\numberwithin{equation}{section}
\newtheorem{prop}{Proposition}[section]
\newtheorem{theo}[prop]{Theorem}
\newtheorem{lemm}[prop]{Lemma}
\newtheorem{coro}[prop]{Corollary}
\newtheorem*{claim*}{Claim}
\theoremstyle{definition}
\newtheorem{defi}[prop]{Definition}
\newcommand{\CC}{\mathbb{C}}
\newcommand{\NN}{\mathbb{N}}
\newcommand{\RR}{\mathbb{R}}
\renewcommand{\SS}{\mathbb{S}}
\newcommand{\cC}{\mathcal C}
\renewcommand{\cH}{\mathcal H}
\renewcommand{\cL}{\mathcal L}
\newcommand{\cM}{\mathcal M}
\newcommand{\cN}{\mathcal N}
\newcommand{\cS}{\mathcal S}
\newcommand{\br}{\mathbf{r}}
\newcommand{\bR}{\mathbf{R}}
\DeclareMathOperator{\supp}{supp}
\newcommand{\bangle}[1]{\left\langle #1 \right\rangle}
\DeclareMathOperator{\Ric}{Ric}
\define{\sff}{{\rm II}}
\define{\tfsff}{\accentset{\circ}{\sff}}
\let\oldmarginpar\marginpar
\renewcommand\marginpar[1]{\-\oldmarginpar[\raggedleft\footnotesize #1]%
{\raggedright\footnotesize #1}}
\DeclareMathOperator{\Jac}{Jac}
\DeclareMathOperator{\Id}{Id}
\DeclareMathOperator{\proj}{proj}
\DeclareMathOperator{\Graph}{graph}
\newcommand{\eps}{\varepsilon}
\title{Uniqueness of asymptotically conical tangent flows}
\author{Otis Chodosh}
\address{OC: Department of Mathematics, Bldg.\ 380, Stanford University, Stanford, CA 94305, USA}
\email{ochodosh@stanford.edu}
\author{Felix Schulze}
\address{FS: Department of Mathematics, Zeeman Building, University of Warwick, Gibbet Hill Road, Coventry CV4 7AL,
UK}
\email{felix.schulze@warwick.ac.uk} 
\date{\today}
\begin{document}

\maketitle

\begin{abstract} 
Singularities of the mean curvature flow of an embedded surface in $\RR^{3}$ are expected to be modeled on self-shrinkers that are compact, cylindrical, or asymptotically conical. In order to understand the flow before and after the singular time, it is crucial to know the uniqueness of tangent flows at the singularity. 

In all dimensions, assuming the singularity is multiplicity one, uniqueness in the compact case has been established by the second-named author \cite{Schulze:compact}, and in the cylindrical case by Colding--Minicozzi \cite{CM:uniqueness}. We show here the uniqueness of multiplicity-one asymptotically conical tangent flows for mean curvature flow of hypersurfaces.

In particular, this implies that when a mean curvature flow has a multiplicity-one conical singularity model, the evolving surface at the singular time has an (isolated) regular conical singularity at the singular point. This should lead to a complete understanding of how to ``flow through'' such a singularity. 
\end{abstract}

\section{Introduction}

\subsection{Uniqueness of tangent flows} By work of Huisken \cite{Huisken:singularities}, White \cite{White:announcement}, and Ilmanen \cite{ilmanen:surface-sing}, singularities of mean curvature flow can be modeled by self-similar shrinking solutions to the flow. For flows of embedded surfaces in $\RR^{3}$, Ilmanen proves \cite{ilmanen:surface-sing} that self-shrinkers arising as tangent flows at the first singular time are smooth and embedded (possibly with higher multiplicity). Wang \cite{Wang:ends-conical} has proven that such shrinkers, if non-compact, have ends that are asymptotic to a cylinder or smooth cone (cf.\ Definition \ref{defi:as-con}); see also \cite{SunWang:compactness}. Moreover, Kapouleas--Kleene--M\o ller \cite{KKM:shrinkers} and Nguyen \cite{Nguyen1,Nguyen2,Nguyen3} have constructed embedded, smooth, self shrinkers in $\RR^{3}$ with (smoothly) conical ends.

An important question is to determine whether or not these tangent flows are unique. The second-named author has proved \cite{Schulze:compact} that this holds (in all dimensions and co-dimension) when there is a compact, multiplicity one, (smooth) tangent flow. Colding--Minicozzi \cite{CM:uniqueness} (cf.\ \cite{ColdingIlmanenMinicozzi}) have proven that uniqueness holds (for hypersurfaces, in all dimensions) for multiplicity one cylindrical tangent flows; see also   \cite{BernsteinWang:uniqueness-mult}.

In this work, we show that uniqueness also holds in the case of multiplicity one tangent flows whose self shrinker is smoothly conical.

\begin{theo}[Uniqueness of conical tangent flows]\label{thm:unique}
Fix $\Sigma^{n}\subset \RR^{n+1}$\index{$\Sigma$} an asymptotically conical self-shrinker. Let $\cM = (\mu_{t})_{t\in (-t_{1},0)}$ be an integral $n$-Brakke flow so that the self-similar shrinking multiplicity one Brakke flow associated to $\Sigma$, $\cM_{\Sigma}$, arises as a tangent flow to $\cM$ at $(0,0)$. Then $\cM_{\Sigma}$ is the unique tangent flow to $\cM$ at $(0,0)$. 
\end{theo}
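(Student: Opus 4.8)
The plan is to adapt the \L{}ojasiewicz-inequality strategy of Colding--Minicozzi and the second-named author to the asymptotically conical setting. The first and decisive difficulty is that $\Sigma$ is noncompact, so the rescaled flow $\{\Sigma_s\}$ (writing the flow in self-similar coordinates so that $\cM_\Sigma$ is stationary) cannot be controlled in a fixed weighted $L^2$ or $C^{2,\alpha}$ space globally; the Gaussian weight $e^{-|x|^2/4}$ decays, but a conical end of $\Sigma$ has infinite $n$-area so the relevant function spaces and the drift operator $\cL = \Delta_\Sigma - \tfrac12 x\cdot\nabla_\Sigma + |A|^2 + \tfrac12$ require care near infinity. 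I expect this to be the main obstacle, and I would handle it by a two-region decomposition: on a large compact core $\Sigma \cap B_R$ the flow is a graph over $\Sigma$ with small $C^{2,\alpha}$ norm by Brakke-flow regularity and the fact that $\cM_\Sigma$ is a tangent flow (Allard/White-type estimates plus White's local regularity theorem), while on the conical end one uses the asymptotic-conicality of $\Sigma$ together with pseudolocality for mean curvature flow to show that the end stays close to the fixed cone $\cC$ and can likewise be written as a small graph, with decay governed by the homogeneity of $\cC$. The upshot is that after passing to the rescaled flow, $\Sigma_s$ is, for $s$ large, a graph of a function $u(\cdot,s)$ over $\Sigma$ with $\|u(\cdot,s)\|$ small in an appropriate weighted parabolic norm.

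With this in place, the second step is the \L{}ojasiewicz--Simon inequality for the Gaussian area (entropy) functional $F(\Sigma) = \int_\Sigma e^{-|x|^2/4}$ near the shrinker $\Sigma$. Here one needs: (i) analyticity (or at least the required Fredholm/gradient structure) of $F$ on the slice of graphs over the compact core, and (ii) a version valid uniformly up to the conical end. For (ii) I would again split: on the core, the linearization $\cL$ is a nice self-adjoint elliptic operator on a compact manifold with weight and the classical Simon--\L{}ojasiewicz argument applies; on the end, I would use the explicit structure of the cone to show that the end contributes a controllable (indeed super-exponentially small, due to the Gaussian weight localizing mass near the origin) error to the gradient and Hessian of $F$, so that an effective \L{}ojasiewicz inequality
\[
\bigl| F(\Sigma_u) - F(\Sigma) \bigr|^{1-\theta} \le C\, \bigl\| \nabla F(\Sigma_u) \bigr\|
\]
holds for $u$ small in the weighted norm, for some $\theta \in (0,\tfrac12]$. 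Establishing that the end does not destroy the \L{}ojasiewicz inequality — i.e., that the ``bad directions'' coming from noncompactness (translations, dilations, rotations of the asymptotic cone, and the continuous spectrum of $\cL$ on the end) either are excluded by the tangent-flow normalization or are harmless because of Gaussian decay — is the technical heart of the argument and is where most of the work goes.

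The final step is the standard \L{}ojasiewicz-to-convergence mechanism. Since $\cM_\Sigma$ arises as a tangent flow, there is a sequence of rescalings converging to $\Sigma$, so $\Sigma_{s_j} \to \Sigma$ for some $s_j \to \infty$; combined with the monotonicity of $s \mapsto F(\Sigma_s)$ (Huisken monotonicity) one gets $F(\Sigma_s) \to F(\Sigma)$ as $s \to \infty$. Feeding the \L{}ojasiewicz inequality into the gradient-flow differential inequality $\frac{d}{ds} F(\Sigma_s) = -\|\partial_s \Sigma_s\|^2 \le -c\, \|\nabla F(\Sigma_s)\|^2$ yields, via the usual Simon computation, that $\int_{s_0}^\infty \|\partial_s \Sigma_s\|\, ds < \infty$, hence $\Sigma_s$ converges in the weighted $C^2$-topology to a single limit shrinker, which must be $\Sigma$. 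This forces the tangent flow at $(0,0)$ to be unique and equal to $\cM_\Sigma$; one must also check that convergence in the rescaled picture upgrades to uniqueness of the tangent flow in the sense of Brakke flows / varifolds, which follows by the backward-uniqueness-free argument already used in the compact and cylindrical cases. Along the way one must be careful that the decomposition into core and end is preserved under the flow and that the constants $C,\theta$ do not degenerate as $R \to \infty$; pseudolocality is the tool that makes this uniform.
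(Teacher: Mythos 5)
You have the right high-level framework: a \L ojasiewicz--Simon inequality for the Gaussian area in weighted spaces, pseudolocality to control the conical end, and the standard integration of the gradient-flow differential inequality. This is indeed the skeleton of the paper's argument. But there is a genuine gap in the middle of your proposal, and it is precisely the part the paper spends most of its effort on.

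You claim that the conical end ``contributes a controllable (indeed super-exponentially small, due to the Gaussian weight localizing mass near the origin) error'' to the \L ojasiewicz inequality, and that pseudolocality keeps the flow graphical on the end. This conflates two very different things and is where the argument would fail. Cutting off the \L ojasiewicz inequality at a \emph{fixed} radius $R$ does introduce an error of size $O(e^{-R^2/4})$ --- but the quantity you want to bound, $|F(M_\tau)-F(\Sigma)|^{1-\theta}$, is controlled only by $\bigl(\int_{M_\tau}|\phi|^2\rho\bigr)^{1/2} = e^{-\bR(M_\tau)^2/8}$, and the shrinker scale $\bR(M_\tau)$ can become arbitrarily large along the flow. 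If $\bR(M_\tau)\gg R$, the cutoff error $e^{-R^2/4}$ swamps the right-hand side and the inequality says nothing. So the Gaussian weight does \emph{not} save you at a fixed scale; you must show that the flow can be written as a graph over $\Sigma$, with the required $O(r^{-1})$ decay toward a (nearby) cone, out to radius $R\sim\bR(M_\tau)$, i.e.\ that the ``conical scale'' keeps pace with the shrinker scale. Pseudolocality alone does not give this: it yields curvature bounds at the natural scaling rate $|A|\lesssim 1/r$, which is enough for the ``rough conical scale'' but does not force the stronger $O(r^{-1})$ decay of the graphing function (which is what the weighted norm $\cC\cS^{2,\alpha}_{-1}$ actually measures). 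Closing this gap is the new mechanism of the paper (Lemma \ref{lemm:model-problem} in model form, Proposition \ref{prop:approx-up-to-rough} in full): one uses the approximate shrinker equation $\phi\approx 0$ to derive a radial ODE $\partial_r(u/r)=O(r^{-3})$, treats the Laplacian as an error term controlled by interpolating $C^0$-smallness against the rough curvature bounds, and integrates outward to propagate decay from the compact core nearly to the shrinker scale. Without this extension-of-closeness step your \L ojasiewicz inequality is not effective, and the convergence argument does not close.

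A secondary, lesser issue: you propose to prove the \L ojasiewicz inequality by a core/end splitting of the operator. The paper instead proves it as a global inequality for \emph{entire} graphs over $\Sigma$ in the weighted spaces (Theorem \ref{theo:loj-entire}, via Fredholm theory for $L$ in $H^2_W\to L^2_W$ and Schauder in $\cC\cS^{2,\alpha}_{-1}$), and only afterwards localizes by truncation (Theorem \ref{theo:cutoff-loj}). Your core-only version would involve a bounded domain with boundary, and handling the boundary contributions to the gradient and Hessian of $F$ is not obviously easier than the global weighted theory; in any case, what matters for the final argument is the truncation at scale $\sim\bR(M_\tau)$, not at a fixed $R$, and that brings you back to the main gap above.
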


See Section \ref{subsec:rate-convergence} for estimates concerning the rate of convergence. We expect that the argument will extend to higher codimension with little change. 

 An interesting feature of our proof of Theorem \ref{thm:unique} is that it shows that the \L ojasiewicz--Simon approach to uniqueness of blow-ups can be applied in the case of a non-compact singularity model. Colding--Minicozzi's work on the uniqueness of cylindrical tangent flows \cite{CM:uniqueness} does not proceed via a reduction to the finite dimensional \L ojasiewicz inequality \`a la Simon, but rather proves a \L ojasiewicz-type inequality by hand, using the explicit structure of the cylinder in a fundamental way. Here the situation is different: we do not use any explicit structure of the conical shrinkers, so instead must rely on a \L ojasiewicz--Simon inequality proven by ``abstract'' methods, after introducing relevant weighted function spaces. 
 
 This approach has the drawback that it requires much stronger ``closeness'' of the flow relative to the shrinker. Thus, we must develop a new ``extension of closeness'' mechanism that is not present in the cylindrical case (cf.\ Lemma \ref{lemm:model-problem} and Proposition \ref{prop:approx-up-to-rough}). We then must combine this mechanism with several crucial ideas of Colding--Minicozzi concerning improvement and extension of curvature estimates to overcome the non-compactness of the problem. 
 
Our approach seems to be quite general and flexible; we expect that it will apply to the uniqueness of non-compact singularities in other geometric problems, when the singularity is ``well behaved'' at infinity. 

\subsection{The structure of the singular set around an asymptotically conical shrinker} We note that conjecturally (cf. Ilmanen's no cylinder conjecture \cite[\#12]{Ilmanen:problems}), the cylinder is the only shrinker in $\RR^{3}$ with a cylindrical end. Combing Theorem \ref{thm:unique} with \cite{Schulze:compact}, \cite{CM:uniqueness}, and \cite{Wang:ends-conical}, it would follow that for the mean curvature flow of a smooth embedded surface in $\RR^{3}$, \emph{all} multiplicity one tangent flows at the first singular time are unique. 

Uniqueness of tangent flows gives important information about the singular behavior of the flow. Using their result on the uniqueness of cylindrical tangent flows, Colding--Minicozzi have proven \cite{CM:sing-set} (among other things) that a mean curvature flow of hypersurfaces in $\RR^{n+1}$ with only multiplicity one cylindrical tangent flows has space-time singular set contained in finitely many compact embedded $(n-1)$-dimensional Lipschitz submanifolds and a $(n-2)$-dimensional set. Moreover, in $\RR^{3}$ they have shown that such flows are smooth for almost all times, and any connected component of the singular set is completely contained in a time-slice (see also \cite{ColdingMinicozzi:level-set-flow}). 

Similarly, Theorem \ref{thm:unique} (and the pseudolocality arguments used in Lemma \ref{lemm:rough-improves} below) implies the following 

\begin{coro}\label{coro:conical-struct}
For $\cM$ and $\Sigma$ as in Theorem \ref{thm:unique}, there is $\eps>0$ so that for all $t\in (-\eps^{2},0)$, we have $\mu_{t}\lfloor B_{\eps}(0) = \cH^{n}\lfloor M_{t}$ for a smooth family $M_{t}$ of embedded surfaces flowing by mean curvature in $B_{\eps}(0)$. The surfaces $M_{t}$ are diffeomorphic to $\Sigma$. Moreover,  as $t\nearrow 0$, the flow $M_{t}\cap (B_{\eps}(0)\setminus\{0\})$ converges in $C^{\infty}_{\textnormal{loc}}$ to a smooth surface $M_{0}\subset B_{\eps}(0)\setminus \{0\}$ with a conical singularity at $0$ smoothly modeled\footnote{In other words, rescaling $M_{0}$ around $0$ converges in $C^{\infty}_{\textrm{loc}}(\RR^{n+1}\setminus\{0\})$ to the asymptotic cone of $\Sigma$.} on the asymptotic cone of $\Sigma$. 
\end{coro}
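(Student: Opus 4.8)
The plan is to deduce Corollary~\ref{coro:conical-struct} from Theorem~\ref{thm:unique} together with the pseudolocality-type estimates already developed for the main theorem (in particular Lemma~\ref{lemm:rough-improves}). First I would use Theorem~\ref{thm:unique}: since $\cM_\Sigma$ is the \emph{unique} tangent flow at $(0,0)$, the rescaled flows $\cM_{0,\lambda}$ converge (as $\lambda\to 0$) to $\cM_\Sigma$ in the sense of Brakke flows, with no further subsequence extraction needed. Because $\Sigma$ is smooth and, being asymptotically conical, has bounded geometry on compact sets away from the cone vertex, White's local regularity theorem (or Brakke's, in the multiplicity-one setting) upgrades this to smooth convergence on compact subsets of space-time away from the singular point. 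Unwinding the parabolic rescaling, this yields $\eps>0$ such that on the parabolic neighborhood $B_\eps(0)\times(-\eps^2,0)$ the flow $\cM$ is, away from a small neighborhood of the spine $\{0\}\times(-\eps^2,0]$, a smooth multiplicity-one mean curvature flow that is graphical and $C^\infty$-close over the (shrinking) smooth part of $\Sigma$.

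The next step is to handle the region near the spine, i.e.\ to show that the flow is in fact smooth on all of $B_\eps(0)\setminus\{0\}$ up to and including $t=0$, and that the only singularity that can form in $B_\eps(0)$ is the conical one at the origin. Here I would invoke the pseudolocality arguments referenced in Lemma~\ref{lemm:rough-improves}: because the asymptotic cone $\cC$ of $\Sigma$ is smooth away from its vertex, for each fixed $r>0$ the self-shrinker $\Sigma$ restricted to $B_R\setminus B_r$ (for appropriate $R$) is a smooth hypersurface with a priori curvature bounds, and the closeness of $\cM$ to $\cM_\Sigma$ at a definite scale lets pseudolocality propagate smoothness forward in time on the annular region $B_{\eps}(0)\setminus B_{r\lambda}(0)$ at the rescaled level; letting $\lambda\to0$ with $r$ fixed shows that for each $t<0$ close to $0$, $M_t\cap(B_\eps(0)\setminus\{0\})$ is smooth with curvature blowing up at worst like the conical model near $0$. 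Standard interior parabolic estimates and the uniqueness of the tangent flow then give $C^\infty_{\mathrm{loc}}$ convergence of $M_t\cap(B_\eps(0)\setminus\{0\})$ to a limit hypersurface $M_0$ as $t\nearrow 0$, and the rescaling of $M_0$ about $0$ converges to $\cC$ — this is exactly the statement that $M_0$ has a conical singularity at $0$ modeled on $\cC$. The diffeomorphism type claim follows because, for $\eps$ small and $t$ close to $0$, $M_t$ is a graph over $\sqrt{-t}\,\Sigma\cap B_\eps(0)$, hence diffeomorphic to a compact piece of $\Sigma$, which retracts onto $\Sigma$ itself (as $\Sigma$ is asymptotically conical, it deformation retracts onto any sufficiently large compact piece).

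The main obstacle I anticipate is the region near the spine: establishing smoothness and the sharp conical curvature profile right up to the vertex and up to the singular time simultaneously. Away from the vertex and away from $t=0$ everything is soft, but controlling the flow uniformly on a fixed annulus $B_\eps(0)\setminus B_\delta(0)$ as $t\to 0$ requires that the closeness to $\cM_\Sigma$ be quantitative at a fixed (non-rescaled) scale, which is precisely what the rate-of-convergence estimates of Section~\ref{subsec:rate-convergence} and the pseudolocality input in Lemma~\ref{lemm:rough-improves} provide; one must be careful that the ``small neighborhood of the spine'' being excluded can genuinely be taken to shrink to the single point $\{0\}$ rather than a nontrivial arc, and this is where multiplicity one and the conical (rather than cylindrical) structure of $\Sigma$ are essential — there is no spine of singularities, just the isolated point. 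Once that quantitative closeness is in hand, the remaining arguments are routine applications of local regularity and interior estimates for mean curvature flow.
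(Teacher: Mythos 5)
Your proposal is correct and follows essentially the same route as the paper's proof: invoke Theorem~\ref{thm:unique} for uniqueness of the tangent flow, use the pseudolocality-based rough conical scale estimates from Lemma~\ref{lemm:rough-improves} (established during the proof of Theorem~\ref{thm:unique}) to get graphicality over $\sqrt{-t}\,\Sigma$ and conical curvature bounds $|A|\lesssim |x|^{-1}$ on a fixed ball $B_\eps(0)$, pass to the limit $t\nearrow 0$ to obtain $M_0$, and identify the blow-down of $M_0$ with the asymptotic cone $\cC$ via uniqueness of the tangent flow. One small over-engineering: the rate-of-convergence estimates of Section~\ref{subsec:rate-convergence} are not actually needed — the qualitative convergence $M_\tau\to\Sigma$ together with the rough conical scale bound $\tilde{\br}_\ell(M_\tau)\gtrsim e^{\tau/2}$ already provide the required uniform control at a fixed unrescaled scale.
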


We note that Colding--Minicozzi have proven \cite{CM:generic} that the plane, sphere, and cylinders are the unique entropy stable shrinkers. They have proposed this as a mechanism for a possible way to construct a generic mean curvature flow. Corollary \ref{coro:conical-struct} suggests that one can flow \emph{through} points with conical tangent flows, instead of trying to perturb them away. Understanding the flow through these ``non-generic'' situations will be particularly important towards understanding families of mean curvature flows. We will investigate this elsewhere. 

\subsection{Some recent results in singularity analysis of mean curvature flow} We remark that Brendle has recently proven \cite{Brendle:genus0} that the only smooth properly embedded self shrinkers in $\RR^{3}$ with genus zero are the plane, sphere, and cylinder; hence, a conical shrinker must have non-zero genus. Moreover Bernstein--Wang have shown \cite{BernsteinWang:sharp-lower-bd} that the round sphere has the least entropy among any closed hypersurface (up to the singular dimension, cf.\ \cite{Zhu:entropy} and see also \cite{ColdingIlmanenMinicozziWhite,KetoverZhou}); the same authors have extended \cite{BernsteinWang:topological-AC-shrinkers} this to non-compact surfaces in $\RR^{3}$ (see also \cite{BernsteinWang:top}). Wang has proven \cite{Wang:uniqueness-conical} that two shrinkers asymptotic to the same smooth cone must be identical. Ketover has recently constructed \cite{Ketover:self-shrinkers} self-shrinking Platonic solids.

Brendle--Choi have classified \cite{BrendleChoi} the bowl solition as the unique strictly convex ancient solution in $\RR^{3}$ (cf.\ \cite{Wang:convex,Haslhofer:bowl,Hershkovits:bowl,BrendleChoi:higher-dim}). Moreover, Angenent--Daskalopoulos--Sesum have classified closed non-collapsed ancient solutions that are uniformly two-convex \cite{ADS:ancientOvals}. Finally, Choi--Haslhofer--Hershkovits \cite{ChoiHaslhoferHerskovits} have proven the mean convex neighborhood conjecture in $\RR^{3}$, by classifying low entropy ancient solutions (see also \cite{HershkovitsWhite}). 

\subsection{Idea of the proof of Theorem \ref{thm:unique}} 

The basic idea to prove Theorem \ref{thm:unique} is to rely on a \L ojasiewicz-type inequality (see \cite{Loj,Simon:Loj,Simon:green-book}) to show uniqueness of the tangent flow. Indeed, this strategy was already successful in the compact \cite{Schulze:compact} and cylindrical \cite{CM:uniqueness} cases. In the cylindrical and conical cases, the non-compactness of the shrinker causes serious issues (beyond simply those of a technical nature), due to the fact that one cannot write the entire flow as a graph over the shrinker. 

Unlike the cylindrical case \cite{CM:uniqueness}, we do not exploit any specific structure of the shrinker (beyond the fact that it has conical ends). Conical ends seem to be less degenerate with regards to the uniqueness problem, allowing us to obtain very strong estimates in annular regions around the point where the singularity is forming.
Because we do not assume any specific structure of the shrinker, we must prove the \L ojasiewicz--Simon inequality by ``abstract'' methods (i.e., by a finite dimensional reduction to \L ojasiewicz's original inequality \cite{Loj}). In Section \ref{sec:lin-est-spaces}, we construct weighted H\"older and Sobolev spaces in which Simon's argument \cite{Simon:Loj} can be used  to prove a \L ojasiewicz--Simon inequality for entire graphs over the shrinker (see Theorem \ref{theo:loj-entire}). Roughly speaking, we consider H\"older spaces (inspired by \cite{KKM:shrinkers}) $\cC\cS^{2,\alpha}_{-1}(\Sigma)$ of functions $u:\Sigma\to\RR$ so that in coordinates $(r,\omega) \in  (1,\infty)\times \Gamma $ on the end of $\Sigma$ (where $\Gamma$ is the link of the asymptotic cone of $\Sigma$),
\[
f(r,\omega) = c(\omega) r + O(r^{-1})
\]
where the error term is taken in $C^{2,\alpha}$ on balls of unit size. We also require the improved radial derivative estimate
\[
\partial_{r}f(r,\omega) = c(\omega) +  O(r^{-2})
\]
in $C^{0,\alpha}$. Geometrically, we can think of $\cC\cS^{2,\alpha}_{-1}(\Sigma)$ as functions whose graphs are asymptotically conical (for a different cone) and decay to their asymptotic cone at a rate $O(r^{-1})$ in $C^{2,\alpha}$.

The linearized shrinker operator maps the space $\cC\cS^{2,\alpha}_{-1}(\Sigma)$ to $\cC\cS^{0,\alpha}_{-1}(\Sigma)$, i.e. $Lu = O(r^{-1})$ in $C^{0,\alpha}$ (this is where the improved radial derivative estimate is needed). We can prove Schauder estimates for the $L$ operator between these spaces (see Proposition \ref{prop:CS-schauder}). Moreover (based on ideas communicated to us by J.\ Bernstein \cite{bernstein:private-Schauder}) one can also establish (see Section \ref{subsec:weighted:sob}) regularity and existence for the $L$ operator (the linearized shrinker operator) between $L^{2}$-based Sobolev spaces $L_{W}^{2}(\Sigma)$ and $H^{2}_{W}(\Sigma)$, when weighted by the Gaussian density $\rho = (4\pi)^{-\frac n 2} e^{-|x|^{2}/4}$. Combining these facts, we find that the $L$ operator behaves between these spaces in essentially the same way as in the compact cases considered by Simon \cite{Simon:Loj}. This yields a \L ojasiewicz--Simon inequality for \emph{entire} graphs over $\Sigma$ (Theorem \ref{theo:loj-entire}), i.e., if $\Vert u\Vert_{\cC\cS^{2,\alpha}_{-1}(\Sigma)}$ is sufficiently small, then for $M = \Graph u$,
\begin{equation}\label{eq:intro-Loj-entire}
|F(M) - F(\Sigma)|^{1-\theta} \leq C \left( \int_{M} |\phi|^{2} \rho \, d\cH^{n}\right)^{\frac 12}. 
\end{equation}
Here $F(M)$ is the Gaussian area (see Definition \ref{defi:gaussian-area}) and $\phi$ is the deviation from $M$ being a shrinker (see Definition \ref{defi:phi}). 

To apply \eqref{eq:intro-Loj-entire} to prove uniqueness of conical tangent flows, the basic strategy is to show that if a Brakke flow $\cM$ has a multiplicity one conical tangent flow (modeled by $\Sigma$) at $(0,0)$, then it is possible to write part of $\cM$ as a graph over part of $\Sigma$, and that this graphical function extends to a function that is small in $\cC\cS^{2,\alpha}_{-1}(\Sigma)$. At this point \eqref{eq:intro-Loj-entire} can be applied to this extended function. Applying the resulting inequality to $\cM$ introduces errors based on the fact that $\cM$ is not an entire graph over $\Sigma$. Controlling the size of these errors relative to the terms in \eqref{eq:intro-Loj-entire} is a serious issue, which we now describe in some detail. 

We consider the rescaled mean curvature flow around $(0,0)$; assume the rescaled flow consists of surfaces $M_{\tau}$ for $\tau \in [-1,\infty)$ and $M_{\tau_{i}}\to\Sigma$ in $C^{\infty}_{\textrm{loc}}$ along some sequence $\tau_{i}\to\infty$. We seek to prove by a continuity argument that for $\underline r$ fixed and $\tau$ sufficiently large, $M_{\tau} \cap B_{\underline r}$ is a $C^{\ell+1}$ graph of a function with $C^{\ell+1}$-norm bounded by $b$. This is (roughly) the \emph{core graphical hypothesis} $(*_{b,\underline r})$ (see Definition \ref{def:core-graph-hypoth}). Notice that the core graphical hypothesis will not suffice to control the errors when applying the \L ojasiewicz inequality. The reason for this is that we must not destroy the term
\[
\int_{M_{\tau}} |\phi|^{2} \rho \, d\cH^{n} := e^{-\frac{\bR(M_{\tau})^{2}}{4}}
\]
on the right hand side of \eqref{eq:intro-Loj-entire}. We call $\bR(M_{\tau})$ the shrinker scale (Definition \ref{defi:shrinker-scale}).\footnote{Note that our shrinker scale differs from the definition used in \cite{CM:uniqueness} slightly, due to the nature of our \L ojasiewicz--Simon inequality.} On the other hand, cutting off the \L ojasiewicz--Simon inequality outside of a ball of radius $R$ will introduce terms on the order of $o(1)e^{-\frac{R^{2}}{4}}$ (see Theorem \ref{theo:cutoff-loj}). Thus, we must show that $M_{\tau}$ is graphical over $\Sigma\cap B_{R}$ for $R\sim \bR(M_{\tau})$. More precisely, we must show that there is $u:\Sigma\to\RR$ with $\Vert u \Vert_{\cC\cS^{2,\alpha}_{-1}(\Sigma)}$ sufficiently small so that $M_\tau\cap B_{R}$ is contained in the graph of $u$. We call the largest $R$ satisfying this property the \emph{conical scale} (Definition \ref{defi:conical-scale}), denoted by $\br_{\ell}(M_{\tau})$. We would thus like to show that the  the conical scale $\br_{\ell}(M_{\tau})$ is comparable to the shrinker scale $\bR(M_{\tau})$.

Observe that this is far from clear: we must show that $M_{\tau}$ decays like $O(r^{-1})$ towards a cone (which is close to the asymptotic cone of $\Sigma$) nearly all the way to $\bR(M_{\tau})$. However, if $\bR(M_{\tau})$ is very large, we have to transmit the graphical information contained in the core graphical hypothesis (only on a fixed compact set) essentially all the way to $\bR(M_{\tau})$, while even obtaining decay!

The way we do this has some features in common with the methods used in \cite{CM:uniqueness}, but the argument on the whole is rather different. To obtain control on the conical scale $\br_{\ell}(M_{\tau})$ we first introduce a weaker notion, the \emph{rough conical scale} $\tilde\br_{\ell}(M_{\tau})$ (Definition \ref{defi:rough-conical-scale}), which is the largest radius where the curvature of $M_{\tau}$ behaves like the curvature along a cone. As a preliminary step, we prove that the rough conical scale improves very rapidly, as long as the core graphical hypothesis $(*_{b,\underline r})$ is satisfied.

\begin{figure}[h]
\begin{tikzpicture}
\filldraw [blue!50] (-2.9,-2) rectangle (-1.1,0);
\filldraw [blue!50] (2.9,-2) rectangle (1.1,0);

\draw (-3,0) -- (3,0); 
\filldraw (0,0) circle (1pt) node [above] {$(0,0)$};
\draw plot [smooth, domain = -2:2] (\x,{-(.5)*(\x)^2});
\draw [thick] (-3,-2) -- (-1,-2);
\draw [thick] (1,-2) -- (3,-2);
\draw [dashed] (-1,-2) -- (1,-2);

\draw [->] (-3.5,-2) -- (-3.5,.5) node [above] {$t$};
\filldraw (-3.5,0) circle (1pt) node [left] {$t=0$};
\filldraw (-3.5,-2) circle (1pt) node [left] {$t=-1$};
\draw [->] (-.1,-3) -- (-1.4,-2.1);
\draw [->] (.1,-3) -- (1.4,-2.1);
\node at (0,-3.2) {conical part of $M_{-1}$};
\end{tikzpicture}

\caption{The conical nature of the shrinker $\Sigma$ (and thus the unrescaled flow at time $t=-1$) yields---via pseudolocality---curvature estimates in the region that is shaded blue. Note that we can only expect \eqref{eq:intro-Loj-entire} to give useful bounds \emph{below} the parabola, since this is the set where the backwards heat kernel $\rho$ is uniformly bounded away from zero. }

\label{fig:pseudolocality-1}
\end{figure}
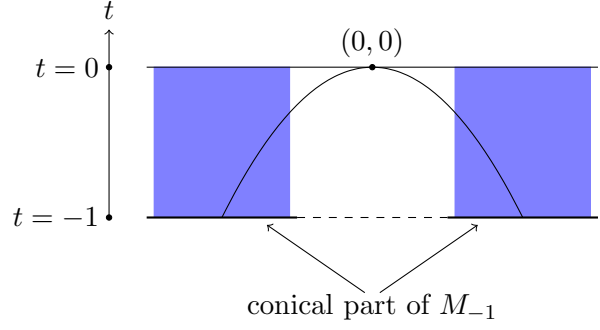

\begin{figure}[h]
\begin{tikzpicture}
\filldraw [blue!50] (-2.9,-2) rectangle (-1.1,0);
\filldraw [blue!50] (2.9,-2) rectangle (1.1,0);

\filldraw [blue!50] (-2.9,-1) rectangle (-.8,0);
\filldraw [blue!50] (2.9,-1) rectangle (.8,0);

\draw (-3,0) -- (3,0); 
\filldraw (0,0) circle (1pt) node [above] {$(0,0)$};
\draw plot [smooth, domain = -2:2] (\x,{-(.5)*(\x)^2});
\draw [thick] (-3,-2) -- (-1,-2);
\draw [thick] (1,-2) -- (3,-2);
\draw [dashed] (-1,-2) -- (1,-2);

\draw plot [smooth, domain = -1:1] (\x,{-2*(\x)^2});
\draw [thick] (-3,-1) -- (-.7,-1);
\draw [thick] (.7,-1) -- (3,-1);
\draw [dashed] (-1,-1) -- (1,-1);

\draw [->] (-3.5,-2) -- (-3.5,.5) node [above] {$t$};
\filldraw (-3.5,0) circle (1pt) node [left] {$t=0$};
\filldraw (-3.5,-2) circle (1pt) node [left] {$t=-1$};
\filldraw (-3.5,-1) circle (1pt) node [left] {$t=-\frac 12$};
\draw [->] (-.1,-3) -- (-1,-1.1);
\draw [->] (.1,-3) -- (1,-1.1);
\node at (0,-3.2) {conical part of $M_{-\frac12}$};
\end{tikzpicture}

\caption{Assuming that we have control over $M_{t}$ via \eqref{eq:intro-Loj-entire} inside of the wide parabola (for $t \in [-1,-\frac 12 )$), we can then use pseudolocality out of the conical region in $M_{-\frac 12}$ to gain curvature estimates on a \emph{larger} region (still shaded blue). This is our first improvement/iteration mechanism.}

\label{fig:pseudolocality-2}
\end{figure}
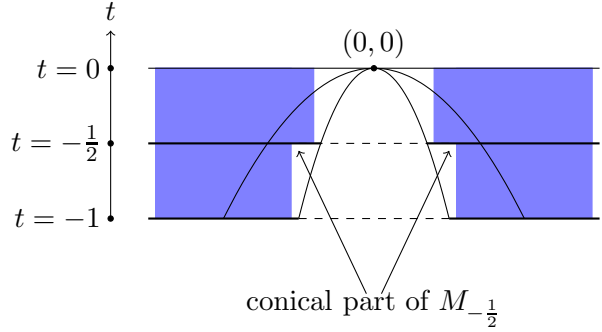

Indeed, to control the rough conical scale $\tilde\br_{\ell}(M_{\tau})$ we first observe that pseudolocality applied to the unrescaled flow gives curvature estimates on an annular region that persist all the way up to the singular time (using the fact that the flow is close on a large compact set to the conical shrinker). This is depicted in Figure \ref{fig:pseudolocality-1} (the region where we obtain curvature estimates is shaded in blue). When translated to the rescaled flow, this annular region will grow exponentially. This initially seems like a problem, since the inner boundary is also moving away exponentially. However, as long as the core graphical hypothesis is satisfied, we can use the pseudolocality estimates at a later time to get curvature estimates further inside. This is shown in Figure \ref{fig:pseudolocality-2}. The argument we have just described shows that as long as the core graphical hypothesis $(*_{b,\underline r})$ applies, we have that $\tilde \br_{\ell}(M_{\tau}) \geq C e^{\frac \tau 2}$ (see Lemma \ref{lemm:rough-improves}). 

Finally, we must show that the core graphical hypothesis $(*_{b,\underline r})$ together with the estimate we have just obtained on the rough conical scale $\tilde \br_{\ell}(M_{\tau})$ imply that the conical scale (i.e., the scale at which we can cut off \eqref{eq:intro-Loj-entire}) is comparable to the shrinker scale $\bR(M_{\tau})$. Since the rough conical scale is improving exponentially, it basically suffices to show that the conical and shrinker scales are comparable, when the shrinker scale is much smaller than the rough conical scale, i.e., $\bR(M_{\tau}) \ll \tilde \br_{\ell}(M_{\tau})$ (see \eqref{eq:huge-shrinker-scale-trivial-est} for the case where this does not hold). 

At this point, we can use the argument of Colding--Minicozzi from \cite[Corollary 1.28]{CM:uniqueness} to argue that because $\bR(M_{\tau}) \ll \tilde \br_{\ell}(M_{\tau})$, the function $\phi_{M_{\tau}} = \frac 12 \bangle{x,\nu_{M_{\tau}}} - H_{M_{\tau}}$ (which measures how close $M_{\tau}$ is to a shrinker) must be very small (see the proof of Theorem \ref{theo:final-Loj}). 

Finally, we show that this (along with the rough conical scale $\tilde \br_{\ell}(M_{\tau})$ estimates) suffices to extend the graphicality (and decay estiamates) from the core $B_{\underline r}$ nearly all the way out to the shrinker scale $\bR(M_{\tau})$ (see Proposition \ref{prop:approx-up-to-rough}). Because this step is delicate and forms a key part of the argument, we explain this argument in a model situation below.

\begin{lemm}[Model problem for the extension of the conical scale]\label{lemm:model-problem}
Fix $\beta_{0}>0$ and suppose that $u: \RR^{2}\to\RR$ satisfies 
\[
\cL_{\frac 12 } u : = \Delta u - \frac 12 (r \partial_{r} u - u) = 0
\]
on $\RR^{2}$ and $|\nabla^{k}u| = O(r^{1-k})$ for all $k\in\NN$. Finally, assume that $\Vert u \Vert_{C^{3}(B_{\underline r+2})} \leq b$ for $\underline r$ sufficiently large and $b$ sufficiently small depending\footnote{We will think of the $|\nabla^ku|$ estimates as being given a priori, so everything here is allowed to depend on the implied constants.} on $\beta_{0}$. Then, there is $c:\SS^{1}\to\RR$ and $f:\RR^{2}\to\RR$ so that outside of $B_{1}$ 
\[
u(r,\theta) = c(\theta) r + f(r,\theta)
\]
and $\Vert c\Vert_{C^{0}(\SS^{1})} + \Vert r f \Vert_{C^{0}(\RR^{2})} \leq \beta_{0}$
\end{lemm}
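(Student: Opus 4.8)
The plan is to analyze the equation $\cL_{1/2}u = \Delta u - \tfrac12(r\partial_r u - u) = 0$ by separation of variables on $\RR^2$ and to exploit the a priori growth bound $|u| = O(r)$ to discard all the fast-growing solution modes. Writing $u$ in polar coordinates and expanding in Fourier modes $u(r,\theta) = \sum_{k\in\ZZ} u_k(r) e^{ik\theta}$, each coefficient $u_k$ solves an ODE of the form $u_k'' + \tfrac1r u_k' - \tfrac{k^2}{r^2}u_k - \tfrac12 r u_k' + \tfrac12 u_k = 0$. For each $k$ this is a second-order linear ODE with a regular singular point at $r=0$ and an irregular singular point at $r=\infty$; one solution grows at most linearly (indeed the function $r$ itself solves $\cL_{1/2}(r)=0$ up to the angular part, reflecting that cones are fixed points of the relevant flow) while the second, linearly independent solution grows like $e^{r^2/4}$ times a power of $r$. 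Since $|u|=O(r)$ globally, the $e^{r^2/4}$-mode must vanish identically in every $u_k$, so $u_k(r) = a_k \psi_k(r)$ where $\psi_k$ is the slowly-growing solution, normalized so that $\psi_k(r) = r + O(r^{-1})$ as $r\to\infty$ (with the $O$ being $k$-dependent but explicit from the indicial/asymptotic analysis). Setting $c(\theta) := \sum_k a_k e^{ik\theta}$ and $f := u - c(\theta) r$ gives the desired decomposition, with $f(r,\theta) = \sum_k a_k(\psi_k(r) - r)e^{ik\theta} = O(r^{-1})$ formally.

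To make this quantitative and to obtain the bound $\Vert c\Vert_{C^0(\SS^1)} + \Vert rf\Vert_{C^0(\RR^2)} \leq \beta_0$, I would instead argue more robustly as follows. First, the interior Schauder-type and the given derivative estimates $|\nabla^k u| = O(r^{1-k})$ show that $r^{-1}u(r,\cdot)$ is bounded on $\SS^1$ uniformly in $r$ and has equicontinuous (in $r$) restrictions; combined with the equation, $\partial_r(r^{-1}u) = r^{-2}(r\partial_r u - u)$ and $r\partial_r u - u = \tfrac{2}{r}\Delta u \cdot r \cdot$(harmless) — more precisely $r\partial_r u - u = 2\cL$-term $= 2r^{-1}\Delta u \cdot$; using $|\Delta u| = O(r^{-1})$ one gets $|\partial_r(r^{-1}u)| = O(r^{-3})$, which is integrable in $r$ near infinity. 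Hence $c(\theta) := \lim_{r\to\infty} r^{-1}u(r,\theta)$ exists, the convergence is uniform in $\theta$ with rate $O(r^{-2})$, and $c \in C^0(\SS^1)$ (in fact better). Then $f := u - c(\theta)r$ satisfies $|r^{-1}f(r,\theta)| = |r^{-1}u - c(\theta)| = O(r^{-2})$ outside $B_1$, i.e. $|rf| = O(1)$, and on $B_1$ one controls $f$ directly from $\Vert u\Vert_{C^3(B_{\underline r + 2})} \leq b$ and the bound on $c$.

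The remaining — and genuinely delicate — point is that the constants in all these $O(\cdot)$ estimates must ultimately be dominated by $\beta_0$, which is a smallness parameter. The a priori bounds $|\nabla^k u| = O(r^{1-k})$ come with some fixed implied constants (we are told to treat these as given), but the decay of $r^{-1}u$ toward $c(\theta)$ picks up an additive constant that is controlled by $\int_1^\infty |\partial_r(r^{-1}u)|\,dr$, and a priori this need not be small. The mechanism that saves us is the hypothesis $\Vert u\Vert_{C^3(B_{\underline r+2})} \leq b$ with $\underline r$ large and $b$ small: one shows that the "linear part" $c(\theta)$ is essentially read off from the behavior of $u$ on the annulus $\{\underline r \leq r \leq \underline r + 2\}$ up to an error of size $O(\underline r^{-1})$ coming from the tail $\int_{\underline r}^\infty |\partial_r(r^{-1}u)|\,dr = O(\underline r^{-2})$, so $\Vert c\Vert_{C^0} \lesssim \underline r^{-1}\Vert u\Vert_{C^1(B_{\underline r+2})} + O(\underline r^{-2}) \lesssim \underline r^{-1} b + \underline r^{-2}$, and similarly $\Vert rf\Vert_{C^0(\RR^2\setminus B_1)} \lesssim$ (implied constants)$\cdot \underline r^{-1} + \underline r^{-1}b$ while $\Vert rf\Vert_{C^0(B_1)} \lesssim b + \Vert c\Vert_{C^0}$; choosing $\underline r$ large (depending on $\beta_0$ and the fixed implied constants) and then $b$ small makes the total $\leq \beta_0$. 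The main obstacle is precisely this bookkeeping: interpolating the global-but-not-small $O(r^{1-k})$ bounds against the small-but-local $C^3(B_{\underline r+2})$ bound to conclude that the obstruction $c$ and the remainder $f$ are both $\beta_0$-small, and in particular tracking that the passage from the annulus to the full half-line only costs a power of $\underline r^{-1}$.
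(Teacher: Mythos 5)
Your radial-ODE approach is essentially the same as the paper's, and your treatment of $c$ is correct: from $\cL_{\frac12}u=0$ one has $r\partial_r u - u = 2\Delta u = O(r^{-1})$, hence $\partial_r(r^{-1}u) = O(r^{-3})$, which you integrate to define $c(\theta)=\lim_{r\to\infty}r^{-1}u(r,\theta)$, and then splitting the tail integral at $\underline r$ gives $\Vert c\Vert_{C^{0}}\lesssim \underline r^{-1}b+\underline r^{-2}$, which is $\beta_{0}$-small for $\underline r$ large and $b$ small. (Your intermediate line ``$r\partial_r u - u = \tfrac{2}{r}\Delta u\cdot r\cdot$'' is garbled, but the conclusion is right.) The Fourier-mode decomposition in your first paragraph is a plausible alternative for exact solutions on $\RR^2$, but you rightly set it aside, and it would not generalize to the actual setting of Proposition \ref{prop:approx-up-to-rough}.

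The gap is in your estimate for $f$. The first-pass ODE bound gives only $\big|r^{-1}u(r,\theta)-c(\theta)\big|\le\int_{r}^{\infty}\big|\partial_s(s^{-1}u)\big|\,ds\le Cr^{-2}$ with $C$ the \emph{fixed} implied constant from $|\Delta u|=O(r^{-1})$, hence $|rf|=r^{2}|r^{-1}u-c|\le C$: bounded but not small, and not improving as $\underline r\to\infty$ or $b\to 0$. Your claimed bound $\Vert rf\Vert_{C^{0}(\RR^{2}\setminus B_{1})}\lesssim(\text{implied constants})\cdot\underline r^{-1}+\underline r^{-1}b$ therefore does not follow and contradicts your own statement $|rf|=O(1)$ two lines earlier. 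You gesture at ``interpolating the global-but-not-small $O(r^{1-k})$ bounds against the small-but-local $C^{3}(B_{\underline r+2})$ bound'' as the obstacle, but you never carry it out, and interpolating only on the fixed annulus would not propagate smallness to $r\gg\underline r$. The missing step, which is the crux of the paper's proof, is a bootstrap: the first-pass ODE estimate already gives $|u(r,\theta)|\le\delta^{2}r$ for \emph{all} $r\ge\underline r$ with $\delta$ small (this is global smallness of $r^{-1}u$, not just on the annulus); interpolating this $C^{0}$ smallness on unit balls at radius $r$ against the scale-invariant bounds $|D^{k}u|=O(r^{1-k})$ for large $k$ (Lemma \ref{lemm:interpolation}) yields $|\Delta u|\le O(\delta)\,r^{-1}$, i.e.\ the constant in front of $r^{-1}$ is now $O(\delta)$; re-running the ODE argument with this improved right-hand side gives $\partial_r(r^{-1}u)=O(\delta)r^{-3}$ and hence $|rf|=O(\delta)$, which can be made $\le\beta_{0}/2$. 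Without this bootstrap only boundedness of $rf$ is available, and the conclusion fails.
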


Before proving this lemma, we explain the relationship with the full improvement/extension result (Proposition \ref{prop:approx-up-to-rough}). Firstly, we have considered the simplest possible conical shrinker $\RR^{2}\subset \RR^{3}$ instead of a general asymptotically conical shrinker $\Sigma^{n}\subset \RR^{n+1}$. In the full problem, we have that $\phi_{M_{\tau}}$ is very small, so the part of $M_{\tau}$ that is graphical over $\Sigma$ roughly solves the graphical shrinker equation. The $\cL_{\frac 12}$ operator is the linearization (at the flat plane) of the shrinker equation, so to simplify this situation we have simply assumed that $\cL_{\frac 12}u = 0$. The higher derivative estimates on $u$ are the analogue here of the rough conical scale estimates. Finally, the $C^{3}$-smallness of $u$ in $B_{\underline r+2}$ is analogous to the core graphical hypothesis. We have simplified the conclusion above, in Proposition \ref{prop:approx-up-to-rough} we prove full $\cC\cS^{2,\alpha}_{-1}(\Sigma)$ estimates for $u$ (but the result described here contains the essential ideas).

We note that a key technical difficulty present in Proposition \ref{prop:approx-up-to-rough} that does not occur in this model problem is the fact that $M_{\tau}$ is \emph{not} an entire graph over $\Sigma$ (and a priori is only graphical up to $B_{\underline r}$). Thus, the argument below must be coupled with a continuity argument outwards; this necessarily complicates the argument. 

\begin{proof}
The beginning of the proof is very similar to proof of \cite[Theorem 8.9]{KKM:shrinkers}. As an initial step, we treat the Laplacian in $\cL_{\frac 12}$ as an error term, since $\Delta u = O(r^{-1})$ from the Hessian estimates on $u$. Thus, we find that 
\begin{equation}\label{eq:u-ode-model-problem}
r^{2} \partial_{r} \left( \frac u r \right) = r \partial_{r} u - u = O(r^{-1}).
\end{equation}
Integrating this to infinity, we find 
\[
c(\theta) : = \lim_{r\to\infty} \frac{u(r,\theta)}{r}
\]
is well defined (and continuous). Thus, we have obtained the asserted decomposition. It remains to prove the asserted estimates for $c$ and $f$.

We begin by proving that $\frac{u}{r}$ is small (we have already proven that it is bounded). Integrating \eqref{eq:u-ode-model-problem} from $\underline r$ to $r$, we find that 
\begin{equation}\label{eq:u-model-prob-int-first}
\frac{u(r,\theta)}{r} - \frac{u(\underline r,\theta)}{\underline r} = O(\underline r^{-2} - r^{-2}). 
\end{equation}
In particular,
\[
c(\theta) = \frac{u(\underline r,\theta)}{\underline r} + O(\underline r^{-2}). 
\]
We can arrange that the right hand side is less than $\frac {\beta_{0}}{2}$ by choosing $\underline r$ large (to control the second term) and $b$ small (to control the first term). This proves the desired estimate for $c(\theta)$. 

We now turn to the estimate for $f$. The key idea is to interpolate smallness in the $C^{0}$ norm of $u$ (that we have just obtained) with scale invariant boundedness of higher derivatives: this implies that the Laplacian term in $\cL_{\frac 12}$ is controlled with a small constant. Then, integrating the resulting ODE estimate to infinity, we obtain decay (and, more importantly,\footnote{Note that the initial step in the proof can be used to prove decay for $f$, but not smallness.} smallness) estimates for $f$. 

First of all, we note that by \eqref{eq:u-model-prob-int-first}, we have
\[
\left| u(r,\theta) \right| \leq \delta^{2}r,
\]
for $r\geq \underline r$, where we can take $\delta$ small below (at the cost of taking $\underline r$ larger and $b$ smaller). Interpolating this (on balls of unit size) with $|D^{k}u| = O(r^{1-k})$, for $k$ large, we find that
\[
|\Delta u| \leq O(\delta) r^{-1},
\]
for $r \geq \underline r$. Now, returning to $\cL_{\frac 12}u = 0$ we have gained smallness in the constant on the right hand side of \eqref{eq:u-ode-model-problem}, i.e.,
\[
\partial_{r} \left( \frac u r \right) = O(\delta) r^{-3} .
\]
Now, integrating this on $[r,\infty)$, we find
\[
c(\theta) = \frac{u(r,\theta)}{r} + O(\delta r^{-2}). 
\]
Because $u(r,\theta) = c(\theta)r + f(r,\theta)$, this gives
\[
f(r,\theta) = O(\delta r^{-1}).
\]
Choosing $\delta$ sufficiently small (in terms of $\beta_{0})$, we find that $\Vert r f \Vert_{C^{0}(\RR^{2}\setminus B_{\underline r}(0))}\leq \frac{\beta_{0}}{2}$. This completes the proof (since we already control $u$, and thus $f$ inside of $B_{\underline r}(0)$). 
\end{proof}

At this point, we have proven that the conical scale $\br_{\ell}(M_{\tau})$ is sufficiently large, so that when cutting off the \L ojaisewicz--Simon inequality \eqref{eq:intro-Loj-entire} at this scale, the error terms do not affect the right hand side of the equation. At this point, we can use the now-standard uniqueness argument based on the \L ojasiewicz inequality for parabolic equations (cf.\ {\cite{Schulze:compact, Simon:Loj}}). This completes the sketch of the proof of Theorem \ref{thm:unique}. 

\subsection{Organization of the paper} In Section \ref{sec:prelim} we prove several estimates on the geometry of asymptotically conical self-shrinkers. In Section \ref{sec:lin-est-spaces} we establish the relevant linear PDE theory in weighted H\"older and Sobolev spaces. In Section \ref{sec:Loj-entire}, we apply these estimates to establish the \L ojasiewicz--Simon inequality for entire graphs over a conical shrinker. So as to localize this inequality, in Section \ref{sec:scales} we define the various scales used later. This then allows us to localize the inequality in Section \ref{sec:loc-LS}. In Section \ref{sec:improvement-argument} we carry out the central improvement/extension argument (cf.\ the model problem Lemma \ref{lemm:model-problem} above). In Section \ref{sec:final-Loj}, we establish our final \L ojasiewicz--Simon inequality. Putting this all together, we prove the uniqueness of conical tangent flows (Threorem \ref{thm:unique}) in Section \ref{sec:unique}. In Appendix \ref{app:defi}, we recall several standard definitions and conventions, while in Appendix \ref{app:interpolate} we recall some useful interpolation inequalities. Appendix \ref{app:NG} contains an analysis of normal graphs and Appendix \ref{app:EL} recalls the first and second variations of Gaussian area. Appendix \ref{app:area-bds} recalls an entropy-area bound estimate. Finally, we include a list of notation.

\subsection{Acknowledgements}

We are grateful to Jacob Bernstein for several useful discussions.  O.C.~was partially supported by a Sloan Fellowship, a Terman Fellowship, and NSF grants  DMS-1811059 and DMS-2016403. F.S. was supported by a Leverhulme Trust Research Project Grant RPG-2016-174.

\section{Geometric preliminaries} \label{sec:prelim}

Throughout this section we fix $\Sigma^{n}\subset \RR^{n+1}$ a smooth, smoothly asymptotically conical self-shrinker. We denote by
\[
\cC = \lim_{t\nearrow0}\sqrt{-t}\Sigma
\]
the asymptotic cone of $\Sigma$ and assume that $\cC^{n}$ is the cone over $\Gamma^{n-1}\subset \SS^{n}$. Note that the induced metric on $\cC$ satisfies
\[
g_{\cC} = dr\otimes dr + r^{2}g_{\Gamma}
\]
for $r=|x|$ the radial variable. 

The following estimate is a straightforward consequence of the smooth convergence of $\sqrt{-t}\Sigma$ to $\cC$ combined with scaling considerations.
\begin{lemm}
For $R>0$ sufficiently large, the induced metric, $g_{\Sigma}$, on $\Sigma\setminus B_{R}(0)$ satisfies
\[
g_{\Sigma} = g_{\cC} + h 
\]
for $h$ a symmetric $(0,2)$-tensor on $\Sigma\setminus B_{R}(0)$ satisfying $|\nabla^{(j)}h| = o(r^{-j})$ as $r\to\infty$, for all $j\geq 0$. The second fundamental form of $\Sigma$ satisfies 
\[
|\nabla^{(j)}A_{\Sigma}| = O(r^{-j-1})
\]
as $r\to\infty$ for $j\geq 0$. 
\end{lemm}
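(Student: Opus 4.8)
The plan is to reduce everything to the one hypothesis that $\Sigma$ is \emph{smoothly} asymptotically conical, i.e.\ that $\sqrt{-t}\,\Sigma\to\cC$ in $C^{\infty}_{\mathrm{loc}}(\RR^{n+1}\setminus\{0\})$ as $t\nearrow 0$, and then to read off the polynomial decay by rescaling. Put $\lambda=\sqrt{-t}>0$, so $\lambda\Sigma\to\cC$ in $C^{\infty}_{\mathrm{loc}}$ as $\lambda\downarrow 0$, and fix once and for all the annulus $\mathcal A=\{\,\tfrac{1}{2}\le|y|\le 2\,\}$. Unwinding the definition of smooth convergence of submanifolds, for $\lambda$ small $(\lambda\Sigma)\cap\mathcal A$ is the normal graph over $\cC\cap\mathcal A$ of a function $v_{\lambda}$ with $\|v_{\lambda}\|_{C^{k}(\cC\cap\mathcal A)}\to 0$ as $\lambda\downarrow 0$, for every $k$; dually (rescaling), the end $\Sigma\setminus B_{R}$ is, for $R$ large, the normal graph over $\cC\setminus B_{R}$ of a function $v$ with $|\nabla^{(j)}_{g_{\cC}}v|_{g_{\cC}}=o(r^{1-j})$ as $r\to\infty$ for all $j$. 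This is exactly what ``smoothly asymptotically conical'' furnishes, and it also supplies the identification of $\Sigma\setminus B_{R}$ with $\cC\setminus B_{R}$ under which $h=g_{\Sigma}-g_{\cC}$ is defined.

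The first main step is to upgrade $C^{\infty}_{\mathrm{loc}}$ convergence of the \emph{submanifolds} to $C^{\infty}$ convergence, on the fixed annulus, of the associated \emph{tensors}. The induced metric $g_{\lambda\Sigma}$, the second fundamental form $A_{\lambda\Sigma}$, and all of their $g_{\cC}$-covariant derivatives are given by fixed, smooth, \emph{local} expressions in $v_{\lambda}$, finitely many of its $g_{\cC}$-derivatives, and the (smooth, $\lambda$-independent) geometry of $\cC$. Hence, passing to a slightly smaller annulus $\mathcal A'$ if necessary, $g_{\lambda\Sigma}\to g_{\cC}$ and $A_{\lambda\Sigma}\to A_{\cC}$ in $C^{k}(\cC\cap\mathcal A')$ for every $k$; in particular $\|g_{\lambda\Sigma}-g_{\cC}\|_{C^{k}(\cC\cap\mathcal A')}\to 0$ and $\|A_{\lambda\Sigma}\|_{C^{k}(\cC\cap\mathcal A')}$ stays bounded, uniformly for $\lambda$ small. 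I would take all norms and connections here with respect to $g_{\cC}$; since $g_{\lambda\Sigma}\to g_{\cC}$, the same statements hold with $g_{\lambda\Sigma}$ and $\nabla_{g_{\lambda\Sigma}}$ in place of $g_{\cC}$ and $\nabla_{g_{\cC}}$.

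The second main step is the scaling bookkeeping. Fix $x\in\Sigma$ with $r:=|x|$ large and set $\lambda=1/r$, so $\lambda x$ lies on the unit sphere and hence in $\cC\cap\mathcal A'$. The dilation $\Phi_{\lambda}(z)=\lambda z$ restricts to a homothety $\Sigma\to\lambda\Sigma$ of ratio $\lambda$ that preserves $\cC$ setwise, so $\Phi_{\lambda}^{*}g_{\lambda\Sigma}=\lambda^{2}g_{\Sigma}$, $\Phi_{\lambda}^{*}g_{\cC}=\lambda^{2}g_{\cC}$, and $\Phi_{\lambda}^{*}A_{\lambda\Sigma}=\lambda A_{\Sigma}$. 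Since Levi-Civita connections are invariant under constant rescalings of the metric and natural under isometries, $\Phi_{\lambda}^{*}(\nabla^{(j)}_{g_{\lambda\Sigma}}T)=\nabla^{(j)}_{g_{\Sigma}}(\Phi_{\lambda}^{*}T)$ for any tensor $T$, while a $(0,m)$-tensor obeys $|\Phi_{\lambda}^{*}T|_{g_{\Sigma}}(x)=\lambda^{m}\,|T|_{g_{\lambda\Sigma}}(\lambda x)$. Combining these with $h=\lambda^{-2}\Phi_{\lambda}^{*}(g_{\lambda\Sigma}-g_{\cC})$ and with $A_{\Sigma}=\lambda^{-1}\Phi_{\lambda}^{*}A_{\lambda\Sigma}$ gives
\[
|\nabla^{(j)}_{g_{\Sigma}}h|_{g_{\Sigma}}(x)=\lambda^{j}\,|\nabla^{(j)}_{g_{\lambda\Sigma}}(g_{\lambda\Sigma}-g_{\cC})|_{g_{\lambda\Sigma}}(\lambda x)=o(r^{-j}),
\]
uniformly (as $\lambda x$ ranges over the compact set $\cC\cap\mathcal A'$), and
\[
|\nabla^{(j)}_{g_{\Sigma}}A_{\Sigma}|_{g_{\Sigma}}(x)=\lambda^{j+1}\,|\nabla^{(j)}_{g_{\lambda\Sigma}}A_{\lambda\Sigma}|_{g_{\lambda\Sigma}}(\lambda x)=O(r^{-j-1}),
\]
using Step 1 for the qualitative control of the right-hand factors. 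Since on $\Sigma\setminus B_{R}$ the metrics $g_{\Sigma}$ and $g_{\cC}$ (and their connections) are uniformly equivalent once $h$ is small, it is immaterial which of them is used in the statement; this proves the lemma.

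I do not expect a genuine obstacle here, as the authors bill the lemma as straightforward. The two points demanding care are: (i) in Step 1, the passage from convergence of submanifolds to convergence of the geometric tensors on a \emph{fixed} annulus --- this is where one uses the smoothness and locality of the dependence of $g_{\lambda\Sigma},A_{\lambda\Sigma}$ on $v_{\lambda}$, and shrinks $\mathcal A$ slightly to avoid any boundary derivative loss; and (ii) keeping the homogeneity weights $\lambda^{j}$ and $\lambda^{j+1}$ straight through $\Phi_{\lambda}$, since these are precisely what convert unit-scale smallness ($o(1)$), respectively boundedness ($O(1)$), into the asserted decay at scale $r$.
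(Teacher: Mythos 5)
The paper offers no written proof of this lemma, dismissing it with the remark that it is ``a straightforward consequence of the smooth convergence of $\sqrt{-t}\,\Sigma$ to $\cC$ combined with scaling considerations,'' and your argument is precisely that argument filled in. Your scaling bookkeeping is correct ($\Phi_{\lambda}^{*}g_{\lambda\Sigma}=\lambda^{2}g_{\Sigma}$, $\Phi_{\lambda}^{*}A_{\lambda\Sigma}=\lambda A_{\Sigma}$, the $(0,m)$-tensor norm picks up $\lambda^{m}$, and the Levi--Civita connection is unchanged by constant rescaling and naturality), so the proposal matches the paper's intended approach.
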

In the sequel, we will improve these estimates based on the fact that $\Sigma$ is a self-shrinker. Indeed, the shrinker equation \eqref{eq:self-shrinker} and second fundamental form decay in the previous lemma combine to yield decay for $\bangle{x,\nu_{\Sigma}}$ that is faster than scaling:
\begin{coro}\label{coro:improved-x-perp}
For $R>0$ sufficiently large, we have
\[
|\nabla^{(j)}\bangle{x,\nu_{\Sigma}}| = O(r^{-j-1})
\]
as $r\to\infty$ for $j\geq 0$. 
\end{coro}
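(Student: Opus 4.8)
The plan is to use the self-shrinker equation together with the second fundamental form decay from the previous lemma. Recall that the shrinker equation \eqref{eq:self-shrinker} reads $H_\Sigma = \tfrac12\langle x,\nu_\Sigma\rangle$, so it suffices to prove the stated decay for $H_\Sigma$, i.e.\ $|\nabla^{(j)}H_\Sigma| = O(r^{-j-1})$. Since $H_\Sigma = g_\Sigma^{ik} (A_\Sigma)_{ik}$ is a contraction of the second fundamental form with the (inverse) induced metric, the bound $|\nabla^{(j)}A_\Sigma| = O(r^{-j-1})$ would immediately give $|\nabla^{(j)}H_\Sigma| = O(r^{-j-1})$ — except that covariant derivatives of $H_\Sigma$ involve derivatives of the metric contraction, and one must check these do not degrade the rate. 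The key point is that $g_\Sigma = g_{\cC} + h$ with $|\nabla^{(j)}h| = o(r^{-j})$, and the Christoffel symbols of $g_\Sigma$ differ from those of the cone metric $g_{\cC}$ by terms that are $o(r^{-1})$ (one derivative of $h$, with two inverse metric factors that are $O(1)$), so the connection of $\Sigma$ is, to leading order, that of the cone. Thus differentiating $H_\Sigma$ covariantly costs at most one factor of $r^{-1}$ per derivative, consistent with the claimed rate.

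Concretely, I would proceed as follows. First, write $\langle x, \nu_\Sigma\rangle = 2H_\Sigma$ using the shrinker equation, reducing everything to $H_\Sigma$. Second, observe $|H_\Sigma| \le \sqrt{n}\,|A_\Sigma| = O(r^{-1})$ directly from the lemma, giving the $j=0$ case. Third, for the higher derivatives, argue inductively: expand $\nabla^{(j)} H_\Sigma$ via the Leibniz rule applied to $H_\Sigma = \mathrm{tr}_{g_\Sigma} A_\Sigma$, so that $\nabla^{(j)} H_\Sigma$ is a sum of contractions of $\nabla^{(a)} A_\Sigma$ with factors coming from $\nabla^{(b)}$ of the inverse metric (equivalently, iterated Christoffel terms), where $a \le j$. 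Each such term is estimated by combining $|\nabla^{(a)}A_\Sigma| = O(r^{-a-1})$ with the fact that each of the remaining $j - a$ derivatives lands on metric/connection coefficients and contributes a factor $O(r^{-1})$ (since $\nabla^{g_\cC}$ is the ``background'' connection for which the cone is totally geodesic in the relevant sense, and the discrepancy $h$ and its derivatives are lower order). One has to be mildly careful that ``$\nabla$'' in the statement means the intrinsic Levi-Civita connection of $\Sigma$; the cleanest bookkeeping is to compare with the cone connection $\nabla^{g_\cC}$, use scaling on the cone (where radial derivatives see the natural $r^{-1}$ from $g_\cC = dr^2 + r^2 g_\Gamma$), and then absorb the difference using the $o(r^{-j})$ bounds on $h$.

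The main obstacle — really the only subtle point — is verifying that iterated covariant differentiation does not lose powers of $r$: one must confirm that each Christoffel symbol of $g_\Sigma$ (or of $g_\cC$) and each of its derivatives carries the expected negative power of $r$ in the conical coordinates, so that the worst term in the Leibniz expansion of $\nabla^{(j)} H_\Sigma$ is precisely $O(r^{-j-1})$ and not something slower. This is a routine but slightly tedious scaling computation on the cone $\cC = C(\Gamma)$, using that in coordinates $(r,\omega)$ the metric $g_\cC$ is homogeneous of degree $2$; combined with the previous lemma it closes the induction and yields $|\nabla^{(j)}\langle x,\nu_\Sigma\rangle| = O(r^{-j-1})$ for all $j \ge 0$.
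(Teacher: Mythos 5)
Your starting move is exactly right and is all the paper needs: by the shrinker equation, $\langle x,\nu_\Sigma\rangle = 2H_\Sigma$, so it suffices to bound $\nabla^{(j)}H_\Sigma$, and $H_\Sigma$ is the trace of $A_\Sigma$. The paper gives no further argument precisely because the rest is immediate.

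The bulk of your proposal, however, is devoted to a difficulty that does not exist. You worry that ``covariant derivatives of $H_\Sigma$ involve derivatives of the metric contraction,'' and then spend two paragraphs comparing Christoffel symbols of $g_\Sigma$ and $g_\cC$ and invoking scaling on the cone to control them. But here $\nabla = \nabla^\Sigma$ is the Levi-Civita connection of $g_\Sigma$, which is metric compatible: $\nabla g_\Sigma = 0$, hence $\nabla g_\Sigma^{-1} = 0$. Consequently the trace commutes \emph{exactly} with covariant differentiation, $\nabla^{(j)} H_\Sigma = g_\Sigma^{ik}\,\nabla^{(j)}(A_\Sigma)_{ik}$, with no error terms at all, and $|\nabla^{(j)} H_\Sigma| \le n\, |\nabla^{(j)} A_\Sigma| = O(r^{-j-1})$ follows in one line from the previous lemma. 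No induction, no Leibniz expansion involving derivatives of the metric, and no comparison with the cone connection is needed. (That comparison machinery is genuinely used elsewhere in the paper, e.g.\ Lemma \ref{lemm:improved-conical-est-shrinker} and Corollary \ref{coro:metric-conical-shrinker}, where one really does change reference frame; here it is simply not the issue.) The conclusion you reach is correct, but the framing misidentifies where the content lies and makes a trivial observation look delicate.
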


\subsection{Improved conical estimates for shrinkers}\label{subsec:improved-conical-est}

\begin{lemm}\label{lemm:improved-conical-est-shrinker}
For $R>0$ sufficiently large, there is $w \in C^{\infty}(\cC\setminus B_{R}(0))$ so that 
\[
\Graph w : = \{ p + w(p)\nu_{\cC}(p) : p \in \cC \setminus B_{R}(0)\} \subset \Sigma
\]
parametrizes $\Sigma$ outside of a compact set. The function $w$ satisfies 
\[
w = O(r^{-1})
\]
and 
\[
\nabla^{(j)} w = O(r^{-1-j+\eta})
\]
as $r\to\infty$ for any $\eta>0$ and $j\geq 1$. Moreover, the radial derivatives satisfy the sharper relation $\partial^{(j)}_{r} w = O(r^{-1-j})$. 
\end{lemm}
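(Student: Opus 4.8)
The plan is to realize $\Sigma$ outside a compact set as a normal graph over $\cC$, to extract from the self-shrinker equation an \emph{exact} first-order ODE in the radial variable for the graph function, and then to feed into this ODE the faster-than-scaling decay $\langle x,\nu_\Sigma\rangle=O(r^{-1})$ supplied by Corollary~\ref{coro:improved-x-perp}. First, by Definition~\ref{defi:as-con} (equivalently, from the smooth convergence $\sqrt{-t}\,\Sigma\to\cC$ together with the previous lemma), for $R$ large $\Sigma\setminus B_R(0)$ is a normal graph $\{p+w(p)\nu_\cC(p):p\in\cC\setminus B_R(0)\}$ with $w=o(r)$ and $|\nabla^\cC w|=o(1)$; moreover, writing the second fundamental form of the graph in terms of $w$ and inverting the leading (second-order) part of that expression, the decay $|\nabla^{(j)}A_\Sigma|=O(r^{-j-1})$ upgrades to the scale-invariant a priori bounds $|\nabla^{(j)}w|=O(r^{1-j})$ for all $j$. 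Apart from Corollary~\ref{coro:improved-x-perp}, these are the only inputs.

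\textbf{The exact ODE.} The heart of the matter is a clean identity for $w$. Writing $\nu_\Sigma=a\,\nu_\cC+V$ with $V$ tangent to $\cC$, orthogonality of $\nu_\Sigma$ to the graph forces $V=-a\,(I-w\,S_\cC)^{-1}\nabla^\cC w$ and $a=(1+|(I-w\,S_\cC)^{-1}\nabla^\cC w|^2)^{-1/2}$, where $S_\cC$ is the shape operator of $\cC$. Now the position vector at $r\omega\in\cC$ is the unit radial tangent vector scaled by $r$, and this radial direction lies in the kernel of $S_\cC$ (the unit normal of a cone is constant along rays), so $S_\cC^{\,k}(r\partial_r)=0$ for $k\ge 1$; unwinding $(I-w\,S_\cC)^{-1}=\sum_{k}w^kS_\cC^k$ one obtains
\[
w-r\,\partial_r w \;=\; a^{-1}\,\langle x,\nu_\Sigma\rangle \qquad\text{on }\cC\setminus B_R(0),
\]
which for $\cC=\RR^n$ is the familiar $w-r\partial_r w=\sqrt{1+|\nabla w|^2}\,\langle x,\nu_\Sigma\rangle$ (cf.\ Lemma~\ref{lemm:model-problem}). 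The point of isolating this ODE is that the graphical self-shrinker equation is \emph{not} scale invariant (because of the drift term $r\partial_r$), so a naive rescaling/elliptic-regularity argument cannot improve the decay of $w$: it gives only $\Delta_\cC w=O(r^{-1})$, far from what decay of $\nabla^2 w$ would require. The identity above, being exact, transmits the fast decay of $\langle x,\nu_\Sigma\rangle$ directly to $w$ with no polynomial rate to bootstrap first.

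\textbf{Decay of $w$ and of its radial derivatives.} Since $|w|\,|S_\cC|=o(1)$ and $|\nabla^\cC w|=o(1)$ we have $a^{-1}=1+o(1)$, so the identity and Corollary~\ref{coro:improved-x-perp} give $|w-r\partial_r w|=O(r^{-1})$; as $r^2\partial_r(w/r)=r\partial_r w-w$, this reads $|\partial_r(w/r)|=O(r^{-3})$, and integrating from $r$ to $\infty$ (the boundary term at infinity vanishing because $w=o(r)$) yields $w=O(r^{-1})$. For the sharp radial bounds, set $g:=a^{-1}\langle x,\nu_\Sigma\rangle$ and differentiate $w-r\partial_r w=g$ repeatedly in $r$, which gives $r\,\partial_r^{\,j+1}w=(1-j)\,\partial_r^{\,j}w-\partial_r^{\,j}g$; since Corollary~\ref{coro:improved-x-perp} together with the a priori bounds yields $|\partial_r^{\,j}g|=O(r^{-1-j})$, an induction on $j$ starting from $|w|=O(r^{-1})$ produces $|\partial_r^{\,(j)}w|=O(r^{-1-j})$.

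\textbf{General derivatives.} Finally, interpolate the $C^0$-decay just obtained against the scale-invariant a priori bounds: on a ball $B_{r/2}(p)\cap\cC$ with $|p|=r$, for any $k$,
\[
\|\nabla^{(j)}w\|_{C^0(B_{r/4}(p))}\;\lesssim_{j,k}\;\|w\|_{C^0(B_{r/2}(p))}^{1-j/k}\,\|\nabla^{(k)}w\|_{C^0(B_{r/2}(p))}^{j/k}+r^{-j}\,\|w\|_{C^0(B_{r/2}(p))}
\]
(cf.\ Appendix~\ref{app:interpolate}), and plugging in $\|w\|_{C^0}=O(r^{-1})$, $\|\nabla^{(k)}w\|_{C^0}=O(r^{1-k})$ gives $\|\nabla^{(j)}w\|_{C^0}=O(r^{-1-j+2j/k})$; choosing $k>2j/\eta$ yields $|\nabla^{(j)}w|=O(r^{-1-j+\eta})$. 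The $\eta$-loss is intrinsic to this scheme (one cannot take $k=\infty$), and the radial derivatives escape it precisely because they are governed directly by the ODE rather than by interpolation. I expect the only genuinely non-mechanical point to be the one in the second paragraph: recognizing that one should work with the exact ODE at all, and verifying the cancellation $S_\cC(r\partial_r)=0$ that makes the identity exact; once that is in hand, everything downstream is routine bootstrapping.
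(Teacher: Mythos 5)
Your proof is correct and follows the same basic strategy as the paper's: derive a first-order radial ODE for $w$ from the decay $\langle x,\nu_\Sigma\rangle=O(r^{-1})$ of Corollary~\ref{coro:improved-x-perp}, integrate from $r$ to $\infty$ to obtain $w=O(r^{-1})$, interpolate against the scale-invariant a priori bounds for the general derivative estimate, and differentiate the ODE repeatedly for the sharper radial estimates. The paper's derivation of that ODE is slightly more roundabout: it parametrizes $\Sigma$ and $\cC$ locally as graphs over the tangent plane $T_p\cC$ at a fixed $p\in\Gamma$, reads off $\langle y,\nabla u(y)\rangle-u(y)=O(|y|^{-1})$ from the unit-normal formula for a Euclidean graph, integrates along rays through $p$, and only afterwards translates to the normal graph $w$ over the cone. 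You work intrinsically with $w$ from the start and obtain the clean exact identity $w-r\partial_r w=a^{-1}\langle x,\nu_\Sigma\rangle$, whose key ingredient is that the shape operator of a cone annihilates the radial direction, so $(I-wS_\cC)^{-1}$ fixes $\partial_r$ and the radial component of the normal-graph formula collapses. This is a genuine, if modest, streamlining: it makes the ODE exact (no $O$-error to track), avoids the choice of a reference tangent plane and the ray-by-ray integration, and makes the radial-derivative bootstrap $r\partial_r^{j+1}w=(1-j)\partial_r^{j}w-\partial_r^{j}g$ completely transparent, provided you keep track (as you indicate) that the derivatives of $a^{-1}$ and of $\langle x,\nu_\Sigma\rangle\circ F$ stay under control using the a priori scale-invariant bounds on $\nabla^{(k)}w$. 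The interpolation step matches the paper's (the extra additive term in your displayed inequality is harmless --- it is dominated, and in fact disappears after rescaling). Both proofs reach the same conclusion; yours is arguably the cleaner exposition of the same idea.
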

\begin{proof}
For $p \in \Gamma$, consider the plane $T_{p}\cC$ with normal vector $\nu_{\cC}(p)$. After a rotation, we can assume that $T_{p}\cC = \{x^{n+1}=0\}$ and $\nu_{\cC}(p) = \pm e^{n+1}$. Define
\[
\Gamma_{\eps,R} : = \{ x \in T_{p}\cC : |\langle x,p\rangle|  >(1-\eps)|x|, |x|> R\}.
\]
For $\eps>0$ sufficiently small and $R$ sufficiently large, there is $u,u_{\infty} : \Gamma_{\eps,R}\to\RR$ so that
\begin{align*}
\Graph u & = \{(y,u(y)) : y \in \Gamma_{\eps,R}\} \subset \Sigma,\\
\Graph u_{\infty} & = \{(y,u_{\infty}(y)) : y \in \Gamma_{\eps,R}\} \subset \cC.
\end{align*}
We have that
\[
\nabla^{(j)} u(y) = \nabla^{(j)}u_{\infty}(y) + o(|y|^{1-j})
\]
as $y\to\infty$. 

We recall that
\[
\nu_{\Sigma} = \pm \frac{(-\nabla u,1)}{\sqrt{1+|\nabla u|^{2}}} 
\]
Thus, by Corollary \ref{coro:improved-x-perp}, we find that 
\begin{equation}\label{eq:equation-u-shrinker-radial-derivative}
\bangle{y,\nabla u(y)} - u(y) = O(|y|^{-1}). 
\end{equation}
Thus, the function $v(s) = \frac{u(sp)}{s}$ satisfies $\lim_{s\to\infty} v(s) = 0$ (because $u_{\infty}(sp) = 0$) and $v'(s) = O(s^{-3})$ by \eqref{eq:equation-u-shrinker-radial-derivative}. Integrating this, we find that
\begin{equation}\label{eq:improved-0th-order-decay-conical-shrinker}
u(sp) = O(s^{-1}). 
\end{equation}
Thus (taking $R$ larger if necessary), we may find $w \in C^{\infty}(\Sigma\setminus B_{R}(0))$ so that
\[
\Graph w : = \{q + w(q) \nu_{\cC}(q) : q \in \cC\setminus B_{R}(0)\}\subset \Sigma
\]
parametrizes $\Sigma$ outside of a compact set. From \eqref{eq:improved-0th-order-decay-conical-shrinker} we find that
\[
|w| = O(r^{-1}).
\]
This yields the first asserted decay estimate. Furthermore, scaling considerations yield
\[
|\nabla^{(j)} w| = o(r^{1-j}),
\]
as $r\to\infty$ for $j\geq 1$. Hence, the second assertion follows by interpolating these two estimates (cf.\ Lemma \ref{lemm:interpolation}). Finally, by differentiating \eqref{eq:equation-u-shrinker-radial-derivative} in the radial direction, the improved radial derivative estimate follows. 
\end{proof}

\begin{coro}\label{coro:metric-conical-shrinker}
For $R>0$ sufficiently large, we have the following improved estimates on the induced metric: 
\[
g_{\Sigma} = dr\otimes dr + r^{2}g_{\Gamma} + h 
\]
for $h$ a symmetric $(0,2)$-tensor on $\Sigma\setminus B_{R}(0)$ satisfying $|h| = O(r^{-2})$ and $|\nabla^{(j)}h| = O(r^{-2-j+\eta})$ as $r\to\infty$, for all $j\geq 1$ and $\eta>0$. 
\end{coro}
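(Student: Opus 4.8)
The plan is to pull back the Euclidean metric under the normal graph parametrization of $\Sigma$ provided by Lemma \ref{lemm:improved-conical-est-shrinker} and to expand the resulting first fundamental form in powers of $w$, $\nabla w$, and the second fundamental form $A_\cC$ of the cone.

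First, write $F\colon \cC\setminus B_R(0)\to\Sigma$, $F(q) = q + w(q)\nu_\cC(q)$; by Lemma \ref{lemm:improved-conical-est-shrinker} this is a diffeomorphism onto $\Sigma$ outside a compact set. For $X,Y$ tangent to $\cC$ one has $dF(X) = \big(X + w\,\nabla_X\nu_\cC\big) + (dw(X))\,\nu_\cC$, where the first summand is tangent to $\cC$ and the second is normal. Using $\langle\nu_\cC,X\rangle = 0$, $\langle\nu_\cC,\nabla_X\nu_\cC\rangle = 0$, and $A_\cC(X,Y) = -\langle\nabla_X\nu_\cC,Y\rangle$, the standard normal–graph computation (cf.\ Appendix \ref{app:NG}) yields
\begin{equation}
g_\Sigma = g_\cC - 2w\,A_\cC + w^2\,Q + dw\otimes dw,
\end{equation}
where $Q(X,Y) := \langle\nabla_X\nu_\cC,\nabla_Y\nu_\cC\rangle = \sum_k A_\cC(X,e_k)A_\cC(Y,e_k)$ for a $g_\cC$-orthonormal frame $\{e_k\}$, so that $|Q|_{g_\cC} = O(|A_\cC|_{g_\cC}^2)$. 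Hence $h = g_\Sigma - g_\cC = -2w\,A_\cC + w^2 Q + dw\otimes dw$, and it remains to estimate each term.

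Next I would record the cone curvature estimates: since $\lambda\cdot\cC = \cC$ for every $\lambda>0$, the second fundamental form of $\cC$ is scale-covariant, so $|\nabla^{(j)}A_\cC|_{g_\cC} = O(r^{-1-j})$ for all $j\geq 0$ (where $\nabla = \nabla_{g_\cC}$), and therefore $|\nabla^{(j)}Q|_{g_\cC} = O(r^{-2-j})$. Combining these with $w = O(r^{-1})$ and $\nabla^{(j)}w = O(r^{-1-j+\eta})$ ($j\geq 1$) from Lemma \ref{lemm:improved-conical-est-shrinker} and applying the Leibniz rule term by term: the leading term $-2wA_\cC$ satisfies $|wA_\cC|_{g_\cC} = O(r^{-1})\cdot O(r^{-1}) = O(r^{-2})$ with no $\eta$-loss, while for $j\geq 1$ the summand of $\nabla^{(j)}(wA_\cC)$ in which no derivative hits $w$ is $O(r^{-2-j})$ and every other summand is $O(r^{-1-k+\eta})\cdot O(r^{-1-(j-k)}) = O(r^{-2-j+\eta})$ with $k\geq 1$; thus $|\nabla^{(j)}(wA_\cC)|_{g_\cC} = O(r^{-2-j+\eta})$. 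The remaining terms $w^2 Q$ and $dw\otimes dw$ have size $O(r^{-4})$ and $O(r^{-4+2\eta})$, with $j$-th derivatives $O(r^{-4-j+\eta})$ and $O(r^{-4-j+2\eta})$ respectively — strictly better than needed. Summing gives $|h|_{g_\cC} = O(r^{-2})$ and $|\nabla^{(j)}h|_{g_\cC} = O(r^{-2-j+\eta})$ for $j\geq 1$, which is the claim.

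The only point requiring a little care — and the mildest of obstacles — is purely bookkeeping of the $r$-weights: because $g_\cC$ carries a factor $r^2$ on the angular block, raising indices and measuring tensor norms costs powers of $r$, so one must consistently use the $g_\cC$-norm throughout (as above). If one prefers the covariant derivatives in the statement to be taken with respect to $\nabla_{g_\Sigma}$, note that the Christoffel difference tensor is $\Gamma_{g_\Sigma} - \Gamma_{g_\cC} = O\big(g_\cC^{-1}\,\nabla_{g_\cC}h\big)$, which by the estimates just obtained is of strictly lower order; a one-step bootstrap then shows the two connections produce the same bounds on $h$, so the distinction is immaterial. Once the norm convention is fixed, the estimates follow mechanically from Lemma \ref{lemm:improved-conical-est-shrinker} and the scale-invariance of $\cC$.
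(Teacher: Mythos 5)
Your argument is correct and mirrors the paper's: both pull back the Euclidean metric under the normal-graph parametrization $F(p)=p+w(p)\nu_\cC(p)$ from Lemma \ref{lemm:improved-conical-est-shrinker} and read off $|h|=O(r^{-2})$ from the resulting expansion of $dF$. The only difference is in the higher-derivative bounds, where the paper interpolates $|h|=O(r^{-2})$ against the rough scaling bound $|\nabla^{(j)}h|=o(r^{-j})$ (as in the proof of Lemma \ref{lemm:improved-conical-est-shrinker}), whereas you obtain them directly from the Leibniz rule applied to the explicit decomposition $h=-2w\,A_\cC+w^2Q+dw\otimes dw$ together with the scale-invariant decay $|\nabla^{(j)}A_\cC|_{g_\cC}=O(r^{-1-j})$; both are valid and yield the same $O(r^{-2-j+\eta})$ estimates.
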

\begin{proof}
Write $F: \cC\setminus B_{R}(0)\to\Sigma$, $F(p) = p+w(p) \nu_{\cC}(p)$. We compute (using the fact that $A_{\cC}(\partial_{r},\cdot) = 0$)
\begin{align*}
\partial_{r}F & = \partial_{r} + (\partial_{r}w(p)) \nu_{\cC}(p),\\ 
r^{-1}\partial_{\omega_{i}} F & = r^{-1} \partial_{\omega_{i}} + r^{-1}(\partial_{\omega_{i}} w(p)) \nu_{\cC}(p) - w(p) A_{\cC}|_{p}(r^{-1}\partial_{\omega_{i}},\cdot) .
\end{align*}
That $|h| = O(r^{-2})$ follows from these expressions and Lemma \ref{lemm:improved-conical-est-shrinker}. The higher derivative estimates follow from interpolation, as in Lemma \ref{lemm:improved-conical-est-shrinker}. 
\end{proof}

\begin{lemm}\label{lemm:normal-Sigma-estimates}
The unit normal to $\Sigma$ satisfies
\begin{align*}
\nu_{\Sigma}(F(p))
& =O(r^{-2})\partial_{r} + \sum_{j=1}^{n-1} O(r^{-2+\eta}) r^{-1}\partial_{\omega_{j}} + (1-O(r^{-4+\eta})) \nu_{\cC}(p)
\end{align*}
for $\eta>0$ as $r\to\infty$.  
\end{lemm}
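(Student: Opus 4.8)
The plan is to read off $\nu_{\Sigma}(F(p))$ directly from the pushed-forward coordinate frame of $F$ that was already computed in the proof of Corollary \ref{coro:metric-conical-shrinker}. Fix $p\in\Gamma$ and choose a local $g_{\Gamma}$-orthonormal coframe $\{\omega_1,\dots,\omega_{n-1}\}$ near $p$, so that $\{\partial_r,\,r^{-1}\partial_{\omega_1},\dots,r^{-1}\partial_{\omega_{n-1}},\,\nu_{\cC}\}$ is a Euclidean-orthonormal basis of $\RR^{n+1}$ along the ray through $p$. I would then substitute the decay rates from Lemma \ref{lemm:improved-conical-est-shrinker}---namely $\partial_r w = O(r^{-2})$, $|\nabla w| = O(r^{-2+\eta})$ (which gives $r^{-1}\partial_{\omega_i}w = O(r^{-2+\eta})$ after accounting for the metric factor $r^2 g_{\Gamma}$), and $w = O(r^{-1})$---together with $|A_{\cC}| = O(r^{-1})$, into the formulas of Corollary \ref{coro:metric-conical-shrinker} to obtain
\begin{align*}
dF(\partial_r) &= \partial_r + O(r^{-2})\,\nu_{\cC},\\
dF(r^{-1}\partial_{\omega_i}) &= r^{-1}\partial_{\omega_i} + \sum_{j=1}^{n-1} O(r^{-2})\,r^{-1}\partial_{\omega_j} + O(r^{-2+\eta})\,\nu_{\cC},
\end{align*}
where the angular off-diagonal terms come from the $w\,A_{\cC}(r^{-1}\partial_{\omega_i},\cdot)$ contribution and, crucially, there is no $\partial_r$-component in the second line since $A_{\cC}(\partial_r,\cdot)=0$.

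Next I would write $\nu_{\Sigma}(F(p)) = a\,\partial_r + \sum_j b_j\,r^{-1}\partial_{\omega_j} + c\,\nu_{\cC}$ in this basis and impose the three defining properties. Orthogonality to $dF(\partial_r)$ reads $a = -c\,\partial_r w = O(r^{-2})\,c$. Orthogonality to the $dF(r^{-1}\partial_{\omega_i})$ gives the linear system $b_i + \sum_j O(r^{-2})\,b_j = O(r^{-2+\eta})\,c$; its coefficient matrix is $I + O(r^{-2})$, hence invertible for $R$ large, so $b_i = O(r^{-2+\eta})\,c$ for each $i$. Finally $|\nu_{\Sigma}| = 1$ gives $c^2 = 1 - a^2 - \sum_j b_j^2 = 1 - O(r^{-4+2\eta})$, and fixing the orientation so that $c>0$ yields $c = 1 - O(r^{-4+2\eta})$, whence $|c|\le 1$, $a = O(r^{-2})$, and $b_i = O(r^{-2+\eta})$. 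Since $\eta>0$ is arbitrary, relabeling $2\eta$ as $\eta$ produces exactly the stated expansion.

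The computation is essentially routine; the points that require care are the conversions between coordinate angular derivatives and the orthonormal angular frame (the stray powers of $r$ hidden in $g_{\cC}=dr\otimes dr + r^2 g_{\Gamma}$), keeping track of which error terms sit in the $\nu_{\cC}$-direction versus the tangential directions, and checking that the angular block of the frame is an $O(r^{-2})$-perturbation of the identity so that the system for the $b_i$ inverts. I expect this bookkeeping to be the only real obstacle---there is no analytic difficulty. It is worth noting that the sharper radial decay $\partial_r w = O(r^{-2})$ from Lemma \ref{lemm:improved-conical-est-shrinker} is precisely what produces the $O(r^{-2})$ (rather than merely $O(r^{-2+\eta})$) rate in the $\partial_r$-component of $\nu_{\Sigma}$.
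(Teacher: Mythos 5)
Your proof is correct and is essentially the same as the paper's: you write $\nu_\Sigma$ in the Euclidean-orthonormal frame $\{\partial_r,\,r^{-1}\partial_{\omega_j},\,\nu_\cC\}$, impose orthogonality to $\partial_r F$ and $r^{-1}\partial_{\omega_i}F$ from Corollary \ref{coro:metric-conical-shrinker} together with the normalization $|\nu_\Sigma|=1$, and solve the resulting linear system. The paper carries out exactly this computation (with $A, B_j, C$ in place of your $a, b_j, c$), leaving the inversion of the $(\delta_{ij}+O(r^{-2}))$-block and the relabeling of $2\eta$ as $\eta$ implicit, which you simply spell out.
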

\begin{proof}
Write 
\begin{equation}\label{eq:unit-normal-sigma-components-ABC}
\nu_{\Sigma}(F(p)) = A \partial_{r} + \sum_{j=1}^{n-1}B_{j} r^{-1}\partial_{\omega_{j}} + C \nu_{\cC}(p),
\end{equation}
where
\[
A^{2} + \sum_{j=1}^{n-1} B_{j}^{2} + C^{2} = 1.
\]
Because $\bangle{\nu_{\Sigma},\partial_{r}F} = \bangle{\nu_{\Sigma},r^{-1}\partial_{\omega_{i}}F} = 0$, we find that
\begin{align*}
0 & = A + C (\partial_{r}w(p)) \\
0 & =  \sum_{j=1}^{n-1} B_{j} (\delta_{ij}+O(r^{-2})) + C(r^{-1}\partial_{\omega_{i}}w(p))
\end{align*}
This implies the claim. 
\end{proof}

\begin{lemm}\label{lemm:cancellation-w-derivatives}
We have $|\nabla^{(j)}(r\partial_{r}w(p) - w(p))| = O(r^{-1-j})$ for any $j\geq 0$. 
\end{lemm}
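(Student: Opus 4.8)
The plan is to reduce the estimate to the ($\eta$-loss-free) decay of $\bangle{x,\nu_{\Sigma}}$ from Corollary \ref{coro:improved-x-perp}, by first establishing an \emph{exact} identity relating $r\partial_{r}w - w$ to $\bangle{x,\nu_{\Sigma}}$. With $F(p) = p + w(p)\nu_{\cC}(p)$ as in Lemma \ref{lemm:improved-conical-est-shrinker}, I would use that along the cone the position vector satisfies $p = r\partial_{r}$, that $\bangle{p,\nu_{\cC}(p)} = 0$, and that $A_{\cC}(\partial_{r},\cdot) = 0$ forces $\partial_{r}\nu_{\cC} = 0$ and hence $\partial_{r}F = \partial_{r} + (\partial_{r}w)\,\nu_{\cC}$. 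Expressing $\nu_{\Sigma}(F(p))$ in the cone-adapted orthonormal frame of \eqref{eq:unit-normal-sigma-components-ABC}, the relation $\bangle{\nu_{\Sigma},\partial_{r}F} = 0$ gives $A = -C\,\partial_{r}w(p)$ (exactly as in the proof of Lemma \ref{lemm:normal-Sigma-estimates}), with $C = \bangle{\nu_{\Sigma}(F(p)),\nu_{\cC}(p)}$. Since in that frame $x = F(p) = r\partial_{r} + w(p)\nu_{\cC}(p)$, pairing with $\nu_{\Sigma}$ yields
\[
\bangle{x,\nu_{\Sigma}}\circ F \;=\; rA + wC \;=\; -C\big(r\partial_{r}w(p) - w(p)\big),
\]
i.e.\ $r\partial_{r}w(p) - w(p) = -C^{-1}\,\big(\bangle{x,\nu_{\Sigma}}\circ F\big)$.

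Given this identity the lemma is immediate upon differentiating. By Corollary \ref{coro:improved-x-perp} we have $|\nabla^{(k)}(\bangle{x,\nu_{\Sigma}}\circ F)| = O(r^{-1-k})$ with no $\eta$-loss (and converting $\nabla^{g_{\Sigma}}$ to $\nabla^{g_{\cC}}$ costs nothing by Corollary \ref{coro:metric-conical-shrinker}, so it is irrelevant which covariant derivative one uses). For the factor $C^{-1}$: Lemma \ref{lemm:normal-Sigma-estimates} gives $|C - 1| = O(r^{-4+\eta})$, hence $|C^{-1}| = O(1)$ and $|C^{-1}-1| = O(r^{-4+\eta})$; the formulas for $A, B_{j}, C$ in terms of $w$, together with the scale-invariant bounds $\nabla^{(k)}w = O(r^{-1-k+\eta})$ and the $A_{\Sigma}, A_{\cC}$ decay, give scale-invariant control of the derivatives of $C$, so interpolating (Lemma \ref{lemm:interpolation}) the fast zeroth-order decay against these yields $|\nabla^{(a)}(C^{-1})| = O(r^{-1-a})$ for every $a\geq 1$ (much more decay is in fact available, but this suffices). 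Applying the Leibniz rule to $r\partial_{r}w - w = -C^{-1}(\bangle{x,\nu_{\Sigma}}\circ F)$: the term with all derivatives on $\bangle{x,\nu_{\Sigma}}\circ F$ is $|C^{-1}|\,O(r^{-1-j}) = O(r^{-1-j})$, while each term with $a\geq 1$ derivatives on $C^{-1}$ and $b = j-a$ on $\bangle{x,\nu_{\Sigma}}\circ F$ is $O(r^{-1-a})\,O(r^{-1-b}) = O(r^{-2-j})$; summing gives $|\nabla^{(j)}(r\partial_{r}w - w)| = O(r^{-1-j})$, as claimed.

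I expect the only genuinely delicate point — and the reason the lemma is not vacuous — to be the following: $\nabla^{(j)}w$ and $\nabla^{(j)}(r\partial_{r}w)$ individually only decay like $O(r^{-1-j+\eta})$, so one \emph{must} exhibit a cancellation to kill the $\eta$. The mechanism above does exactly that, by recognizing $r\partial_{r}w - w$ (up to the factor $C^{-1}$, whose deviation from $1$ decays fast enough to absorb any loss) as $-\bangle{x,\nu_{\Sigma}}\circ F$, which by the shrinker equation \eqref{eq:self-shrinker} equals $-2H_{\Sigma}\circ F = -2(\tr A_{\Sigma})\circ F$ and is therefore controlled directly, and losslessly, by the second fundamental form decay. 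Verifying that the derivatives of $C^{-1}$ really do not reintroduce the loss — which is where Lemma \ref{lemm:interpolation} and the strong decay $|C-1| = O(r^{-4+\eta})$ enter — is the one computation I would carry out carefully.
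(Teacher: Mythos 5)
Your proposal is correct and follows essentially the same route as the paper: both arguments hinge on the exact relation $A = -C\,\partial_r w$ from $\bangle{\nu_\Sigma,\partial_r F}=0$, the identity $\bangle{F(p),\nu_\Sigma(F(p))} = rA + wC$, and then the lossless decay of Corollary \ref{coro:improved-x-perp} to conclude. The only difference is cosmetic: you organize the error as a multiplicative factor $C^{-1}$ (writing $r\partial_r w - w = -C^{-1}\bangle{x,\nu_\Sigma}\circ F$ and controlling derivatives of $C^{-1}$ by interpolation), whereas the paper writes $A = -\partial_r w + a$, $C = 1 + c$ with additive errors satisfying $|\nabla^{(j)}a| = O(r^{-5-j+\eta})$, $|\nabla^{(j)}c| = O(r^{-4-j+\eta})$, so that $rA + wC = w - r\partial_r w + (ar - cw)$ with the last term negligible. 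Both work, and your identification of $\bangle{x,\nu_\Sigma} = 2H_\Sigma$ as the conceptual source of the $\eta$-free decay is a nice framing, though the paper invokes Corollary \ref{coro:improved-x-perp} directly without remarking on it.
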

\begin{proof}
Revisiting the proof of Lemma \ref{lemm:normal-Sigma-estimates}, we find that the components of $\nu_{\Sigma}$ in \eqref{eq:unit-normal-sigma-components-ABC} satisfy 
\begin{align*}
A & = -C (\partial_{r}w(p)) \\
0 & =  \sum_{j=1}^{n-1} B_{j} (\delta_{ij}+\tilde b_{j}) + C(r^{-1}\partial_{\omega_{i}}w(p))
\end{align*}
where $|\tilde b_{j}| = O(r^{-2})$ and $|\nabla^{(j)}\tilde b_{j}| = O(r^{-2-j+\eta})$. Thus, we find that the expressions from the proof of Lemma \ref{lemm:normal-Sigma-estimates} can be differentiated in the sense that
\begin{align*}
A & = - \partial_{r}w(p) + a\\
B_{i} & = - r^{-1}\partial_{\omega_{i}}w(p) + b_{i}\\
C & = 1 + c,
\end{align*}
where $|\nabla^{(j)}a|=O(r^{-5 - j+\eta})$, $|\nabla^{(j)}b_{i}| = |\nabla^{(j)}c| = O(r^{-4-j+\eta}))$. This implies that 
\begin{align*}
\bangle{F(p),\nu_{\Sigma}(F(p))} & =  r A + w(p) C\\
& = w(p) - r\partial_{r}w(p) + (ar - cw).
\end{align*}
Using Corollary \ref{coro:improved-x-perp} and the above estimates for $a,c$, we conclude the proof.
\end{proof}

\begin{lemm}\label{lemm:second-fund-form}
The second fundamental form of $\Sigma$ satisfies
\begin{align*}
A_{\Sigma}(\partial_{r}F,\partial_{r}F) & = O(r^{-3})\\
A_{\Sigma}(\partial_{r}F,r^{-1}\partial_{\omega_{i}}F) & = O(r^{-3})\\
A_{\Sigma}(r^{-1}\partial_{\omega_{i}}F,r^{-1}\partial_{\omega_{j}}F) & = A_{\cC}(r^{-1}\partial_{\omega_{i}},r^{-1}\partial_{\omega_{j}}) +O(r^{-3+\eta})
\end{align*}
as $r\to\infty$. Moreover, $|\nabla^{(k)}_{\cC} (A_{\Sigma}\circ F - A_{\cC})| = O(r^{-3-k+\eta})$ for any $\eta>0$ and $k\geq 1$. 
\end{lemm}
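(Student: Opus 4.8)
The plan is to compute $A_\Sigma$ directly from the graphical parametrization $F(p) = p + w(p)\,\nu_\cC(p)$ of Lemma \ref{lemm:improved-conical-est-shrinker}, using $A_\Sigma(\partial_a F,\partial_b F) = \langle \partial_a\partial_b F,\nu_\Sigma\rangle$ (with $\partial_a$ ranging over $\partial_r,\partial_{\omega_i}$), the Gauss--Weingarten relations for $\cC$ --- in particular $A_\cC(\partial_r,\cdot) = 0$, $\nabla^\cC_{\partial_r}\partial_{\omega_i} = \tfrac1r\partial_{\omega_i}$, $\partial_r(S_\cC(\partial_{\omega_i})) = 0$ for the Weingarten map $S_\cC$ of $\cC$, and $A_\cC(\partial_{\omega_i},\partial_{\omega_j}) = r\,\tilde A_{ij}(\omega)$ with $\tilde A$ depending only on $\omega$ --- and the expansion $\nu_\Sigma(F(p)) = A\,\partial_r + \sum_j B_j\,r^{-1}\partial_{\omega_j} + C\,\nu_\cC(p)$ with $A = -\partial_r w + a$, $B_i = -r^{-1}\partial_{\omega_i}w + b_i$, $C = 1 + c$ and the decay of $a,b_i,c$ recorded in the proofs of Lemmas \ref{lemm:normal-Sigma-estimates} and \ref{lemm:cancellation-w-derivatives}.

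The radial--radial term is immediate: $A_\cC(\partial_r,\cdot) = 0$ forces $\partial_r^2 F = (\partial_r^2 w)\,\nu_\cC$, so $A_\Sigma(\partial_r F,\partial_r F) = (\partial_r^2 w)\,C = O(r^{-3})$ by the sharp radial bound $\partial_r^2 w = O(r^{-3})$ of Lemma \ref{lemm:improved-conical-est-shrinker}. For the radial--angular term I would expand $\partial_r\partial_{\omega_i}F$, use $\langle\partial_{\omega_i}F,\nu_\Sigma\rangle = 0$ to eliminate the $\langle\partial_{\omega_i},\nu_\Sigma\rangle$ that appears, and arrive at
\[
\langle\partial_r\partial_{\omega_i}F,\nu_\Sigma\rangle = \tfrac1r\Big(C\,\partial_{\omega_i}(r\partial_r w - w) - \langle S_\cC(\partial_{\omega_i}),\nu_\Sigma\rangle\,(r\partial_r w - w)\Big).
\]
Since both surviving terms are built from $r\partial_r w - w$, Lemma \ref{lemm:cancellation-w-derivatives} gives $\langle\partial_r\partial_{\omega_i}F,\nu_\Sigma\rangle = O(r^{-2})$, hence $A_\Sigma(\partial_r F, r^{-1}\partial_{\omega_i}F) = O(r^{-3})$. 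This is exactly the cancellation that Lemma \ref{lemm:cancellation-w-derivatives} was proved to provide.

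For the angular--angular term the Gauss--Weingarten expansion of $\partial_{\omega_i}\partial_{\omega_j}F$ isolates the main term $A_\cC(\partial_{\omega_i},\partial_{\omega_j})\,C = r\,\tilde A_{ij}\,C$, which after the $r^{-2}$ normalization equals $A_\cC(r^{-1}\partial_{\omega_i},r^{-1}\partial_{\omega_j}) + O(r^{-5+\eta})$ because $C = 1 + O(r^{-4+\eta})$. I would then rewrite $\partial_{\omega_i}\partial_{\omega_j}w$ via $\Hess_\cC w$ and the Christoffel symbols of $\cC$, and decompose each tangential contribution against the expansion of $\nu_\Sigma$. The only contributions not manifestly $o(r^{-3})$ after normalization are: two copies of $r\,g_{\Gamma,ij}\,\partial_r w$, one coming from the tangential part of $\partial_{\omega_i}\partial_{\omega_j}F$ paired against the tangential part of $\nu_\Sigma$ (via $A = -\partial_r w + a$) and one from $C\,\partial_{\omega_i}\partial_{\omega_j}w$, which cancel up to $O(r^{-5+\eta})$ using $C = 1 + O(r^{-4+\eta})$ and the sharp bound $\partial_r w = O(r^{-2})$; and terms of size $O(r^{-1+\eta})$ from $\Hess_\cC w$ and from the angular derivatives $\partial_{\omega_k}w$, which become $O(r^{-3+\eta})$ after normalization. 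This gives the third estimate and makes transparent that the $\eta$-loss enters only through the non-sharp angular derivative bounds for $w$ in Lemma \ref{lemm:improved-conical-est-shrinker}.

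Finally, the higher-derivative bound follows by interpolation. From the above, $|A_\Sigma\circ F - A_\cC| = O(r^{-3+\eta})$; on the other hand $|\nabla^{(k)}_{\cC} A_\cC| = O(r^{-1-k})$ by scale-invariance of $\cC$, and $|\nabla^{(k)}_{\cC}(A_\Sigma\circ F)| = O(r^{-1-k})$ follows from $|\nabla^{(j)}_\Sigma A_\Sigma| = O(r^{-1-j})$ together with Corollary \ref{coro:metric-conical-shrinker} and Lemma \ref{lemm:improved-conical-est-shrinker}, which control the difference between the $\cC$- and $\Sigma$-Levi-Civita connections and the $C^\infty$-closeness of $F$ to the identity and hence let one pass between $\nabla_\Sigma$ and $\nabla_\cC$. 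Thus $|\nabla^{(k)}_{\cC}(A_\Sigma\circ F - A_\cC)| = O(r^{-1-k})$, and interpolating this against the $C^0$ bound on unit-size balls (Lemma \ref{lemm:interpolation}) gives $|\nabla^{(k)}_{\cC}(A_\Sigma\circ F - A_\cC)| = O(r^{-3-k+\eta})$ for all $\eta > 0$ and $k \geq 1$. The main obstacle is the bookkeeping in the angular--angular case: one must organize the expansion of $\partial_{\omega_i}\partial_{\omega_j}F$ against $\nu_\Sigma$ carefully enough to witness the cancellation of the two $r\,g_{\Gamma,ij}\,\partial_r w$ terms --- without it one obtains only the trivial scaling $O(r^{-1})$ for $A_\Sigma(r^{-1}\partial_{\omega_i}F,r^{-1}\partial_{\omega_j}F)$ --- and, symmetrically, in the radial--angular case one must recognize that every surviving term is an angular derivative of $r\partial_r w - w$ so that Lemma \ref{lemm:cancellation-w-derivatives} applies; everything else is routine once the right combinations are in hand.
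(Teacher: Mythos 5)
Your computation matches the paper's proof in substance: both expand $\partial^2 F$ in the cone frame $\{\partial_r,\partial_{\omega_i},\nu_\cC\}$, pair against $\nu_\Sigma$ via Lemma \ref{lemm:normal-Sigma-estimates}, and exploit the sharp radial bounds of Lemmas \ref{lemm:improved-conical-est-shrinker} and \ref{lemm:cancellation-w-derivatives} (exactly your $r\partial_r w - w$ observation for the mixed term). The paper handles higher derivatives by differentiating the expansions directly rather than by interpolation, but your interpolation route is fine and yields the same bound.

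One side remark in your writeup is not correct, although it does not affect the proof. For the angular--angular term you claim that without witnessing the cancellation of the two $r\,g_{\Gamma,ij}\,\partial_r w$ contributions ``one obtains only the trivial scaling $O(r^{-1})$.'' This is not so: after the $r^{-2}$ normalization each of the two terms is $r^{-1}g_{\Gamma,ij}\,\partial_r w$, and the sharp radial estimate $\partial_r w = O(r^{-2})$ already makes each one $O(r^{-3})$ separately, well within the stated $O(r^{-3+\eta})$ budget. The cancellation you found does improve those two terms to $O(r^{-5+\eta})$, but it is not needed for the lemma; the genuinely borderline contributions are the $\Hess_\cC w$ and $\partial_{\omega_k}w$ pieces, which are $O(r^{-3+\eta})$ as you correctly note. (The paper does not even display the tangential-to-$\cC$ pieces of $\partial^2_{\omega_i,\omega_j}F$ in its expansion; it suppresses them because pairing them against $\nu_\Sigma$ only sees the $O(r^{-2+\eta})$ tangential coefficients from Lemma \ref{lemm:normal-Sigma-estimates}, landing in the same $O(r^{-3+\eta})$ bucket without any cancellation.)
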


\begin{proof}
We compute
\begin{align*}
\partial^{2}_{r,r}F & = (\partial^{2}_{r,r}w(p)) \nu_{\cC}(p)\\
r^{-1}\partial^{2}_{r,\omega_{i}} F & = r^{-2}\partial_{\omega_{i}} + r^{-1}(\partial^{2}_{r,\omega_{i}}w)\nu_{\cC}(p) - (\partial_{r}w(p))A_{\cC}|_{p}(r^{-1}\partial_{\omega_{i}},\cdot)\\
r^{-2} \partial^{2}_{\omega_{i},\omega_{j}}F & = A_{\cC}|_{p}(r^{-1}\partial_{\omega_{i}},r^{-1}\partial_{\omega_{j}}) \nu_{\cC}(p) + r^{-2} (\partial^{2}_{\omega_{i},\omega_{j}}w(p)) \nu_{\cC}(p)\\
& \qquad  - r^{-1}(\partial_{\omega_{i}}w(p)) A_{\cC}|_{p}(r^{-1}\partial_{\omega_{j}},\cdot)  - r^{-1}(\partial_{\omega_{j}}w(p)) A_{\cC}|_{p}(r^{-1}\partial_{\omega_{i}},\cdot) \\
& \qquad - w(p) (\nabla_{r^{-1}\partial_{\omega_{j}}} A_{\cC})|_{p}(r^{-1}\partial_{\omega_{i}},\cdot). 
\end{align*}
Using Lemma \ref{lemm:normal-Sigma-estimates}, the first and third equation follow immediately. 
For the second, we use the expression for $r^{-1}\partial_{w_{i}}F$ (which is orthogonal to $\nu_{\Sigma}(F(p))$ to write 
\begin{align*}
r^{-1} \partial^{2}_{r,\omega_{i}} F = r^{-2}\partial_{\omega_{i}}F + r^{-2}(r\partial^{2}_{r,\omega_{i}}w - \partial_{\omega_{i}}w(p))\nu_{\cC}(p) + r^{-1}(w(p) - r\partial_{r}w(p))A_{\cC}|_{p}(r^{-1}\partial_{\omega_{i}},\cdot)
\end{align*}
Using Lemmas \ref{lemm:normal-Sigma-estimates} and \ref{lemm:cancellation-w-derivatives}, the first estimates follow. The higher derivatives follow by differentiating these expressions. 
\end{proof}

\begin{lemm}\label{lemm:proj-radial-vect-Sigma}
The vector field $V : = \proj_{T\Sigma}F(p) - r\partial_{r} F$ is tangent to $\Sigma$ and satisfies $|V| = O(r^{-1})$, $|\nabla^{(k)}V| = O(r^{-1-k+\eta})$ for $\eta>0$. 
\end{lemm}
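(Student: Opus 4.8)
The plan is to express the radial vector field $F(p) = p + w(p)\nu_{\cC}(p)$ in terms of the frame $\{\partial_r F, r^{-1}\partial_{\omega_i}F, \nu_\Sigma(F(p))\}$ adapted to $\Sigma$, and then read off the tangential part. First I would write $F(p) = r\partial_r + w(p)\nu_\cC(p)$, using that the position vector on the cone $\cC$ is $r\partial_r$ (the cone is dilation-invariant, so the radial vector field is the position vector). Then I would decompose each of the two terms $r\partial_r$ and $w(p)\nu_\cC(p)$ against the $\Sigma$-adapted frame, using Lemma \ref{lemm:normal-Sigma-estimates} (which expresses $\nu_\Sigma$ in the cone frame, hence invertibly expresses the cone frame in the $\Sigma$ frame up to small errors) together with the expressions for $\partial_r F$ and $r^{-1}\partial_{\omega_i}F$ from the proof of Corollary \ref{coro:metric-conical-shrinker}.

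The key computation is that the normal component $\bangle{F(p),\nu_\Sigma(F(p))}$ is exactly $O(r^{-1})$ with the stated derivative decay — but this is precisely Corollary \ref{coro:improved-x-perp} (that $\bangle{x,\nu_\Sigma}$ and its derivatives decay like $r^{-j-1}$). So $V := \proj_{T\Sigma} F(p) - r\partial_r F = \big(F(p) - \bangle{F(p),\nu_\Sigma}\nu_\Sigma\big) - r\partial_r F$. Since $F(p) - r\partial_r F = w(p)\nu_\cC(p) - r(\partial_r w(p))\nu_\cC(p) = (w(p) - r\partial_r w(p))\nu_\cC(p)$ using the formula $\partial_r F = \partial_r + (\partial_r w)\nu_\cC$ from the proof of Corollary \ref{coro:metric-conical-shrinker}, and since $w(p) - r\partial_r w(p) = O(r^{-1})$ with derivatives $O(r^{-1-k})$ by Lemma \ref{lemm:cancellation-w-derivatives}, the first piece is $O(r^{-1})$ in the required sense. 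Subtracting the normal correction $\bangle{F(p),\nu_\Sigma}\nu_\Sigma = O(r^{-1})$ (with derivatives controlled by Corollary \ref{coro:improved-x-perp} and the frame estimates) leaves $V$ with $|V| = O(r^{-1})$ and $|\nabla^{(k)}V| = O(r^{-1-k+\eta})$. The tangency of $V$ to $\Sigma$ is immediate: both $\proj_{T\Sigma}F(p)$ and $r\partial_r F = \proj_{T\Sigma}(r\partial_r F)$ (the latter because $\partial_r F$ is a coordinate tangent vector to $\Sigma$) are tangent to $\Sigma$.

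I expect the main obstacle to be bookkeeping the derivative estimates rather than any conceptual difficulty: one must track how differentiating the decomposition $V = (w - r\partial_r w)\nu_\cC - \bangle{F,\nu_\Sigma}\nu_\Sigma$ interacts with the $\eta$-losses in $\nabla^{(k)}\nu_\cC$-type terms (the cone's second fundamental form contributes factors of $A_\cC = O(r^{-1})$ whose derivatives are clean, but the $w$-dependent terms and $\nu_\Sigma - \nu_\cC = O(r^{-2+\eta})$ carry the $\eta$). The cleanest route is to interpolate: establish $|V| = O(r^{-1})$ directly from the $C^0$ estimates above, separately note $|\nabla^{(k)}V| = o(r^{1-k})$ from the smooth convergence $\sqrt{-t}\,\Sigma \to \cC$ and scaling (as in Lemma \ref{lemm:improved-conical-est-shrinker}), and then interpolate these two bounds (Lemma \ref{lemm:interpolation}) to get the intermediate decay rate $O(r^{-1-k+\eta})$ for all $k \geq 1$, exactly as was done for $w$ itself.
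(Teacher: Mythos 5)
Your proposal is correct and follows essentially the same route as the paper: decompose $V = \proj_{T\Sigma}F(p) - r\partial_rF = (w - r\partial_r w)\nu_\cC - \bangle{F,\nu_\Sigma}\nu_\Sigma$, bound the first piece via the $w - r\partial_r w$ estimates and the second via Corollary~\ref{coro:improved-x-perp}, and handle higher derivatives either by differentiating the decomposition or by the interpolation shortcut. The paper's proof is the same computation written more tersely (it absorbs the normal projection into $O(r^{-1})$ immediately and dismisses the derivative estimates with ``follow similarly''), so your more explicit version and the interpolation observation are just fleshing out the same argument.
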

\begin{proof}
Because $\bangle{F(p),\nu_{\Sigma}(F(p))} = O(r^{-1})$, we compute
\begin{align*}
\proj_{T\Sigma} F(p) & = F(p) - \bangle{F(p),\nu_{\Sigma}(F(p))}\nu_{\Sigma}(F(p))\\
& = p + w(p)\nu_{\cC}(p)+ O(r^{-1}) \\
& = r\partial_{r} + w(p)\nu_{\cC}(p) + O(r^{-1}) \\
& = r\partial_{r}F +  O(r^{-1}).
\end{align*}
The higher derivatives follow similarly. 
\end{proof}

The function $w$ from Lemma \ref{lemm:improved-conical-est-shrinker} gives a diffeomorphism from $\cC \setminus B_{R}(0) \simeq \Gamma \times [R,\infty)$ to the non-compact part of $\Sigma$, where we recall that $\Gamma$ is the link of the asymptotic cone $\cC$. We will thus parametrize points of $\Sigma$ by $(r,\omega) \in \Gamma \times [R,\infty)$ below. We will write $g_{\cC}$ for the metric on the end of $\Sigma$ given by
\[
g_{\cC} = dr\otimes dr + r^{2}g_{\Gamma}
\]
in this parametrization. We emphasize that the coordinate $r$ along $\Sigma$ is \emph{not} exactly equal to $d_{\RR^{3}}(\cdot,0)$ (like it is along the cone).  It is useful to extend $r$ to $\tilde r$ defined on all of $\Sigma$ so that $\tilde r\geq 1$ on $\Sigma$ and $\tilde r=r$ outside of $B_{R}$ for $R$ as above.

\begin{lemm}\label{lemm:radial-der-improved}
The radial derivative satisfies
\[
\vec{x}\cdot \nabla_{\Sigma} f =r\partial_{r}f + \alpha_{3}\cdot\nabla_{g_{\cC}} f,
\]
where $|\alpha_{3}| = O(r^{-1})$ and $|\nabla^{(j)}\alpha_{3}| = O(r^{-1-j+\eta})$ for $\eta>0$ and $j\geq 1$, as $r\to\infty$. 
\end{lemm}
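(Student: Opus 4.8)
The plan is to compute $\vec{x}\cdot\nabla_\Sigma f$ directly in the parametrization $F:\cC\setminus B_R(0)\to\Sigma$, $F(p)=p+w(p)\nu_\cC(p)$, by expressing the ambient position vector $\vec{x}=F(p)$ in terms of the $g_\cC$-orthonormal-ish frame $\{\partial_r F, r^{-1}\partial_{\omega_i}F\}$ on $T\Sigma$, and then reading off the coefficient of $\partial_r F$ versus the remaining tangential part. Concretely, $\vec{x}\cdot\nabla_\Sigma f = df(\proj_{T\Sigma}\vec{x})$, so by Lemma \ref{lemm:proj-radial-vect-Sigma} we already know $\proj_{T\Sigma}F(p) = r\,\partial_r F + V$ with $|V| = O(r^{-1})$ and $|\nabla^{(k)}V| = O(r^{-1-k+\eta})$. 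Hence $\vec{x}\cdot\nabla_\Sigma f = r\,\partial_r f + df(V)$, and the claim follows upon setting $\alpha_3 = V$ — but with the caveat that $\partial_r f$ here must mean the derivative along the coordinate vector field $\partial_r$ of the product parametrization $\Gamma\times[R,\infty)$, which is exactly $\partial_r F$ under $F$, so $r\partial_r f = df(r\partial_r F)$ is consistent.

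The one genuine subtlety is that the statement writes the correction term as $\alpha_3\cdot\nabla_{g_\cC}f$ rather than as $df(V)$, i.e. it wants $V$ (or its $g_\cC$-metric dual one-form contracted against the $g_\cC$-gradient) expressed so that the pairing is taken with respect to $g_\cC$ rather than $g_\Sigma$. Since $df(V)$ is metric-independent, one has $df(V) = g_\Sigma(\nabla_\Sigma f, V) = g_\cC(\nabla_{g_\cC}f, \tilde V)$ where $\tilde V$ is obtained from $V$ by the bounded (close to identity) transition between $g_\Sigma$ and $g_\cC$; more simply, just define $\alpha_3$ to be the vector field with $g_\cC(\alpha_3,\cdot) = df(V)$-dual, equivalently $\alpha_3 = $ the $g_\cC$-dual of the one-form $\iota_V g_\cC$... cleanest is to note that $\partial_r F$ has no component along $\partial_r$-direction error and simply take $\alpha_3$ to be the $\partial_\omega$-part of $\proj_{T\Sigma}F(p)$ read in the $g_\cC$ frame: write $\proj_{T\Sigma}F(p) = r\,\partial_r + \alpha_3$ as vector fields on $\Gamma\times[R,\infty)$ (pulling back via $F^{-1}$), where $\alpha_3 := F^{-1}_*(\proj_{T\Sigma}F(p)) - r\partial_r$. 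Then $df(\proj_{T\Sigma}\vec x) = r\partial_r f + \alpha_3\cdot\nabla_{g_\cC}f$ is a tautology once one checks $df(r\partial_r F) = r\partial_r f$, and the estimates on $\alpha_3$ are literally the estimates on $V$ transported through the diffeomorphism $F$, which changes them only by the bounded factors controlled in Lemma \ref{lemm:improved-conical-est-shrinker} and Corollary \ref{coro:metric-conical-shrinker}.

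So the key steps, in order, are: (1) recall $\vec{x}\cdot\nabla_\Sigma f = df(\proj_{T\Sigma}\vec x)$; (2) invoke Lemma \ref{lemm:proj-radial-vect-Sigma} to write $\proj_{T\Sigma}F(p) = r\,\partial_r F + V$ with the stated decay on $V$ and its covariant derivatives; (3) identify $df(r\partial_r F) = r\partial_r f$ in the product coordinates, so that $\vec x\cdot\nabla_\Sigma f = r\partial_r f + df(V)$; (4) rewrite $df(V)$ as $\alpha_3\cdot\nabla_{g_\cC}f$ by transporting $V$ through $F^{-1}$ and/or switching the pairing to $g_\cC$, noting the transition maps are $C^\infty$-close to the identity with errors of order $O(r^{-1})$ (Corollary \ref{coro:metric-conical-shrinker}), hence the decay rates $|\alpha_3| = O(r^{-1})$ and $|\nabla^{(j)}\alpha_3| = O(r^{-1-j+\eta})$ are inherited verbatim from those of $V$. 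The main obstacle — really the only place requiring care — is step (4): keeping track of whether covariant derivatives are taken with respect to $g_\Sigma$ or $g_\cC$ when transporting the estimates on $V$, and verifying that the difference between $\nabla^{g_\Sigma}$ and $\nabla^{g_\cC}$ (whose Christoffel symbols differ by $O(r^{-2-j+\eta})$-size terms by Corollary \ref{coro:metric-conical-shrinker}) does not degrade the advertised rates. Since those corrections are of strictly lower order than $r^{-1-j}$, they are absorbed into the $\eta$, and no issue arises. Everything else is bookkeeping with the expansions of $\partial_r F$ and $r^{-1}\partial_{\omega_i}F$ already recorded in the proof of Corollary \ref{coro:metric-conical-shrinker}.
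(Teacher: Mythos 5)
Your argument is correct and is exactly the paper's approach: the paper's entire proof of this lemma is the single line ``This follows from Lemma~\ref{lemm:proj-radial-vect-Sigma},'' and your steps (1)--(4) simply spell out the bookkeeping implicit in that deduction (identifying $\vec{x}\cdot\nabla_\Sigma f = df(\proj_{T\Sigma}\vec{x})$, invoking the decomposition $\proj_{T\Sigma}F(p) = r\partial_r F + V$, and transporting $V$ into the $(r,\omega)$ chart to obtain $\alpha_3$). The observation that $df(\cdot)$ is metric-independent, and that switching between $\nabla^{g_\Sigma}$ and $\nabla^{g_\cC}$ only perturbs the decay rates by terms absorbable into the $\eta$ loss, is precisely why the paper treats the step as immediate.
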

\begin{proof}
This follows from Lemma \ref{lemm:proj-radial-vect-Sigma}. 
\end{proof}

\section{Linear estimates in weighted spaces}\label{sec:lin-est-spaces}

In this section we consider the relevant weighted function spaces which will play a role in our proof of the \L ojasiewicz--Simon inequality for the conical shrinker $\Sigma^{n}\subset \RR^{n+1}$. Our choice of H\"older spaces will be heavily influenced by the work of N.\ Kapouleas, S.~J.\ Kleene, and N.~M.\ M\o ller \cite{KKM:shrinkers} except for the complication that in \cite{KKM:shrinkers}, it was only neccessary to define the spaces on a flat $\RR^{2}$ (which is, of course, a conical shrinker), whereas, here we must consider general conical shrinkers. Additionally, in various points of \cite{KKM:shrinkers}, the discrete symmetry of the problem was used in certain places, which will not be available to us here.

\subsection{Weighted H\"older spaces} We now define the relevant weighted H\"older spaces. We begin with the most basic weighted space. 
\begin{defi}[Homogeneously weighted H\"older spaces]
We define a norm, for $\gamma\in\RR$,
\[
\Vert f \Vert_{0;-\gamma}^{\text{hom}} : = \sup_{x\in\Sigma} \tilde r(x)^{\gamma} |f(x)|
\]
and a semi-norm
\[
[ f ]_{\alpha;-\gamma-\alpha}^{\text{hom}} : = \sup_{x,y\in \Sigma} \frac{1}{\tilde r(x)^{-\gamma-\alpha} + \tilde r(y)^{-\gamma-\alpha}} \frac{|f(x)-f(y)|}{|x-y|^{\alpha}}.
\]
We thus define $C^{0,\alpha}_{\text{hom},-\gamma}(\Sigma)$\index{$C^{0,\alpha}_{\text{hom},-\gamma}(\Sigma)$} to be the set of functions $f:\Sigma\to\RR$ so that
\[
\Vert f \Vert_{0,\alpha;-\gamma}^{\text{hom}} : = \Vert f \Vert_{0;-\gamma}^{\text{hom}} + [ f ]_{\alpha;-\gamma-\alpha}^{\text{hom}} 
\]
is finite. Similarly, we define $C^{2,\alpha}_{\text{hom},-\gamma}(\Sigma)$\index{$C^{2,\alpha}_{\text{hom},-\gamma}(\Sigma)$} to be the set of $f:\Sigma\to\infty$ so that the norm
\[
\Vert f \Vert_{2,\alpha;-\gamma}^{\text{hom}} = \sum_{j=0}^{2} \Vert (\nabla_{\Sigma})^{(j)} f \Vert_{0,\alpha;-\gamma}^{\text{hom}}
\]
is finite.
\end{defi}
Loosely speaking, $C^{2,\alpha}_{\text{hom},-\gamma}(\Sigma)$ is the space of $C^{2,\alpha}$ functions whose $C^{2,\alpha}$ norm falls off like $r^{-\gamma}$ at infinity. We now define a space which will require stronger weights in the radial direction.

\begin{defi}[Anisotropically weighted H\"older spaces]
We define $C^{2,\alpha}_{\text{an},-1}(\Sigma)$\index{$C^{2,\alpha}_{\text{an},-1}(\Sigma)$} to be the space of $f\in C^{2,\alpha}_{\text{hom},-1}(\Sigma)$ so that 
\[
\Vert f \Vert_{2,\alpha;-1}^{\text{an}} : = \Vert f \Vert_{2,\alpha;-1}^{\text{hom}} + \Vert \vec{x}\cdot \nabla_{\Sigma} f \Vert_{0,\alpha;-1}^{\text{hom}}
\]
is finite. 
\end{defi}

Now, we fix a cutoff function $\chi : [0,\infty) \to [0,1]$ so that $\supp \chi \subset [R,\infty)$, $\chi \equiv 1$ in $[2R,\infty)$, and $|\nabla^{j} \chi| \leq C R^{-j}$ for $j\geq 1$ and $C$ independent of $R$ sufficiently large. This now allows us to define our primary H\"older space. 
\begin{defi}[Cone H\"older spaces]
We define \index{$\cC\cS^{0,\alpha}_{-1}$}$\cC \cS^{0,\alpha}_{-1}(\Sigma) : = C^{0,\alpha}_{\textrm{hom},-1}(\Sigma)$ and \index{$\cC\cS^{2,\alpha}_{-1}$}
\[
\cC\cS^{2,\alpha}_{-1}(\Sigma) : = C^{2,\alpha}(\Gamma) \times C^{2,\alpha}_{\text{an},-1}(\Sigma).
\]
An element $(c,f) \in \cC\cS_{-1}^{2,\alpha}(\Sigma)$ will be considered as a function on $\Sigma$ given by
\[
u=u_{(c,f)}(r,\omega) = \chi(r) c(\omega) r + f(r,\omega)
\]
for $r\geq R$, and $u=f$ otherwise. We will frequently conflate $u$ with $(c,f)$. We take the norm
\[
\Vert u \Vert_{\cC\cS_{-1}^{2,\alpha}(\Sigma)} : = \Vert c\Vert_{C^{2,\alpha}(\Gamma)} + \Vert f\Vert_{2,\alpha;-1}^{\text{an}}.
\]
\end{defi}
Observe that an element of $\cC\cS_{-1}^{2,\alpha}(\Sigma)$ is allowed to grow linearly at infinity, but only in a particularly prescribed manner. The remaining terms then must decay like $r^{-1}$. It is a standard exercise to observe that all of the above spaces are indeed Banach spaces. 

\subsection{Mapping properties}We observe that the cone spaces are well suited to the analysis of the $\cL_{\frac 12}$ operator 
\[
\cL_{\frac12} u  : = \Delta_{\Sigma} u - \frac 12 \vec{x}\cdot \nabla_{\Sigma} u + \frac 12 u
\]
(see also Definition \ref{defi:L-operators}) in the following sense. 
\begin{lemm}
For $a: \Sigma\to\RR$ with $\Vert a \Vert_{C^{0,\alpha}(B_{1}(x))} = O(|x|^{-2})$ for $x\in\Sigma$ with $|x|\to\infty$, i.e., $a \in C^{0,\alpha}_{\textnormal{hom};-2}(\Sigma)$, we have that the operator
\[
\cL_{\frac 12} + a : \cC\cS^{2,\alpha}_{-1}(\Sigma) \to \cC\cS^{0,\alpha}_{-1}(\Sigma)
\]
is bounded.
\end{lemm}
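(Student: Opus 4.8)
The statement is a boundedness assertion for the operator $\cL_{\frac12}+a$ between the two cone Hölder spaces, so the plan is to unwind the definitions and estimate each piece of $\cL_{\frac12} u$ term by term. Write an element $u = u_{(c,f)} = \chi(r)c(\omega)r + f$. By linearity it suffices to bound $\cL_{\frac12}$ on the two summands $\chi(r)c(\omega)r$ and $f$ separately, plus the zeroth-order perturbation $a\cdot u$. For the perturbation, since $u \in \cC\cS^{2,\alpha}_{-1}(\Sigma)$ grows at most linearly, we have $u \in C^{0,\alpha}_{\mathrm{hom},-(-1)}$, i.e.\ $|u| = O(r)$ in $C^{0,\alpha}$ on unit balls, and multiplying by $a \in C^{0,\alpha}_{\mathrm{hom},-2}$ lands in $C^{0,\alpha}_{\mathrm{hom},-1} = \cC\cS^{0,\alpha}_{-1}$, with the norm bound following from the product rule for these weighted Hölder seminorms (a standard computation: weights add, and the Hölder seminorm of a product is controlled by $\|a\|_{0}[f]_\alpha + [a]_\alpha \|f\|_0$ with the correct weight bookkeeping). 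So the main content is the bound on $\cL_{\frac12}$ itself.

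\textbf{The linear-growth summand.} For the term $\chi(r)c(\omega)r$ I would use the geometric estimates of Section \ref{sec:prelim}. Away from $B_{2R}$ (where $\chi\equiv 1$) one computes $\cL_{\frac12}(c(\omega)r) = \Delta_\Sigma(c(\omega)r) - \frac12 \vec x\cdot\nabla_\Sigma(c(\omega)r) + \frac12 c(\omega)r$. On the exact cone $\cC$ one has $\Delta_{\cC}(c(\omega)r) = r^{-1}(\Delta_\Gamma c + (1-n)c)$ — at any rate it is $O(r^{-1})$ in the appropriate Hölder sense — while the radial-derivative term, using Lemma \ref{lemm:radial-der-improved} to write $\vec x\cdot\nabla_\Sigma = r\partial_r + \alpha_3\cdot\nabla_{g_\cC}$, gives $\vec x\cdot\nabla_\Sigma(c(\omega)r) = c(\omega)r + \alpha_3\cdot\nabla_{g_\cC}(c(\omega)r)$, so $-\frac12\vec x\cdot\nabla_\Sigma(c(\omega)r) + \frac12 c(\omega)r = -\frac12\alpha_3\cdot\nabla_{g_\cC}(c(\omega)r) = O(r^{-1})$ by the $|\alpha_3| = O(r^{-1})$ bound. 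The corrections coming from $g_\Sigma - g_\cC = h$ with $|h| = O(r^{-2})$ (Corollary \ref{coro:metric-conical-shrinker}) only improve matters, contributing further $O(r^{-1})$ (indeed $O(r^{-3})$-type) terms; the Hölder seminorm bounds come from the $|\nabla^{(j)}h| = O(r^{-2-j+\eta})$ and $|\nabla^{(j)}\alpha_3| = O(r^{-1-j+\eta})$ estimates via interpolation. In the transition region $R\le r\le 2R$ the cutoff derivatives $|\nabla^j\chi|\le CR^{-j}$ are harmless since $R$ is a fixed constant there. Hence $\cL_{\frac12}(\chi c r) \in \cC\cS^{0,\alpha}_{-1}$ with norm $\lesssim \|c\|_{C^{2,\alpha}(\Gamma)}$.

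\textbf{The decaying summand.} For $f \in C^{2,\alpha}_{\mathrm{an},-1}(\Sigma)$ we have $\|f\|^{\mathrm{hom}}_{2,\alpha;-1}<\infty$ and additionally $\|\vec x\cdot\nabla_\Sigma f\|^{\mathrm{hom}}_{0,\alpha;-1}<\infty$. The term $\Delta_\Sigma f$ is $O(r^{-1})$ in $C^{0,\alpha}$ on unit balls directly from $f\in C^{2,\alpha}_{\mathrm{hom},-1}$ (two derivatives of an $r^{-1}$-decaying function decay like $r^{-3}$ by scaling, but we only need $r^{-1}$); the term $\frac12 f$ is trivially in $\cC\cS^{0,\alpha}_{-1}$; and the term $-\frac12\vec x\cdot\nabla_\Sigma f$ is in $\cC\cS^{0,\alpha}_{-1} = C^{0,\alpha}_{\mathrm{hom},-1}$ \emph{by definition} of the anisotropic space — this is precisely the point of including the extra seminorm $\|\vec x\cdot\nabla_\Sigma f\|^{\mathrm{hom}}_{0,\alpha;-1}$ in $\|\cdot\|^{\mathrm{an}}_{2,\alpha;-1}$. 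Summing the three contributions gives $\|\cL_{\frac12}f\|_{\cC\cS^{0,\alpha}_{-1}} \lesssim \|f\|^{\mathrm{an}}_{2,\alpha;-1}$.

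\textbf{Main obstacle.} None of the steps is deep; the one place requiring genuine care is the bookkeeping of the anisotropic (non-scale-invariant) radial weight: on the cone the naive scaling of $\Delta_\Sigma$ on a linear-growth function would give an $O(r^{-1})$ term that is \emph{not} automatically small, and it is only the shrinker-specific cancellation $\vec x\cdot\nabla_\Sigma(c(\omega)r) = c(\omega)r + (\text{lower order})$ — equivalently, the improved radial estimates of Lemmas \ref{lemm:radial-der-improved} and \ref{lemm:proj-radial-vect-Sigma} — that makes the operator land in the \emph{weighted} target space rather than merely in $C^{0,\alpha}_{\mathrm{loc}}$. I would therefore organize the write-up so that this cancellation is isolated and invoked explicitly, with the remaining error terms handled by routine interpolation from the geometric decay estimates of Section \ref{sec:prelim}.
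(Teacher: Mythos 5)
Your proposal is correct and is essentially the paper's argument spelled out in detail: the paper's proof is a one-line remark that the lemma follows from the definitions once one observes the cancellation of the linear term $rc(\omega)$ in $\frac12(u - \vec x\cdot\nabla_\Sigma u)$, which is precisely the cancellation you isolate and verify via Lemma \ref{lemm:radial-der-improved}. (One small typo: $\Delta_{\cC}(c(\omega)r) = r^{-1}(\Delta_\Gamma c + (n-1)c)$, not $(1-n)c$, though this does not affect the $O(r^{-1})$ conclusion.)
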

\begin{proof}
This follows directly from the definition of the cone spaces (after observing that the linear term $r c(\omega)$ exhibits a cancelation in the term $\frac 12 (u - \vec{x}\cdot\nabla^{\Sigma} u)$; note that this fact does not hold for general $\cL_{\gamma}$ when $\gamma\not = \frac 12$). 
\end{proof}

\subsection{Schauder estimates} In this section, we prove Schauder estimates for the $\cL$ operator in the cone H\"older spaces. These estimates are essentially the generalization of \cite[Proposition 8.8]{KKM:shrinkers} to our setting, and we will closely follow their arguments, with some necessary modifications as discussed above. We note that Schauder estimates for the linearization of the expander equation on asymptotically conical self-expanders were proven by a related method in \cite[Proposition 5.3]{BernsteinWang:space-of-expanders}.

\begin{prop}\label{prop:CS-schauder}
Consider $a:\Sigma\to\RR$ with $\Vert a \Vert_{C^{0,\alpha}(B_{1}(x))} = O(|x|^{-2})$ for $x\in\Sigma$ with $|x|\to\infty$, i.e., $a \in C^{0,\alpha}_{\textnormal{hom};-2}(\Sigma)$. Then, there is $C=C(\Sigma,a)$ so that if $u \in C^{2,\alpha}_{\textnormal{loc}}(\Sigma) \cap C^{0}_{\textnormal{hom};+1}(\Sigma)$ has $\cL_{\frac 12} u + au  \in \cC\cS^{0,\alpha}_{-1}(\Sigma)$, then $u\in \cC\cS^{2,\alpha}_{-1}(\Sigma)$ and we have the estimate
\[
\Vert u \Vert_{\cC\cS_{-1}^{2,\alpha}(\Sigma)} \leq C\left( \Vert u \Vert_{C^{0}_{\textnormal{hom},+1}(\Sigma)} + \Vert \cL_{\frac 12} u + au  \Vert_{\cC\cS_{-1}^{0,\alpha}(\Sigma)} \right).
\] 
\end{prop}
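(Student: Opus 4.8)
The strategy is the standard one for weighted Schauder estimates on manifolds with conical ends: a dyadic decomposition of the end combined with interior Schauder estimates at unit scale, together with a parabolic-rescaling (here, elliptic-rescaling) trick to see the scale-invariant form of the operator. First I would split $\Sigma$ into a compact core $\Sigma \cap B_{2R}$ and the annular pieces $\cA_j := \Sigma \cap (B_{2^{j+1}R}\setminus B_{2^{j-1}R})$ for $j\geq 1$; on the core, classical interior Schauder estimates for the (uniformly) elliptic operator $\cL_{\frac12}+a$ give $\Vert u\Vert_{C^{2,\alpha}(B_{2R})} \leq C(\Vert u\Vert_{C^0(B_{4R})} + \Vert \cL_{\frac12}u+au\Vert_{C^{0,\alpha}(B_{4R})})$, which is controlled by the right-hand side since $\tilde r \sim 1$ there. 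This also upgrades $u$ from $C^{2,\alpha}_{\mathrm{loc}}$ to having finite norm on compact sets, so no issue there.

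For the end, fix a point $x\in\Sigma$ with $|x|=\rho$ large, set $\lambda = \rho$, and consider the rescaled surface $\Sigma_\lambda := \lambda^{-1}\Sigma$ near $\lambda^{-1}x$, which by the improved conical estimates of Section \ref{sec:prelim} (Lemmas \ref{lemm:improved-conical-est-shrinker}, \ref{lemm:second-fund-form}, Corollary \ref{coro:metric-conical-shrinker}) converges in $C^{2,\alpha}$ on unit balls to a fixed piece of the cone $\cC$, with uniform bounds independent of $\rho$. Set $\tilde u(y) := \lambda^{-1} u(\lambda y)$. The point of the weight choice ``$-1$'' is the cancellation already noted in the preceding lemma: $\cL_{\frac12} = \Delta_\Sigma - \frac12 \vec x\cdot\nabla_\Sigma + \frac12$, and under $x\mapsto \lambda y$ the Laplacian scales by $\lambda^{-2}$ while $\vec x\cdot\nabla$ and the zeroth order term scale by $\lambda^0$; writing $\vec x\cdot\nabla_\Sigma = r\partial_r + \alpha_3\cdot\nabla_{g_\cC}$ via Lemma \ref{lemm:radial-der-improved}, the rescaled equation reads $\lambda^{-2}\Delta_{\Sigma_\lambda}\tilde u - \frac12(\rho\partial_\rho \tilde u - \tilde u)(\lambda y) + (\text{small}) = \lambda^{-1}(\cL_{\frac12}u+au)(\lambda y)$, i.e. on the unit ball around $\lambda^{-1}x$ the operator is $\lambda^{-2}\Delta + O(1)$, which is a \emph{degenerate} rescaling — so the cleaner route is: apply interior Schauder estimates directly to $\cL_{\frac12}u + au$ on the ball $B_{\rho/4}(x)\subset\Sigma$ after rescaling \emph{only the domain} by $\rho$, noting that $\rho^2(\cL_{\frac12}u+au)$ has the rescaled equation of a uniformly elliptic operator with $C^{0,\alpha}$ coefficients bounded uniformly in $\rho$ (the first-order term $\rho\cdot(\text{coefficient})$ is $O(1)$ by Lemma \ref{lemm:radial-der-improved} since $|\alpha_3|=O(r^{-1})$, and $\rho^2 a = O(1)$ by the hypothesis $a\in C^{0,\alpha}_{\mathrm{hom};-2}$). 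Interior Schauder then gives, with $x'=\lambda^{-1}x$,
\[
\Vert \tilde u\Vert_{C^{2,\alpha}(B_{1/4}(x'))} \leq C\big(\Vert \tilde u\Vert_{C^{0}(B_{1/2}(x'))} + \rho^{2}\Vert (\cL_{\frac12}u+au)(\lambda\cdot)\Vert_{C^{0,\alpha}(B_{1/2}(x'))}\big),
\]
with $C$ independent of $\rho$. Unwinding the scaling: $\Vert \tilde u\Vert_{C^0} = \rho^{-1}\Vert u\Vert_{C^0(B_{\rho/2}(x))}$, each derivative of $\tilde u$ carries a factor $\rho^{k-1}$ relative to that of $u$, and the $C^{0,\alpha}$ seminorm of the rescaled right-hand side is $\rho^{1+\alpha}$ times the unscaled seminorm over $B_{\rho/2}(x)$. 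Multiplying through, this is exactly the statement that $\tilde r^{-1}|\nabla^{(k)}_\Sigma u|$ and the corresponding weighted Hölder seminorm at $x$ are bounded by $\Vert u\Vert_{C^0_{\mathrm{hom},+1}} + \Vert \cL_{\frac12}u+au\Vert_{\cC\cS^{0,\alpha}_{-1}}$, which gives membership in $C^{2,\alpha}_{\mathrm{hom},-1}(\Sigma)$ with the asserted bound on its norm.

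It remains to promote the homogeneous bound to the anisotropic norm, i.e. to bound $\Vert \vec x\cdot\nabla_\Sigma u\Vert^{\mathrm{hom}}_{0,\alpha;-1}$, equivalently (via Lemma \ref{lemm:radial-der-improved} and the $C^{2,\alpha}_{\mathrm{hom},-1}$ bound already obtained, which handles the $\alpha_3\cdot\nabla_{g_\cC}u$ term since $|\alpha_3|=O(r^{-1})$) to bound $\Vert r\partial_r u\Vert^{\mathrm{hom}}_{0,\alpha;-1}$, i.e. $|r\partial_r u| = O(r)$ with $C^{0,\alpha}$ control at unit scale. Here one reads off from the equation $\cL_{\frac12}u + au \in \cC\cS^{0,\alpha}_{-1}$ that $\frac12(r\partial_r u - u) = \Delta_\Sigma u + au - (\cL_{\frac12}u+au)$; the right-hand side is $O(r^{-1})$ in $C^{0,\alpha}$ at unit scale (using $\Delta_\Sigma u = O(r^{-1})$ from the $C^{2,\alpha}_{\mathrm{hom},-1}$ estimate, $au = O(r^{-1})$, and the hypothesis on $\cL_{\frac12}u+au$), so $r\partial_r u = u + O(r^{-1})$ and the anisotropic bound follows from the $C^0_{\mathrm{hom},+1}$-control of $u$ (this is the mechanism behind the parenthetical remark in the previous lemma that the linear term exhibits a cancellation). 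Combining the core estimate, the weighted interior estimate on the end, and this last observation, and covering $\Sigma$ by the core together with the balls $B_{\rho/4}(x)$, yields $u\in\cC\cS^{2,\alpha}_{-1}(\Sigma)$ with the stated bound. The main obstacle is purely bookkeeping: keeping track of the $\rho$-powers through the rescaling so that the three pieces (interior core, weighted Hölder on the end, anisotropic radial term) combine into the single clean inequality — and verifying that the coefficients of the rescaled operator genuinely have $\rho$-independent $C^{0,\alpha}$ bounds, which is precisely what the geometric estimates of Section \ref{sec:prelim} were set up to provide.
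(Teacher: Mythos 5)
The central step of your argument — rescaling only the domain by $\rho = |x|$ and applying standard interior elliptic Schauder estimates — does not work, and this is precisely the obstacle the paper's parabolic argument is designed to overcome. Setting $\hat u(y) := u(\rho y)$ on the unit ball in $\rho^{-1}\Sigma$ around $x/\rho$, one finds (up to the correction from $\alpha_{3}$)
\[
\rho^{2}\bigl(\cL_{\frac12}u + au\bigr)(\rho y) = \Delta_{\rho^{-1}\Sigma}\hat u(y) - \frac{\rho^{2}}{2}\bigl(\tilde r\,\partial_{\tilde r}\hat u - \hat u\bigr)(y) + \rho^{2}(\hat a\hat u)(y),
\]
so the first- and zeroth-order coefficients of the rescaled operator grow like $\rho^{2}$ rather than staying $O(1)$. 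The error $\alpha_{3}=O(r^{-1})$ from Lemma \ref{lemm:radial-der-improved}, which you focus on, is not the problem; the problem is the main drift $r\partial_{r}$, whose coefficient on a unit ball around $x$ is already of order $|x|$, and this is not tamed by domain rescaling. Moving the drift to the right-hand side is no better: $\frac12(\vec x\cdot\nabla_{\Sigma}u - u)$ is a priori uncontrolled since you have no gradient bound on $u$, and even granting $|\nabla u|=O(1)$ the resulting Poisson source is $O(r)$, which after rescaling gives only $|D^{2}u|\lesssim r$, not $r^{-1}$. The operator $\cL_{\frac12}$ is not scale-invariant in the way a dyadic elliptic argument requires. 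The paper instead absorbs the drift into a time derivative: with $\hat u(x,t):=\sqrt{-t}\,u(\Phi_{t}(x))$ for $\Phi_{t}$ the intrinsic shrinker flow, one obtains a genuine linear \emph{parabolic} equation $\partial_{t}\hat u - \Delta_{\hat g_{t}}\hat u - \hat a\hat u = \hat E$ with uniformly bounded, spatially H\"older coefficients (Lemma \ref{lemm:behavior-Phi-r}), and then applies Brandt's non-standard interior parabolic Schauder estimates (Theorem \ref{theo:brandt-schauder}), which are needed because the coefficients are not H\"older in $t$ near $t=0$. That parabolic rescaling is the essential device your elliptic proposal does not supply.

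A secondary gap: membership in $\cC\cS^{2,\alpha}_{-1}(\Sigma)$ requires producing the decomposition $u=\chi(r)c(\omega)r+f$ with $c\in C^{2,\alpha}(\Gamma)$ and $f$ controlled in $C^{2,\alpha}_{\text{an},-1}(\Sigma)$; the weighted derivative bounds and the smallness of $w:=r\partial_{r}u-u$ do not, by themselves, give this. One must extract $c(\omega)$ (the paper uses $c(\omega)=u(R,\omega)/R+\int_{R}^{\infty}w(s,\omega)s^{-2}\,ds$), show $c\in C^{2,\alpha}(\Gamma)$ via Arzel\`a--Ascoli and lower semicontinuity of the H\"older norm, and then verify $|f|\lesssim r^{-1}$ together with the radial identity relating $r\partial_{r}f$ to $f$ and $w$. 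Your last paragraph gestures at the cancellation but does not carry out the decomposition, and the anisotropic norm in $\cC\cS^{2,\alpha}_{-1}$ is of $f$ (which must decay like $r^{-1}$), not of $u$, so the claim that it follows from the $C^{0}_{\text{hom},+1}$-control of $u$ is not correct as stated.
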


Because the $\cL$ operator is related to the linearization of the shrinker equation, which is, in turn, a special case of the mean curvature flow (whose linearization is related to the heat equation), we might expect that such an estimate can be proven from standard parabolic Schauder estimates. This is nearly the case, except it turns out the appropriate time parametrization of the equations will produce functions which are not H\"older continuous (at $t=0$) in the time variables. As such, we will require the following non-standard parabolic Schauder estimates due to A.\ Brandt \cite{Brandt:interiorSchauder}. We note that these estimates were strengthened in \cite{Knerr:interiorSchauder} (see also \cite{Lieberman:IV}) but we will not make use of these stronger estimates here. 

\begin{theo}[Non-standard interior Schauder estimates, {\cite{Brandt:interiorSchauder}}]\label{theo:brandt-schauder} 
Suppose that $B_{2}\subset \RR^{n}$ and we are given coefficients $a_{ij}(x,t),b_{i}(x,t),c:B_{2}\times [-2,0]\to\RR$ and functions $f,u : B_{2} \times [-2,0]\to\RR$ so that $u$ is a classical solution of
\[
\frac{\partial u}{\partial t} - a_{ij}D^{2}_{ij}u - b_{i}D_{i}u - cu = f.
\]
Assume that the coefficients $a_{ij},b_{i},c$ have spatial H\"older norms bounded uniformly in time, e.g.,
\[
\sup_{t \in [-2,0]} \left( \Vert a_{ij}(\cdot,t)\Vert_{C^{0,\alpha}(B_{1})} + \Vert b_{i}(\cdot,t)\Vert_{C^{0,\alpha}(B_{1})} + \Vert c(\cdot,t)\Vert_{C^{0,\alpha}(B_{1})}  \right) < \Lambda
\]
and that the equation is uniformly parabolic in the sense that
\[
a_{ij}(x,t) \xi_{i}\xi_{j} \geq \lambda |\xi|^{2}
\]
for $\lambda > 0$. Then, for $T \in (-1,0]$,
\[
\sup_{t\in[-1,T]} \Vert u(\cdot,t)\Vert_{C^{2,\alpha}(B_{1})} \leq C  \sup_{t\in[-2,T]}  \left( \Vert u(\cdot,t) \Vert_{C^{0}(B_{2})} +\Vert f(\cdot,t)\Vert_{C^{0,\alpha}(B_{2})} \right)
\]
for some $C=C(n,\lambda,\Lambda)$. 
\end{theo}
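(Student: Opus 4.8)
The plan is to prove this by a Campanato-type perturbation scheme, the one non-standard feature being that the \emph{frozen} comparison operator must be allowed to keep its time dependence, so that H\"older regularity in $t$ is neither needed nor obtained; indeed one should only aim for a bound on $\sup_t\|u(\cdot,t)\|_{C^{2,\alpha}}$, not for continuity of $D^2u$ in $t$, which can genuinely fail. By the usual covering, translation, and parabolic-scaling reductions it suffices to bound the spatial $C^{2,\alpha}$-seminorm of $u$ at a single interior point, which after translation we take to be $(0,t_0)$ with $t_0\in[-1,T]$, in terms of the data on the cylinder $Q:=B_{3/2}\times(t_0-1,t_0]\subset B_2\times(-2,T]$. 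Once $[D^2u(\cdot,t_0)]_{C^{0,\alpha}(B_{1/2})}$ is controlled uniformly in $t_0$, the quantities $\|u(\cdot,t)\|_{C^0}$ and $\|Du(\cdot,t)\|_{C^0}$ follow by interpolation on each time slice, and a crude interior $L^2\to L^\infty$ estimate bounds $\sup_Q|D^2u|$, which may then be absorbed.

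\textbf{Step 1: the frozen operator.} First I would record interior estimates for the constant-in-space operator $\mathcal P_0:=\partial_t-a_{ij}(0,t)D^2_{ij}$, whose coefficients are bounded, uniformly elliptic, and only measurable in $t$. Because $a_{ij}$ depends on $t$ alone, $\mathcal P_0$ has an \emph{explicit Gaussian fundamental solution} --- its spatial Fourier transform satisfies $\partial_t\hat v=-a_{ij}(t)\xi_i\xi_j\hat v$, a Gaussian-in-$\xi$ multiplier --- whose spatial derivatives obey heat-kernel-type bounds; alternatively, every $x$-derivative of a solution $v$ of $\mathcal P_0v=0$ is again annihilated by $\mathcal P_0$, so iterating Caccioppoli inequalities together with a local-boundedness estimate gives the same information intrinsically. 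Either way, on a cylinder $Q_R$ a solution $v$ of $\mathcal P_0v=0$ is smooth in $x$ at each fixed time with $\sup_{Q_{R/2}}|D^3_xv|\le CR^{-3}(\text{mean of }|v|^2\text{ over }Q_R)^{1/2}$, $C=C(n,\lambda,\Lambda)$; in particular $D^2_xv(\cdot,t_0)$ is Lipschitz in $x$, so its $L^2$-oscillation over $B_\rho$ decays like $(\rho/R)^2$, and the Lipschitz exponent $1$ strictly beats the target exponent $\alpha<1$, which is what makes the iteration close.

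\textbf{Step 2: perturbation and iteration.} On $Q_\rho:=B_\rho\times(t_0-\rho^2,t_0]$ let $v$ solve $\mathcal P_0v=f(0,t)$ with $v=u$ on the parabolic boundary. Then $w:=u-v$ has zero parabolic boundary data and $\mathcal P_0w=(a_{ij}(x,t)-a_{ij}(0,t))D^2_{ij}u+b_iD_iu+cu+(f(x,t)-f(0,t))$, so the clean second-derivative $L^2$ estimate on $Q_\rho$ --- test with $-\Delta w$, which is clean precisely because $\mathcal P_0$ has no $x$-dependence --- yields $\|D^2w\|_{L^2(Q_\rho)}\le C\|(\text{right-hand side})\|_{L^2(Q_\rho)}$. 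The hypothesis enters \emph{only} at this point, and uses spatial H\"older control alone: $|a_{ij}(x,t)-a_{ij}(0,t)|\le\Lambda|x|^\alpha$ and $|f(x,t)-f(0,t)|\le\Lambda|x|^\alpha$ for $x\in B_\rho$, uniformly in $t$. Writing $\Phi(\rho)$ for the $L^2$-oscillation of $D^2u(\cdot,t_0)$ over $B_\rho$ and $M:=\sup_Q|D^2u|$, Step 1 applied to $D^2v$ together with the bound on $D^2w$ gives $\Phi(\theta\rho)\le C\theta^2\Phi(\rho)+C\rho^{2\alpha}(\Lambda^2M^2+\Lambda^2+\|f\|_{C^{0,\alpha}_x}^2+\text{lower order})$; taking $\theta$ small and iterating produces $\Phi(\rho)\le C\rho^{2\alpha}(\cdots)$ uniformly in $t_0$, i.e.\ the Campanato characterization of $[D^2u(\cdot,t_0)]_{C^{0,\alpha}(B_{1/2})}\le C$. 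Absorbing $M$ and the lower-order $C^1$ terms as in Step 1 then completes the estimate.

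\textbf{The main point / obstacle.} Each ingredient above is classical; the delicate feature, and the actual content of the theorem, is the bookkeeping that keeps every error $O(\rho^\alpha)$ \emph{uniformly in} $t$ --- which forces one to freeze coefficients and inhomogeneity in the \emph{space} variables only, leaving the comparison operator $\mathcal P_0$ time-dependent, and to settle for a uniform-in-$t$ bound rather than time regularity of $D^2u$. The alternative, which is Brandt's original route in \cite{Brandt:interiorSchauder}, dispenses with the $L^2$ machinery in favor of the maximum principle: one constructs explicit barriers comparing $u$ on parabolic cylinders with frozen-coefficient solutions and with quadratic-in-$x$ polynomials, and reads off the decay of oscillation of $D^2_xu$ directly in the sup-norm; there the barrier construction is the one genuinely technical step.
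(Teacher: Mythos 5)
The paper does not actually prove Theorem \ref{theo:brandt-schauder} --- it is cited as a result of Brandt \cite{Brandt:interiorSchauder} (with strengthenings in \cite{Knerr:interiorSchauder,Lieberman:IV}). So there is no proof in the paper to compare against; what can be assessed is whether your sketch is a viable argument for the statement.

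The conceptual core of your proposal is right and is precisely what makes the estimate ``non-standard'': freeze the coefficients and the inhomogeneity in $x$ only, so that the comparison operator $\cP_0 = \partial_t - a_{ij}(t)D^2_{ij}$ retains its merely measurable $t$-dependence, and then only claim a bound uniform in $t$ rather than any H\"older continuity in $t$. You also correctly identify that Brandt's actual route is via the maximum principle with explicit barriers rather than an $L^2$-energy scheme, and that the conclusion genuinely cannot include time regularity of $D^2u$.

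However, Step 2 as written has a real gap. Your oscillation functional $\Phi(\rho)$ is the $L^2$-oscillation of $D^2u(\cdot,t_0)$ on a \emph{single time slice}, but the only bound you extract for $w=u-v$ is $\Vert D^2 w\Vert_{L^2(Q_\rho)}$, a \emph{space-time} $L^2$ bound. To close the iteration for $\Phi(\theta\rho)$ you would need $\Vert D^2 w(\cdot,t_0)\Vert_{L^2(B_{\theta\rho})}$, and this single-slice bound is not a consequence of testing with $-\Delta w$: that test yields $\sup_t\Vert \nabla w(\cdot,t)\Vert_{L^2}$ and $\Vert D^2w\Vert_{L^2(Q_\rho)}$, one derivative short at the final slice. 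The usual fixes --- testing with $-\Delta w_t$ or $\Delta^2 w$, or differentiating the equation in $t$ --- all bring $a_{ij}'(t)$ or $\partial_t g$ into play, which the hypotheses do not control (indeed must not be controlled, since that is the whole point of the theorem). This is not a cosmetic issue: with only a space-time Campanato decay one would obtain H\"older continuity of $D^2u$ in the parabolic metric, i.e.\ H\"older continuity in $t$, which is false when $a_{ij},f$ are merely measurable in $t$.

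Two ways to repair the scheme: (i) replace $\Phi(\rho)$ by the Campanato quantity with time-dependent constants, $\inf_{c(\cdot)}\bigl(\rho^{-(n+2)}\int_{Q_\rho}|D^2u-c(t)|^2\bigr)^{1/2}$, which the space-time bound on $D^2w$ does control; then the resulting decay, together with the a priori continuity of $D^2u$ (built into the hypothesis that $u$ is a classical solution), yields the single-slice $C^\alpha$ bound uniformly in $t$. Or (ii) bypass the energy method entirely and exploit the explicit Gaussian kernel of $\cP_0$: write $w$ via Duhamel against $\Gamma$, and prove the spatial Schauder estimate for $D^2w(\cdot,t_0)$ by the standard singular-integral/Korn's-trick cancellation, which uses only $\sup_t\Vert g(\cdot,t)\Vert_{C^{0,\alpha}_x}$ and the Gaussian kernel bounds, integrating out the $t$-dependence. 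Either is more work than what you wrote, and Brandt's maximum-principle argument, which you mention as the ``alternative,'' is in fact the path of least resistance.
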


We now explain how to relate the $\cL_{\frac12}$-operator considered in Proposition \ref{prop:CS-schauder} to a parabolic equation where we can apply Theorem \ref{theo:brandt-schauder}. 
\begin{defi}[Intrinsic shrinker quantities] It is useful to consider the intrinsic behavior of the shrinker $\Sigma$ under the mean curvature flow. To this end, for $t\in [-1,0)$, we define the (time dependent) vector field $X_{t} = \frac{1}{2(-t)}x^{T}$. Here, $x^{T}$ is the tangential component of the position vector along $\Sigma$. For $t\in[-1,0)$, define $\Phi_{t}: \Sigma\to \Sigma$ to be the family of diffeomorphisms generated by $X_{t}$ (i.e., $\frac{\partial}{\partial t}\Phi_{t} = X_{t}\circ \Phi_{t}$) with $\Phi_{-1} = \Id$. Finally, define the metric $\hat g_{t} : = (-t) \Phi_{t}^{*} g_{\Sigma}$.
\end{defi}

Observe that if $F:\Sigma\to\RR^{n+1}$ is the embedding of $\Sigma$ in $\RR^{n+1}$, then 
\[
\hat F_{t} : = \sqrt{-t} (F \circ \Phi_{t}) : \Sigma \to \RR^{n+1}
\]
is a mean curvature flow of hypersurfaces parametrized by normal speed. Moreover, we have that $\hat g_{t} = \hat F_{t}^{*}g_{\RR^{n+1}}$. Thus, because the (extrinsic) blow-down of $\Sigma$ is $\cC$, we see that $(\Sigma,\hat g_{t},p)$ converges in the pointed $C^{\infty}$-Cheeger--Gromov sense to (the incomplete metric) $(\cC,g_{\cC},p)$ for any point $p$ sufficiently far out in the conical part of $\Sigma$. This will be useful in the sequel.

As in the proof of Corollary \ref{coro:metric-conical-shrinker} we write the end of $\Sigma$ via the map $F: \cC \setminus B_r(0) \rightarrow \Sigma, p \mapsto F(p) + w(p) \nu_\cC(p)$ as a normal graph over the cone $\cC$ with  coordinates $(r,\omega) \in \Gamma \times [R,\infty)$ for $R$ sufficiently large. We consider the induced flow of $\Phi_t$ in these coordinates, i.e.
$$ \tilde{\Phi}_t: = F^{-1}\circ \Phi_t \circ F\, $$ 
For $t \in [-1,0)$ we consider the map 
$$\phi_t: (R, \infty)\times \Gamma  \rightarrow (R, \infty)\times \Gamma, (r,\omega) \mapsto ((-t)^{-1/2} r, \omega)\, .$$
Then we have the following estimates.
\begin{lemm}\label{lemm:behavior-Phi-r}
For $t \in [-1,0)$, for $r$ sufficiently large, we have
\[
d_{g_\cC}\left(\tilde{\Phi}_t( r, \theta), \phi_t(r, \theta)\right) \lesssim \frac{1}{\sqrt{-t}\, r}.
\]
Moreover, in the coordinates\footnote{We emphasize that in this estimate we are \emph{not} using the conical metric, but rather the flat cylindrical metric $dr^{2} + g_{\Gamma}$ to estimate these derivatives. This avoids defining derivatives of diffeomorphisms as sections of an appropriate bundle and this estimate here suffices for our purposes.} $(r,\omega)$ we have the (non-sharp) estimate
\[
\big|D^{(j)}\big(\tilde{\Phi}_t - \phi_t\big)\big| (r,\theta)  \lesssim \frac{1}{\sqrt{-t}\, r^{1+j-\eta}} \, .
\]
for $j \geq 1$ and $\eta>0$. 
\end{lemm}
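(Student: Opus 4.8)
The plan is to analyze the induced flow $\tilde\Phi_t = F^{-1}\circ\Phi_t\circ F$ by writing down the ODE it solves in the $(r,\omega)$ coordinates and comparing it to the explicit flow $\phi_t(r,\omega) = ((-t)^{-1/2}r,\omega)$, which is the flow generated by the pure radial dilation field $Y_t = \tfrac{1}{2(-t)} r\partial_r$ on the cone. First I would recall that $\Phi_t$ is generated by $X_t = \tfrac{1}{2(-t)} x^T$, where $x^T = \proj_{T\Sigma} F(p)$. By Lemma \ref{lemm:proj-radial-vect-Sigma} we have $x^T = r\partial_r F + V$ with $|V| = O(r^{-1})$ and $|\nabla^{(k)}V| = O(r^{-1-k+\eta})$. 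Pulling back by $F$, the generating vector field of $\tilde\Phi_t$ in the $(r,\omega)$ coordinates is thus $\tilde X_t = \tfrac{1}{2(-t)}(r\partial_r + F_*^{-1}V)$, i.e. it equals the generator $Y_t$ of $\phi_t$ plus an error field $\tfrac{1}{2(-t)}F_*^{-1}V$ of size $O(r^{-1})$ (measured in the flat cylindrical metric $dr^2 + g_\Gamma$, using Corollary \ref{coro:metric-conical-shrinker} to compare $g_{\cC}$ and $g_\Sigma$ so that the push-forward/pull-back does not lose anything worse than the stated rates), with the analogous decay for its cylindrical derivatives.

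The next step is a Gronwall/variation-of-constants argument. Write $\Psi_t = \tilde\Phi_t \circ \phi_t^{-1}$, so $\Psi_{-1} = \Id$, and compute $\frac{\partial}{\partial t}\Psi_t$: since $\phi_t$ is generated by $Y_t$ and $\tilde\Phi_t$ by $Y_t$ plus the error, the difference $\Psi_t$ is driven only by the $O(r^{-1})$ error field, transported along the dilation. Concretely, along the flow the radial coordinate grows like $r(t) = (-t)^{-1/2} r(-1)$, so integrating $\frac{d}{dt}$ of the difference from $-1$ to $t$ and using that the error field has magnitude $\lesssim \tfrac{1}{(-t)}\cdot \tfrac{1}{r(t)} = \tfrac{1}{(-t)}\cdot \tfrac{\sqrt{-t}}{r} = \tfrac{1}{\sqrt{-t}\,r}$ at parameter time $t$... actually one must be slightly careful: integrating $\int_{-1}^t \tfrac{ds}{\sqrt{-s}\,\sqrt{-s}^{-1}r}$ type expressions, the bound $d_{g_\cC}(\tilde\Phi_t(r,\theta),\phi_t(r,\theta)) \lesssim \tfrac{1}{\sqrt{-t}\,r}$ drops out because the worst contribution comes from times near $t$ (where $r(s)$ is largest, hence the error smallest, but the $\tfrac{1}{-s}$ prefactor is largest) and the integral $\int_{-1}^t \tfrac{ds}{(-s)r(s)}$ is dominated by its endpoint behavior, giving a constant times $\tfrac{1}{\sqrt{-t}\,r}$. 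Gronwall handles the fact that the error field also depends on the (moving) position, closing the estimate since the Lipschitz-in-position norm of the error field is small for $r$ large.

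For the derivative estimates, I would differentiate the ODE for $\Psi_t$ in the $(r,\omega)$ variables $j$ times and apply the same integrating-factor argument to the linearized system: the inhomogeneous term involves $D^{(j)}$ of the error field composed with $\tilde\Phi_t$, which is $O(r^{-1-j+\eta})$ by the derivative bounds on $V$ (Lemma \ref{lemm:proj-radial-vect-Sigma}) and the chain rule (the $\eta$-loss already present there propagates, and composing with $\tilde\Phi_t$, which is $C^1$-close to $\phi_t$, costs nothing extra at the level of these non-sharp bounds), plus lower-order terms controlled inductively. One again integrates from $-1$ to $t$; the homogeneous part of the linearized flow is the differential of $\phi_t$, a pure scaling by $(-t)^{-1/2}$, whose contribution is benign, and one collects the factor $\tfrac{1}{\sqrt{-t}}$ exactly as in the $j=0$ case to obtain $|D^{(j)}(\tilde\Phi_t - \phi_t)|(r,\theta) \lesssim \tfrac{1}{\sqrt{-t}\,r^{1+j-\eta}}$.

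The main obstacle I anticipate is bookkeeping the time-dependence correctly: the singular $\tfrac{1}{2(-t)}$ factor in the generator means the flow ``speeds up'' as $t\nearrow 0$, so one must verify that the radial variable genuinely grows like $(-t)^{-1/2}$ along trajectories (this is exactly why we compare to $\phi_t$ rather than to the identity) and then check that the time integral of the error, weighted by the singular prefactor, still converges and produces precisely the claimed $\tfrac{1}{\sqrt{-t}}$ power rather than, say, a logarithm or a worse singularity. The error field decay $O(r^{-1})$ is just strong enough for this; a heuristic check is that $\int_{-1}^{t}\tfrac{1}{(-s)}\cdot\tfrac{\sqrt{-s}}{r}\,ds = \tfrac{1}{r}\int_{-1}^t (-s)^{-1/2}\,ds \lesssim \tfrac{1}{r}$, which is even better than claimed, leaving room for the Gronwall correction and for the composition with the (nearly dilation) flow. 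Everything else — the Gronwall closing, the inductive derivative estimates — is routine once the error field and its decay are identified via Lemma \ref{lemm:proj-radial-vect-Sigma} and Corollary \ref{coro:metric-conical-shrinker}.
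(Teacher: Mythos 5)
Your overall approach — pull back $\Phi_t$ to coordinates on the end, identify the generator as the pure dilation field $Y_t = \frac{1}{2(-t)}r\partial_r$ plus an $O(r^{-1})$ error field coming from Lemma~\ref{lemm:proj-radial-vect-Sigma}, and then run a Gronwall/comparison argument against $\phi_t$ — is essentially the same as the paper's; the paper just estimates $d_{g_\cC}(\tilde\Phi_t(\cdot),\phi_t(\cdot))$ directly rather than passing to $\Psi_t = \tilde\Phi_t\circ\phi_t^{-1}$.

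There is, however, a genuine confusion in your Gronwall step about where the $(-t)^{-1/2}$ comes from, and it shows up as an internal inconsistency in your own paragraph. You first assert that $\int_{-1}^t \frac{ds}{(-s)\,r(s)}$ ``is dominated by its endpoint behavior, giving a constant times $\frac{1}{\sqrt{-t}\,r}$,'' but your subsequent heuristic check correctly finds $\int_{-1}^t \frac{1}{(-s)}\cdot\frac{\sqrt{-s}}{r}\,ds = \frac{1}{r}\int_{-1}^t (-s)^{-1/2}\,ds \lesssim \frac{1}{r}$: the integrand is integrable at $s=0$, there is no endpoint concentration, and the inhomogeneous integral alone produces only $O(1/r)$. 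Likewise, your closing remark that the Gronwall constant is controlled because ``the Lipschitz-in-position norm of the error field is small for $r$ large'' points at the wrong source. The factor $(-t)^{-1/2}$ is produced by the \emph{homogeneous} Gronwall term, namely the Lipschitz constant of $Y_t$ itself, which is $\frac{1}{2(-t)}$ — large, not small, as $t\nearrow 0$. Concretely, setting $D(t) = d_{g_\cC}(\tilde\Phi_t(\cdot),\phi_t(\cdot))$, the triangle inequality for velocities gives
\[
D'(t) \leq |Y_t(\tilde\Phi_t) - Y_t(\phi_t)| + |E_t(\tilde\Phi_t)| \leq \frac{1}{2(-t)}D(t) + \frac{c}{2\sqrt{-t}\,\underline r},
\]
and the integrating factor $(-t)^{1/2}$ yields $(-t)^{1/2}D(t) \leq \frac{c}{2\underline r}(t+1)$, hence $D(t) \lesssim \frac{1}{\sqrt{-t}\,\underline r}$. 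In your $\Psi_t$ formulation the same $\frac{1}{2(-t)}$ reappears through the commutator term $Y_t\circ\Psi_t - D\Psi_t\cdot Y_t$ (which does not vanish once $\Psi_t\neq\Id$), so it must be tracked rather than dismissed as small. Once this is corrected, the rest of the argument, including the inductive derivative estimates, goes through as you describe.
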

\begin{proof}
We denote the ambient radius by $\underbar{r}(x):= |x|$ and compute along $\Sigma$, using Lemma \ref{lemm:proj-radial-vect-Sigma},
\begin{align*}
\frac{\partial}{\partial t}( \underbar{r}\circ \Phi_{t}) & = (\nabla_{\dot \Phi_{t}} \underbar{r}) \circ\Phi_{t}\\
& = \frac{1}{2(-t)} \bangle{x^{T},\nabla \underbar{r}}_{g} \circ \Phi_{t}\\
& = \frac{1}{2(-t)}( r \circ \Phi_{t} + O((r\circ \Phi_{t})^{-1})).
\end{align*}
Integrating this, we see that
\begin{align}\label{eq:behavior-Phi-r.1}
\frac{1}{\sqrt{-t}} \left(\underbar{r}(x) - \frac{c}{\underbar{r}(x)}\right)  \leq \underbar{r}(\Phi_{t}(x)) \leq \frac{1}{\sqrt{-t}} \left(\underbar{r}(x) - \frac{c}{\underbar{r}(x)}\right)\, . 
\end{align}

Now, we have that
\[
x^{T} = r \partial_{r}F + O(r^{-1})
\]
by Lemma \ref{lemm:proj-radial-vect-Sigma}. 
This implies that 
\begin{equation}\label{eq:behavior-Phi-r.2}
\frac{\partial}{\partial t} \tilde{\Phi}_t = \frac{1}{2(-t)} \left( r \partial_r + O(r^{-1})\partial_{r} + O(r^{-2})\partial_{\omega_{i}}  \right)
\end{equation}
where the right hand side is evaluated at $\tilde{\Phi}_{t}(\cdot)$. 
Note that $\phi_t$ satisfies
\[
\frac{\partial}{\partial t} \phi_t = \frac{1}{2(-t)}\,  r \partial_r ,
\]
where the right hand side is evaluated at $\phi_{t}(\cdot)$. In combination with \eqref{eq:behavior-Phi-r.1}, this implies that
\begin{align*}
\frac{\partial}{\partial t} d_{g_\cC}(\tilde{\Phi}_t(\cdot),\phi_t(\cdot)) & \leq \frac{1}{2(-t)} \left( d_{g_\cC}(\tilde{\Phi}_t(\cdot) ,\phi_t(\cdot)) + \frac{c}{\underline r(\Phi_{t}(\cdot))}\right)\\
& \leq \frac{1}{2(-t)} \left( d_{g_\cC}(\tilde{\Phi}_t(\cdot) ,\phi_t(\cdot)) + \frac{c}{\underline r(\cdot)} \sqrt{-t} \right).
\end{align*}
Integrating this yields
\[
d_{g_\cC}(\tilde{\Phi}_t(\cdot),\phi_t(\cdot)) \leq \frac{c}{\sqrt{-t}\, \underline r(\cdot)}
\]
The derivative estimates follow similarly. 
\end{proof}

Now, assume that $\cL_{\frac 12} u + a u =E$ for some $u\in C^{2,\alpha}_{\textrm{loc}}(\Sigma)$ and $a : \Sigma\to\RR$ with $\Vert a \Vert_{C^{0,\alpha}(B_{1}(x))} = O(|x|^{-2})$ for $x\in\Sigma$ with $|x|\to\infty$. We define 
\[
\hat u(x,t) := \sqrt{-t} (u (\Phi_{t}(x)), \qquad \hat E(x,t) := \frac{1}{\sqrt{-t}} E (\Phi_{t}(x)), \qquad \hat a(x,t) =  \frac{1}{(-t)} a(\Phi_{t}(x))
\]
Then, we find that
\[
\Delta_{\hat g_{t}} \hat u = \frac{1}{\sqrt{-t}} (\Delta_{g} u)\circ \Phi_{t},
\]
since the Laplacian is diffeomorphism invariant, as well as
\begin{align*}
\frac{\partial \hat u} {\partial t}  & = \sqrt{-t} (\nabla_{X_{t}} u) \circ \Phi_{t} - \frac{1}{2\sqrt{-t}} u \circ \Phi_{t}\\
& = \frac{1}{2\sqrt{-t}}  \left( \vec{x}\cdot \nabla_{\Sigma} u \circ \Phi_{t} - u \circ \Phi_{t}\right).
\end{align*}
We thus find that
\begin{equation}\label{eq:parabolic-L}
\frac{\partial \hat u} {\partial t} - \Delta_{\hat g_{t}} \hat u - \hat a \hat u = \hat E.
\end{equation}

We now use this equation in conjunction with Theorem \ref{theo:brandt-schauder} to prove the desired Schauder estimates. Observe that Lemma \ref{lemm:behavior-Phi-r} and the presumed decay of $a$ shows that $\hat a(\cdot,t)$ is uniformly bounded in $C^{0,\alpha}$ on sufficiently far out balls of unit size, allowing us to apply Theorem \ref{theo:brandt-schauder}. 
\begin{proof}[Proof of Proposition \ref{prop:CS-schauder}] We can choose $R$ sufficiently large such that the normal evolution of $\Sigma_t:=\sqrt{t}\cdot \Sigma$ for $t\in [-2,0)$ is almost orthogonal to $x$ outside of $B_{R/4}$. Applying Theorem \ref{theo:brandt-schauder} to \eqref{eq:parabolic-L} we find that (where the implied constant is independent of $R$ sufficiently large)
\begin{align*}
 &\sup_{t\in[-1,0)} \Vert D^{2}_{x} \hat u(\cdot,t)\Vert_{C^{0}(\Sigma_t \cap (B_{R+2}(0)\setminus B_{R+1}(0)))} + 
  \sup_{t\in[-1,0)} \Vert D_{x} \hat u(\cdot,t)\Vert_{C^{0}(\Sigma_t \cap (B_{R+2}(0)\setminus B_{R+1}(0)))}\\
 &+\sup_{t\in[-1,0)}  [D^{2}_{x} \hat u(\cdot,t)]_{\alpha ;\, \Sigma_t \cap (B_{R+2}(0)\setminus B_{R+1}(0)))} 
\\ 
&\qquad\lesssim \sup_{t\in[-2,0)} \Vert \hat u(\cdot,t)\Vert_{C^{0}(\Sigma_t \cap (B_{R+3}(0)\setminus B_{R}(0)))} 
+  \sup_{t\in [-2,0)}  [ \hat E(\cdot,t)]_{\alpha;\, \Sigma_t \cap (B_{R+3}(0)\setminus B_{R}(0)))} .
\end{align*}
On the other hand, Lemma \ref{lemm:behavior-Phi-r} implies that for $R$ sufficiently large, we can estimate the H\"older norms of $\hat u$ in terms of weighted norms of $u$ as follows:
\begin{align*}
& \sup_{t\in[-1,0)} \Vert D^{2}_{x} \hat u(\cdot,t)\Vert_{C^{0}(\Sigma \cap (B_{R+2}(0)\setminus B_{R+1}(0)))}
+  \sup_{t\in[-1,0)} \Vert D_{x} \hat u(\cdot,t)\Vert_{C^{0}(\Sigma \cap (B_{R+2}(0)\setminus B_{R+1}(0)))} \\
& \qquad +  \sup_{t\in[-1,0)}  [D^{2}_{x} \hat u(\cdot,t)]_{\alpha ; \Sigma \cap (B_{R+2}(0)\setminus B_{R+1}(0))} \\
& \gtrsim \sup_{x \in \Sigma \setminus B_{R+1}(0)} r(x) |D^{2}u(x)| \\
& \qquad + \sup_{x,y \in \Sigma \setminus B_{R+1}(0)} \frac{1}{r(x)^{-1-\alpha}+r(y)^{-1-\alpha}} \frac{| D^{2}u(x) - D^{2}u(y)|}{|x-y|^{\alpha}} .
\end{align*}
Arguing similarly for the other terms, we thus rewrite the above parabolic Schauder estimates as weighted elliptic estimates.
\begin{align*}
\sup_{x \in \Sigma \setminus B_{R+1}(0)} r(x) |D^{2}u(x)| &+ \sup_{x,y \in \Sigma \setminus B_{R+1}(0)} \frac{1}{r(x)^{-1-\alpha}+r(y)^{-1-\alpha}} \frac{| D^{2}u(x) - D^{2}u(y)|}{|x-y|^{\alpha}} \\ \\ 
& \lesssim \sup_{x \in \Sigma \setminus B_{R/2}(0)}  r(x)^{-1} |u(x)| + \sup_{x \in \Sigma \setminus B_{R/2}(0)}  r(x) |E(x)| \\
& \qquad + \sup_{x,y \in \Sigma \setminus B_{R/2}(0)} \frac{1}{r(x)^{-1-\alpha}+r(y)^{-1-\alpha}} \frac{| E(x) - E(y)|}{|x-y|^{\alpha}} .
\end{align*}
This implies
\begin{align*}
\Vert D^{2}u \Vert_{ \cC \cS^{0,\alpha}_{-1}(\Sigma\setminus B_{R+1}(0))}&= \sup_{x \in \Sigma \setminus B_{R+1}(0)} r(x) |D^{2}u(x)| \\
& \qquad + \sup_{x,y \in \Sigma  \setminus B_{R+1}(0)} \frac{1}{r(x)^{-1-\alpha}+r(y)^{-1-\alpha}} \frac{| D^{2}u(x) - D^{2}u(y)|}{|x-y|^{\alpha}} \\ \\ 
& \lesssim \sup_{x \in \Sigma \setminus B_{R/2}(0)}  r(x)^{-1} |u(x)|  + \sup_{x \in \Sigma \setminus B_{R/2}(0)}  r(x) |E(x)| \\
& \qquad + \sup_{x,y \in \Sigma \setminus B_{R/2}(0)} \frac{1}{r(x)^{-1-\alpha}+r(y)^{-1-\alpha}} \frac{| E(x) - E(y)|}{|x-y|^{\alpha}} \\ \\
& \lesssim \Vert u \Vert_{C^{0}_{\textrm{hom};+1}(\Sigma \setminus B_{R/2}(0))} + \Vert E \Vert_{\cC\cS^{0,\alpha}_{-1}(\Sigma\setminus B_{R/2}(0))}.
\end{align*}

Arguing similarly for $Du$ and combining all of this with standard interior (elliptic) Schauder theory, we thus find
\begin{equation}\label{eq:parabolic-to-elliptic-first-est}
\Vert Du\Vert_{\cC\cS^{0,\alpha}_{-1}(\Sigma)} + \Vert D^{2}u \Vert_{ \cC\cS^{0,\alpha}_{-1}(\Sigma)} \lesssim \Vert u \Vert_{C^{0}_{\textrm{hom};+1}(\Sigma)} + \Vert E \Vert_{\cC\cS^{0,\alpha}_{-1}(\Sigma)}
 \end{equation}
Note that we can combine this inequality with an interpolation between $u$ in $C^{0}$ and $C^{1}$ to find
\[
\Vert u \Vert_{C^{0,\alpha}_{\textrm{hom};+1}(\Sigma)} \lesssim \Vert u \Vert_{C^{0}_{\textrm{hom};+1}(\Sigma)} + \Vert E \Vert_{\cC\cS^{0,\alpha}_{-1}(\Sigma)}.
\]
This allows us to bound $a u$ in $\cC\cS^{0,\alpha}_{-1}(\Sigma)$ in the sequel. 

We now argue that $u$ can be decomposed as $u(r,\omega) =\chi(r) c(\omega) r + f(r,\omega)$ making $u$ into an element of $\cC\cS^{2,\alpha}_{-1}(\Sigma)$. We have that
\[
w : = r\partial_{r} u - u= - 2 E + 2 \Delta u + 2 a u  - (\vec{x}\cdot \nabla_{\Sigma} u - r\partial_{r}u) .
\]
Combining \eqref{eq:parabolic-to-elliptic-first-est} with Lemma \ref{lemm:proj-radial-vect-Sigma}, we see that $w \in \cC\cS^{0,\alpha}_{-1}$ with
\[
\Vert w \Vert_{\cC\cS^{0,\alpha}_{-1}(\Sigma)} \lesssim \Vert u \Vert_{C^{0}_{\textrm{hom};+1}(\Sigma)} + \Vert E \Vert_{\cC\cS^{0,\alpha}_{-1}(\Sigma)}
\]
Now, we define
\begin{equation}\label{eq:def_c}
c(\omega) : = \frac{u(R,\omega)}{R} + \int_{R}^{\infty} \frac{w(s,\omega)}{s^{2}}ds,
\end{equation}
where $R$ is chosen large above (we emphasize that this expression is independent of the choice of $R$ and that the integral is finite, thanks to the fact that $w\in \cC\cS^{0,\alpha}_{-1}(\Sigma)$).  

We note that the functions
\[
 \omega \mapsto \frac{u(r,\omega)}{r}
\]
have uniformly bounded $C^{2,\alpha}(\Gamma)$ norm for $r$ sufficiently large. On one hand, they converge in $C^{0,\alpha}(\Gamma)$ to $c(\omega)$ by the previous analysis. On the other hand, by Arzel\'a--Ascoli, they converge in $C^{2,\beta}(\Gamma)$ (for any $\beta <\alpha$) to $c(\omega) \in C^{2,\alpha}(\Gamma)$, and we find that (by lower semicontinuity of the H\"older norm in this situation)
\[
\Vert c \Vert_{C^{2,\alpha}(\Gamma)} \leq \Vert D^{2}u \Vert_{\cC\cS^{0,\alpha}_{-1}(\Sigma)} \lesssim \Vert u \Vert_{C^{0}_{\textrm{hom};+1}(\Sigma)} + \Vert E \Vert_{\cC\cS^{0,\alpha}_{-1}(\Sigma)}
\]
where we again used \eqref{eq:parabolic-to-elliptic-first-est} in the second inequality. Now, defining
\[
f(r,\omega)  =\chi(r) c(\omega) r - u(r,\omega),
\]
we see that $f\in C^{2,\alpha}_{\textrm{loc}}(\Sigma)$. Note for $r$ sufficiently large we have from \eqref{eq:def_c} that
\[
f(r,\omega) = r \int_r^\infty \frac{w(s,\omega)}{s^{2}}ds\, ,
\]
which implies
\[
\Vert f \Vert_{\cC\cS^{0,\alpha}_{-1}(\Sigma)} \lesssim \Vert u \Vert_{C^{0}_{\textrm{hom};+1}(\Sigma)} + \Vert E \Vert_{\cC\cS^{0,\alpha}_{-1}(\Sigma)}\, .
\]
Moreover, using the estimates for $D^{2}u$ (and for $D^{2}(\chi(r)rc(\omega))$ which are easily derived from the $C^{2,\alpha}$ estimate for $c$), along with interpolation, we find that
\[
\Vert f \Vert_{C^{2,\alpha}_{\textrm{hom},-1}(\Sigma)} \lesssim \Vert u \Vert_{C^{0}_{\textrm{hom};+1}(\Sigma)} + \Vert E \Vert_{\cC\cS^{0,\alpha}_{-1}(\Sigma)}\, .
\]
Finally, it remains to estimate $\vec{x}\cdot \nabla_{\Sigma} f \in C^{0,\alpha}_{\textrm{hom},-1}(\Sigma)$. However, this follows from
\[
r \partial_{r} f = f - w
\]
and Lemma \ref{lemm:proj-radial-vect-Sigma}. This completes the proof. 
\end{proof}

\subsection{Weighted Sobolev spaces}\label{subsec:weighted:sob}
In this section, we combine the H\"older space theory developed above, with integral estimates and a Fredholm alternative to establish existence results for the $\cL_{\frac12}$ operator. The way to use these weighted Sobolev spaces to prove the Fredholm alternative (cf.\ Theorem \ref{thm:Lax-Milgram-consequence} below) was explained to us by J.\ Bernstein \cite{bernstein:private-Schauder}. 

We denote by $L^{2}_{W}$\index{$L^{2}_{W}(\Sigma)$} the space of measurable functions $f:\Sigma\to\RR$ with
\[
\Vert f \Vert_{W}^{2} := \int_{\Sigma} f^{2} \rho\,  d\cH^{n}<\infty. 
\]
We then define the Sobolev norm
\[
\Vert f \Vert_{W,k}^{2} : = \sum_{j=0}^{k} \Vert (\nabla_{\Sigma})^{j} f\Vert_{W}^{2}. 
\]
It is easy to see that the associated Sobolev space $H_{W}^{k}(\Sigma)$\index{$H_{W}^{k}(\Sigma)$}  is precisely the closure of $C^{\infty}_{0}(\Sigma)$ under this norm. 

We recall the following Sobolev inequlity due to Ecker \cite[p.\ 109]{Ecker:Sobolev} (see also \cite[Lemma B.1]{BernsteinWang:topological-AC-shrinkers}. 
\begin{prop}\label{prop:ecker-sobolev}
For  $f \in H_W^1(\Sigma)$, we have
\[
 \int_{\Sigma} f^{2} |x|^{2} \rho \, d\cH^{n} \leq 4 \int_{\Sigma}\left( n f^{2} + 4 |\nabla_{\Sigma} f|^{2} \right)\rho \, d\cH^{n}
\]
\end{prop}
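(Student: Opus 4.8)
The plan is to prove this by the weighted integration-by-parts argument of Ecker \cite{Ecker:Sobolev}, whose engine is the fact that on a self-shrinker $|x|^{2}$ behaves like an eigenfunction of the drift Laplacian. First I would record the relevant identities. Writing $\sL := \Delta_{\Sigma} - \tfrac12\vec{x}\cdot\nabla_{\Sigma}$ for the drift Laplacian and using $\nabla_{\Sigma}\rho = -\tfrac12\rho\, x^{T}$, one has $\rho\,\sL u = \Div_{\Sigma}(\rho\,\nabla_{\Sigma}u)$, so $\sL$ is formally self-adjoint with respect to the measure $\rho\, d\cH^{n}$. From $\nabla_{\Sigma}|x|^{2} = 2x^{T}$, $|\nabla_{\Sigma}x|^{2} = n$, and the shrinker equation \eqref{eq:self-shrinker} (so $\Delta_{\Sigma}x = -\tfrac12 x^{\perp}$), a one-line computation gives $\Delta_{\Sigma}|x|^{2} = 2n + 2\bangle{x,\Delta_{\Sigma}x} = 2n - |x^{\perp}|^{2}$ and hence $\sL|x|^{2} = 2n - |x^{\perp}|^{2} - |x^{T}|^{2} = 2n - |x|^{2}$.

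Next I would establish the inequality for $f \in C^{\infty}_{0}(\Sigma)$. Substituting $|x|^{2} = 2n - \sL|x|^{2}$, integrating against $f^{2}\rho$, and integrating by parts (legitimate since $f$ is compactly supported),
\[
\int_{\Sigma} f^{2}|x|^{2}\rho\, d\cH^{n} = 2n\int_{\Sigma} f^{2}\rho\, d\cH^{n} + \int_{\Sigma}\bangle{\nabla_{\Sigma}(f^{2}),\nabla_{\Sigma}|x|^{2}}\rho\, d\cH^{n} = 2n\int_{\Sigma} f^{2}\rho\, d\cH^{n} + 4\int_{\Sigma} f\,\bangle{\nabla_{\Sigma}f,\, x}\,\rho\, d\cH^{n},
\]
using $\nabla_{\Sigma}(f^{2}) = 2f\nabla_{\Sigma}f$, $\nabla_{\Sigma}|x|^{2} = 2x^{T}$, and that $\nabla_{\Sigma}f$ is tangent to $\Sigma$. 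Then Cauchy--Schwarz, $4f\bangle{\nabla_{\Sigma}f, x} \le 4|f|\,|\nabla_{\Sigma}f|\,|x|$, followed by the elementary inequality $4ab \le \tfrac12 a^{2} + 8b^{2}$ applied with $a = |f|\,|x|$ and $b = |\nabla_{\Sigma}f|$, gives
\[
\int_{\Sigma} f^{2}|x|^{2}\rho\, d\cH^{n} \le 2n\int_{\Sigma} f^{2}\rho\, d\cH^{n} + \tfrac12\int_{\Sigma} f^{2}|x|^{2}\rho\, d\cH^{n} + 8\int_{\Sigma}|\nabla_{\Sigma}f|^{2}\rho\, d\cH^{n}.
\]
Absorbing the middle term on the left and multiplying by $2$ yields precisely $\int_{\Sigma} f^{2}|x|^{2}\rho \le 4\int_{\Sigma}(nf^{2} + 4|\nabla_{\Sigma}f|^{2})\rho$, with the stated constants $4$ and $16$.

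Finally, I would upgrade to general $f \in H^{1}_{W}(\Sigma)$ by density: since $H^{1}_{W}(\Sigma)$ is the $\Vert\cdot\Vert_{W,1}$-closure of $C^{\infty}_{0}(\Sigma)$, take $f_{k} \in C^{\infty}_{0}(\Sigma)$ with $f_{k} \to f$ in $H^{1}_{W}$; the right-hand side passes to the limit directly, while for the left-hand side I would pass to a subsequence converging $\cH^{n}$-a.e.\ to $f$ and invoke Fatou's lemma. The main (and essentially only) obstacle is a technical one: justifying the weighted integration by parts on the non-compact $\Sigma$ — which is precisely why one works with $C^{\infty}_{0}$ first — together with the existence of the a.e.-convergent subsequence in the density step; both are routine, so the substance of the argument is entirely the identity $\sL|x|^{2} = 2n - |x|^{2}$ and the Young-inequality bookkeeping.
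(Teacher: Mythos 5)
Your proof is correct and yields exactly the stated constants. It is essentially the same argument as the paper's: you derive the key identity $\tfrac12\int f^2|x|^2\rho = \int(nf^2 + 2f\,\vec{x}\cdot\nabla_\Sigma f)\rho$ by computing $\sL|x|^2 = 2n - |x|^2$ and invoking self-adjointness of the drift Laplacian, whereas the paper gets the same identity by plugging $V = f^2\rho\,\vec{x}$ into the first variation formula and then using the shrinker equation; after that the Young-inequality absorption and density step coincide.
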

\begin{proof}
Assume first $f \in C^{\infty}_{0}(\Sigma)$. Consider the vector field $V : = f^{2}\rho \vec{x}$ in the (Euclidean) first variation formula along $\Sigma$. We obtain
\[
\int_{\Sigma}\left(n f^{2} + 2 f  \vec{x}\cdot \nabla_{\Sigma} f - \frac 12 f^{2}  |x^{T}|^{2} \right)\rho \, d\cH^{n} = \int_{\Sigma} f^{2} H \bangle{x,\nu_{\Sigma}} \rho \, d\cH^{n}
\]
Using the shrinker equation, we thus find
\[
\int_{\Sigma}\left(n f^{2} + 2 f  \vec{x}\cdot \nabla_{\Sigma} f  \right)\rho \, d\cH^{n} = \frac 12 \int_{\Sigma} f^{2} |x|^{2} \rho \, d\cH^{n}
\]
Thus, we find that 
\[
\frac 12 \int_{\Sigma} f^{2} |x|^{2} \rho \, d\cH^{n} \leq \int_{\Sigma}\left(n f^{2} + 2 f  \vec{x}\cdot \nabla_{\Sigma} f \right)\rho \, d\cH^{n} \leq \int_{\Sigma}\left( n f^{2} + 4 |\nabla_{\Sigma} f|^{2} + \frac 1 4 |x|^{2} f^{2} \right)\rho \, d\cH^{n}
\]
Now let $f \in H_W^1(\Sigma)$ and choose $f_i\in C^\infty_0(\Sigma)$ such that $f_i \rightarrow f$ in $H_W^1(\Sigma)$. The above estimate yields for any $R>0$
$$ \int_{\Sigma\cap B_R(0)} f_i^{2} |x|^{2} \rho \, d\cH^{n} \leq 4 \int_{\Sigma}\left( n f_i^{2} + 4 |\nabla_{\Sigma} f_i|^{2} \right)\rho \, d\cH^{n}\, . $$
Letting $i \rightarrow \infty$ and then $R \rightarrow \infty$ yields the statement.
\end{proof}

\begin{coro}\label{coro:cL-sobolev-mapping}
The map $\cL_{\gamma} : H^{2}_{W}(\Sigma) \to L^{2}_{W}(\Sigma)$ is bounded. 
\end{coro}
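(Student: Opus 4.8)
\emph{Proof plan.} The plan is to split $\cL_{\gamma}$ into its three constituent pieces --- the Laplacian $\Delta_{\Sigma} u$, the drift term $\vec x\cdot\nabla_{\Sigma} u$ (up to a constant factor), and the zeroth-order term --- and to bound each in $L^{2}_{W}$ by a fixed constant times $\Vert u\Vert_{W,2}$. Since $H^{2}_{W}(\Sigma)$ is by definition the $\Vert\cdot\Vert_{W,2}$-closure of $C^{\infty}_{0}(\Sigma)$, I would first prove the inequality for $u\in C^{\infty}_{0}(\Sigma)$, where the pointwise differential inequalities below are unproblematic, and then extend to general $u\in H^{2}_{W}(\Sigma)$ by density: taking $u_{i}\to u$ in $H^{2}_{W}$, the sequence $\cL_{\gamma}u_{i}$ is then Cauchy in $L^{2}_{W}$ and its limit coincides with $\cL_{\gamma}u$ as distributions.

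The first and third pieces are routine. For the Laplacian, the pointwise bound $|\Delta_{\Sigma} u|\le\sqrt n\,|\nabla^{2}_{\Sigma} u|$ gives $\Vert\Delta_{\Sigma} u\Vert_{W}\le\sqrt n\,\Vert u\Vert_{W,2}$. For the zeroth-order term, its coefficient is bounded on all of $\Sigma$: it is built from constants together with (at worst) $|A_{\Sigma}|^{2}$, and the decay estimates of Section~\ref{sec:prelim} give $|A_{\Sigma}| = O(r^{-1})$, so the coefficient lies in $C^{0}(\Sigma)$; hence this term contributes at most $C\Vert u\Vert_{W}\le C\Vert u\Vert_{W,2}$.

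The main obstacle, and the only substantive point, is the drift term $\vec x\cdot\nabla_{\Sigma} u$: here the coefficient $\vec x$ is unbounded, so a naive estimate fails and one must exploit the decay of the Gaussian weight $\rho$. Using $|x^{T}|\le|x|$ one has the pointwise bound $|\vec x\cdot\nabla_{\Sigma} u|\le|x|\,|\nabla_{\Sigma} u|$, which reduces matters to controlling $\int_{\Sigma}|x|^{2}|\nabla_{\Sigma} u|^{2}\rho\,d\cH^{n}$. I would apply the weighted Sobolev inequality of Proposition~\ref{prop:ecker-sobolev} to the function $f=|\nabla_{\Sigma} u|$, which lies in $H^{1}_{W}(\Sigma)$ when $u\in C^{\infty}_{0}(\Sigma)$ since it is Lipschitz with compact support; combined with Kato's inequality $|\nabla_{\Sigma}|\nabla_{\Sigma} u||\le|\nabla^{2}_{\Sigma} u|$ (valid a.e.), this bounds $\int_{\Sigma}|x|^{2}|\nabla_{\Sigma} u|^{2}\rho$ by $4\int_{\Sigma}\big(n|\nabla_{\Sigma} u|^{2}+4|\nabla^{2}_{\Sigma} u|^{2}\big)\rho$, which is at most a dimensional constant times $\Vert u\Vert_{W,2}^{2}$.

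Assembling the three bounds gives $\Vert\cL_{\gamma} u\Vert_{W}\le C(n,\Sigma,\gamma)\Vert u\Vert_{W,2}$ for $u\in C^{\infty}_{0}(\Sigma)$, and the density argument above then yields the same for all $u\in H^{2}_{W}(\Sigma)$. I expect no hidden difficulty: the only place real structure enters is the use of Ecker's inequality (which uses that $\Sigma$ is a shrinker) to absorb the unbounded drift coefficient, and the only technical care needed is the harmless use of Kato's inequality to put $|\nabla_\Sigma u|$ into $H^1_W$.
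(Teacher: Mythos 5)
Your proof is correct and follows essentially the same route as the paper: the paper's one-line proof ("apply Ecker's Sobolev inequality to the gradient of $f$") is precisely your treatment of the drift term, and the Kato-inequality step you add is the natural way to make that one-liner rigorous. The only minor imprecision is that $\cL_{\gamma}$ has zeroth-order coefficient simply $\gamma$, not anything involving $|A_{\Sigma}|^{2}$, but since that term is constant your bound holds a fortiori.
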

\begin{proof}
Apply Ecker's Sobolev inequality to the gradient of $f$ to bound $\vec{x}\cdot\nabla_{\Sigma} f \in L^{2}_{W}(\Sigma)$. 
\end{proof}

\begin{lemm}[cf.\ {\cite[Proposition 3.4]{BernsteinWang:degreeExpanders}}]\label{lemm:H1-estimate-cL0}
For $f \in H^{2}_{W}(\Sigma)$, 
\[
\Vert f \Vert_{W,1}^{2} \leq \Vert \cL_{0}f \Vert_{W}\Vert f \Vert_{W}
\]
\end{lemm}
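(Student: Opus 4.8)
The plan is to establish, for $f\in C^\infty_0(\Sigma)$, the weighted integration-by-parts identity $\int_\Sigma(\cL_0 f)\,f\,\rho\,d\cH^n=-\Vert f\Vert_{W,1}^2$, and then conclude by Cauchy--Schwarz together with a routine density step. First I would reduce to compactly supported $f$: by definition $H^2_W(\Sigma)$ is the $\Vert\cdot\Vert_{W,2}$-closure of $C^\infty_0(\Sigma)$, and by Corollary \ref{coro:cL-sobolev-mapping} the operator $\cL_0\colon H^2_W(\Sigma)\to L^2_W(\Sigma)$ is bounded, so both sides of the asserted inequality are continuous under $H^2_W$-convergence; hence it suffices to treat $f\in C^\infty_0(\Sigma)$.

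For such $f$ the computation is the standard one for a Gaussian-drift operator. The point is that the drift part of $\cL_0$ is formally self-adjoint with respect to $\rho\,d\cH^n$: along $\Sigma$ one has $\nabla_\Sigma\rho=-\tfrac12\,\rho\,\vec x^{\,T}$, immediately from $\rho=(4\pi)^{-n/2}e^{-|x|^2/4}$ and $\nabla_\Sigma|x|^2=2\,\vec x^{\,T}$. Applying the intrinsic divergence theorem to the compactly supported tangential vector field $f\rho\,\nabla_\Sigma f$ (no boundary terms occur, $\Sigma$ being a complete self-shrinker and $f$ compactly supported) gives
\begin{align*}
\int_\Sigma(\Delta_\Sigma f)\,f\,\rho\,d\cH^n
&=-\int_\Sigma|\nabla_\Sigma f|^2\,\rho\,d\cH^n-\int_\Sigma f\,\langle\nabla_\Sigma f,\nabla_\Sigma\rho\rangle\,d\cH^n\\
&=-\int_\Sigma|\nabla_\Sigma f|^2\,\rho\,d\cH^n+\tfrac12\int_\Sigma f\,(\vec x\cdot\nabla_\Sigma f)\,\rho\,d\cH^n .
\end{align*}
The last term cancels exactly against the contribution of the first-order term $-\tfrac12\vec x\cdot\nabla_\Sigma f$ in $\cL_0$, while the zeroth-order term of $\cL_0$ (Definition \ref{defi:L-operators}) contributes $-\int_\Sigma f^2\,\rho\,d\cH^n$; altogether $\int_\Sigma(\cL_0 f)\,f\,\rho\,d\cH^n=-\int_\Sigma\bigl(|\nabla_\Sigma f|^2+f^2\bigr)\rho\,d\cH^n=-\Vert f\Vert_{W,1}^2$. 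Cauchy--Schwarz then yields $\Vert f\Vert_{W,1}^2=-\int_\Sigma(\cL_0 f)\,f\,\rho\,d\cH^n\le\Vert\cL_0 f\Vert_W\,\Vert f\Vert_W$, and passing to the limit over $C^\infty_0$-approximations finishes the proof. Note that, in contrast with Proposition \ref{prop:ecker-sobolev}, the shrinker equation itself is not needed here; only the matching of the Gaussian weight to the drift is used.

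The argument is essentially elementary, so there is no substantial obstacle; the one place to be careful is the cancellation of the first-order contribution in the integration by parts (this is precisely where $\nabla_\Sigma\rho=-\tfrac12\rho\,\vec x^{\,T}$ is used) and the bookkeeping of the zeroth-order term against the $\Vert f\Vert_W^2$ that appears in $\Vert f\Vert_{W,1}^2$. The density/no-boundary-term step is immediate here because we have reduced to $f\in C^\infty_0(\Sigma)$; in the global estimates elsewhere the corresponding approximation is the exhaustion-and-cutoff argument used at the end of the proof of Proposition \ref{prop:ecker-sobolev}, relying on the polynomial volume growth of $\Sigma$ and the rapid decay of $\rho$.
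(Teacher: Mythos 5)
Your general strategy — weighted integration by parts against $\rho\,d\cH^n$, then Cauchy--Schwarz, then density of $C^\infty_0$ — is exactly the paper's; the paper just packages it as $0=\int_\Sigma \cL_0(f^2)\rho\,d\cH^n = 2\int_\Sigma(|\nabla f|^2 + f\,\cL_0 f)\rho\,d\cH^n$, which is the same computation. But there is a genuine error in your calculation: by Definition~\ref{defi:L-operators}, $\cL_0 u = \Delta_\Sigma u - \tfrac12\vec x\cdot\nabla_\Sigma u$ has \emph{no} zeroth-order term, so your claim that ``the zeroth-order term of $\cL_0$ contributes $-\int_\Sigma f^2\rho$'' is not correct. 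The integration by parts you wrote down actually gives only
\[
\int_\Sigma (\cL_0 f)\,f\,\rho\,d\cH^n = -\int_\Sigma |\nabla_\Sigma f|^2\,\rho\,d\cH^n = -\Vert\nabla_\Sigma f\Vert_W^2,
\]
not $-\Vert f\Vert_{W,1}^2$. Your claimed identity $\int(\cL_0 f)f\rho=-\Vert f\Vert_{W,1}^2$ is false: take $f\equiv 1$ (which lies in $H^2_W(\Sigma)$, being approximable by cutoffs since $F(\Sigma)<\infty$); then $\cL_0 f=0$ so the left side is $0$, but the right side is $-F(\Sigma)\neq 0$.

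Two remarks. First, the cure: once you remove the spurious zeroth-order contribution, your computation correctly yields $\Vert\nabla_\Sigma f\Vert_W^2\leq\Vert\cL_0 f\Vert_W\Vert f\Vert_W$, which is exactly what the paper's proof establishes, and it is this gradient bound (not the full $W{,}1$-norm bound) that is invoked in the proof of Lemma~\ref{lem:H2_est}. Second, the constant example above shows that the lemma as stated in the paper (with $\Vert f\Vert_{W,1}^2$ on the left) cannot be literally correct either — the left side should read $\Vert\nabla_\Sigma f\Vert_W^2$, or equivalently the inequality should be $\Vert f\Vert_{W,1}^2\leq\Vert\cL_0 f\Vert_W\Vert f\Vert_W+\Vert f\Vert_W^2$. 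You should not try to rescue the stated form by inventing a $-f$ term in $\cL_0$; the issue is in the statement, not in the operator.
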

\begin{proof}
It suffices to prove this for $f\in C^{\infty}_{0}(\Sigma)$. Note that $\cL_{0}$ is self adjoint with respect to the Gaussian area. Thus,
\[
0 = \int_{\Sigma} \cL_{0}(f^{2}) \rho d\cH^{n} = 2 \int_{\Sigma} ( |\nabla f|^{2} + f\cL_{0}f ) \rho \, d\cH^{n}.
\]
This proves the claim.
\end{proof}

\begin{lemm}[{\cite[Proposition B.2]{BernsteinWang:topological-AC-shrinkers}}]\label{lem:compact_emb}
The inclusion $H^{1}_{W}\subset L^{2}_{W}$ is compact. 
\end{lemm}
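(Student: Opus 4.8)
The plan is to run the classical truncation argument for Rellich--Kondrachov compactness on the non-compact manifold $\Sigma$, using Ecker's Sobolev inequality (Proposition \ref{prop:ecker-sobolev}) to prevent loss of $L^2_W$-mass at infinity. Let $\{f_i\}\subset H^1_W(\Sigma)$ be a sequence with $\|f_i\|_{W,1}\le \Lambda$; since $L^2_W(\Sigma)$ is complete, it suffices to extract a subsequence that is Cauchy in $L^2_W$.

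First I would record the uniform \emph{tail estimate}: for every $f\in H^1_W(\Sigma)$ and every $R>0$,
\[
\int_{\Sigma\setminus B_R(0)} f^2 \rho \, d\cH^n \le \frac{1}{R^2}\int_{\Sigma} f^2 |x|^2 \rho \, d\cH^n \le \frac{4}{R^2}\int_{\Sigma}\left( n f^2 + 4|\nabla_\Sigma f|^2\right)\rho \, d\cH^n \le \frac{C(n)}{R^2}\,\|f\|_{W,1}^2 ,
\]
where the middle inequality is exactly Proposition \ref{prop:ecker-sobolev}. Thus the $L^2_W$-mass that any $H^1_W$-bounded family places outside $B_R(0)$ is $O(R^{-2})$, uniformly in the family.

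Next comes interior compactness and diagonalization. On each fixed ball the Gaussian weight $\rho$ is bounded above and below by positive constants, so for every $k\in\NN$ the restrictions $f_i|_{\Sigma\cap B_k(0)}$ are bounded in the \emph{unweighted} space $H^1(\Sigma\cap B_k(0))$ (here we use that $\Sigma$ is properly embedded, hence $\Sigma\cap\overline{B_k(0)}$ is compact). By the Rellich--Kondrachov theorem applied on the precompact subdomain $\Sigma\cap B_k(0)$, a subsequence converges in $L^2(\Sigma\cap B_{k-1}(0))$; iterating over $k$ and passing to a diagonal subsequence, I obtain a single subsequence, still denoted $\{f_i\}$, that converges in $L^2_{\mathrm{loc}}(\Sigma)$. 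To finish, fix $\eps>0$: by the tail estimate choose $R$ so large that $\int_{\Sigma\setminus B_R(0)}(f_i-f_j)^2\rho\,d\cH^n \le C(n)(2\Lambda)^2 R^{-2} < \eps/2$ for all $i,j$; then, since $\rho$ is bounded on $\Sigma\cap B_R(0)$, the $L^2_{\mathrm{loc}}$ convergence gives $\int_{\Sigma\cap B_R(0)}(f_i-f_j)^2\rho\,d\cH^n \le C_R\|f_i-f_j\|_{L^2(\Sigma\cap B_R(0))}^2 < \eps/2$ for $i,j$ large. Hence $\|f_i-f_j\|_W^2<\eps$ for $i,j$ large, so $\{f_i\}$ is Cauchy in $L^2_W$ and converges there, proving compactness of the inclusion.

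The only genuinely non-formal point is the control at infinity, which is precisely what Proposition \ref{prop:ecker-sobolev} supplies; the interior step is just the classical Rellich--Kondrachov theorem on precompact subdomains of $\Sigma$. The one technical care is that $\Sigma\cap B_k(0)$ has boundary, which is why I invoke interior compactness on the slightly smaller ball $B_{k-1}(0)$ (equivalently, one could pre-multiply by a spatial cutoff before applying the unweighted embedding).
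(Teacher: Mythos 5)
Your proof follows exactly the same strategy as the paper's: local Rellich--Kondrachov compactness combined with a diagonal argument to get $L^2_{\mathrm{loc}}$ convergence, then Ecker's Sobolev inequality (Proposition \ref{prop:ecker-sobolev}) to show the $L^2_W$-mass in $\Sigma\setminus B_R(0)$ is uniformly $O(R^{-2})$, hence no mass escapes to infinity. You have simply spelled out the steps the paper leaves implicit; the argument is correct.
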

\begin{proof}
For $f_{j} \in H_{W}^{1}$ with $\Vert f_{j}\Vert_{H_{W}^{1}} \leq C$, the classical Rellich compactness theorem applied to an exhaustion of $\Sigma$ shows that (after passing to a subsequence) there is $f \in H^{1}_{W}$ so that $f_{j}\to f$ in $L^{2}_{\textrm{loc}}$. That $f_{j}\to f$ follows easily from Ecker's Sobolev inequality, which implies that 
\[
\int_{\Sigma\setminus B_{\lambda}(0)} (f_{j}-f)^{2} \rho \, d\cH^{n} \lesssim \frac{C}{\lambda^{2}}. 
\]
This concludes the proof. 
\end{proof}

\begin{lemm}[cf.\ {\cite[Proposition 3.4]{BernsteinWang:degreeExpanders}}]\label{lem:H2_est}
For $f \in H^{2}_{W}(\Sigma)$, we have
\[
\Vert f\Vert_{W,2}^{2} \leq C (\Vert \cL_{0} f \Vert^{2}_{W} + \Vert f \Vert_{W}^{2})
\]
\end{lemm}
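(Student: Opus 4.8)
The plan is to reduce to compactly supported functions and then run a weighted Bochner argument. First I would record that $\cL_{0} = \Delta_{\Sigma}-\tfrac12\vec{x}\cdot\nabla_{\Sigma}$ is exactly the drift Laplacian of the Gaussian weight, $\cL_{0}f = \rho^{-1}\Div_{\Sigma}(\rho\,\nabla_{\Sigma}f)$, so it is self-adjoint on $L^{2}_{W}(\Sigma)$ and $\int_{\Sigma}\cL_{0}g\,\rho\,d\cH^{n}=0$ for $g\in C^{\infty}_{0}(\Sigma)$. Since $H^{2}_{W}(\Sigma)$ is by definition the $\Vert\cdot\Vert_{W,2}$-closure of $C^{\infty}_{0}(\Sigma)$, and both sides of the claimed inequality depend continuously on $f$ in this norm --- the left side trivially and the right side by the boundedness $\cL_{0}\colon H^{2}_{W}(\Sigma)\to L^{2}_{W}(\Sigma)$ from Corollary \ref{coro:cL-sobolev-mapping} --- it suffices to prove the estimate for $f\in C^{\infty}_{0}(\Sigma)$, where all the integrations by parts below are unambiguous (no boundary terms).

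For such $f$ I would start from the Bakry--\'Emery (weighted) Bochner identity
\[
\tfrac12\,\cL_{0}|\nabla_{\Sigma}f|^{2} = |\Hess_{\Sigma}f|^{2} + \langle\nabla_{\Sigma}f,\nabla_{\Sigma}\cL_{0}f\rangle + \Ric_{\rho}(\nabla_{\Sigma}f,\nabla_{\Sigma}f),
\]
where $\Ric_{\rho} := \Ric_{\Sigma}+\Hess_{\Sigma}\!\big(\tfrac14|x|^{2}\big)$ is the weighted Ricci tensor of $(\Sigma,\rho)$. Integrating against $\rho\,d\cH^{n}$: the left-hand term integrates to zero by self-adjointness, and integrating $\langle\nabla_{\Sigma}f,\nabla_{\Sigma}\cL_{0}f\rangle$ by parts turns it into $-\Vert\cL_{0}f\Vert_{W}^{2}$. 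This yields the key identity
\[
\int_{\Sigma}|\Hess_{\Sigma}f|^{2}\,\rho\,d\cH^{n} = \Vert\cL_{0}f\Vert_{W}^{2} - \int_{\Sigma}\Ric_{\rho}(\nabla_{\Sigma}f,\nabla_{\Sigma}f)\,\rho\,d\cH^{n}.
\]

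Next I would bound the weighted Ricci term, which is the only place the geometry of $\Sigma$ enters. Using $\nabla^{\Sigma}_{V}x^{T} = V \pm \langle x,\nu_{\Sigma}\rangle A_{\Sigma}(V)$ one computes $\Hess_{\Sigma}\!\big(\tfrac14|x|^{2}\big) = \tfrac12 g_{\Sigma} \pm \tfrac12\langle x,\nu_{\Sigma}\rangle A_{\Sigma}$, while the Gauss equation gives $|\Ric_{\Sigma}| = O(|A_{\Sigma}|^{2})$; since $\Sigma$ is a smooth, asymptotically conical shrinker, $|A_{\Sigma}|$ is bounded on $\Sigma$ and $|\langle x,\nu_{\Sigma}\rangle|$ is bounded (in fact $O(r^{-1})$ by Corollary \ref{coro:improved-x-perp}), so $|\Ric_{\rho}|\le\Lambda$ on $\Sigma$ for some $\Lambda=\Lambda(\Sigma)$. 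Then $\int_{\Sigma}|\Hess_{\Sigma}f|^{2}\rho\,d\cH^{n} \le \Vert\cL_{0}f\Vert_{W}^{2} + \Lambda\,\Vert\nabla_{\Sigma}f\Vert_{W}^{2}$, and combining with Lemma \ref{lemm:H1-estimate-cL0} (which gives $\Vert\nabla_{\Sigma}f\Vert_{W}^{2}\le\Vert f\Vert_{W,1}^{2}\le\Vert\cL_{0}f\Vert_{W}\Vert f\Vert_{W}\le\tfrac12\Vert\cL_{0}f\Vert_{W}^{2}+\tfrac12\Vert f\Vert_{W}^{2}$) one gets
\[
\Vert f\Vert_{W,2}^{2} = \Vert f\Vert_{W}^{2}+\Vert\nabla_{\Sigma}f\Vert_{W}^{2}+\Vert\Hess_{\Sigma}f\Vert_{W}^{2} \le C(\Sigma)\big(\Vert\cL_{0}f\Vert_{W}^{2}+\Vert f\Vert_{W}^{2}\big),
\]
and the density step of the first paragraph finishes the proof. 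The only part that is more than bookkeeping is the uniform bound $|\Ric_{\rho}|\le\Lambda$, where the asymptotically conical hypothesis (via the curvature decay of Section \ref{sec:prelim} together with Corollary \ref{coro:improved-x-perp}) is essential; the rest is the textbook Bochner-and-integrate-by-parts argument, and assuming $f\in H^2_W$ (rather than merely $f,\cL_0 f\in L^2_W$) is what lets us avoid invoking any elliptic regularity here.
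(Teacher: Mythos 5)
Your proof is correct and follows essentially the same path as the paper's: reduce to $C^{\infty}_{0}$ by density, apply the Bochner identity to $|\nabla_{\Sigma}f|^{2}$, integrate against $\rho\,d\cH^{n}$ using self-adjointness of $\cL_{0}$, integrate by parts the $\langle\nabla_{\Sigma}f,\nabla_{\Sigma}\cL_{0}f\rangle$ term to produce $-\Vert\cL_{0}f\Vert_{W}^{2}$, and control the remaining gradient term with Lemma \ref{lemm:H1-estimate-cL0}. The only difference is presentational: you package the curvature contribution as the Bakry--\'Emery tensor $\Ric_{\rho}=\Ric_{\Sigma}+\Hess_{\Sigma}(\tfrac14|x|^{2})$ and bound it once, whereas the paper unwinds the same terms by hand via the Gauss equation and the cross-terms of $\vec{x}\cdot\nabla_{\Sigma}$; both land on the bound that the geometry contributes only $O(|\nabla_{\Sigma}f|^{2})$.
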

\begin{proof}
It suffices to prove this for $f\in C^{\infty}_{0}(\Sigma)$. Using the Bochner identity and the Gauss equations, we find (using $|A_{\Sigma}| = O(1)$)
\begin{align*}
 & \frac 12 \cL_{0} |\nabla_{\Sigma} f|^{2} \\
 & = |\nabla^{2} f|^{2} + \bangle{\nabla_{\Sigma}\Delta_{\Sigma}f ,\nabla_{\Sigma} f} + \Ric_{\Sigma}(\nabla_{\Sigma}f,\nabla_{\Sigma}f) - \frac{1}{4} \bangle{ x, \nabla |\nabla f|^2}\\
& = |\nabla^{2} f|^{2} + \bangle{\nabla_{\Sigma}\Delta_{\Sigma}f ,\nabla_{\Sigma} f}  - \frac{1}{4} \bangle{ x, \nabla |\nabla f|^2} \\
&\qquad+ H_{\Sigma} \cdot A_{\Sigma}(\nabla_{\Sigma}f,\nabla_{\Sigma}f)  - (A_{\Sigma})^{2}(\nabla_{\Sigma}f,\nabla_{\Sigma}f)  \\
& = |\nabla^{2} f|^{2} + \bangle{\nabla_{\Sigma}\cL_{0}f ,\nabla_{\Sigma} f} + \frac 12 \bangle{\nabla_{\Sigma}(\vec{x}\cdot\nabla_{\Sigma}f),\nabla_{\Sigma} f} - \frac{1}{4} \bangle{ x, \nabla |\nabla f|^2}\\
& \qquad + H_{\Sigma} \cdot A_{\Sigma}(\nabla_{\Sigma}f,\nabla_{\Sigma}f) - (A_{\Sigma})^{2}(\nabla_{\Sigma}f,\nabla_{\Sigma}f)  \\
& = |\nabla^{2} f|^{2} + \bangle{\nabla_{\Sigma}\cL_{0}f ,\nabla_{\Sigma} f} + \frac 12 |\nabla_{\Sigma}f|^{2} + H_{\Sigma} \cdot A_{\Sigma}(\nabla_{\Sigma}f,\nabla_{\Sigma}f)  - (A_{\Sigma})^{2}(\nabla_{\Sigma}f,\nabla_{\Sigma}f)\\
& = |\nabla^{2} f|^{2} + \bangle{\nabla_{\Sigma}\cL_{0}f ,\nabla_{\Sigma} f} + O(|\nabla_{\Sigma}f|^{2}). 
\end{align*}
Integrating this and using that $\cL_{0}$ is self adjoint with respect to the Gaussian area, the conclusion follows (after integrating by parts the second term in the right hand side, and using Lemma \ref{lemm:H1-estimate-cL0} to control the $H^{1}_{W}$ norm of $f$). 
\end{proof}

This suffices to establish an existence theory for the $L$ operator (cf.\ \cite[Proposition 3.4]{BernsteinWang:degreeExpanders}) where
\[
L := \cL_{\frac 12} + |A_{\Sigma}|^{2} = \Delta_{\Sigma} - \frac 12 (\vec{x}\cdot\nabla_{\Sigma} - 1) + |A_{\Sigma}|^{2}. 
\]
Define
\[
B_{\gamma}(u,v) := \int_{\Sigma} \left( \bangle{\nabla_{\Sigma} u,\nabla_{\Sigma} v} + \left( \gamma -  |A_{\Sigma}|^{2}  - \frac 12  \right) u v \right) \rho\, d\cH^{n},
\]
the bilinear form naturally associated to $L+\gamma$. For $\gamma$ sufficiently large so that $\gamma \geq \max_{\Sigma} |A_{\Sigma}|^{2} + \frac 32$, we see that
\[
\Vert u \Vert_{W,1}^{2} \leq B_{\gamma}(u,v),
\]
so $B_{\gamma}$ is coercive on $H^{1}_{W}(\Sigma)$. It is clearly bounded, so applying the Lax--Milgram Theorem, and applying the standard Fredholm alternative to this setting (combining Lemma \ref{lem:H2_est} with Lemma \ref{lem:compact_emb}), we have the following result:
\begin{theo}\label{thm:Lax-Milgram-consequence}
The space $\ker L \subset H^{1}_{W}$ of weak solutions to $Lu=0$ is finite dimensional. For $f\in L^{2}_{W}(\Sigma)$, $Lu=f$ has a weak solution in $H^{1}_{W}(\Sigma)$ if and only if $f$ is $L^{2}_{W}$-orthogonal to $\ker L$. Moreover, if $u$ is orthogonal to $\ker L$ and satisfies $Lu=f$, then we have the estimate $\Vert u \Vert_{H^{2}_{W}(\Sigma)} \leq C \Vert f \Vert_{L^{2}_{W}(\Sigma)}$. 
\end{theo}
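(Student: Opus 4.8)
The plan is to reduce the statement to the classical Riesz--Schauder theory for compact self-adjoint operators, building on the three ingredients already in place: the boundedness and coercivity of $B_\gamma$ on $H^1_W(\Sigma)$ for $\gamma \geq \gamma_0 := \max_\Sigma |A_\Sigma|^2 + \tfrac32$, the weighted $H^2_W$ a priori estimate of Lemma \ref{lem:H2_est}, and the compact inclusion $H^1_W(\Sigma) \hookrightarrow L^2_W(\Sigma)$ of Lemma \ref{lem:compact_emb}. First I would fix $\gamma \geq \gamma_0$ and invoke Lax--Milgram to produce the bounded solution operator $G : L^2_W(\Sigma) \to H^1_W(\Sigma)$, characterized by $B_\gamma(Gg, v) = \int_\Sigma g v \rho\, d\cH^n$ for all $v \in H^1_W(\Sigma)$; thus $Gg$ is the weak solution of $(\gamma - L)(Gg) = g$. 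Since the bilinear form $B_\gamma$ is manifestly symmetric (it differs from the Dirichlet form only by a symmetric multiplication term), $G$ is self-adjoint on $L^2_W(\Sigma)$, and composing with the inclusion and applying Lemma \ref{lem:compact_emb} shows that $G : L^2_W(\Sigma) \to L^2_W(\Sigma)$ is compact.

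Next I would observe that $u \in H^1_W(\Sigma)$ is a weak solution of $Lu = f$ precisely when $B_\gamma(u,v) = \int_\Sigma (\gamma u - f) v \rho\, d\cH^n$ for all $v$, i.e. $u = G(\gamma u - f)$, i.e. $(I - \gamma G)u = -Gf$. In particular $\ker L$ is exactly $\ker(I - \gamma G) \subset L^2_W(\Sigma)$, and this subspace automatically lies in $H^1_W(\Sigma)$ since $G$ maps into it; hence $\ker L$ is finite-dimensional by Riesz--Schauder. The Fredholm alternative then gives solvability of $(I - \gamma G)u = -Gf$ if and only if $Gf$ is $L^2_W$-orthogonal to $\ker(I - (\gamma G)^\ast) = \ker(I - \gamma G) = \ker L$, using the self-adjointness of $G$. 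To convert this into a condition on $f$ itself, note that any $v \in \ker L$ satisfies $B_\gamma(v,w) = \gamma \int_\Sigma v w \rho\, d\cH^n$ for all $w$, so $Gv = \gamma^{-1} v$, and therefore $\int_\Sigma (Gf) v \rho\, d\cH^n = \int_\Sigma f (Gv) \rho\, d\cH^n = \gamma^{-1} \int_\Sigma f v \rho\, d\cH^n$. Hence $Gf \perp \ker L$ if and only if $f \perp \ker L$, which is exactly the asserted solvability criterion.

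For the regularity estimate, suppose $Lu = f$ with $u$ orthogonal to $\ker L$; interior elliptic regularity together with Lemma \ref{lem:H2_est} applied on an exhaustion (handling cutoff errors via Lemma \ref{lemm:H1-estimate-cL0} and Ecker's inequality) shows $u \in H^2_W(\Sigma)$. Writing $\cL_0 u = Lu - (\tfrac12 + |A_\Sigma|^2) u = f - (\tfrac12 + |A_\Sigma|^2) u$ and using $|A_\Sigma| = O(1)$, Lemma \ref{lem:H2_est} yields $\Vert u \Vert_{W,2}^2 \leq C(\Vert f \Vert_W^2 + \Vert u \Vert_W^2)$, so it remains to prove $\Vert u \Vert_W \leq C \Vert f \Vert_W$ on the orthogonal complement of $\ker L$. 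I would do this by the standard compactness argument: if it failed there would be $u_j \perp \ker L$ with $\Vert u_j \Vert_W = 1$ and $\Vert L u_j \Vert_W \to 0$; the estimate just derived bounds $\Vert u_j \Vert_{W,2}$, so along a subsequence $u_j \to u$ strongly in $L^2_W(\Sigma)$ and weakly in $H^1_W(\Sigma)$ by Lemma \ref{lem:compact_emb}, whence $\Vert u \Vert_W = 1$, $u \perp \ker L$ (as $\ker L$ is finite-dimensional), and passing to the limit in the weak formulation gives $Lu = 0$, so $u \in \ker L$ --- a contradiction. Combining the two bounds gives $\Vert u \Vert_{H^2_W(\Sigma)} \leq C \Vert f \Vert_{L^2_W(\Sigma)}$.

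The argument is largely bookkeeping once Lemmas \ref{lem:H2_est} and \ref{lem:compact_emb} and the coercivity of $B_\gamma$ are available. The points I expect to need the most care are the symmetry (hence self-adjointness) of $G$, which is what lets the cokernel in the Fredholm alternative be identified with $\ker L$ rather than left abstract, and the upgrade of an $H^1_W$ weak solution of $Lu = f$ to an $H^2_W(\Sigma)$ solution --- this again rests on Lemma \ref{lem:H2_est}, now applied locally together with the global bound on an exhaustion, and it is needed both to make sense of $\ker L \subset H^2_W(\Sigma)$ and to close the estimate in the last step.
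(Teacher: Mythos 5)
Your proof is correct and follows the route the paper intends: Lax--Milgram applied to the coercive symmetric form $B_\gamma$ to produce the bounded solution operator $G = (\gamma - L)^{-1}$, compactness of $G$ on $L^2_W$ via Lemma \ref{lem:compact_emb}, Riesz--Schauder and the Fredholm alternative for $I - \gamma G$ with the cokernel identified with $\ker L$ through self-adjointness and the relation $Gv = \gamma^{-1}v$ on $\ker L$, and finally Lemma \ref{lem:H2_est} plus a standard compactness/contradiction argument for the $H^2_W$ bound. The paper only sketches this in one sentence; your write-up supplies the missing details correctly, including the regularity upgrade from $H^1_W$ to $H^2_W$ needed before Lemma \ref{lem:H2_est} can be invoked.
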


To complete this section, we now show that for $f \in \cC\cS^{\alpha}_{-1}(\Sigma)$ perpendicular to $\ker L$, we can solve $Lu=f$. It remains to check that a solution of $Lu=f$ with $f \in \cC\cS^{0,\alpha}_{-1}(\Sigma)$ satisfies $u \in C^{0}_{\textrm{hom};+1}(\Sigma)$ a priori.
\begin{lemm}
For $f \in L^{2}_{W}(\Sigma) \cap C^{0}(\Sigma)$, if $Lu=f$ for $u \in H^{1}_{W}(\Sigma)$, then $u \in C^{0}_{\textnormal{hom};+1} (\Sigma)$ and for $R$ sufficiently large,
\[
\Vert u\Vert_{C^{0}_{\textnormal{hom};+1}(\Sigma)} \lesssim \Vert f \Vert_{C^{0}(\Sigma)} + \Vert u \Vert_{C^{0}(\Sigma\cap B_{R}(0))}.
\]
\end{lemm}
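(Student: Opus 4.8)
The plan is to reuse the parabolic reformulation from the proof of Proposition~\ref{prop:CS-schauder}. Setting $\hat u(x,t)=\sqrt{-t}\,u(\Phi_t(x))$, $\hat f(x,t)=(-t)^{-1/2}f(\Phi_t(x))$ and $\hat a(x,t)=(-t)^{-1}|A_\Sigma|^2(\Phi_t(x))$, equation~\eqref{eq:parabolic-L} becomes a uniformly parabolic equation $\partial_t\hat u-\Delta_{\hat g_t}\hat u-\hat a\hat u=\hat f$ on sufficiently far out unit balls, with coefficients bounded uniformly in $t\in[-1,0)$ by Lemma~\ref{lemm:behavior-Phi-r} and $|A_\Sigma|=O(r^{-1})$. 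The one difference from the Schauder setting is that $f$ is merely bounded and continuous, so $\hat f$ need not be H\"older; however $|\hat f(x,t)|\le(-t)^{-1/2}\|f\|_{C^0}$, so $\hat f\in L^q_tL^\infty_x$ for every $q<2$, which is admissible for the standard interior parabolic \emph{local boundedness} (De Giorgi--Nash--Moser) estimate since $\tfrac n\infty+\tfrac2q<2$. Thus in place of Theorem~\ref{theo:brandt-schauder} I will use this local boundedness estimate, together with ordinary interior elliptic estimates on a fixed compact region (where the coefficients of $L$ are bounded). On compact sets, $W^{2,p}_{\mathrm{loc}}$-theory already gives $u\in C^{1,\beta}_{\mathrm{loc}}(\Sigma)$, so $\|u\|_{C^0(\Sigma\cap B_R(0))}<\infty$ for every $R$; the only issue is the growth of $u$ as $r\to\infty$.

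The key is to combine the interior estimates with the \emph{conical cancellation}, exactly as in the model problem Lemma~\ref{lemm:model-problem}. Treating the Laplacian in $\cL_{\frac12}$ as an error and using Lemma~\ref{lemm:radial-der-improved}, rewrite $Lu=f$ on the end as
\[
r^{2}\partial_r\!\Bigl(\tfrac ur\Bigr)=r\partial_r u-u=2\bigl(\Delta_\Sigma u+|A_\Sigma|^2u-f\bigr)-\bigl(\vec x\cdot\nabla_\Sigma u-r\partial_r u\bigr).
\]
Here one should keep in mind that the homogeneous radial ODE on the end has a linearly growing ("conical") solution $r\,c(\omega)$ and a rapidly growing solution $\sim r^{-1}e^{r^2/4}$; the latter is \emph{incompatible with $u\in H^1_W(\Sigma)$}, which is precisely where that hypothesis is used. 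So the first task is an a priori bound: running the interior estimates along the trajectories $t\mapsto\Phi_t(x_0)$ (along which $|\Phi_t(x_0)|\sim|x_0|/\sqrt{-t}$), and feeding the resulting growth bound back through the displayed identity while discarding the rapidly-growing mode via the weighted $L^2$-theory, one obtains $u\in C^0_{\textnormal{hom};+1}(\Sigma)$ qualitatively (at most linear growth, possibly with a crude constant).

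Once $\|u\|_{C^0_{\textnormal{hom};+1}(\Sigma)}<\infty$ is known, the (rescaled) interior parabolic estimates furnish the scale-invariant bounds $|\Delta_\Sigma u|=O(r^{-1})\mathcal N$ and $|\nabla_\Sigma u|=O(1)\mathcal N$ with $\mathcal N:=\|u\|_{C^0_{\textnormal{hom};+1}(\Sigma)}+\|f\|_{C^0}$; combined with $|A_\Sigma|^2=O(r^{-2})$, Corollary~\ref{coro:improved-x-perp}, Lemma~\ref{lemm:radial-der-improved}, and $|f|\le\|f\|_{C^0}$, this bounds the right-hand side of the displayed identity by $O(r^{-1})\mathcal N+O(1)\|f\|_{C^0}$, hence $\partial_r(u/r)=O(r^{-2})\mathcal N$. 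Integrating from $R$ to $\infty$ shows $c(\omega):=\lim_{r\to\infty}u(r,\omega)/r$ exists and that, for $|y|\ge R$,
\[
\frac{|u(y)|}{|y|}\ \lesssim\ \frac1R\,\|u\|_{C^0(\Sigma\cap B_R(0))}+\frac1{R^{2}}\,\|u\|_{C^0_{\textnormal{hom};+1}(\Sigma)}+\frac1R\,\|f\|_{C^0}.
\]
Since trivially $|u(y)|/\tilde r(y)\le\|u\|_{C^0(\Sigma\cap B_R(0))}$ for $|y|\le R$, taking the supremum over $\Sigma$ and choosing $R$ large enough to absorb the (now finite) middle term into the left-hand side gives the asserted estimate.

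The main obstacle is a genuine circularity: any local estimate applied on a far-out region reproduces quantities of the same order as $\|u\|_{C^0_{\textnormal{hom};+1}}$ with $O(1)$ constants, because the drift $-\tfrac12\vec x\cdot\nabla_\Sigma$ transports information \emph{outward} along the trajectories $\Phi_t$ at precisely the rate at which the reachable annulus grows. The way out is the same mechanism as in Lemma~\ref{lemm:model-problem}: the $O(r^{-1})$ cancellation in $\cL_{\frac12}(rc(\omega))$ makes $u/r$ nearly constant along radial rays, so that $\|u\|_{C^0_{\textnormal{hom};+1}}$ re-enters the \emph{integrated} estimate only with the small weight $R^{-2}$, which can then be absorbed for $R$ large. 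The second delicate point is establishing the a priori finiteness of $\|u\|_{C^0_{\textnormal{hom};+1}}$ (so that this manipulation is legitimate); this is where the membership $u\in H^1_W(\Sigma)$ is essential, as it is exactly what excludes the $r^{-1}e^{r^2/4}$ mode of the radial ODE.
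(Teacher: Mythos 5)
Your approach (parabolic interior estimates plus ODE integration of $\partial_r(u/r)$, as in the model problem Lemma~\ref{lemm:model-problem}) is genuinely different from the paper's, and it has a gap that you yourself flag but do not close: the \emph{a priori} finiteness of $\Vert u\Vert_{C^0_{\textnormal{hom};+1}(\Sigma)}$. Everything in your scheme that produces quantitative information --- the rescaled De Giorgi--Nash--Moser bound, the identity $r^2\partial_r(u/r)=2(\Delta_\Sigma u+|A_\Sigma|^2u-f)-(\vec x\cdot\nabla_\Sigma u-r\partial_r u)$, and the $R^{-2}$ absorption --- presupposes that $u$ and its derivatives already satisfy scale-invariant bounds on far-out unit balls, which in turn requires an $L^2$ or $C^0$ bound that grows no faster than linearly. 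Your claimed resolution (``$H^1_W$ excludes the $r^{-1}e^{r^2/4}$ mode of the radial ODE'') is an observation about the \emph{reduced} one-dimensional ODE, not an argument for the full PDE on $\Sigma$: a solution in $H^1_W(\Sigma)$ could, for all this line of reasoning shows, grow like $r^{10}$ in some angular sector and still be $L^2_W$-integrable, since the Gaussian weight kills any polynomial. Excluding that does not follow from ``discarding a rapidly-growing mode,'' and the gesture towards ``running interior estimates along trajectories $\Phi_t$ and feeding the growth bound back through the ODE'' is not a step one can carry out --- the needed input and output are the same quantity.

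The paper sidesteps this circularity entirely with a maximum-principle barrier. It sets $\varphi(x)=\alpha|x|-\beta$, computes $L\varphi\le-\tfrac12(1+O(|x|^{-2}))\beta+O(|x|^{-1})\alpha$, chooses $\alpha,\beta$ in terms of $\sup|f|$ and $\sup_{\partial B_R}|u|$ so that $v:=u-\varphi$ satisfies $Lv>0$ on $\Sigma\setminus B_R(0)$ and $v\le 0$ near $\partial B_R(0)$, and then --- instead of a pointwise maximum principle at infinity --- uses that $v^+\in H^1_W$ (this is where $u\in H^1_W$ is used) to integrate by parts against $\rho$, obtaining
\[
-\int_{\Sigma\setminus B_R} |A_\Sigma|^2 (v^+)^2\rho\,d\cH^n \le \int_{\Sigma\setminus B_R}\left(-|\nabla v^+|^2+\tfrac12(v^+)^2\right)\rho\,d\cH^n ,
\]
which combined with Ecker's Sobolev inequality (Proposition~\ref{prop:ecker-sobolev}) forces $v^+\equiv 0$ once $R$ is large. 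That argument produces the linear upper bound on $|u|$ with no a priori growth hypothesis at all, and it needs no elliptic interior estimates for derivatives of $u$. If you want to pursue your route, the honest fix is to first prove an a priori sub-exponential bound for $u$ (e.g., by a separate $L^2_W\to L^\infty$ estimate of maximum-principle type), at which point you have essentially reinvented the paper's barrier step; the ODE integration you describe would then be a slightly more laborious way of upgrading sub-exponential to linear.
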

\begin{proof}
For $\varphi:\RR^{n+1}\to\RR$, we compute
\begin{align*}
L\varphi & =  \Delta_{\Sigma} \varphi - \frac 12 (\vec{x}\cdot \nabla_{\Sigma}\varphi - \varphi) + |A_{\Sigma}|^{2}\varphi\\
& = \Delta_{\RR^{n+1}} \varphi - D^{2}\varphi(\nu_{\Sigma},\nu_{\Sigma})  - H_{\Sigma} \bangle{\nu_{\Sigma}, \nabla_{\RR^{n+1}} \varphi} - \frac 12 (\vec{x}\cdot \nabla_{\Sigma}\varphi - \varphi) + |A_{\Sigma}|^{2}\varphi\\
& = \Delta_{\RR^{n+1}} \varphi - D^{2}\varphi(\nu_{\Sigma},\nu_{\Sigma}) - \frac 12 \bangle{\vec{x} , \nabla_{\RR^{n+1}} \varphi } + \frac12 \varphi + |A_{\Sigma}|^{2}\varphi.
\end{align*}
We consider $\varphi (x) = \alpha |x|  -\beta$. Then,
\begin{align*}
L \varphi & = \alpha \left( \frac{n-1}{|x|} - \frac{\bangle{\vec{x},\nu_{\Sigma}}^{2}}{|x|^{3}}  \right) - \frac 12\beta + O(|x|^{-2}) (\alpha |x|-\beta) \\
& \leq  - \frac 12 (1+O(|x|^{-2}))\beta + O(|x|^{-1})\alpha .
\end{align*}
Thus, $v=u-\varphi$ satisfies 
\[
L v \geq  f + \frac 12 (1+O(|x|^{-2}))\beta -  O(|x|^{-1})\alpha
\]
We fix $R\geq \underline R(\Sigma)$ and set
\begin{align*}
\alpha & = 2 \sup_{\Sigma} |f| + 2 R^{-1} \sup_{\Sigma \cap \partial B_{R}(0)} |u|\\
\beta & = 4 \sup_{\Sigma} |f| + R^{-1} \sup_{\Sigma \cap \partial B_{R}(0)} |u|. 
\end{align*}
This yields
\[
L v \geq (1-O(R^{-1}))\sup_{\Sigma} |f| + \frac 12  R^{-1} (1+O(R^{-1}))  \sup_{\Sigma \cap \partial B_{R}(0)} |u| > 0,
\]
for $R$ sufficiently large. Moreover, we find that 
\[
\sup_{\Sigma \cap \partial B_{R}} v \leq  -(1-R^{-1}) \sup_{\Sigma \cap \partial B_{R}} |u| - 2 (R- 2) \sup_{\Sigma} |f| <0
\]
 as long as $R$ is sufficiently large. Thus, we have arranged that $v\leq 0$ in a neighborhood of $\Sigma\cap\partial B_{R}(0)$. We now argue that $v\leq 0$ on $\Sigma\setminus B_{R}(0)$. Because $v^{+} \in H^{1}_{W}$, we find that
 \[
- \int_{\Sigma\setminus B_{R}(0)} |A_{\Sigma}|^{2} (v^{+})^{2} \rho \, d\cH^{n} \leq \int_{\Sigma\setminus B_{R}(0)} v^{+}\cL_{\frac 12} v \,\rho\, d\cH^{n} = \int_{\Sigma \setminus B_{R}(0)} \left( - |\nabla v^{+}|^{2} + \frac 12 (v^{+})^{2} \right) \rho \, d\cH^{n}
 \]
 Thus, using Ecker's Sobolev inequality, Proposition \ref{prop:ecker-sobolev}, we find that
 \[
 R^{2} \int_{\Sigma\setminus B_{R}(0)} (v^{+})^{2} \rho\,d\cH^{n} \leq (8 + 4n +O(R^{-2})) \int_{\Sigma\setminus B_{R}(0)} (v^{+})^{2} \rho\,d\cH^{n}.
 \]
 For $R$ sufficiently large, we thus see that $v^{+}\equiv 0$. Thus, $u \leq \varphi$ on $\Sigma\setminus B_{R}(0)$. Applying the same reasoning to $-u$ completes the proof. 
\end{proof}
Combining this estimate with Proposition \ref{prop:CS-schauder} we arrive at:

\begin{coro}
For $f \in \cC\cS^{0,\alpha}_{-1}(\Sigma)$, if $u \in H^{1}_{W}(\Sigma)$ satisfies $Lu = f$ weakly, then $u \in \cC\cS^{2,\alpha}_{-1}(\Sigma)$ and for $R>0$ fixed sufficiently large,
\[
\Vert u \Vert_{\cC\cS^{2,\alpha}_{-1}(\Sigma)}\lesssim \Vert u\Vert_{C^{0}(\Sigma\cap B_{R}(0))} + \Vert f\Vert_{\cC\cS^{0,\alpha}(\Sigma)}. 
\]
\end{coro}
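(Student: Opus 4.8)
The plan is to chain together the two preceding results, so the argument is short. First I would record that since $f \in \cC\cS^{0,\alpha}_{-1}(\Sigma) = C^{0,\alpha}_{\textnormal{hom},-1}(\Sigma)$ and $\tilde r \geq 1$ on $\Sigma$, the function $f$ is bounded with $\Vert f\Vert_{C^{0}(\Sigma)} \leq \Vert f\Vert_{\cC\cS^{0,\alpha}_{-1}(\Sigma)}$ and moreover decays like $r^{-1}$ at infinity; in particular $f \in C^{0}(\Sigma)$, and since $\rho$ has finite total mass, $f \in L^{2}_{W}(\Sigma)$ as well. Also, $f$ is locally H\"older continuous and the coefficients of $L$ are smooth, so interior elliptic Schauder theory upgrades the weak solution $u \in H^{1}_{W}(\Sigma)$ to $u \in C^{2,\alpha}_{\textnormal{loc}}(\Sigma)$ (this is needed both to apply the barrier lemma, whose proof uses the maximum principle, and to apply Proposition \ref{prop:CS-schauder}).

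Next I would apply the preceding lemma to this $f$, obtaining $u \in C^{0}_{\textnormal{hom};+1}(\Sigma)$ together with the bound
\[
\Vert u\Vert_{C^{0}_{\textnormal{hom};+1}(\Sigma)} \lesssim \Vert f\Vert_{C^{0}(\Sigma)} + \Vert u\Vert_{C^{0}(\Sigma\cap B_{R}(0))} \lesssim \Vert f\Vert_{\cC\cS^{0,\alpha}_{-1}(\Sigma)} + \Vert u\Vert_{C^{0}(\Sigma\cap B_{R}(0))}
\]
for $R$ fixed sufficiently large. Then I would invoke Proposition \ref{prop:CS-schauder} with $a := |A_{\Sigma}|^{2}$: the second fundamental form decay from Section \ref{sec:prelim}, namely $|\nabla^{(j)}A_{\Sigma}| = O(r^{-j-1})$, gives $|A_{\Sigma}|^{2} = O(r^{-2})$ with $C^{0,\alpha}$-norm of order $|x|^{-2}$ on unit balls about $x$, so $a \in C^{0,\alpha}_{\textnormal{hom};-2}(\Sigma)$ as the proposition requires. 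Since $u \in C^{2,\alpha}_{\textnormal{loc}}(\Sigma) \cap C^{0}_{\textnormal{hom};+1}(\Sigma)$ and $\cL_{\frac12}u + a u = Lu = f \in \cC\cS^{0,\alpha}_{-1}(\Sigma)$, the proposition yields $u \in \cC\cS^{2,\alpha}_{-1}(\Sigma)$ with
\[
\Vert u\Vert_{\cC\cS^{2,\alpha}_{-1}(\Sigma)} \lesssim \Vert u\Vert_{C^{0}_{\textnormal{hom},+1}(\Sigma)} + \Vert f\Vert_{\cC\cS^{0,\alpha}_{-1}(\Sigma)},
\]
and substituting the previous bound on $\Vert u\Vert_{C^{0}_{\textnormal{hom};+1}(\Sigma)}$ gives the asserted estimate.

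There is no genuine obstacle here, as both inputs are already established; the points requiring care are \emph{(i)} verifying that $f$ lies in the hypothesis classes $L^{2}_{W}(\Sigma) \cap C^{0}(\Sigma)$ of the barrier lemma, \emph{(ii)} confirming the decay hypothesis $a = |A_{\Sigma}|^{2} \in C^{0,\alpha}_{\textnormal{hom};-2}(\Sigma)$ of Proposition \ref{prop:CS-schauder}, and \emph{(iii)} fixing a single large value of $R$ — which enters both the barrier lemma and, via the cutoff $\chi$, the very definition of the cone spaces — for which all the cited estimates hold simultaneously.
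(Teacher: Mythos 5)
Your argument is correct and is exactly the combination the paper intends — the paper prefaces the corollary with ``Combining this estimate [the barrier lemma] with Proposition~\ref{prop:CS-schauder} we arrive at:'' and gives no further proof. The hypothesis checks you spell out (that $f\in L^2_W\cap C^0$ via $\tilde r\geq 1$ and finite Gaussian mass, the local $C^{2,\alpha}$ upgrade from interior Schauder, and $a=|A_\Sigma|^2\in C^{0,\alpha}_{\mathrm{hom};-2}$ from the $|\nabla^{(j)}A_\Sigma|=O(r^{-j-1})$ decay) are the right ones, and the chaining of the two estimates yields the claimed bound.
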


Combined with Theorem \ref{thm:Lax-Milgram-consequence}, we thus see that the following standard solvability condition continues to hold in our setting:
\begin{coro}\label{coro:surjectivity-L-operator}
If $f \in \cC\cS^{0,\alpha}_{-1}(\Sigma)$, then we can find $u \in \cC\cS^{2,\alpha}_{-1}(\Sigma)$ solving $Lu=f$ if and only if $f$ is $L^{2}_{W}$-orthogonal to $\ker L \subset H^{1}_{W}(\Sigma)$. 
\end{coro}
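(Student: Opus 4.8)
The plan is to combine the Fredholm alternative of Theorem \ref{thm:Lax-Milgram-consequence} (which is stated in the $L^{2}_{W}$/$H^{2}_{W}$ setting) with the weighted Schauder regularity of the preceding corollary, treating the two implications of the ``if and only if'' separately. No new analysis is needed; the corollary simply records the combination of these two results.

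For the \emph{sufficiency} direction I would first note the inclusion $\cC\cS^{0,\alpha}_{-1}(\Sigma)\subset L^{2}_{W}(\Sigma)$: an element $f$ is bounded on the compact part of $\Sigma$ and decays like $r^{-1}$ on the conical end, so $\int_{\Sigma}f^{2}\rho\,d\cH^{n}<\infty$ is immediate from the Gaussian decay of $\rho$. Hence if $f$ is $L^{2}_{W}$-orthogonal to $\ker L\subset H^{1}_{W}(\Sigma)$, Theorem \ref{thm:Lax-Milgram-consequence} produces a weak solution $u\in H^{1}_{W}(\Sigma)$ (in fact $u\in H^{2}_{W}(\Sigma)$ with $\Vert u\Vert_{H^{2}_{W}(\Sigma)}\lesssim\Vert f\Vert_{L^{2}_{W}(\Sigma)}$) of $Lu=f$. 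The preceding corollary then upgrades this to $u\in\cC\cS^{2,\alpha}_{-1}(\Sigma)$, which is what is claimed.

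For the \emph{necessity} direction, suppose $u\in\cC\cS^{2,\alpha}_{-1}(\Sigma)$ solves $Lu=f$. The first step is the embedding $\cC\cS^{2,\alpha}_{-1}(\Sigma)\subset H^{2}_{W}(\Sigma)$: using the decomposition $u(r,\omega)=\chi(r)c(\omega)r+f(r,\omega)$, one checks $|u|\lesssim r$, $|\nabla_{\Sigma}u|\lesssim\Vert c\Vert_{C^{1}(\Gamma)}$, and $|\nabla^{(2)}_{\Sigma}u|\lesssim r^{-1}$, all square-integrable against $\rho$; density of $C^{\infty}_{0}(\Sigma)$ via cutoffs then places $u$ in $H^{2}_{W}(\Sigma)$. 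Any $v\in\ker L$ lies in $\cC\cS^{2,\alpha}_{-1}(\Sigma)$ by the preceding corollary applied with right-hand side $0$, hence is smooth (by elliptic regularity) and also lies in $H^{2}_{W}(\Sigma)$. Now $L=\cL_{0}+\tfrac12+|A_{\Sigma}|^{2}$ is self-adjoint with respect to $\rho\,d\cH^{n}$, since the zeroth order terms are multiplication operators and $\rho^{-1}\Div_{\Sigma}(\rho\,\nabla_{\Sigma}\,\cdot)=\cL_{0}$ (cf.\ the proof of Lemma \ref{lemm:H1-estimate-cL0}). Integrating $v\,Lu-u\,Lv$ over $\Sigma\cap B_{R}(0)$, the boundary terms are $O(R^{n}e^{-R^{2}/4})$ by the above growth bounds on $u$ and $v$, so they vanish as $R\to\infty$, giving
\[
\int_{\Sigma}f\,v\,\rho\,d\cH^{n}=\int_{\Sigma}(Lu)\,v\,\rho\,d\cH^{n}=\int_{\Sigma}u\,(Lv)\,\rho\,d\cH^{n}=0,
\]
so that $f$ is $L^{2}_{W}$-orthogonal to $\ker L$.

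The only points requiring care are the two inclusions $\cC\cS^{0,\alpha}_{-1}(\Sigma)\subset L^{2}_{W}(\Sigma)$ and $\cC\cS^{2,\alpha}_{-1}(\Sigma)\subset H^{2}_{W}(\Sigma)$ and the justification of the integration by parts against functions of linear growth. All three are routine consequences of the Gaussian weight together with a cutoff argument (and, if one prefers a cleaner control of the error terms, Ecker's inequality, Proposition \ref{prop:ecker-sobolev}). In particular I do not expect any genuine obstacle here: the substance is already in Theorem \ref{thm:Lax-Milgram-consequence} and the preceding corollary.
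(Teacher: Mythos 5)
Your proof is correct and takes essentially the same route the paper intends: the paper presents this corollary as an immediate combination of Theorem~\ref{thm:Lax-Milgram-consequence} with the preceding regularity corollary, and that is exactly what you do. You add the small implicit steps — the embeddings $\cC\cS^{0,\alpha}_{-1}(\Sigma)\subset L^{2}_{W}(\Sigma)$ and $\cC\cS^{2,\alpha}_{-1}(\Sigma)\subset H^{2}_{W}(\Sigma)$, and the integration by parts with boundary terms $O(R^{n}e^{-R^{2}/4})$ for the necessity direction — which the paper leaves unstated but which are exactly the verifications one should record.
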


\section{The \L ojasiewicz--Simon inequality for entire graphs}\label{sec:Loj-entire}
\noindent We now show that the weighted H\"older and Sobolev spaces considered in the previous section (along with the solvability criteria proven for $L$) provides a framework to prove the \L ojasiewicz--Simon inequality following the arguments in the compact case (cf.\ \cite{Simon:Loj,Simon:reg-sing-harm-maps,Schulze:compact,zemas}). By the Fredholm alternative, Theorem \ref{thm:Lax-Milgram-consequence}, $\ker L \subset H^{1}_{W}(\Sigma)$ is finite dimensional and we can define $\Pi : L^{2}_{W} (\Sigma) \to L^{2}_{W} (\Sigma) \cap \cC\cS^{2,\alpha}_{-1}(\Sigma)$, the projection on to $\ker L$. 

Recall (see Appendix \ref{app:EL}) that the Euler--Lagrange equation (with respect to the $L^{2}_{W}$-inner product) is
\begin{equation}\label{eq:M_def}
\cM(v) =  \Pi_{T^\perp\Sigma}\left(\vec{H}_{M} + \frac{x^{\perp}}{2} \right)\Big|_{x=y+v(y)\nu_{\Sigma}(y)}  J(y,v,\nabla_{\Sigma}v)\rho(y+v(y)\nu_{\Sigma})\rho(y)^{-1} 
\end{equation}
where $\Pi_{T^\perp\Sigma}$\index{$\Pi_{T^\perp\Sigma}$} is the projection on to the normal bundle to $\Sigma$ and \index{$J$}
\[
J(y,v,\nabla_{\Sigma}v) = \Jac(D \exp_{y}(v(y)\nu_{\Sigma}(y)))
\]
 is the area element. 

We now observe that $\cM$ is a well behaved map between the weighted H\"older spaces considered in the previous section. 

\begin{lemm}\label{lemm:M-is-well-defined-diff}
For $\beta$\index{$\beta$} sufficiently small depending on $\Sigma$, we have a continuous map
\[
\cM : \cC\cS^{2,\alpha}_{-1}(\Sigma) \cap \{ \Vert u \Vert_{\cC\cS^{2,\alpha}_{-1}(\Sigma)} < \beta\} \to \cC\cS^{0,\alpha}_{-1}(\Sigma).
\]
Moreover, $\cM$ is Fr\'echet differentiable with derivative at $0$ given by $L$. 
\end{lemm}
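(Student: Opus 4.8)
## Proof Proposal

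The plan is to verify the three assertions in order: that $\cM$ lands in $\cC\cS^{0,\alpha}_{-1}(\Sigma)$ on a small ball, that it is continuous there, and that it is Fr\'echet differentiable at $0$ with derivative $L$. All three will follow from understanding the structure of the map in \eqref{eq:M_def} separately on the compact core of $\Sigma$ and on the conical end, where the geometric estimates of Section \ref{sec:prelim} (Lemmas \ref{lemm:improved-conical-est-shrinker}, \ref{lemm:normal-Sigma-estimates}, \ref{lemm:cancellation-w-derivatives}, \ref{lemm:second-fund-form}, Corollaries \ref{coro:improved-x-perp}, \ref{coro:metric-conical-shrinker}) give us very precise control.

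First I would record, via the normal-graph computations in Appendix \ref{app:NG}, that for $u \in C^{2,\alpha}_{\textrm{loc}}$ the expression $\cM(u)$ can be written schematically as $\cM(u) = \mathfrak{a}^{ij}(y,u,\nabla u)\, \nabla^2_{ij}u + \mathfrak{b}(y,u,\nabla u)$, where $\mathfrak{a}^{ij}, \mathfrak{b}$ are smooth functions of their arguments, $\mathfrak{a}^{ij}(y,0,0) = g^{ij}_\Sigma$, and (crucially) $\mathfrak{b}(y,0,0) = \Pi_{T^\perp\Sigma}(\vec H_\Sigma + \tfrac12 x^\perp) = 0$ since $\Sigma$ is a shrinker. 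The key point for the conical end is that the Jacobian factor $J$, the ratio $\rho(y+v\nu_\Sigma)\rho(y)^{-1} = \exp(-\tfrac14(2v\langle x,\nu_\Sigma\rangle + v^2))$, and the mean curvature of the graph all depend on $y$ only through the shrinker geometry, whose deviation from the exact cone decays like $O(r^{-2})$ (Corollary \ref{coro:metric-conical-shrinker}, Lemma \ref{lemm:second-fund-form}). Thus on the end, $\mathfrak{b}$ is a sum of terms each of which is either a coefficient decaying like $r^{-1}$ or $r^{-2}$ times $u$, $\nabla u$, or $\nabla^2 u$, or a quadratic-or-higher remainder in $(u,\nabla u,\nabla^2 u)$ with bounded smooth coefficients — here I must use the cancellation that makes the cone spaces adapted to $\cL_{1/2}$ (the same cancellation noted in the Lemma before Proposition \ref{prop:CS-schauder}): the linear-growth part $r\,c(\omega)$ of $u$ enters $\vec H_M + \tfrac12 x^\perp$ only through the combination $u - \vec x\cdot\nabla_\Sigma u$, which is $O(r^{-1})$ for such $u$ by the definition of $C^{2,\alpha}_{\textrm{an},-1}(\Sigma)$. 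Carrying the $r$-weights through (with a factor $\beta$ harvested from $\Vert u\Vert_{\cC\cS^{2,\alpha}_{-1}}<\beta$ from each nonlinear term, and using that the $\cC\cS^{0,\alpha}_{-1}$ and $\cC\cS^{2,\alpha}_{-1}$ norms are built from $C^{0,\alpha}$-norms on unit balls so that products and smooth compositions behave as expected) gives $\Vert\cM(u)\Vert_{\cC\cS^{0,\alpha}_{-1}(\Sigma)} \lesssim \Vert u\Vert_{\cC\cS^{2,\alpha}_{-1}(\Sigma)}$; on the compact core there is nothing to prove beyond standard Schauder algebra. This establishes the mapping property.

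Continuity then follows by the same bookkeeping applied to a difference: writing $\cM(u_1) - \cM(u_2)$ via the fundamental theorem of calculus as $\int_0^1 \tfrac{d}{ds}\cM(su_1+(1-s)u_2)\,ds$, the integrand is a linear operator in $u_1-u_2$ with coefficients that are smooth in $(su_1+(1-s)u_2, \nabla(\cdots),\nabla^2(\cdots))$ and carry the appropriate $r$-decay, so $\Vert\cM(u_1)-\cM(u_2)\Vert_{\cC\cS^{0,\alpha}_{-1}} \lesssim \Vert u_1-u_2\Vert_{\cC\cS^{2,\alpha}_{-1}}$ (in fact $\cM$ is locally Lipschitz, and indeed $C^\infty$, on the $\beta$-ball). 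For Fr\'echet differentiability at $0$: by construction $D\cM|_0$ is the linearization of $v \mapsto \Pi_{T^\perp\Sigma}(\vec H + \tfrac12 x^\perp)$ along normal graphs, which by the standard second-variation computation recalled in Appendix \ref{app:EL} is exactly the Jacobi-type operator $L = \cL_{1/2} + |A_\Sigma|^2$; it maps $\cC\cS^{2,\alpha}_{-1}\to\cC\cS^{0,\alpha}_{-1}$ boundedly by the Lemma preceding Proposition \ref{prop:CS-schauder} (with $a = |A_\Sigma|^2 = O(r^{-2})$, by the $A_\Sigma$ decay in Section \ref{sec:prelim}). The remainder $\cM(u) - Lu$ is, by Taylor expansion of the smooth coefficient functions $\mathfrak a^{ij},\mathfrak b$ around $(y,0,0)$, a sum of terms quadratic or higher in $(u,\nabla u,\nabla^2 u)$ with coefficients of the right $r$-decay, hence $\Vert \cM(u) - Lu\Vert_{\cC\cS^{0,\alpha}_{-1}} \lesssim \Vert u\Vert_{\cC\cS^{2,\alpha}_{-1}}^2 = o(\Vert u\Vert_{\cC\cS^{2,\alpha}_{-1}})$.

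The main obstacle is the conical-end bookkeeping in the first step: one must check that \emph{every} term produced by expanding $\vec H_M + \tfrac12 x^\perp$, $J$, and the $\rho$-ratio along a normal graph over $\Sigma$ either inherits enough $r$-decay from the shrinker geometry ($g_\Sigma - g_\cC = O(r^{-2})$, $A_\Sigma - A_\cC = O(r^{-3+\eta})$, $\langle x,\nu_\Sigma\rangle = O(r^{-1})$, $\vec x\cdot\nabla_\Sigma = r\partial_r + O(r^{-1})\nabla_{g_\cC}$) or else is genuinely quadratic in $u$ so that the linear-growth component is harmless — the one linear-in-$u$ term with no intrinsic decay, coming from $\tfrac12(u - \vec x\cdot\nabla_\Sigma u)$, being precisely the term killed by the definition of the anisotropic space. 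Once this is seen term by term, the continuity and differentiability statements are formal consequences. I would organize this as a short lemma in Appendix \ref{app:NG} giving the schematic form of $\cM(u)$ with the coefficient decay, and then the three assertions here drop out.
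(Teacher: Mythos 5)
Your proposal is correct and follows essentially the same route as the paper's proof: both rest on the boundedness of the Jacobian and Gaussian-weight ratio $J\,\rho(y+v\nu_\Sigma)\rho(y)^{-1}$ in $C^{0,\alpha}$ on unit balls, the $O(r^{-1})$ decay of the graphical mean curvature, and—crucially—the observation that the only linear-in-$v$ term without intrinsic decay appears as $v - \langle y,\nabla_\Sigma v\rangle$ (via \eqref{eq:ng.1}, $\langle x,\nu_\Sigma\rangle=O(r^{-1})$, and $A(x^T,\cdot)=O(r^{-2})$), which is precisely the combination the anisotropic space $C^{2,\alpha}_{\textrm{an},-1}$ is designed to control. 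Your slightly more explicit schematic quasilinear decomposition of $\cM(u)$ and the FTC argument for continuity are a clean way to organize what the paper states tersely ("the second follows similarly"), but the key lemmas and the structural cancellation invoked are the same.
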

\begin{proof}
Fix $v\in \cC\cS^{2,\alpha}_{-1}(\Sigma) \cap \{ \Vert u \Vert_{\cC\cS^{2,\alpha}_{-1}(\Sigma)}<\beta\}$. Note that 
\begin{equation}\label{eq:exponential-factor-change}
J(y,v,\nabla_{\Sigma}v)\rho(y+v(y)\nu_{\Sigma})\rho(y)^{-1} = J(y,v,\nabla_{\Sigma}v) \exp\left({- \frac{2 v(y) \bangle{y,\nu_\Sigma} + (v(y))^2}{4}}\right) 
\end{equation}
and this is easily seen to be uniformly bounded in $C^{0,\alpha}(\Sigma\cap B_{1}(y))$  as $y \in\Sigma\to\infty$. Thus, it remains to check the first term. Observe that the mean curvature term is uniformly bounded in $C^{0,\alpha}(\Sigma\cap B_{1}(y))$ by $c/r$ as $y \in\Sigma\to\infty$. Recall that differentiating the shrinker equation yields (or see Lemmas \ref{lemm:second-fund-form} and \ref{lemm:proj-radial-vect-Sigma})
\[ A(x^T,\cdot) = O(r^{-2})\, .
\]
 Combining this with \eqref{eq:ng.1} and the shrinker equation for $\Sigma$ we get for the other term that
\[
\langle x, \nu \rangle= (1 + |(\text{Id} - vS)^{-1}(\nabla^M v)|^2)^{-\frac{1}{2}} \big(v - \bangle{y,\nabla_{\Sigma}v} \big) + O( |y|^{-1} ),
\]
in $C^{0,\alpha}(\Sigma\cap B_{1}(y))$ as $y\to\infty$. Observing that $v\mapsto v - \bangle{y,\nabla_{\Sigma}v}$ is a bounded map $\cC\cS^{2,\alpha}_{-1}(\Sigma)\to\cC\cS^{0,\alpha}_{-1}(\Sigma)$) we obtain the first assertion. The second follows similarly. 
\end{proof}

We define
\[
\cN : = \cM + \Pi
\]
which has the same mapping properties as $\cM$. Moreover, $\cN$ is Fr\'echet differentiable with derivative at $0$ given by $L+\Pi$ (which is bijective as a linear map $\cC\cS^{2,\alpha}_{-1}(\Sigma)\to\cC\cS^{0,\alpha}_{-1}(\Sigma)$).  Thus, the implicit function theorem allows us to find open neighborhoods of $0$,
\begin{align*}
W_{1} & \subset \cC\cS^{2,\alpha}_{-1}(\Sigma) \cap \{ \Vert u \Vert_{\cC\cS^{2,\alpha}_{-1}(\Sigma)} < \beta\}\\
W_{2} & \subset \cC\cS^{0,\alpha}_{-1}(\Sigma)
\end{align*}
so that $\cN : W_{1}\to W_{2}$ is bijective with inverse $\Psi: W_{2}\to W_{1}$. Moreover (cf.\ \cite[\S 3.13]{Simon:green-book} and \cite[p.\ 168]{Schulze:compact}), $\cN$ and $\Psi$ are holomorphic, after tensoring with $\CC$ (and possibly shrinking $W_{1},W_{2}$). 

We now prove that $\cM$ is continuous as a map $H^{2}_{W}\cap W_{1}\to L^{2}_{W}$ and that $\Psi$ is continuous as a map $L^{2}_{W} \cap W_{2} \to H^{2}_{W}$. 
\begin{lemm}\label{lemm:cont-psi}
Shrinking $W_{1},W_{2}$ if necessary, there is $C>0$ so that
\[
\Vert \cM(u_{1})-\cM(u_{2})\Vert_{L^{2}_{W}(\Sigma)} \leq C \Vert u_{1}-u_{2}\Vert_{H^{2}_{W}(\Sigma)}
\]
for $u_{1},u_{2} \in W_{1}$ and moreover
\[
\Vert \Psi(f_{1}) - \Psi(f_{2})\Vert_{H_{W}^{2}(\Sigma)} \leq C \Vert f_{1}-f_{2} \Vert _{L^{2}_{W}(\Sigma)}
\]
for $f_{1},f_{2}\in W_{2}$. 
\end{lemm}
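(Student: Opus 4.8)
The plan is to reduce both bounds to the explicit quasilinear structure of $\cM$ together with the linear theory for $L$ from Theorem \ref{thm:Lax-Milgram-consequence}. First I would record the preliminary facts that $\cC\cS^{2,\alpha}_{-1}(\Sigma)\hookrightarrow H^{2}_{W}(\Sigma)$ and $\cC\cS^{0,\alpha}_{-1}(\Sigma)\hookrightarrow L^{2}_{W}(\Sigma)$ continuously (using the Gaussian decay of $\rho$ against the at-most-linear growth of $\cC\cS^{2,\alpha}_{-1}$-functions and the Euclidean volume growth of $\Sigma$), and that — after shrinking $W_{1}$ to a convex ball about $0$ and setting $W_{2}=\cN(W_{1})$ — the operator $L+\Pi:H^{2}_{W}(\Sigma)\to L^{2}_{W}(\Sigma)$ is a bounded isomorphism with $\Vert u\Vert_{W,2}\le C\Vert(L+\Pi)u\Vert_{W}$: it is bounded by Corollary \ref{coro:cL-sobolev-mapping}, it is the identity on the finite-dimensional space $\ker L$ and equals $L$ on $(\ker L)^{\perp}$, where Theorem \ref{thm:Lax-Milgram-consequence} gives exactly $\Vert u\Vert_{W,2}\le C\Vert Lu\Vert_{W}$, and injectivity and surjectivity follow from self-adjointness of $L$ (so that $\im L\perp\ker L$).

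For the first inequality I would use the Fr\'echet differentiability of $\cM$ (Lemma \ref{lemm:M-is-well-defined-diff}) together with convexity of $W_{1}$ to write, for $u_{1},u_{2}\in W_{1}$,
\[
\cM(u_{1})-\cM(u_{2})=\int_{0}^{1} D\cM\big|_{u_{s}}[u_{1}-u_{2}]\,ds,\qquad u_{s}:=su_{1}+(1-s)u_{2}\in W_{1}.
\]
From \eqref{eq:M_def} one sees that $\cM(v)$ is a second-order quasilinear expression in $v$, so $D\cM|_{v}$ has the form $a^{ij}\nabla^{2}_{\Sigma,ij}+b^{i}\nabla_{\Sigma,i}+c$, with coefficients determined pointwise by the geometry of $\Sigma$ at the base point and by $(v,\nabla_{\Sigma}v,\nabla^{2}_{\Sigma}v)$. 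The key claim — which is essentially the content of the pointwise estimates already made in the proof of Lemma \ref{lemm:M-is-well-defined-diff} — is that these coefficients are bounded on $\Sigma$ \emph{uniformly} for $v\in W_{1}$: although $v$ itself is controlled only by $C\beta\,\tilde r$ pointwise, it enters $D\cM|_{v}$ only through the Gaussian factor $\exp(-(2v\langle y,\nu_{\Sigma}\rangle+v^{2})/4)$ of \eqref{eq:exponential-factor-change}, through the cancelling combination $v-\langle y,\nabla_{\Sigma}v\rangle$ (which is $O(r^{-1})$ on the end by Lemma \ref{lemm:radial-der-improved} and the cone-space structure), and through the already-small quantity $\cM(v)=O(\beta\tilde r^{-1})$; hence products such as $v\exp(-v^{2}/4)$, $v\,\cM(v)$ and $\langle y,\nu_{\Sigma}\rangle v$ stay bounded, here using Corollary \ref{coro:improved-x-perp} and Lemma \ref{lemm:second-fund-form}. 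Granting $\sup_{\Sigma}(|a^{ij}|+|b^{i}|+|c|)\le C$, Minkowski's inequality yields
\[
\Vert\cM(u_{1})-\cM(u_{2})\Vert_{W}\le\sup_{s\in[0,1]}\Vert D\cM|_{u_{s}}[u_{1}-u_{2}]\Vert_{W}\le C\,\Vert u_{1}-u_{2}\Vert_{W,2},
\]
which is the first assertion.

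For the second inequality I would take $f_{1},f_{2}\in W_{2}$, put $u_{i}:=\Psi(f_{i})\in W_{1}$ so that $\cM(u_{i})+\Pi u_{i}=f_{i}$, and subtract as before to obtain
\[
f_{1}-f_{2}=(L+\Pi)(u_{1}-u_{2})+\int_{0}^{1}\big(D\cM|_{u_{s}}-L\big)[u_{1}-u_{2}]\,ds.
\]
Since $D\cM|_{0}=L$, the operator $\cE_{v}:=D\cM|_{v}-L$ has coefficients that vanish at $v=0$, and the same pointwise analysis as above — now tracking the $\beta$-dependence — gives $\sup_{\Sigma}(|a^{ij}-g^{ij}_{\Sigma}|+|b^{i}|+|c-(\tfrac12+|A_{\Sigma}|^{2})|)\le C\beta$, hence $\Vert\cE_{u_{s}}[h]\Vert_{W}\le C\beta\Vert h\Vert_{W,2}$. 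Feeding this into the isomorphism estimate for $L+\Pi$ gives
\[
\Vert u_{1}-u_{2}\Vert_{W,2}\le C\Vert(L+\Pi)(u_{1}-u_{2})\Vert_{W}\le C\Vert f_{1}-f_{2}\Vert_{W}+C\beta\Vert u_{1}-u_{2}\Vert_{W,2},
\]
and choosing $\beta$ small so that $C\beta\le\tfrac12$ lets us absorb the last term, giving $\Vert\Psi(f_{1})-\Psi(f_{2})\Vert_{W,2}\le 2C\Vert f_{1}-f_{2}\Vert_{W}$, as claimed. The main obstacle in both parts is the middle step: checking that the coefficients of $D\cM|_{v}$ are bounded in $L^{\infty}(\Sigma)$, and those of $D\cM|_{v}-L$ bounded by $C\beta$, uniformly over $W_{1}$, despite the linear growth allowed for functions in $\cC\cS^{2,\alpha}_{-1}(\Sigma)$. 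This is exactly where the conical geometry enters: every appearance of the unbounded quantity $v$ is paired either with the fast-decaying shrinker geometry ($\langle x,\nu_{\Sigma}\rangle=O(r^{-1})$, $A_{\Sigma}(x^{T},\cdot)=O(r^{-2})$) or with the Gaussian weight, or occurs inside the cancelling combination $v-r\partial_{r}v=f-r\partial_{r}f=O(r^{-1})$ intrinsic to the cone spaces. Since the $C^{0,\alpha}$-on-unit-balls versions of precisely these estimates are already carried out in the proof of Lemma \ref{lemm:M-is-well-defined-diff} (together with the normal-graph formulae of Appendix \ref{app:NG}), the remaining work is essentially bookkeeping, and once the pointwise bounds are in hand both inequalities follow by the soft arguments above.
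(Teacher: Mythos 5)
Your proof is correct and takes essentially the same route as the paper's. The paper's decomposition $\cM(u_1)-\cM(u_2)=L(u_1-u_2)+A\cdot\nabla^2(u_1-u_2)+B\cdot\nabla(u_1-u_2)+C(u_1-u_2)$ with $\sup_\Sigma(|A|+|B|+|C|)\lesssim\Vert u_1\Vert_{\cC\cS^{2,\alpha}_{-1}}+\Vert u_2\Vert_{\cC\cS^{2,\alpha}_{-1}}$ --- obtained there by writing $\cM(u)=Lu+Q(p,u,\nabla u,\nabla^2 u)$ and ``interpolating $Q$ in the standard way between $u_1$ and $u_2$'' --- is exactly your $\int_0^1 D\cM|_{u_s}[u_1-u_2]\,ds$ with $D\cM|_v-L$ having $O(\beta)$ coefficients, and the second claim that the paper dispatches with ``standard arguments (cf.\ Simon)'' is precisely your $(L+\Pi)$-isomorphism plus absorption argument.
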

\begin{proof}
We claim that
\begin{equation}\label{eq:diff-cMs}
\cM(u_{1}) - \cM(u_{2}) = L(u_{1} - u_{2}) + A \cdot \nabla^{2}(u_{1}-u_{2}) + B\cdot\nabla (u_{1}-u_{2}) + C(u_{1}-u_{2})
\end{equation}
where
\begin{equation}\label{eq:diff-cMs-est}
\sup_{\Sigma} (|A| + |B| + |C|) \lesssim \Vert u_{1}\Vert_{\cC\cS^{2,\alpha}_{-1}(\Sigma)}+ \Vert u_{2}\Vert_{\cC\cS^{2,\alpha}_{-1}(\Sigma)}.
\end{equation}
This follows from using \eqref{eq:M_def}, \eqref{eq:ng.2}, \eqref{eq:ng.3} and \eqref{eq:ng.4} together with the shrinker equation along $\Sigma$ to write 
$$ \cM(u) = L u + Q(p,u,\nabla u, \nabla^2u) $$
and interpolating $Q(p,u,\nabla u, \nabla^2u)$ in the standard way between $u_1$ and $u_2$. Combined with Corollary \ref{coro:cL-sobolev-mapping}, this proves the first assertion. The second claim now follows from standard arguments (cf.\ \cite[\S 3.12]{Simon:green-book}) given \eqref{eq:diff-cMs}, \eqref{eq:diff-cMs-est}, and Theorem \ref{thm:Lax-Milgram-consequence}. \end{proof}

At this point, we can follow the arguments in \cite[\S 3.11-3.13]{Simon:green-book} essentially verbatim (except we use Corollary \ref{coro:surjectivity-L-operator}, Theorem \ref{thm:Lax-Milgram-consequence}, Lemma \ref{lemm:M-is-well-defined-diff}, and Lemma \ref{lemm:cont-psi} in place of their standard counterparts in the compact case) to prove 
\begin{theo}[\L ojasiewicz--Simon inequality for entire graphs]\label{theo:loj-entire}
There is $\beta_{0}>0$\index{$\beta_{0}$} sufficiently small, $\theta \in (0,\frac 12)$,\index{$\theta$} and $C>0$, all depending on $\Sigma$, so that if $M$ is the graph over $\Sigma$ of a function in $u \in \cC\cS^{2,\alpha}_{-1}(\Sigma)$ with $\Vert u \Vert_{\cC\cS^{2,\alpha}_{-1}(\Sigma)} < \beta_{0}$, then 
\[
|F(M) - F(\Sigma)|^{1-\theta} \leq C \Vert \cM(u)\Vert_{L^{2}_{W}(\Sigma)} \leq  C\left(\int_{M} |\phi|^{2} \rho\, d\cH^{n} \right)^{\frac 12}. 
\]
\end{theo}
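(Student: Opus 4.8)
The plan is to run Simon's classical reduction to the finite-dimensional \L ojasiewicz inequality (as presented in \cite[\S 3.11--3.13]{Simon:green-book}), taking care that every step which is usually justified by compactness of the ambient manifold is instead justified by the weighted-space machinery developed above. First I would record the ingredients the abstract scheme requires. The energy is the Gaussian area $F$, whose negative $L^2_W$-gradient at a graph $u$ over $\Sigma$ is exactly the map $\cM(u)$ from \eqref{eq:M_def}; this is the content of Appendix \ref{app:EL}. By Lemma \ref{lemm:M-is-well-defined-diff}, $\cM$ is a $C^1$ (indeed, after complexifying, holomorphic) map $\cC\cS^{2,\alpha}_{-1}(\Sigma)\cap\{\|u\|<\beta\}\to\cC\cS^{0,\alpha}_{-1}(\Sigma)$ with $D\cM(0)=L$. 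The operator $L=\cL_{1/2}+|A_\Sigma|^2$ is self-adjoint and Fredholm in the $L^2_W$ theory (Theorem \ref{thm:Lax-Milgram-consequence}), with finite-dimensional kernel $\ker L\subset H^1_W$, and by Corollary \ref{coro:surjectivity-L-operator} it is surjective onto the $L^2_W$-orthogonal complement of $\ker L$ as a map $\cC\cS^{2,\alpha}_{-1}(\Sigma)\to\cC\cS^{0,\alpha}_{-1}(\Sigma)$. Finally, $L$ has discrete spectrum with no kernel transverse to its finite-dimensional null space.

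Next I would carry out the Lyapunov--Schmidt reduction. Using the implicit function theorem applied to $\cN=\cM+\Pi$, which is bijective near $0$ with holomorphic inverse $\Psi:W_2\to W_1$ (already constructed in the excerpt), one writes the ``reduced energy'' $\mathfrak f(\xi)=F(\Graph(\Psi(\text{correction of }\xi)))$ as a real-analytic function on a neighborhood of $0$ in the finite-dimensional space $\ker L$. Here is where the two continuity estimates of Lemma \ref{lemm:cont-psi} enter: they guarantee that $\Psi$ is Lipschitz $L^2_W\cap W_2\to H^2_W$ and $\cM$ is Lipschitz $H^2_W\cap W_1\to L^2_W$, which is what makes the passage between the $C^{2,\alpha}$-based spaces (where solvability and the IFT live) and the $L^2_W$-based spaces (where the gradient estimate must be phrased) legitimate. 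One then applies \L ojasiewicz's original inequality \cite{Loj} to the real-analytic function $\mathfrak f$ near its critical point $0$: $|\mathfrak f(\xi)-\mathfrak f(0)|^{1-\theta}\le C|\nabla\mathfrak f(\xi)|$ for some $\theta\in(0,\tfrac12)$. Unwinding the reduction — exactly as in \cite{Simon:green-book}, using that the ``infinite-dimensional directions'' are controlled quadratically because $L$ is invertible there, together with the Lipschitz bounds above — converts this into
\[
|F(M)-F(\Sigma)|^{1-\theta}\le C\,\|\cM(u)\|_{L^2_W(\Sigma)}
\]
for $\|u\|_{\cC\cS^{2,\alpha}_{-1}(\Sigma)}<\beta_0$, with $\beta_0\le\beta$ chosen small enough that $u\in W_1$. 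The second inequality in the statement is then just the pointwise comparison $|\cM(u)|\lesssim|\phi|$ on the graph together with the change-of-measure factor \eqref{eq:exponential-factor-change}, which is uniformly bounded on the support where $\|u\|$ is small; integrating against $\rho$ gives $\|\cM(u)\|_{L^2_W(\Sigma)}\le C(\int_M|\phi|^2\rho\,d\cH^n)^{1/2}$.

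The step I expect to be the main obstacle — or rather the one requiring the most care — is verifying that Simon's argument goes through \emph{verbatim} with the weighted spaces in place of their compact-manifold counterparts. Two points deserve scrutiny. First, the argument needs $\cM$ to map between a Banach space $\cC\cS^{2,\alpha}_{-1}$ and a Banach space $\cC\cS^{0,\alpha}_{-1}$ with $D\cM(0)=L$ Fredholm and $L:\ker L^{\perp}\to\ker L^{\perp}$ an isomorphism; this is precisely what Proposition \ref{prop:CS-schauder}, Corollary \ref{coro:surjectivity-L-operator}, and Theorem \ref{thm:Lax-Milgram-consequence} supply, but one must check the orthogonal projection $\Pi$ onto $\ker L$ is bounded on \emph{both} the H\"older and the $L^2_W$ scales simultaneously (true since $\ker L$ consists of smooth functions lying in $\cC\cS^{2,\alpha}_{-1}\cap H^2_W$, as $L^2_W$-kernel elements satisfy $Lu=0$ with $0\in\cC\cS^{0,\alpha}_{-1}$, so Corollary \ref{coro:surjectivity-L-operator} applies). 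Second, the real-analyticity of the reduced energy — which on a compact manifold follows from analyticity of the area functional in finitely many variables — here requires the holomorphic dependence of $\cN,\Psi$ after complexification, already asserted in the excerpt via \cite[\S 3.13]{Simon:green-book}; one must confirm the complexified spaces $\cC\cS^{2,\alpha}_{-1}(\Sigma)\otimes\CC$ are still Banach spaces on which the analytic continuation of $\cM$ is defined and holomorphic, which follows from the explicit polynomial-type nonlinearity $Q$ in the graphical shrinker equation. Once these bookkeeping issues are settled, no new analytic input beyond Sections \ref{sec:prelim}--\ref{sec:lin-est-spaces} is needed.
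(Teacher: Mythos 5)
Your proposal is correct and follows essentially the same route as the paper: the paper's ``proof'' of Theorem~\ref{theo:loj-entire} is precisely the remark that one follows Simon \cite[\S 3.11--3.13]{Simon:green-book} verbatim, substituting Lemma~\ref{lemm:M-is-well-defined-diff}, Lemma~\ref{lemm:cont-psi}, Theorem~\ref{thm:Lax-Milgram-consequence}, and Corollary~\ref{coro:surjectivity-L-operator} for their compact-case counterparts, with the second inequality coming from the change-of-density factor as in~\eqref{eq:exponential-factor-change}. The two ``points deserving scrutiny'' you flag (boundedness of $\Pi$ on both scales; holomorphy after complexification) are exactly the right bookkeeping items, and your resolutions of them match the paper's setup.
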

We note that the second inequality here follows a similar reasoning to \eqref{eq:exponential-factor-change} (so as to control the change in $\rho$ when evaluated along $M$ and as opposed to $\Sigma$).

\section{Defining the relevant scales}\label{sec:scales}

In order to apply the inequality obtained in Theorem \ref{theo:loj-entire}, we must understand the various geometric scales involved.
\subsection{Pseudolocality and the scale of the core of the shrinker} 
These definitions are relevant to the pseudolocality based improvement argument in Lemma \ref{lemm:rough-improves}. 

\begin{prop}[{Pseudolocality \cite[Theorem 1.5]{IlmanenSchulzeNeves}}]\label{prop:pseudolocality-result}
Given $\delta>0$, there exists $\gamma>0$ and a constant $\rho = \rho(n,\delta)\in (0,\infty)$ such that if a mean curvature flow $\{M_{t}\}_{t \in[-1,0]}$ satisfies that $M_{-1}\cap B_{\rho}(0)$ is a Lipschitz graph over the plane $\{x_{n+1} = 0\}$ with Lipschitz constant less than $\gamma$ and $0 \in M_{-1}$, then $M_t \cap B_\rho(0)$ intersects $B_\delta(0)$ and $M_t \cap B_\delta(0)$ remains a Lipschitz graph over $\{x_{n+1} = 0\}$ with Lipschitz constant less than $\delta$ for all $t \in [-1,0]$. 
\end{prop}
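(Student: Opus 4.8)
The plan is to prove the statement by a contradiction/compactness argument, using a localized version of Huisken's monotonicity formula to propagate near-flatness of the initial time-slice into Gaussian density bounds at later times, and then White's local regularity theorem to convert these into uniform curvature estimates on the approximating flows.

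First I would fix $\delta>0$ and suppose the conclusion fails for every choice of parameters. Then, testing the candidate parameters $\rho=j$, $\gamma=1/j$ for each $j\in\NN$, there is a mean curvature flow $\{M^{j}_{t}\}_{t\in[-1,0]}$ with $0\in M^{j}_{-1}$ and $M^{j}_{-1}\cap B_{j}(0)$ a Lipschitz graph over $\{x_{n+1}=0\}$ with Lipschitz constant less than $1/j$, but such that at some time $t_{j}\in[-1,0]$ either $M^{j}_{t_{j}}\cap B_{j}(0)$ misses $B_{\delta}(0)$ or $M^{j}_{t_{j}}\cap B_{\delta}(0)$ is not a Lipschitz graph over $\{x_{n+1}=0\}$ with Lipschitz constant less than $\delta$. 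The goal is to contradict this by showing the $M^{j}$ converge to the static plane.

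The heart of the argument, and the step I expect to be the main obstacle, is the density estimate. Fix $R>0$ and a point $(x_{0},t_{0})$ with $|x_{0}|\leq R$, $t_{0}\in(-1,0]$. Applying Huisken's monotonicity formula localized by a spatial cutoff $\varphi$ with $\varphi\equiv 1$ on $B_{j/4}(0)$ and $\supp\varphi\subset B_{j/2}(0)$, the quantity $t\mapsto \int \rho_{(x_{0},t_{0})}\,\varphi\, d\mu^{j}_{t}$ is monotone up to an error controlled by $\sup|\nabla\varphi|^{2}$ integrated against the backwards heat kernel. Since $M^{j}_{-1}$ is $(1/j)$-close to the plane on the enormous ball $B_{j}(0)$, the initial value of this quantity tends to $1$, and the cutoff error tends to $0$ as $j\to\infty$ precisely because $\supp\varphi$ has radius large compared to the relevant Gaussian scale. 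Hence the Gaussian density ratios of $M^{j}$ at $(x_{0},t_{0})$ at all scales below a fixed constant are at most $1+\eps_{j}$ with $\eps_{j}\to 0$. Making this quantitative---propagating flatness of the \emph{initial} slice to a density bound at \emph{all} later times near $0$---is exactly the pseudolocality phenomenon and is the delicate point.

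Granting this, White's local regularity theorem for Brakke flows applies: a density ratio below $1+\eps_{0}$ forces $M^{j}$ to be smooth near $(x_{0},t_{0})$ with a curvature bound $|A_{M^{j}_{t}}|\leq C$ that is uniform in $j$ on $\big(B_{R}(0)\setminus\{0\}\big)\times[-1,0]$ (with the constant depending only on $R$ and the distance to the lateral boundary $\partial B_{j}(0)$, which is harmless as $j\to\infty$). With these uniform estimates I would extract a subsequential limit flow $\{M^{\infty}_{t}\}_{t\in[-1,0]}$, converging in $C^{\infty}_{\mathrm{loc}}$ on $B_{R}(0)\times[-1,0]$ for every $R$; its initial slice is the flat plane $\{x_{n+1}=0\}$, so by uniqueness of smooth mean curvature flow (the plane being a static solution) $M^{\infty}_{t}\equiv\{x_{n+1}=0\}$ for all $t\in[-1,0]$. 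Therefore, for $j$ large, $M^{j}_{t}\cap B_{\delta}(0)$ is a Lipschitz graph over $\{x_{n+1}=0\}$ with Lipschitz constant less than $\delta$ and meets $B_{\delta}(0)$ for all $t\in[-1,0]$, contradicting the choice of $M^{j}$. (An alternative, blow-up-free route is to invoke the Ecker--Huisken interior gradient estimate for graphical mean curvature flow directly, but ensuring that the graph does not escape $B_{\rho}(0)$ before time $0$ still requires a quantitative version of the same near-flatness propagation.)
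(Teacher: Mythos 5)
The paper does not prove this proposition; it is quoted verbatim (with attribution) from Ilmanen--Neves--Schulze \cite{IlmanenSchulzeNeves}, Theorem 1.5. So there is no ``paper's own proof'' to compare against. That said, your sketch does follow the same broad strategy used to establish this result: a contradiction/compactness argument in which near-flatness of the initial slice is converted, via a localized Huisken monotonicity formula, into Gaussian density ratios close to $1$ at later points, and then White's local regularity theorem is invoked to extract uniform curvature bounds and a smooth subsequential limit that must be the static plane.

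However, as you yourself flag, the argument as written is not a proof. The step you label ``the delicate point''---propagating flatness of the initial slice to a density bound at all later spacetime points near the origin---\emph{is} the content of pseudolocality, and your paragraph about it is a plan rather than an estimate. Two concrete issues need to be addressed there. First, the localized monotonicity formula with a general spatial cutoff $\varphi$ produces an error term that is controlled not merely by $\sup|\nabla\varphi|^{2}$ but by this quantity integrated against the flow's mass near $\supp\nabla\varphi$; without an a priori area-ratio bound on $M^{j}_{t}$ in the annulus $B_{j/2}\setminus B_{j/4}$ (which does not simply follow from the hypothesis on $M^{j}_{-1}$ alone) this error is not a priori small. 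The standard fix is to use the Ecker-type cutoff that yields genuine monotonicity, or to first secure local area bounds from the initial Lipschitz graph and the monotonicity of area under the flow. Second, even after bounding the localized Gaussian integral, one has to check that the conclusion is a \emph{scale-invariant} density ratio bound, uniformly over the relevant range of scales and centers, in order to invoke White's regularity theorem; you assert this but do not carry out the bookkeeping.

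Finally, a small point of precision: after obtaining the curvature bounds and passing to a limit flow $M^{\infty}$ on $B_{R}(0)\times[-1,0]$, you invoke uniqueness of the smooth flow with initial data the plane. This is fine here because your curvature bounds make the limit a smooth (hence classical, graphical over $\{x_{n+1}=0\}$) flow, but it should be said explicitly that one is appealing to smooth interior uniqueness for graphical mean curvature flow rather than to any weak (Brakke) uniqueness statement, which would be false in general.
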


\begin{defi}[Fixing the Pseudolocality constants]
We will fix $\delta = 10^{-2}$ in the preceding Pseudolocality result. We denote the corresponding $\gamma$ by $\gamma_*$\index{$\gamma_{*}$} and $\rho=\rho_{*}$\index{$\rho_{*}$}. For consistency, we also write $\delta_{*}=\delta$\index{$\delta_{*}$}. We will always assume that $\rho_{*}\geq 1$. 
\end{defi}

\begin{defi}[Scale of the core of the conical shrinker]
For an asymptotically conical self-shrinker $\Sigma^{n}\subset \RR^{n+1}$, we choose $\underline R(\Sigma)$\index{$\underline R(\Sigma)$} so that for $x \in \Sigma\setminus B_{\underline R(\Sigma)}(0)$, we have that $\Sigma\cap B_{2\rho_{*}}(x)$ is a Lipschitz graph over $T_{x}\Sigma$ with Lipchitz constant less than $\gamma_{*}/2$. Furthermore, we require that the map from the end of $\cC$ described in Lemma \ref{lemm:improved-conical-est-shrinker} is defined outside of $B_{\underline R(\Sigma)-1}$. 
\end{defi}

It is clear that for an asymptotically conical shrinker, we may take $\underline R(\Sigma) < \infty$. 
\subsection{Scales of hypersurfaces near the shrinker} 
The definitions here are relevant to the radius at which one can apply a cut-off version of Theorem \ref{theo:loj-entire}. 

\begin{defi}[Shrinker scale]\label{defi:shrinker-scale}
For $M^{n}\subset \RR^{n+1}$ we define the \emph{shrinker scale}\index{shrinker scale} $\bR(M)$\index{$\bR(M)$} by
\begin{equation}\label{eq:shrinker-scale}
e^{-\frac{\bR(M)^{2}}{4}} : = |\nabla_{M}F|^{2} = \int_{M} |\phi|^{2} \rho \, d\cH^{n}\, .
\end{equation}
\end{defi}

\begin{defi}[Rough conical scale]\label{defi:rough-conical-scale}
For $M^{n}\subset \RR^{n+1}$, $\ell\in \NN$\index{$\ell$}, and $C_{\ell}>0$\index{$C_{\ell}$} we define the \emph{rough conical scale}\index{rough conical scale} $\tilde {\br}_{\ell}(M)$\index{$\tilde {\br}_{\ell}(M)$} to be the largest radius so that $M^{n}\cap B_{\tilde \br_{\ell}(M)}(0)$ is smooth and
\[
|\nabla^{(k)}A_{M}| \leq C_{\ell}(1+r)^{-1-k}
\]
for $k \in \{0,\dots,\ell+1\}$. 
\end{defi}

\begin{defi}[Conical scale]\label{defi:conical-scale}
Fix an asymptotically conical self-shrinker $\Sigma^{n}\subset \RR^{n+1}$ and choose $\beta_{0} =\beta_{0}(\Sigma)>0$ as in Theorem \ref{theo:loj-entire}. For a hypersurface $M^{n} \subset \RR^{n+1}$ we define the \emph{conical scale}\index{conical scale} $\br_{\ell}(M)$\index{$\br_{\ell}(M)$} to be largest radius in $[\underline R(\Sigma), \tilde \br_{\ell}(M)]$ so that there is $u : \Sigma \to\RR$ with
\[
\Graph u|_{\Sigma\cap B_{\br_{\ell}(M)} } \subset M \qquad \text{and} \qquad M \cap B_{\br_{\ell}(M)-1}\subset \Graph u,
\]
where $u \in \cC\cS^{2,\alpha}_{-1}(\Sigma)$ with $\Vert u\Vert_{\cC\cS^{2,\alpha}_{-1}(\Sigma)} < \beta_{0}$. 
\end{defi}

\begin{defi}[Core graphical hypothesis]\label{def:core-graph-hypoth}
 We say that $M$ satisfies the \emph{core graphical hypothesis}\index{core graphical hypothesis}, denoted by $(*_{b,\underline r})$\index{$(*_{b,\underline r})$}, if $\tilde \br_{\ell}(M)\geq  \underline r$ and there is $u : \Sigma \cap B_{\underline r} (0) \to \RR$ so that 
\[
\Graph u \subset M \qquad \text{and} \qquad M \cap B_{{ \underline r}-1}\subset \Graph u
\]
and $\Vert u \Vert_{C^{\ell+1}(B_{\underline r}(0))}\leq b$. 

We will always assume that \index{\underline r}$\underline r > \sqrt{2n}$ (so that $\partial B_{\underline r}(0)$ expands under the rescaled mean curvature flow). 
\end{defi}
We fix $b>0$\index{$b$} to be very small (e.g. $b\ll\beta_{0}$) in Proposition \ref{prop:approx-up-to-rough}.

\section{Localizing the \L ojasiewicz--Simon inequality}\label{sec:loc-LS}

We now localize Theorem \ref{theo:loj-entire} to hypersurfaces that are not entire graphs over $\Sigma$. {For the definition of $\lambda(M)$ see Definition \ref{defi:gaussian-area}.}

\begin{theo}[The local \L ojasiewicz--Simon inequality]\label{theo:cutoff-loj}
For $M^{n}\subset \RR^{n+1}$ with $\lambda(M) \leq \lambda_{0}$, $\gamma \in (1,2)$, and $R \in [1,\br_{\ell}(M)-1]$, we have that
\[
|F(M) - F(\Sigma)| \leq C \left( \left( \int_{M\cap B_{R}(0)} |\phi|^{2} \rho\, d\cH^{n} \right)^{\frac{1}{2(1-\theta)}} + R^{\frac{n-4}{2(1-\theta)}} e^{-\frac{R^{2}}{8(1-\theta)}} + e^{-\frac{R^{2}}{4\gamma}} \right)
\]
for $C=C(\Sigma,\lambda_{0},\alpha,\gamma)$. Here $\theta \in (0,\frac 12)$ depends on $\Sigma$ and the H\"older coefficient $\alpha$; $\theta$ is fixed in Theorem \ref{theo:loj-entire}. 
\end{theo}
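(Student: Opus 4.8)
The plan is to derive the local inequality from the entire-graph inequality (Theorem \ref{theo:loj-entire}) by a cut-off argument. Since $R \in [1, \br_\ell(M)-1]$, by the definition of the conical scale (Definition \ref{defi:conical-scale}) there is $u \in \cC\cS^{2,\alpha}_{-1}(\Sigma)$ with $\|u\|_{\cC\cS^{2,\alpha}_{-1}(\Sigma)} < \beta_0$ and such that $M \cap B_R(0) = \Graph u|_{\Sigma \cap B_R(0)}$. First I would build a comparison surface $\widetilde M$ that agrees with $M$ (equivalently $\Graph u$) inside $B_{R-1}(0)$ and equals $\Sigma$ outside $B_R(0)$: concretely, $\widetilde M = \Graph(\eta u)$, where $\eta$ is a cut-off with $\eta \equiv 1$ on $B_{R-1}(0)$, $\eta \equiv 0$ outside $B_R(0)$, and $|\nabla^j \eta| \lesssim 1$. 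Since $\|u\|_{\cC\cS^{2,\alpha}_{-1}(\Sigma)} < \beta_0$ and the spaces are weighted so that $u$ decays like $r^{-1}$, one checks that $\|\eta u\|_{\cC\cS^{2,\alpha}_{-1}(\Sigma)} < \beta_0$ as well (the cut-off is placed at scale $R \geq 1$, so the cutting-off only helps), and hence Theorem \ref{theo:loj-entire} applies to $\widetilde M$:
\[
|F(\widetilde M) - F(\Sigma)|^{1-\theta} \leq C\left(\int_{\widetilde M} |\phi|^2 \rho\, d\cH^n\right)^{\frac 12}.
\]

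Next I would control the two discrepancies introduced by the cut-off. The first is the difference $|F(M) - F(\widetilde M)|$: the two surfaces coincide outside the annulus $B_R(0) \setminus B_{R-1}(0)$, and on that annulus both are graphs of $\cC\cS^{2,\alpha}_{-1}$-small functions, so the Gaussian areas differ by at most the Gaussian measure of that annulus, which is $\lesssim R^{n-2} e^{-R^2/4}$ (using $\lambda(M) \leq \lambda_0$ to bound the relevant area densities, together with the asymptotically conical geometry so that the $\cH^n$-measure of $\Sigma \cap (B_R \setminus B_{R-1})$ is $\lesssim R^{n-1}$). Combined with the crude bound $|F(M) - F(\Sigma)| \lesssim |F(M) - F(\widetilde M)|^{1/(1-\theta)} \cdots$ — more carefully, I would split into the regime where $|F(M)-F(\Sigma)|$ is comparable to $|F(\widetilde M)-F(\Sigma)|$ (apply the \L ojasiewicz inequality for $\widetilde M$) versus the regime where it is dominated by the cut-off error $|F(M)-F(\widetilde M)|$ (estimate directly). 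The second discrepancy is $\int_{\widetilde M} |\phi|^2\rho\, d\cH^n$: on $B_{R-1}(0)$ the integrand agrees with that over $M$, contributing $\int_{M \cap B_R(0)} |\phi|^2\rho\, d\cH^n$; on the annulus and outside, $\phi$ measures the deviation of $\Graph(\eta u)$ from being a shrinker, which vanishes outside $B_R$ (there $\widetilde M = \Sigma$) and on the annulus is controlled by $\|u\|_{\cC\cS^{2,\alpha}_{-1}}$ times the Gaussian weight there, giving again a term $\lesssim R^{n-4} e^{-R^2/4}$ after taking the square root $\lesssim R^{(n-4)/2} e^{-R^2/8}$. Raising the \L ojasiewicz inequality to the power $1/(1-\theta)$ then produces the first two terms on the right-hand side of the claimed estimate.

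The third term, $e^{-R^2/(4\gamma)}$ with $\gamma \in (1,2)$, should come from the monotonicity/entropy side of the argument rather than from the cut-off itself: I would use the Gaussian density bound (the entropy bound $\lambda(M) \leq \lambda_0$ together with the area-ratio estimate of Appendix \ref{app:area-bds}) to control $\int_{M \setminus B_R(0)} \rho\, d\cH^n \lesssim e^{-R^2/(4\gamma)}$, which is what lets one pass from "control of $F$ on $B_R$" to "control of $F$ globally'' — i.e. $|F(M) - F(M\cap B_R)| \lesssim e^{-R^2/(4\gamma)}$, where the slightly worse exponent $1/\gamma < 1$ absorbs the fact that the area ratio bound is not sharp. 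I expect the main obstacle to be the bookkeeping in this last step: carefully tracking that cutting off at radius $R$ (rather than exactly $\br_\ell(M)$) and using only the non-sharp Gaussian area tail forces the constant $\gamma > 1$, and ensuring all the error terms are genuinely of the stated form and not, say, $R^{n}e^{-R^2/4}$ with a worse polynomial power — this requires using the $r^{-1}$ decay built into $\cC\cS^{2,\alpha}_{-1}(\Sigma)$ rather than mere boundedness of $u$, exactly as in the model problem Lemma \ref{lemm:model-problem}. The \L ojasiewicz exponent $\theta$ and constant are inherited verbatim from Theorem \ref{theo:loj-entire}.
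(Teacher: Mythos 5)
Your proposal takes a more circuitous route than the paper, and the extra step introduces a real gap. The paper's proof applies Theorem \ref{theo:loj-entire} \emph{directly} to $\Graph u$, where $u:\Sigma\to\RR$ is the globally defined function with $\Vert u\Vert_{\cC\cS^{2,\alpha}_{-1}(\Sigma)}<\beta_{0}$ furnished by Definition \ref{defi:conical-scale}. There is no need to construct a comparison surface: the conical scale was engineered precisely so that the relevant graph is already entire and already in the domain of the \L ojasiewicz inequality. One then chains triangle-inequality comparisons $F(M)\leftrightarrow\int_{M\cap B_R}\rho\leftrightarrow F(\Graph u)$ (each difference controlled by Gaussian tails from the entropy bound, giving the $e^{-R^2/(4\gamma)}$ term), and finally restricts $\int_{\Graph u}|\phi|^2\rho$ to $B_R$ using $r\,|\phi_{\Graph u}|\leq C\beta_0$, producing the $R^{(n-4)/(2(1-\theta))}e^{-R^2/(8(1-\theta))}$ term.

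The gap in your version is the claim that $\Vert \eta u\Vert_{\cC\cS^{2,\alpha}_{-1}(\Sigma)}<\beta_0$ ``because the cut-off only helps.'' This is false in general. An element of $\cC\cS^{2,\alpha}_{-1}(\Sigma)$ is a pair $(c,f)$ identified with $u=\chi(r)c(\omega)r+f$, and the norm measures $\|c\|_{C^{2,\alpha}(\Gamma)}$ plus a weighted norm of $f$ that forces $|f|\lesssim r^{-1}$. When you multiply $u$ by a cutoff $\eta$ supported in $B_R$, the result $\eta u$ vanishes at infinity, so its decomposition in $\cC\cS^{2,\alpha}_{-1}$ necessarily has $\tilde c=0$ and $\tilde f=\eta u$. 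But near $r=R-1$ the function $\eta u$ is of size $|c(\omega)|\,R$, so $\sup r\,|\tilde f|\sim|c|\,R^2$, which is of order $\beta_0 R^2$, not of order $\beta_0$. Thus $\Graph(\eta u)$ is \emph{not} in the $\beta_0$-ball where Theorem \ref{theo:loj-entire} applies, and the argument breaks at its first step unless $c\equiv 0$. (Concretely: for $\Sigma=\RR^n$ and $u=\epsilon r$, $\Vert u\Vert_{\cC\cS^{2,\alpha}_{-1}}=\epsilon$ but $\Vert\eta u\Vert_{\cC\cS^{2,\alpha}_{-1}}\gtrsim\epsilon R^2$.) One could repair this by interpolating only the decaying part --- i.e.\ using $\tilde u=\chi c r+\eta f$ so that $\Graph\tilde u$ is a cone (not $\Sigma$) outside $B_R$ --- but that requires additional analysis of $\phi_{\Graph\tilde u}$ on the conical end and is strictly more work than the paper's approach. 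Your bookkeeping of the two error terms (the entropy tail $e^{-R^2/(4\gamma)}$ and the $\phi$-restriction error using the $r^{-1}$ decay of $\phi$ coming from the cone-space structure) is otherwise on the right track and matches the roles these play in the paper's argument.
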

\begin{proof}
By definition of $\br_{\ell}(\Sigma)$ (Definition \ref{defi:conical-scale}), there is $u:\Sigma\to\RR$ with 
\[
\Graph u |_{\Sigma\cap B_{R}(0)} \subset M \qquad \text{and} \qquad M \cap B_{R}(0) \subset \Graph u 
\]
with $\Vert u\Vert_{\cC\cS^{2,\alpha}_{-1}(\Sigma)} < \beta_{0}$. We may thus apply Theorem \ref{theo:loj-entire} to $\Graph u$ to obtain (allowing the constant $C$ to change from line to line as usual)
\begin{align*}
|F(M) - F(\Sigma)| & = \left| \int_{M} \rho\, d\cH^{n} - F(\Sigma)\right|\\
& \leq \left| \int_{M\cap B_{R}(0)} \rho d\cH^{n} - F(\Sigma)\right| + C e^{-\frac{R^{2}}{4\gamma}}\\
& \leq \left| \int_{\Graph u} \rho d\cH^{n} - F(\Sigma)\right| + C e^{-\frac{R^{2}}{4\gamma}}\\
& \leq  C \left(  \int_{\Graph u} |\phi|^{2} \rho \, d\cH^{n} \right)^{\frac{1}{2(1-\theta)}} + C e^{-\frac{R^{2}}{4\gamma}}.
\end{align*}
It remains to argue that we can restrict the first integral to $\Sigma\cap B_{R}(0)$. It is easy to see that $r|\phi_{\Graph u}| \leq C\beta_{0}$ by definition of $\cC\cS^{2,\alpha}_{-1}(\Sigma)$. Using
\[
\int_{R}^{\infty} r^{n-3} e^{-\frac{r^{2}}{4}} dr \lesssim R^{n-4} e^{-\frac{R^{2}}{4}},
\]
we thus obtain
\begin{align*}
|F(M) - F(\Sigma)| & \leq  C \left( \int_{(\Graph u) \cap B_{R}(0)} |\phi|^{2} \rho \, d\cH^{n} \right)^{\frac{1}{2(1-\theta)}} +C R^{\frac{n-4}{2(1-\theta)}} e^{-\frac{R^{2}}{8(1-\theta)}} + C e^{-\frac{R^{2}}{4\gamma}}.
\end{align*}
This completes the proof. 
\end{proof}

\section{Approximate shrinkers up to the rough conical scale} \label{sec:improvement-argument}

For $\theta$ fixed in Theorem \ref{theo:loj-entire}, define \index{$\Theta$}
\[
\Theta = \left(\frac{1-\frac\theta 2}{1-\theta}\right)^{\frac 1 4} \in \left(1,\left(\frac 32\right)^{\frac 14} \, \right].
\]

\begin{defi}\label{defi:rough-conical-approx-shrinker}
For $R \geq \underline r$, we say that $M^{n}\subset \RR^{n+1}$ is a \emph{roughly conical approximate shrinker up to scale $R$}\index{roughly conical approximate shrinker} if:
\begin{enumerate}
\item we have $\Theta R\leq  \tilde \br_{\ell}(M)$,
\item $M$ satisfies the core graphical hypothesis $(*_{b, \underline r})$, and
\item $|\phi| + (1+|x|)|\nabla \phi| \leq s (1+|x|)^{-1}$ on $M\cap B_{\Theta R}(0)$. 
\end{enumerate}
\end{defi}
We will fix $s,b$ sufficiently small in the following proposition giving a lower bound on the conical scale. \index{$s$}

\begin{prop} \label{prop:approx-up-to-rough}
Taking $\ell$ sufficiently large, there are constants $b,s>0$ sufficiently small, depending on the shrinker $\Sigma$, the conical scale constant $\beta_{0}$, the rough conical scale constant $C_{\ell}$, and the entropy bound $\lambda_0$ with the following property.
If $M^{n}\subset \RR^{n+1}$ has $\lambda(M) \leq \lambda_0$ and is a roughly conical approximate shrinker up to scale $R$ in the sense of Definition \ref{defi:rough-conical-approx-shrinker}, then there is a function $u: \Sigma \to\RR$ with
\[
\Graph u|_{\Sigma\cap B_{R}(0)} \subset M \qquad \text{and} \qquad M\cap B_{R-1}(0)\subset \Graph u
\]
and $\Vert u \Vert_{\cC\cS^{2,\alpha}_{-1}(\Sigma)} \leq \beta_{0}$. Equivalently, the conical scale satisfies $\br_{\ell}(M) \geq R$.  
\end{prop}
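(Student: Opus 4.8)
The plan is to run a continuity argument in the radius $R'$, starting from the core scale $\underline{r}$ (where the core graphical hypothesis $(*_{b,\underline r})$ gives graphicality and $C^{\ell+1}$-smallness) and extending outward to $R$. Define the set $\mathcal{G}$ of radii $R' \in [\underline{r}, R]$ such that $M \cap B_{R'}(0)$ is a normal graph over $\Sigma$ of a function $u$ with $\Vert u\Vert_{\cC\cS^{2,\alpha}_{-1}(\Sigma\cap B_{R'})}$ controlled by some constant strictly smaller than $\beta_0$ (say $\beta_0/2$), together with an auxiliary $C^{\ell+1}$-type bound coming from the rough conical scale. The goal is to show $\mathcal{G}$ is non-empty, open, and closed in $[\underline r, R]$, hence equals $[\underline r, R]$; then the decomposition $u = \chi(r)c(\omega)r + f$ and the norm bound $\Vert u\Vert_{\cC\cS^{2,\alpha}_{-1}(\Sigma)} \leq \beta_0$ follow, which by Definition \ref{defi:conical-scale} says exactly $\br_\ell(M) \geq R$.

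First I would set up the graphical representation: on the region where $M$ is graphical over $\Sigma$, writing $x = y + u(y)\nu_\Sigma(y)$, the condition (3) in Definition \ref{defi:rough-conical-approx-shrinker} (namely $|\phi_M| + (1+|x|)|\nabla\phi_M|\le s(1+|x|)^{-1}$) together with the normal-graph formulas from Appendix \ref{app:NG} and the shrinker equation for $\Sigma$ shows that $u$ satisfies a perturbed equation $\cL_{\frac12}u + au = E$, where $a \in C^{0,\alpha}_{\mathrm{hom};-2}(\Sigma)$ is built from $A_\Sigma$ (recall $A_\Sigma = O(r^{-1})$, and $A(x^T,\cdot) = O(r^{-2})$ from Lemma \ref{lemm:second-fund-form}) and $E = O(sr^{-1})$ in $C^{0,\alpha}$ on unit balls, i.e.\ $\Vert E\Vert_{\cC\cS^{0,\alpha}_{-1}} \lesssim s$ plus quadratic-in-$u$ error terms absorbed for $\Vert u\Vert$ small. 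The rough conical scale hypothesis $\Theta R \le \tilde\br_\ell(M)$ gives scale-invariant curvature bounds $|\nabla^{(k)}A_M| \le C_\ell(1+r)^{-1-k}$ out to radius $\Theta R$, which translate (via interpolation, Appendix \ref{app:interpolate}) into scale-invariant bounds $|\nabla^{(k)}u| = O(r^{1-k})$ on the graphical region — these are precisely the analogue of the higher-derivative hypotheses in the model Lemma \ref{lemm:model-problem}. The extra factor $\Theta > 1$ gives the room needed so that a Schauder-type estimate on $B_{\Theta R}$ controls $u$ on $B_R$ without boundary losses.

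The heart of the argument then mimics the proof of Lemma \ref{lemm:model-problem}: treating $\Delta_\Sigma u$ (equivalently the $\Delta$ term in $\cL_{\frac12}$) as an error using the Hessian bound $|\nabla^2 u| = O(r^{-1})$, one gets the ODE $r^2\partial_r(u/r) = r\partial_r u - u = O(r^{-1})$ up to the $au$ and $E$ terms, both $O(r^{-1})$ in $C^{0,\alpha}$; integrating to infinity defines $c(\omega) = \lim_{r\to\infty}u(r,\omega)/r$ and yields $c(\omega) = u(\underline r, \omega)/\underline r + O(\underline r^{-2}) + O(s)$, which is $\le \beta_0/2$ by taking $\underline r$ large, $b$ small, $s$ small. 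Then the interpolation step: the $C^0$ smallness $|u| \le \delta r$ interpolated against $|\nabla^{(k)}u| = O(r^{1-k})$ forces $|\Delta_\Sigma u| = O(\delta)r^{-1}$ with a genuinely small constant, so the ODE improves to $\partial_r(u/r) = O(\delta)r^{-3} + (\text{small})$, and integrating out gives $f = u - \chi c r$ with $\Vert rf\Vert_{C^0} \le \beta_0/2$. The full $\cC\cS^{2,\alpha}_{-1}$ bound for $u$ then comes from the Schauder estimate of Proposition \ref{prop:CS-schauder} applied on the graphical region, combined with these $C^0$ bounds and interpolation — note we must use a localized/interior version since $M$ is not an entire graph, which is why the $\Theta R$ buffer and an a priori smallness bootstrap are needed.

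The main obstacle, as the authors flag, is that $M$ is not an entire graph over $\Sigma$, so the clean integration-to-infinity in Lemma \ref{lemm:model-problem} is not directly available: one only controls $u$ out to the current continuity radius $R'$, and must close the continuity argument by showing the $\cC\cS^{2,\alpha}_{-1}$-norm on $B_{R'}$ is \emph{strictly} less than $\beta_0$ (so the conical scale definition lets us push $R'$ slightly larger, and the graph persists by pseudolocality / smooth dependence since $\tilde\br_\ell(M) \ge \Theta R > R'$). Concretely, the danger is a feedback loop where extending the graph degrades the norm bound; this is defeated because the improvement mechanism above is \emph{strict} — it produces the bound $\beta_0/2$, not merely $\beta_0$ — provided $b$, $s$ are small and $\underline r$, $\ell$ large enough (depending on $\Sigma, \beta_0, C_\ell, \lambda_0$), exactly the dependencies asserted in the statement. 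One also needs the entropy bound $\lambda(M)\le\lambda_0$ to control areas (Appendix \ref{app:area-bds}) when converting between extrinsic balls and intrinsic regions and when bounding the error terms in integral form. Openness and non-emptiness of $\mathcal{G}$ are then routine; closedness follows from the uniform estimates and Arzelà--Ascoli, as in the passage to $c \in C^{2,\alpha}(\Gamma)$ in the proof of Proposition \ref{prop:CS-schauder}.
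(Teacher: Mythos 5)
Your overall strategy—derive the radial ODE from the smallness of $\phi_M$, then bootstrap via $C^0\cap C^\ell$ interpolation, inside a continuity-type argument—captures the right spirit, and correctly identifies the non-entire-graph issue as the central difficulty. But there are three concrete gaps where the proposal departs from what actually works.

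First, you define $c(\omega) = \lim_{r\to\infty}u(r,\omega)/r$. That limit does not exist: $u$ is only defined on the region where $M$ is graphical over $\Sigma$, which (in your continuity argument) is a bounded ball $B_{R'}$, and even in the final estimate only up to roughly $\Theta R$. The paper sidesteps this by instead setting $c(\omega):=u(\tilde R,\omega)/\tilde R$ at a \emph{finite} boundary radius $\tilde R=\tfrac12(1+\Theta)\hat R$, which has the crucial side effect that $f:=u-cr$ vanishes identically at $r=\tilde R$. Your formula $c(\omega)=u(\underline r,\omega)/\underline r + O(\underline r^{-2})+O(s)$ can be extracted by integrating the ODE inward, but you still need a well-defined anchor; choosing it at the outer radius (rather than the core) is what makes the next step go through.

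Second, and more seriously, you never construct the extension of $(c,f)$ to all of $\Sigma$, which is required by Definition \ref{defi:conical-scale}: the function $u$ must lie in $\cC\cS^{2,\alpha}_{-1}(\Sigma)$ globally, not on a ball. Extending $c(\omega)r$ is trivial, but $f$ is only defined for $r\le\tilde R$, and the naive cutoff extension would cost a factor of $\tilde R$ in the $C^{2,\alpha}_{\textrm{an},-1}$ norm from the $\partial_r^2 f$ term, destroying smallness. This is why the paper must first prove the \emph{improved} second radial derivative bound $|\partial_r^2 f(\tilde R,\omega)|=O(\delta^{1-\frac{1}{\ell-1}})\tilde R^{\frac{2}{\ell-1}-3}$ (better than what $C^{2,\alpha}_{\textrm{an},-1}$ itself requires), via one-dimensional $C^1\cap C^{\ell}\subset C^2$ interpolation along radial rays, and then extend $f$ beyond $\tilde R$ by an explicit cubic polynomial matching $f=0$, $\partial_r f$, $\partial_r^2 f$ at $\tilde R$ before cutting off. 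Without this step the extension does not have small norm and the argument does not close.

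Third, you invoke "the Schauder estimate of Proposition \ref{prop:CS-schauder} applied on the graphical region" in a "localized/interior version." That proposition is stated and proven for globally defined $u\in C^0_{\textrm{hom};+1}(\Sigma)$, via a time-dependent reparametrization and parabolic Schauder theory; it is not clear how to localize it, and the paper does not use it here at all. The $C^{2,\alpha}$ and $C^{2,\alpha}_{\textrm{an},-1}$ bounds for $c$ and $f$ in the actual proof come purely from Lemma \ref{lemm:interpolation} applied to the $C^0$ smallness $u=O(\delta^2)r+O(r^{-1})$ against the $C^{\ell}$ bounds $|D^{2+k}u|=O(r^{-1-k})$ inherited from the rough conical scale; no elliptic or parabolic Schauder estimate is needed or used. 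Replacing that invocation with direct interpolation is both necessary and simpler.

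Finally, a smaller structural note: the paper's proof is organized as a quantitative compactness argument ($b,s\to0$ forces $M\to\Sigma$ in $C^\ell_{\textrm{loc}}$, then the largest good radius $\hat R\to\infty$ and one shows it can be pushed to $\tfrac12(1+\Theta)\hat R$). Your continuity-in-$R'$ set $\mathcal G$ is a closely related mechanism and could likely be made to work, but the openness/closedness verification needs the extension-with-smallness step above, which is the part missing from your write-up.
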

Certain aspects of the following proof are inspired by the proof of \cite[Theorem 8.9]{KKM:shrinkers}. 
\begin{proof}
We claim that for $b,s$ sufficiently small, the conclusion eventually holds for any $R\geq  \underline r$. As such, we will take $b,s\to 0$ and will prove that for any given (sequence) of $R \geq \underline r$, the conclusion eventually holds for $R$. We may assume that $R\to\infty$ (the subsequent argument is easily modified to the case where $R$ is bounded). It is clear that $M$ converges to $\Sigma$ in $C^{\ell}$ in $B_{\underline r-1}$ with multiplicity one. Moreover, $M$ converges in $C^{\ell}_{\textrm{loc}}(\RR^{n+1})$ to\footnote{By Lemma \ref{lemm:poly-area-growth}, $M'$ is a properly embedded hypersurface.} $M'$, which satisfies $\phi \equiv 0$, and is thus a properly\footnote{Properness of $M'$ follows from the entropy bounds, which imply local area bounds by Lemma \ref{lemm:poly-area-growth}.} embedded shrinker. Unique continuation implies that\footnote{Note that there cannot be more than one component of $M'$. One way to see this is that it would have to lie outside of $B_{\underline r}(0)$ and we chose $\underline r > \sqrt{2n}$; this would contradict the maximum principle. Alternatively this follows from the Frankel property for shrinkers (i.e., two properly embedded shrinkers must intersect); see \cite[Corollary C.4]{genericMCF} for Ilmanen's proof of this fact.} $M'=\Sigma$. Finally, it is clear that $M$ converges to $\Sigma$ in $C^{\ell}$ with multiplicity one everywhere by connectedness of $\Sigma$ and the multiplicity one convergence on $B_{\underline r-1}$. 

Hence, if we let $\hat R \in [\underline r,R]$ denote the largest radius (depending on $b,s$) so that the conclusion holds with $\hat R$ (in the place of $R$), it is clear that $\hat R\to\infty$. We will prove that the proposition holds up to $\tilde R : = \frac 12 (1+\Theta) \hat R$ (note that this is a fixed factor less than $\Theta \hat R$). This will then imply the claim by a straightforward contradiction argument. 

First of all, we can assume that $R/\hat{R} \rightarrow \lambda \in [1, \infty]$. Observe that $(\hat R)^{-1}M$ converges in $C^{\ell}_{loc}(B_{\Theta\lambda}(0)\setminus\{0\})$ to a cone $\hat \cC$ which is a $C^{\ell}$ graph over the original cone $\cC$. Moreover, because we have assumed that the proposition holds up to $\hat R$, we see that the cones are close in the sense that $d_{H}(\cC,\hat \cC) = O(\beta_{0})$.\footnote{We have written $d_{H}$ for the Hausdorff distance in $\SS^{n-1}$ between the two links of the cones.} Thus, we can find a $C^{\ell}$ function $u: \Sigma \cap B_{\tilde R}(0)\to\RR$ with 
\[
\Graph u \subset M \qquad \text{and} \qquad M \cap B_{\tilde R-1}\subset \Graph u.
\]
Moreover, $r^{-1}|u| \leq O(\beta_{0})$ on $\Sigma\cap (B_{\tilde R}(0)\setminus B_{\underline r}(0))$ by the above observation that the blow-down cones are $O(\beta_{0})$-close. Furthermore, the second fundamental form estimates coming from the rough conical scale estimate $\tilde \br_{\ell}(M) \geq \Theta \tilde R$ yield 
\begin{equation}\label{eq:higher-der-rough-conical-u}
|D^{2+k}u| = O(r^{-1-k})
\end{equation}
on $\Sigma\cap (B_{\Theta \tilde R}(0)\setminus B_{\underline r}(0))$, for $k \in \{0,\dots,\ell-1\}$. Finally,  because $M$ converges in $C^{\ell}_{\text{loc}}(\RR^{n+1})$ to $\Sigma$ (as $b,s\to0$), for $\delta\in (0,\underline r^{-1})$ fixed sufficiently small depending only on $\beta_{0}$ (this will be made explicit in the last line of the proof), we can assume that $\Vert u \Vert_{C^{3}(\Sigma \cap B_{2\delta^{-1}}(0))} \leq \delta^{3}$. 

We now relate the smallness condition on $\phi$ to decay properties of $u$. These computations are similar to those considered in Section \ref{subsec:improved-conical-est} for an exact shrinker (except we are now parametrizing $M$ over the shrinker $\Sigma$, rather than parametrizing the end of $\Sigma$ over the cone $\cC$; this complicates certain aspects of the subsequent computation).

We write $F(p) = p +u(p)\nu_{\Sigma}(p)$ for the function parametrizing (part of) $M$ over $\Sigma \cap B_{\tilde R}(0)$. The computations below will hold for $p\in\Sigma$ with $|p| \in [\underline R(\Sigma),\tilde R]$, with error terms uniform with respect to $b,s\to0$. Recall that we have fixed coordinates $(r,\omega)$ on $\Sigma \setminus B_{\underline R(\Sigma)}$ in Section \ref{sec:prelim}. In particular, the vector fields $\partial_r$ and $\partial_{\omega_j}$ are tangent to $\Sigma$.\footnote{In particular, we reiterate that the vector field $\partial_r$ is \emph{not} the Euclidean radial vector field!} 

We write
\[
\nu_{M}(F(p)) = A\partial_{r} + \sum_{j=1}^{n-1}B_{j}r^{-1}\partial_{\omega_{j}} + C\nu_{\Sigma}(p),
\]
where
\begin{equation}\label{eq:approxshrinker-ABC}
A^{2} + \sum_{j=1}^{n-1} B_{j}^{2} + C^{2} = 1+O(r^{-2}) 
\end{equation}
by Corollary \ref{coro:metric-conical-shrinker} (we emphasize that $(r,\omega)$ are the coordinates induced on the end of $\Sigma$ by the parametrization over $\cC$ constructed in Lemma \ref{lemm:improved-conical-est-shrinker}). 

 Moreover, we find for $p\in\Sigma$ with $|p|$ sufficiently large (assuming that $\omega_{j}$ are normal coordinates at $\omega$ for $p=(r,\omega)$) we find
\begin{align*}
0 & = A (1+O(r^{-2}) - u(p) A_{\Sigma}|_{p}(\partial_{r},\partial_{r})) \\
& \qquad + \sum_{j=1}^{n-1} B_{j} (O(r^{-2}) - u(p)A_{\Sigma}|_{p}(\partial_{r},r^{-1}\partial_{\omega_{j}})) + C(\partial_{r}u(p))\\
0 & = A(O(r^{-2}) -u(p) A_{\Sigma}|_{p}(\partial_{r},r^{-1}\partial_{\omega_{i}})) \\
& \qquad + \sum_{j=1}^{n-1} B_{j} (\delta_{ij} + O(r^{-2}) - u(p) A_{\Sigma}|_{p}(r^{-1}\partial_{\omega_{i}},r^{-1}\partial_{\omega_{j}})) + C(r^{-1}\partial_{\omega_{i}}u(p)).
\end{align*}
Now, using Lemma \ref{lemm:second-fund-form}, we find that 
\begin{equation}
\label{eq:eqn-normal-approx-shrinker-A} 0 = A (1+O(r^{-2})) + \sum_{j=1}^{n-1} B_{j} (O(r^{-2})) + C(\partial_{r}u(p))
\end{equation}
and
\begin{equation}
\label{eq:eqn-normal-approx-shrinker-B} 0 = A(O(r^{-2}))  + \sum_{j=1}^{n-1} B_{j} (\delta_{ij} - u(p) A_{\cC}|_{p}(r^{-1}\partial_{\omega_{i}},r^{-1}\partial_{\omega_{j}}) + O(r^{-2 +\eta})  ) + C(r^{-1}\partial_{\omega_{i}}u(p)).
\end{equation}

Observe that \eqref{eq:eqn-normal-approx-shrinker-A} yields (since $A,B,C=O(1)$)
\[
A + C (\partial_{r}u) = O(r^{-2}). 
\]
Moreover, as long as $\beta_{0}$ is sufficiently small so that that $r^{-1}|u(p)|\sup_{\Gamma}|A_{\Gamma}| \leq \frac 12$, we see that $C^{-1} = O(1)$, i.e., $C$ is not tending to zero.\footnote{Indeed, if $C\to 0$, then this condition on $\beta_{0}$ combined with \eqref{eq:eqn-normal-approx-shrinker-B} yields $B_{i}\to 0$ as well; returning to \eqref{eq:eqn-normal-approx-shrinker-A} yields $A\to 0$, which contradicts \eqref{eq:approxshrinker-ABC}.} 

We now compute
\begin{equation}\label{eq:support-function-approximate-shrinker-coarse}
\begin{split}
\bangle{F(p),\nu_{M}(F(p))} & = \bangle{p+u(p)\nu_{\Sigma}(p), A \partial_{r} + \sum_{j=1}^{n-1} B_{j} r^{-1}\partial_{\omega_{j}} + C \nu_{\Sigma}(p) }\\ 
& = A \bangle{p,\partial_{r}} + \sum_{j=1}^{n-1} B_{j} \bangle{p,r^{-1}\partial_{\omega_{j}}} + C u(p) + C\bangle{p,\nu_{\Sigma}(p)}\\
& =  A (r +O(r^{-1})) + C u(p) + 2C H_{\Sigma}(p) + O(r^{-1})\\ 
& =  C (u(p) - (r+O(r^{-1}))\partial_{r}u(p)) + O(r^{-1})\\ 
& =  C (u(p) - r \partial_{r}u(p)) + O(r^{-1}) \partial_{r}u(p) + O(r^{-1}).
\end{split}
\end{equation}

We begin by analyzing this expression (below, we will repeat the above derivation to yield more precise estimates). We have that 
\[
\bangle{F(p),\nu_{M}(F(p))} = 2 \phi(F(p)) + 2H_{M}(F(p)) = O(r^{-1}).
\]
Thus, \eqref{eq:support-function-approximate-shrinker-coarse} (and $C^{-1}=O(1)$) gives
\[
r \partial_{r}u(p) - u(p) = O(r^{-1})\partial_{r}u(p) + O(r^{-1}).
\]
Thus, 
\begin{align*}
\partial_{r} \frac{u(p)}{r} & = r^{-2}(r\partial_{r} u(p) - u(p)) \\
& = O(r^{-3})\partial_{r}u(p) + O(r^{-3})\\
& = O(r^{-2})\partial_{r} \frac{u(p)}{r} - O(r^{-4}) u(p) + O(r^{-3}). 
\end{align*}
Thus, using $r^{-1} u = O(1)$, we conclude that 
\[
\partial_{r} \frac{u(p)}{r} = O(r^{-3}). 
\]
We integrate this from $\delta^{-1}$ to $r \in (\delta^{-1},\tilde R]$ to find 
\[
\frac{u(r,\omega)}{r} = \delta u(\delta^{-1},\omega) + O(\delta^{2}) + O(r^{-2}) = O(\delta^{2}) + O(r^{-2}),
\]
using the fact that $\Vert u \Vert_{C^{3}(\Sigma \cap B_{\delta^{-1}}(0))} \leq \delta^{3}$. Thus,
\begin{equation}\label{eq:u-delta-smallness-radial-growth}
u = O(\delta^{2}) r + O(r^{-1}). 
\end{equation}
Note that we immediately get
\[
\partial_{r} u(p) = r^{-1}u(p) + O(r^{-2}) = O(\delta^{2}) + O(r^{-2}).
\]

We now interpolate \eqref{eq:u-delta-smallness-radial-growth} (on balls of radius $1$) with the higher derivative estimates from \eqref{eq:higher-der-rough-conical-u}, using Lemma \ref{lemm:interpolation}. This yields 
\begin{align*}
|D^{2}u| & \lesssim (O(\delta^{2}) r + O(r^{-1}))^{1-\frac 2 \ell} r^{(1-\ell)\frac{2}{\ell}}\\
& = O(\delta^{2-\frac 4 \ell}) r^{-1} + O(r^{\frac 4 \ell - 3}).
\end{align*}
Similarly, we can obtain an estimate for the full gradient
\begin{align*}
|Du| & \lesssim (O(\delta^{2}) r + O(r^{-1}))^{1-\frac 1 \ell} r^{(1-\ell)\frac{1}{\ell}}\\
& = O(\delta^{2-\frac 2\ell}) + O(r^{\frac2\ell - 2})
\end{align*}

Now we return to \eqref{eq:support-function-approximate-shrinker-coarse} and use this improved decay for the derivatives to derive a sharper equation. Firstly, we note that as long as $\beta_{0}$ is small, as above, using the gradient estimate for $u$, \eqref{eq:eqn-normal-approx-shrinker-B}, together with \eqref{eq:u-delta-smallness-radial-growth}, implies that
\[
B_{i} = O(\delta^{2-\frac 2\ell}),
\]
for $r\geq \delta^{-1}$.
Finally, using this, $A + C (\partial_{r}u) = O(r^{-2})$, and \eqref{eq:approxshrinker-ABC}, we find that 
\[
C = 1 + O(\delta^{2-\frac{2}{\ell}}),
\]
for $r\geq \delta^{-1}$.

Now, repeating the derivation used in \eqref{eq:support-function-approximate-shrinker-coarse}, with this additional information on $B_{j}$ and $C$, we find
\begin{equation}\label{eq:support-function-approximate-shrinker-sharp}
\begin{split}
\bangle{F(p),\nu_{M}(F(p))} & = \bangle{p+u(p)\nu_{\Sigma}(p), A \partial_{r} + \sum_{j=1}^{n-1} B_{j} r^{-1}\partial_{\omega_{j}} + C \nu_{\Sigma}(p) }\\ 
& = A \bangle{p,\partial_{r}} + \sum_{j=1}^{n-1} B_{j} \bangle{p,r^{-1}\partial_{\omega_{j}}} + C u(p) + C\bangle{p,\nu_{\Sigma}(p)}\\
& =  A (r +O(r^{-1})) + C u(p) + 2 H_{\Sigma}(p) + O(\delta^{2-\frac 2\ell}r^{-1})\\ 
& =  C (u(p) - (r+O(r^{-1}))\partial_{r}u(p)) +  2 H_{\Sigma}(p) +  O(\delta^{2-\frac 2\ell}r^{-1})\\ 
& =  C (u(p) - r \partial_{r}u(p)) + O(r^{-1}) \partial_{r}u(p) +  2H_{\Sigma}(p)  + O(\delta^{2-\frac 2\ell}r^{-1})
\end{split}
\end{equation}

We thus have
\[
2 \phi(F(p)) + 2H_{M}(F(p)) = C( u(p) - r \partial_{r}u(p) )+ O(r^{-1}) \partial_{r}u(p) +  2H_{\Sigma}(p)  + O(\delta^{2-\frac 2\ell}r^{-1}). 
\]
Moreover, we have that (for $\ell$ sufficiently large)
\[
H_{M}(F(p)) - H_{\Sigma} =  (1+O(|\nabla u|^{2})) O(| D^{2}u| ) = O(\delta) r^{-1} + O(r^{-2}) = O(\delta) r^{-1},
\]
since $r\geq\delta^{-1}$. Thus, we find that 
\[
\partial_{r} \frac{u(p)}{r} = O(\delta) r^{-3}
\]
(assuming that $s\ll\delta$, which can be arranged since we have fixed $\delta$ independently of the value of $s$).

We now define
\[
c(\omega) : = \frac{u(\tilde R,\omega)}{\tilde R} 
\]
and observe that by interpolation of \eqref{eq:u-delta-smallness-radial-growth} with \eqref{eq:higher-der-rough-conical-u}, we have $\Vert c \Vert_{C^{2,\alpha}(\Gamma)} = O(\delta)$. Then, we set 
\[
f(r,\omega) = u(r,\omega) - c(\omega) r 
\]
We have that $f(\tilde R, \cdot) = 0$ and 
\[
\partial_{r} \frac{f(r,\omega)}{r} = O(\delta) r^{-3}
\]
Thus,
\[
r f(r,\omega) = O(\delta) (1-  r^2 \tilde R^{-2}) = O(\delta). 
\]
These two expressions imply that $\partial_{r}f = O(\delta) r^{-2}$.

Moreover, we easily see that $|D^{k}f| = O(r^{1-k})$ for $k \in \{2,\dots,\ell\}$. Interpolating this (and discarding some unnecessary decay with respect to $r$), we find that $\Vert f \Vert_{C^{2,\alpha}} = O(\delta^{1-\frac{2+\alpha}{\ell}})r^{-1}$, where the H\"older norm is taken on balls of unit size.

These estimates provide $C^{2,\alpha}_{\textrm{an},-1}$ estimates on $f$, so it remains to extend $f$ to all of $\Sigma$ while only increasing these norms by a fixed factor (we can trivially extend $c(\omega)r$). Before we do this, we must obtain improved estimates for $\partial^{2}_{r}f$. Using $C^{1}\cap C^{\ell}\subset C^{2}$ interpolation applied to the $1$-dimensional function $r\mapsto f(r,\omega)$ (for $\omega$ fixed but arbitrary), on a unit interval, we see that
\[
|\partial^{2}_{r}f(\tilde R,\omega)| \lesssim (O(\delta)\tilde R^{-2})^{1-\frac{1}{\ell-1}} \tilde R^{(1-\ell)\frac{1}{\ell-1}} = O(\delta^{1-\frac{1}{\ell-1}}) \tilde R^{\frac{2}{\ell-1} -3 }.
\]
Thus, taking $\ell$ sufficiently large, we see that 
\begin{equation}\label{eq:f-estimates-improved-second}
\tilde R |f(\tilde R,\omega)| + \tilde R^{2} |\partial_{r}f(\tilde R,\omega)| + \tilde R^{2} |\partial^{2}_{r}f(\tilde R,\omega)| = O(\delta^{\mu})
\end{equation}
for some absolute constant $\mu>0$. In particular, we emphasize that the third term in \eqref{eq:f-estimates-improved-second} is better than the $C^{2,\alpha}_{\textrm{an},-1}$ norm requires (we need this improved estimate when we extend $f$ to all of $\Sigma$). 

We now define
\[
\tilde f(r,\omega) : = 
\begin{cases}
f(r,\omega) & r\leq \tilde R\\
\partial_{r}f(\tilde R,\omega)(r-\tilde R) + \frac 1 6 \partial^{2}_{r}f(\tilde R,\omega) (\tilde R+3-r)(r-\tilde R)^{2} & r > \tilde R \end{cases}.
\]
(Recall that $f(\tilde R,\cdot) = 0$.) We then fix a cutoff function $\zeta$ with $\zeta \equiv 1$ on $(-\infty,0)$ and $\zeta\equiv 0$ on $(1,\infty)$. Then, we set $\hat f(r,\omega) = \tilde f(r,\omega)\zeta(r-\tilde R)$. Using \eqref{eq:f-estimates-improved-second}, we easily see that 
\[
\Vert \tilde f\Vert_{C^{2,\alpha}_{\textrm{an},-1}(\Sigma)} = O(\delta^{\mu}). 
\]
Thus, 
\[
\Vert (c,\tilde f) \Vert_{\cC\cS^{2,\alpha}_{-1}(\Sigma)} = O(\delta^{\mu}). 
\]
Taking $\delta$ sufficiently small depending on $\beta_{0}$, this concludes the proof. 
\end{proof}

\section{The final localized \L ojasiewicz--Simon inequality and the rough conical scale} \label{sec:final-Loj}

We now show that the error terms in the localized \L ojasiewicz--Simon inequality (Theorem \ref{theo:cutoff-loj}) are small, under the assumption that the rough conical scale is larger than the shrinker scale. 

\begin{theo}[The final localized \L ojasiewicz--Simon inequality]\label{theo:final-Loj}
Assume that $M^{n}\subset \RR^{n+1}$ has $\lambda(M)\leq \lambda_{0}$ and $\bR(M)$ sufficiently large depending on $\Sigma$. Assume that $M$ additionally satisfies the core graphical hypothesis $(*_{b,\underline r})$ and $\bR(M) \leq  \tilde \br_{\ell}(M)-1$. Then,
\[
|F(M) - F(\Sigma)| \leq C   \left( \int_{M} |\phi|^{2} \rho\, d\cH^{n} \right)^{\frac{1}{2(1-\theta/3)}}
\]
for $C=C(\Sigma,\lambda_{0},\alpha)$. Note that $\theta$ is fixed in Theorem \ref{theo:loj-entire}. 
\end{theo}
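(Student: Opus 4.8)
The plan is to apply the localized \L ojasiewicz--Simon inequality (Theorem~\ref{theo:cutoff-loj}) at a radius $R$ equal to a definite fraction of the shrinker scale $\bR(M)$, chosen so that both ``error'' terms on its right hand side are dominated by the leading term; the preparatory step is to verify, via Proposition~\ref{prop:approx-up-to-rough}, that the conical scale satisfies $\br_{\ell}(M)\ge R$, so that Theorem~\ref{theo:cutoff-loj} may legitimately be invoked at this $R$. Throughout one uses that, by Definition~\ref{defi:shrinker-scale}, $\int_{M}|\phi|^{2}\rho\,d\cH^{n}=e^{-\bR(M)^{2}/4}$, which is small since $\bR(M)$ is assumed large.

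The first and main step is to upgrade this Gaussian-weighted $L^{2}$ smallness of $\phi$ to a pointwise bound on a large ball, so that hypothesis~(3) of Definition~\ref{defi:rough-conical-approx-shrinker} can be checked. Here I would adapt the argument of Colding--Minicozzi \cite[Corollary~1.28]{CM:uniqueness}: on any ball $B_{r}(0)$ with $r\le\tilde\br_{\ell}(M)$, the curvature bounds $|\nabla^{(k)}A_{M}|\le C_{\ell}(1+|x|)^{-1-k}$ encoded in the rough conical scale (Definition~\ref{defi:rough-conical-scale}) allow one to run local interior estimates for $\phi$, whose coefficients are controlled by $A_{M}$, $\nabla A_{M}$ and the (on $B_{r}$ bounded) position vector. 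Passing from the $\rho$-weighted to the unweighted $L^{2}$ norm on $B_{r}$ costs a factor $\sup_{B_{r}}\rho^{-1}\lesssim e^{r^{2}/4}$, so one obtains
\[
\sup_{M\cap B_{r-1}}\Big(|\phi|+(1+|x|)|\nabla\phi|\Big)\ \lesssim\ C(r)\,e^{(r^{2}-\bR(M)^{2})/8},
\]
with $C(r)$ growing at most like $e^{O(r)}$. Hence, as long as $r$ is a fixed fraction of $\bR(M)$ that is \emph{strictly} less than $\bR(M)$, the right hand side decays like $e^{-\kappa\bR(M)^{2}}$ for some $\kappa=\kappa(\Sigma)>0$, which for $\bR(M)$ large lies far below the fixed quantity $s(1+|x|)^{-1}$ required in Definition~\ref{defi:rough-conical-approx-shrinker}(3), even after multiplying through by $1+|x|$. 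I expect this to be the main obstacle: one must track the Gaussian weight carefully so that the localization loss $e^{r^{2}/4}$ does not swamp the gain $e^{-\bR(M)^{2}/4}$ --- which is exactly why the cut-off radius must be kept below $\bR(M)/\Theta$, with $\Theta$ as fixed at the start of Section~\ref{sec:improvement-argument}.

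Next one fixes the radius. Set $c_{1}:=\big(\tfrac{1-\theta}{1-\theta/3}\big)^{1/2}$; the elementary inequality $(1-\theta)(1-\tfrac{\theta}{2})<(1-\tfrac{\theta}{3})^{2}$, valid for $0<\theta<\tfrac12$, gives $c_{1}\Theta<1$, so one may choose $c_{1}'\in(c_{1},\tfrac{1}{\Theta})$ and set $R:=c_{1}'\,\bR(M)$. Since $c_{1}'\Theta<1$, $c_{1}'>c_{1}$, and $\bR(M)\le\tilde\br_{\ell}(M)-1$ with $\bR(M)$ large, a direct check gives $\Theta(R+1)<\bR(M)\le\tilde\br_{\ell}(M)-1$ and $R\ge c_{1}\bR(M)$. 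Applying the previous step with $r=\Theta(R+1)$, together with the assumed core graphical hypothesis $(*_{b,\underline r})$, shows that $M$ is a roughly conical approximate shrinker up to scale $R+1$ in the sense of Definition~\ref{defi:rough-conical-approx-shrinker}; Proposition~\ref{prop:approx-up-to-rough} then yields $\br_{\ell}(M)\ge R+1$, i.e.\ $R\in[1,\br_{\ell}(M)-1]$.

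Finally one applies Theorem~\ref{theo:cutoff-loj} at this $R$ with $\gamma\in(1,2)$ chosen close enough to $1$, and estimates its three right hand terms against $\big(\int_{M}|\phi|^{2}\rho\big)^{1/(2(1-\theta/3))}=e^{-\bR(M)^{2}/(8(1-\theta/3))}$. The leading term is at most $e^{-\bR(M)^{2}/(8(1-\theta))}\le e^{-\bR(M)^{2}/(8(1-\theta/3))}$, since $1-\theta<1-\theta/3$. For $R^{(n-4)/(2(1-\theta))}e^{-R^{2}/(8(1-\theta))}$, the choice $c_{1}'>c_{1}$ makes $\tfrac{R^{2}}{8(1-\theta)}=\tfrac{c_{1}'^{2}\bR(M)^{2}}{8(1-\theta)}$ exceed $\tfrac{\bR(M)^{2}}{8(1-\theta/3)}$ by an amount growing quadratically in $\bR(M)$, absorbing the polynomial prefactor once $\bR(M)$ is large. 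For $e^{-R^{2}/(4\gamma)}$, since $\theta<\tfrac12$ one has $c_{1}^{2}=\tfrac{1-\theta}{1-\theta/3}>\tfrac{1}{2(1-\theta/3)}$, hence $c_{1}'^{2}>\tfrac{\gamma}{2(1-\theta/3)}$ for $\gamma$ sufficiently close to $1$, which gives $\tfrac{R^{2}}{4\gamma}\ge\tfrac{\bR(M)^{2}}{8(1-\theta/3)}$. Summing the three contributions and enlarging $C=C(\Sigma,\lambda_{0},\alpha)$ accordingly yields $|F(M)-F(\Sigma)|\le C\big(\int_{M}|\phi|^{2}\rho\,d\cH^{n}\big)^{1/(2(1-\theta/3))}$, as claimed.
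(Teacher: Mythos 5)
Your proposal is correct and follows essentially the same approach as the paper: combine the $L^{1}\cap C^{\ell}\subset C^{0}$ interpolation (Lemma~\ref{lemm:interpL1Ck}) with the rough conical scale curvature bounds to verify condition (3) of Definition~\ref{defi:rough-conical-approx-shrinker} on a ball of radius strictly inside $\bR(M)$, invoke Proposition~\ref{prop:approx-up-to-rough}, and then tune the cut-off radius and $\gamma$ in Theorem~\ref{theo:cutoff-loj} so that all error terms fall below $e^{-\bR(M)^{2}/(8(1-\theta/3))}$ (the paper simply fixes $R=\Theta^{-2}\bR(M)$ and $\gamma=2\Theta^{-4}(1-\theta/3)$, which is a specific instance of your $c_1'\in(c_1,1/\Theta)$). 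The only imprecision is the phrase ``local interior estimates for $\phi$'' in step~1: the mechanism is not PDE regularity for $\phi$ but the interpolation inequality of Lemma~\ref{lemm:interpL1Ck} applied to the pointwise bounds $|\nabla^{k}\phi|\lesssim(1+|z|)^{1-k}$ (which follow directly from the rough conical scale) together with the H\"older/Gaussian-weight bookkeeping you describe.
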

\begin{proof}
We first claim that $M$ is a roughly conical approximate shrinker up to scale $R= \Theta^{-2} \bR(M)$ in the sense of Definition \ref{defi:rough-conical-approx-shrinker}. We have already assumed that the first two conditions hold, so it remains to check that
\[
|\phi| + (1+|x|) |\nabla \phi| \leq s (1+|x|)^{-1}
\]
on $M\cap B_{\Theta R}(0)$. We will do this by modifying the proof of \cite[Corollary 1.28]{CM:uniqueness}. 

Pick $z \in M\cap B_{\Theta R}(0)$. Set $r_{z} = (1+|z|)^{-1}$, so that the Gaussian weight $\rho$ has uniformly bounded oscillation in $B_{r_{z}}(z)$. Set 
\[
\psi(z) : = \left( \int_{M\cap B_{r_{z}}({z})} |\phi_{M}|^{2} \rho \, d\cH^{n} \right)^{\frac 12} \leq e^{-\frac{\bR(M)^{2}}{8}}
\]
H\"older's inequality yields
\[
\int_{B_{r_{z}}({z})} |\phi| d\cH^{n} \lesssim r_{z}^{\frac n2} e^{\frac{|z|^{2}}{8}} \left(\int_{B_{r_{z}}(z)} |\phi|^{2} \rho \, d\cH^{n}\right)^{2} = r^{\frac n 2}_{z} e^{\frac{|z|^{2}}{8}} \psi. 
\]
Because $1+\Theta R \leq \tilde \br_{\ell}(M)$, we have that (see Definition \ref{defi:rough-conical-scale})
\[
|\nabla^{k}\phi| \leq C_{\ell} (1+|z|)^{1-k}
\]
on $M \cap B_{r_{z}}(z)$, for $k\in\{1,\dots,\ell\}$ and $z \in M\cap B_{\Theta R}(0)$. Now, by the $L^{1}\cap C^{\ell} \subset C^{0}$ interpolation inequality described in Lemma \ref{lemm:interpL1Ck}, we have that
\begin{align*}
& (1+|z|) \sup_{B_{r_{z}}(z)}|\phi| \\
& \leq C \left( r_{z}^{-1-\frac n 2} e^{\frac{|z|^{2}}{8}} \psi + \left( r_{z}^{\frac n2} e^{\frac{|z|^{2}}{8} } \psi \right)^{a_{\ell,n}} (1+|z|)^{(1-{\ell})(1-a_{\ell,n})-1} \right)\\
& \leq C \left( (1+|z|)^{\frac n 2+1} e^{\frac{|z|^{2}}{8}} \psi + (1+|z|)^{- a_{\ell,n} \frac n 2} \left( e^{\frac{|z|^{2}}{8} } \psi \right)^{a_{\ell,n}} (1+|z|)^{(1-{\ell})(1-a_{\ell,n})} \right)\\
& \leq C \left( \bR(M)^{\frac n 2 + 1} e^{-\frac{(1-{\Theta^{-2}})\bR(M)^{2}}{8}}  + e^{-\frac{a_{\ell,n}(1-{\Theta^{-2}})\bR(M)^{2}}{8}}  \right).
\end{align*}
The negative powers in the exponentials allow us to arrange that this is smaller than $s$, as long as $\bR(M)$ is sufficiently large. A similar argument can be used to bound $|\nabla \phi|$.

Thus, we see that $M$ is a roughly conical approximate shrinker up to scale $R$. Proposition \ref{prop:approx-up-to-rough} implies that the strong conical scale satisfies $\br_{\ell}(M)\geq R$. Thus, we can apply the localized \L ojasiewicz--Simon inequality from Theorem \ref{theo:cutoff-loj} to find 
\begin{align*}
|F(M) - F(\Sigma)| & \leq C \left( \left( \int_{M\cap B_{R}(0)} |\phi|^{2} \rho\, d\cH^{n} \right)^{\frac{1}{2(1-\theta)}} + R^{\frac{n-4}{2(1-\theta)}} e^{-\frac{R^{2}}{8(1-\theta)}} + e^{-\frac{R^{2}}{4\gamma}} \right)\\
& \leq C \left( \left( \int_{M} |\phi|^{2} \rho\, d\cH^{n} \right)^{\frac{1}{2(1-\theta)}} + R^{\frac{n-4}{2(1-\theta)}} e^{-\frac{R^{2}}{8(1-\theta)}} + e^{-\frac{R^{2}}{4\gamma}} \right).
\end{align*}
Note that 
\begin{align*}
R^{\frac{n-4}{2(1-\theta)}} e^{-\frac{R^{2}}{8(1-\theta)}} & = (\Theta^{-2}\bR(M))^{\frac{n-4}{2(1-\theta)}} e^{-\frac{\bR(M)^{2}}{8\Theta^{4}(1-\theta)}}\\
& = (\Theta^{-2}\bR(M))^{\frac{n-4}{2(1-\theta)}} e^{-\frac{\bR(M)^{2}}{8(1-\theta/2)}}\\
& \leq C \left( e^{-\frac{\bR(M)^{2}}{4}} \right)^{\frac{1}{2(1-\theta/3)}}\\
& = C \left(\int_{M} |\phi|^{2} \rho\, d\cH^{n}  \right)^{\frac{1}{2(1-\theta/3)}}
\end{align*}
and 
\[
e^{-\frac{R^{2}}{4\gamma}} =  \left(\int_{M} |\phi|^{2} \rho\, d\cH^{n}  \right)^{\frac{1}{\Theta^{4}\gamma}},
\]
so choosing $\gamma = 2 \Theta^{-4} (1-\theta/3) \in (1,2)$, we conclude the proof. 
\end{proof}

\section{The uniqueness of conical tangent flows: proof of Theorem \ref{thm:unique}}\label{sec:unique}

Fix $\underline r$ sufficiently large in terms of the scale of the core of the conical shrinker $\underline R(\Sigma)$, and the pseudolocality radius $\rho_{*}$ (this choice will be made explicit in Lemma \ref{lemm:rough-improves} below). 

Now, fix $\eps=\eps(\Sigma,\underline r)>0$\index{$\eps$} will be chosen sufficiently small below. Suppose that $\{M_{\tau}\}_{\tau\in[-1,\infty)}$\index{$M_{\tau}$} is a rescaled mean curvature flow (Brakke flow) on $[-1,\infty)\times \RR^{n+1}$ so that there is 
\[
u : (\Sigma\cap B_{\eps^{-1}}(0)) \times [-1,\eps^{-2}) \to \RR
\]
with 
\begin{enumerate}
\item $\Graph u(\cdot, \tau) \subset M_{\tau}$
\item $M_{\tau}\cap B_{\eps^{-1}-1} \subset \Graph u(\cdot,\tau)$. 
\item $\Vert u \Vert_{C^{\ell+1}(\Sigma\cap B_{\eps^{-1}}(0))} \leq \eps$, and 
\item $F(M) - F(\Sigma) \leq \eps$.
\end{enumerate}
Here, $\ell \in \NN$ controls the number of derivatives in the definition of $\tilde \br_{\ell}$. It has been fixed in Proposition \ref{prop:approx-up-to-rough}. We additionally fix $\lambda_{0}$ so that $\lambda(M_{0}) \leq \lambda_{0}$ (which implies that $\lambda(M_{\tau}) \leq \lambda_{0}$). Finally, we assume that there is a sequence if times $s_{k}\to \infty$ so that $M_{s_{k}}$ converges smoothly on compact subsets of $\RR^{n+1}$ to $\Sigma$ (with multiplicity one).

Recall that the core graphical hypothesis $(*_{b,\underline r})$ has been defined in Definition \ref{def:core-graph-hypoth}. Define the \emph{graphical time}\index{graphical time} $\bar \tau$\index{$\bar \tau$} by 
\[
\bar \tau : = \sup\{ \hat \tau \in [-1,\infty) : M_{\tau} \textrm{ satisfies } (*_{b,\underline r}) \textrm{ for all } \tau \in [-1,\hat \tau]\}. 
\]
Our first goal is to show that $\bar \tau = \infty$. Note that by taking $\eps$ sufficiently small (depending on $b,\underline r,\Sigma$, we we can assume that $\bar \tau$ is arbitrarily large.

\begin{lemm}[The rough conical scale improves rapidly] \label{lemm:rough-improves} There is $\underline r_{0}(\Sigma,\underline R(\Sigma),\rho_{*})$ sufficiently large so that taking $\underline r\geq{\underline{ r}_{0}}$, $\eps_{0} = \eps_{0}(\Sigma,\underline r)$ sufficiently small, and fixing $C_{\ell}=C_{\ell}(\Sigma,\underline r)$ sufficiently large in the definition of the rough shrinker scale,
we have that $\tilde \br_{\ell}(M_{\tau})\geq \frac 12 e^{\frac\tau 2}\underline r$ for all $\tau \in [0,\bar \tau)$. 

Moreover, we can find $u : \Sigma \cap B_{4\underline r}(0) \times [0,\overline \tau) \to \RR$ with $u(\cdot,\tau)$ uniformly bounded in $C^{\ell+2}$ and with
\[
\Graph u (\cdot,\tau) \subset M_{\tau}\qquad \textnormal{and} \qquad M_{\tau} \cap B_{4\underline r-1}\subset \Graph u (\cdot,\tau) . 
\]
\end{lemm}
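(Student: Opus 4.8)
The plan is to run the pseudolocality bootstrap sketched after Figure~\ref{fig:pseudolocality-2}, using \emph{only} the core graphical hypothesis $(*_{b,\underline r})$ as input (no \L ojasiewicz inequality is needed for this comparatively weak conclusion). Write the rescaled flow as $M_\tau = (-t)^{-1/2}\mathcal{M}_t$ with $\tau = -\log(-t)$, so $\mathcal{M}_t = e^{-\tau/2}M_\tau$ is an unrescaled Brakke flow on $[-e,0)$. Fix a base time $\tau_0\in[-1,\bar\tau)$ and let $\hat{\mathcal{M}}_s$, $s\in[-1,0)$, be the parabolic rescaling of $\{\mathcal{M}_t\}$ shifting $[t_0,0)$, with $t_0 = -e^{-\tau_0}$, to $[-1,0)$; then $\hat{\mathcal{M}}_{-1} = M_{\tau_0}$ and a short computation gives $\hat{\mathcal{M}}_s = e^{(\tau_0-\tau)/2}M_\tau$ with $\tau = \tau_0 - \log(-s)\in[\tau_0,\infty)$. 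The first step is to record, from $(*_{b,\underline r})$ and the conical structure of $\Sigma$, that for every such $\tau_0$ and every $y_0 \in M_{\tau_0}$ with $|y_0|\in[\underline R(\Sigma)+2\rho_*,\underline r - 2\rho_*]$ the surface $M_{\tau_0}\cap B_{\rho_*}(y_0)$ is a Lipschitz graph over $T_{y_0}M_{\tau_0}$ with constant $<\gamma_*$: indeed $\Sigma\cap B_{2\rho_*}(y_0)$ is a $\tfrac{\gamma_*}{2}$-Lipschitz graph over $T_{y_0}\Sigma$ by the choice of $\underline R(\Sigma)$, and composing with the $b$-small graph $u(\cdot,\tau_0)$ of $M_{\tau_0}$ over $\Sigma$ keeps the constant $<\gamma_*$ once $b$ is small.

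Next I would apply the pseudolocality Proposition~\ref{prop:pseudolocality-result} (with $\delta = \delta_*$) to $\hat{\mathcal{M}}_s$, taking its ``$B_\rho(0)$'' to be $B_{\rho_*}(y_0)$. This yields a $\delta_*$-Lipschitz graph over (a translate of) $T_{y_0}M_{\tau_0}$ on $B_{\delta_*}(y_0)$ for all $s\in[-1,0)$, i.e.\ after unscaling, $M_\tau$ is a $\delta_*$-Lipschitz graph on $B_{e^{(\tau-\tau_0)/2}\delta_*}(e^{(\tau-\tau_0)/2}y_0)$ for every $\tau\geq\tau_0$. Since this graph persists on a time interval of definite length, interior estimates for graphical mean curvature flow upgrade it on a slightly smaller ball to $|\nabla^{(k)}A_{M_\tau}|(z)\lesssim (|y_0|/\delta_*)^{k+1}|z|^{-1-k}$ for $k\in\{0,\dots,\ell+1\}$, where $|z|\approx e^{(\tau-\tau_0)/2}|y_0|$. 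Now fix $\tau\in[0,\bar\tau)$ and let $\tau_0$ range over $[0,\tau]$ (permissible since $[0,\tau]\subset[-1,\bar\tau)$) and $|y_0|$ over $[\underline R(\Sigma)+2\rho_*,\underline r-2\rho_*]$: the resulting balls sweep continuously across the annular region $\{\underline R(\Sigma)+2\rho_* \le |x| \le e^{\tau/2}(\underline r - 2\rho_*)\}$, carrying these curvature bounds, while $(*_{b,\underline r})$ supplies smoothness and $|\nabla^{(k)}A_{M_\tau}| \le C_\ell(1+r)^{-1-k}$ on $B_{\underline r}(0)$. Taking $\underline r \ge 4\rho_*$ (so $e^{\tau/2}(\underline r - 2\rho_*)\ge\tfrac12 e^{\tau/2}\underline r$ and $\underline R(\Sigma)+2\rho_* < \underline r$) and then $C_\ell = C_\ell(\Sigma,\underline r)$ large enough to dominate the constants $(|y_0|/\delta_*)^{k+1}\le(\underline r/\delta_*)^{\ell+2}$, we conclude that $M_\tau\cap B_{\frac12 e^{\tau/2}\underline r}(0)$ is smooth with $|\nabla^{(k)}A_{M_\tau}|\le C_\ell(1+r)^{-1-k}$ for $k\le\ell+1$, that is, $\tilde\br_\ell(M_\tau)\ge\tfrac12 e^{\tau/2}\underline r$. (Throughout, $\eps_0 = \eps_0(\Sigma,\underline r)$ is taken small enough that $(*_{b,\underline r})$ holds at $\tau=-1$ and $\bar\tau$ is as large as needed; hypotheses (1)--(4) together with pseudolocality make this routine.)

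For the ``moreover'' statement I would instead choose the base point $y_0$ as far inward as the conical region of $\Sigma$ permits, $|y_0| = \underline R(\Sigma)+2\rho_*$, a constant independent of $\underline r$; then on $M_\tau\cap(B_{4\underline r}(0)\setminus B_{\underline r}(0))$ the curvature bounds have a constant not depending on $\underline r$, so (as $\underline r\gg 1$) $M_\tau$ there is a small $C^{\ell+1}$ perturbation of a cone. Since $\Sigma\cap(B_{4\underline r}(0)\setminus B_{\underline R(\Sigma)}(0))$ is itself $C^0$-close to $\cC$ with small normalized $C^{\ell+1}$-norm, and $M_\tau$ coincides with the $b$-small normal graph of $u(\cdot,\tau)$ over $\Sigma$ on $M_\tau\cap B_{\underline r-1}(0)$, an outward continuity argument (integrating the difference of these two nearly conical hypersurfaces starting from $\partial B_{\underline r}(0)$) extends the normal-graph representation of $M_\tau$ over $\Sigma$ to $\Sigma\cap B_{4\underline r}(0)$ with $\Vert u(\cdot,\tau)\Vert_{C^{\ell+2}}$ bounded. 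This needs $\tfrac12 e^{\tau/2}\underline r$ to exceed $4\underline r$ and the innermost base point to be usable, which holds once $\tau$ is larger than a threshold depending on $\underline r$; on the remaining bounded range of $\tau$ one instead invokes directly that, for $\eps_0$ small, $M_\tau$ is $C^{\ell+1}$-close to $\Sigma$ on $B_{5\underline r}(0)$.

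The main obstacle is the bookkeeping in the sweeping step: one must check that the balls produced by pseudolocality genuinely cover the whole annulus out to $\tfrac12 e^{\tau/2}\underline r$ (this is why $\underline r$ must be large relative to $\underline R(\Sigma)$ and $\rho_*$, so that the annuli $[\lambda(\underline R(\Sigma)+2\rho_*),\lambda(\underline r-2\rho_*)]$ overlap as $\lambda$ ranges over $[1,e^{\tau/2}]$), and that the curvature constant stays acceptable — which forces $y_0$ to be chosen as far inward as the conical part of $\Sigma$ allows, and is exactly what makes $C_\ell$ independent of $\tau$. The secondary subtlety is the outward extension of graphicality \emph{over $\Sigma$} (rather than over fixed tangent planes) in the moreover part; this is a bounded-range instance of the mechanism carried out in full generality in Proposition~\ref{prop:approx-up-to-rough}.
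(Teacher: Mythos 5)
Your overall strategy --- applying pseudolocality from the core annulus $B_{\underline r - 2\rho_*}(0)\setminus B_{\underline R(\Sigma)}(0)$ at intermediate base times $\tau_0$, rescaling back to $M_\tau$, and invoking interior estimates for graphical mean curvature flow after a definite waiting time --- is exactly the argument the paper runs, and your computation $|\nabla^{(k)}A_{M_\tau}|(z) \lesssim (|y_0|/\delta_*)^{k+1}|z|^{-1-k}$ is the correct unwinding of the scaled interior estimate.

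There is, however, one genuine gap. The definition of the rough conical scale $\tilde\br_\ell$ requires that \emph{all} of $M_\tau\cap B_{\tilde\br_\ell(M_\tau)}(0)$ be smooth with the stated curvature bounds. Your sweeping argument only tracks the pieces of $M_\tau$ that propagate from the pseudolocality balls anchored at $y_0\in M_{\tau_0}$: these account for the part of $M_\tau$ lying near the scaled cone, but pseudolocality, being local, says nothing about possible additional sheets of the (a priori only Brakke) flow entering the ball $B_{\frac12 e^{\tau/2}\underline r}(0)$ away from the cone between times $\tau_0$ and $\tau$. Without ruling this out, the claim ``$M_\tau\cap B_{\frac12 e^{\tau/2}\underline r}(0)$ is smooth with $|\nabla^{(k)}A_{M_\tau}|\le C_\ell(1+r)^{-1-k}$'' does not follow. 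The paper closes this gap by patching the local Lipschitz graphs into a single graph over $\Sigma$ on the annulus at scale $\tau_0$ and then invoking a shrinking-sphere barrier (placed near radius $\underline r - 3\rho_*$ at $t=-1$, with $\underline r$ chosen large enough that the sphere cannot shrink past radius $\underline r - 4\rho_*$ in unit time) to show this patched graph describes the entire flow on the inner annulus throughout $[-1,0)$. You need some version of this step, both for the $\tilde\br_\ell$ bound and for the ``moreover'' graphicality statement.

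A secondary point: letting both $\tau_0$ and $|y_0|$ vary, as you do, is workable but obscures the uniform-in-$\tau$ lower bound on the waiting time that makes the interior-estimate constant (and hence $C_\ell$) independent of $\tau$. The cleaner parameterization, as in the paper, fixes $|y_0|\approx\underline r - 5\rho_*$ and solves $\tau_0 = \tau + 2\log(|x|^{-1}(\underline r - 5\rho_*))$, so that $t = -|x|^{-2}(\underline r - 5\rho_*)^2 \in [-1+\omega,0)$ with $\omega = 1-(1-5\rho_*/\underline r)^2$ depending only on $\underline r$. Your remark that choosing $y_0$ inward ``makes $C_\ell$ independent of $\tau$'' is not quite right: $\tau$-independence comes from the uniform waiting time, not from how far inward $y_0$ sits, and $C_\ell$ is in any case permitted to depend on $\underline r$.
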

\begin{proof}
Consider $\tau_{0} \in [0,\bar \tau)$. Note that $t\mapsto \sqrt{-t} M_{(\tau_{0}-\log(-t))} : = \hat M_{t}^{(\tau_{0})}$ is a mean curvature flow for $t \in [-e^{\tau_{0}},0)$ and $\hat M_{-1}^{(\tau_{0})} = M_{\tau_{0}}$. Take $b$ sufficiently small in the core graphical hypothesis. Then, by definition of the {pseudolocality scale $\rho_*$, the scale  $\underline R(\Sigma)$ of the core of the shrinker and the core graphical scale $\underline r$}, we can ensure that for 
\[
x \in \hat M_{-1}^{(\tau_{0})}\cap (\overline{ B_{\underline r-2\rho_{*}}(0)}\setminus B_{\underline R(\Sigma)}(0)),
\]
there is some plane $\Pi_{x}$ through $x$ so that $\hat M_{-1}^{(\tau_{0})}\cap B_{\rho_{*}}(x)$ is a Lipschitz graph over $\Pi_{x}$ with Lipschitz constant at most $\gamma_{*}$. Thus, by pseudolocality (Proposition \ref{prop:pseudolocality-result}), $\hat M_{t}^{(\tau_{0})}\cap B_{\delta_{*}}(x)$ is non-empty, and a $\delta_{*}$-Lipchitz graph over $\Pi_{x}$ for all $t \in [-1,0)$. 

We can patch these graphs together to write find a family of domains ${\hat{\Omega}_t^{(\tau_0)}}\subset \Sigma$ with 
\[
(\Sigma \cap (\overline{ B_{\underline r-3\rho_{*}}(0)}\setminus B_{\underline R(\Sigma)+1}(0)))\subset \hat \Omega_{t}^{(\tau_{0})}
\]
and a function $\hat v_{t}^{(\tau_{0})} : \Omega_{t}\to\RR$ so that $\Graph \hat v_{t}^{(\tau_{0})}  \subset \hat M_{t}^{\tau_{0}}$. Using a shrinking sphere as a barrier, we can see that for $t \in [-1,0)$, this graph describes all of $\hat M_{-1}^{(\tau_{0})}\cap (\overline{ B_{\underline r-4\rho_{*}}(0)}\setminus B_{\underline R(\Sigma)+2}(0))$. The shrinking sphere of radius $\underline r-4\rho_{*}$ at $t=-1$ still contains $B_{\underline r-3\rho_{*}}(0)$ as long as we choose $\underline r$ sufficiently large so that
\[
(\underline r - 4\rho_{*})^{2} \leq (\underline r - 3\rho_{*})^{2} - 1\qquad \Leftrightarrow \qquad   \frac{1}{2\rho_{*}}(5 \rho_{*}^{2} + 1) \leq \underline r . 
\]
Now, for $\omega \in (0,1)$, by applying interior estimates \cite{EckerHuisken} (cf.\ \cite[Corollary 8.4]{BegleyMoore}) for graphical mean curvature flow, we find that
\[
|\nabla^{(k)}A_{\hat M^{(\tau_{0})}_{t}}|(x) \leq C=C(\Sigma,\lambda_{0},\omega)
\]
for $x \in \hat M_{-t}^{(\tau_{0})}\cap (\overline{ B_{\underline r - 5\rho_{*}}(0)}\setminus B_{\underline R(\Sigma)+3}(0))$, $t \in [-1+\omega,0)$, and $k \in \{0,\dots,\ell\}$. 

By the definition of the core graphical scale, $\tilde\br_{\ell}(M_{\tau})\geq \underline r$, so the desired curvature estimates hold on $M_{\tau}\cap B_{\underline r}(0)$. Moreover, by taking the parameter $\eps$ sufficiently small, we can ensure that the desired estimates hold for $\tau \in [-1,1]$. On the other hand, for $\tau \in [1,\bar \tau)$ and 
\[
x \in M_{\tau} \cap \left(\overline{B_{e^{\frac \tau 2}(\underline r-5\rho_{*})}(0)} \setminus B_{\underline r}(0) \right), 
\]
We choose 
\[
\tau_{0} = \tau + 2 \log (|x|^{-1} (\underline r - 5\rho_{*})) \in [0,\tau)
\]
Then,
\[
t = -e^{\tau_{0}-\tau} = - |x|^{-2} (\underline r - 5\rho_{*})^{2} \in [-1+\omega,0),
\]
for $\omega = \omega(n,\underline r) \in (0,1)$ fixed by
\[
\omega : = 1-(1-5\underline r^{-1}\rho_{*})^{2} \in (0,1).
\]
Now, we find that the point $x$ is rescaled to
\[
\hat x : = \sqrt{-t}x \in \hat M_{-t}^{(\tau_{0})}\cap \partial  B_{\underline r - 5\rho_{*}}(0),
\]
so the curvature estimates established above yield
\[
|\hat x|^{-1-k} |\hat x|^{1+k}|\nabla^{(k)}A_{\hat M^{(\tau_{0})}_{t}}|(\hat x) = |\nabla^{(k)}A_{\hat M^{(\tau_{0})}_{t}}|(\hat x) \leq C=C(\Sigma,\lambda_{0},\underline r)
\]
Unwinding this, we find
\[
|x|^{1+k}|\nabla^{(k)}A_{M_{\tau}}|( x)  \leq C(\underline r - 5 \rho_{*})^{1+k},
\]
for $k\in \{0,\dots,\ell\}$. Thus, by choosing $C_{\ell} = C_{\ell}(\Sigma,\lambda_{0},\underline r)$ sufficiently large, we find that $\tilde\br_{\ell}(M_{\tau}) \geq e^{\frac\tau 2}(\underline r - 5\rho_{*})$, as claimed. As such, the asserted curvature estimates follow by requiring that $\underline r \geq 10 \rho_{*}$.

The above proof also shows that there is a function $u : (\Sigma \cap B_{4\underline r}) \times [0,\overline\tau) \to \RR$ with 
\[
\Graph u (\cdot,\tau)  \subset M_{\tau} \qquad \text{and} \qquad M_{ \tau}\cap B_{4\underline r-1}\subset \Graph u (\cdot,\tau) ,
\]
 and so that $u(\cdot,\tau)$ uniformly bounded in $C^{\ell+2}$. Note that this $u$ agrees with the function in the definition of the core graphical hypothesis, on their common domain of definition. 
\end{proof}

First, suppose that $\tau$ is such that $\bR(M_{\tau})\leq \tilde \br_{\ell}(M_{\tau}) - 1$. By Theorem \ref{theo:final-Loj}, we have that for $\theta' = \theta/3$\index{$\theta'$},
\[
F(M_{\tau}) - F(\Sigma) \leq C   \left( \int_{M_{\tau}} |\phi|^{2} \rho\, d\cH^{n} \right)^{\frac{1}{2(1-\theta')}},
\]
so
\begin{align*}
-\frac{d}{d\tau} (F(M_{\tau}) - F(\Sigma))^{\theta'} & = \theta'  (F(M_{\tau}) - F(\Sigma))^{\theta'-1} \int_{M_{\tau}} |\phi|^{2} \rho\, d\cH^{n} \\
& \geq C  \left( \int_{M_{\tau}} |\phi|^{2} \rho\, d\cH^{n} \right)^{\frac 12}.
\end{align*}
On the other hand, suppose that $\tau$ is such that $\bR(M_{\tau}) > \tilde \br_{\ell}(M_{\tau}) - 1 \geq \frac 12 e^{\frac \tau 2}\underline r - 1$ (by Lemma \ref{lemm:rough-improves}). The following coarse estimate will suffice in this case:
\begin{equation}\label{eq:huge-shrinker-scale-trivial-est}
\left( \int_{M_{\tau}} |\phi|^{2} \rho\, d\cH^{n} \right)^{\frac 12} = e^{-\frac{\bR(M_\tau)^{2}}{8}} \leq C e^{-\tau}.
\end{equation}

Thus, we can conclude that for all $\tau \in [0,\overline \tau)$
\begin{align*}
C  \left( \int_{M_{\tau}} |\phi|^{2} \rho\, d\cH^{n} \right)^{\frac 12} \leq -\frac{d}{d\tau} (F(M_{\tau}) - F(\Sigma))^{\theta'} + e^{-\tau}. 
\end{align*}
Integrating this, we find that for $\tau_{0} \in [0,\overline \tau)$,
\begin{equation}\label{eq:conseq-loj}
\int_{\tau_{0}}^{\overline \tau} \left( \int_{M_{\tau}} |\phi|^{2} \rho\, d\cH^{n} \right)^{\frac 12} d\tau \lesssim (F(M_{\tau_{0}}) - F(\Sigma))^{\theta'} + e^{-\tau_{0}} \lesssim \eps^{\theta'} + e^{-\tau_{0}}
\end{equation}

For the function $u: (\Sigma\cap B_{2\underline r})\times [0,\overline \tau) \to \RR$ described in Lemma \ref{lemm:rough-improves},  we have that
\[
 \int_{M_{\tau}} |\phi|^{2} \rho\, d\cH^{n}  \geq C \left\Vert \frac{\partial u}{\partial \tau} \right \Vert_{L^{2}(\Sigma\cap B_{4\underline r}(0))}^{2},
\]
so we see that 
\[
\sup_{\tau \in [\tau_{0},\overline \tau)} \left\Vert u(\cdot,\tau) - u(\cdot,\tau_{0}) \right \Vert_{L^{2}(\Sigma\cap B_{4\underline r}(0))} \lesssim \eps^{\theta'} + e^{-\tau_{0}}.  
\]
Because $u(\cdot,\tau)$ is uniformly bounded in $C^{\ell+2}$ by Lemma \ref{lemm:rough-improves}, by taking $\eps$ sufficiently small and $\tau_{0} = \frac 12 \eps^{-2}$, we have that 
\[
\Vert u (\cdot,\tau_{0})\Vert_{C^{\ell+1}(\Sigma\cap B_{2\underline r}(0))} \leq \frac b 4
\] 
and 
\[
\sup_{\tau \in [\tau_{0},\overline \tau)} \left\Vert u(\cdot,\tau) - u(\cdot,\tau_{0}) \right \Vert_{C^{\ell+1}(\Sigma\cap B_{2\underline r}(0))} \leq \frac b 4.
\]
Thus, we see that $\Vert u(\cdot,\tau)\Vert_{C^{\ell+1}(\Sigma\cap B_{2\underline r}(0))} \leq \frac b 2$ for $\tau \in [0,\overline \tau)$. This (combined with pseudolocality and interior estimates) implies that we can extend the graphical hypothesis slightly beyond $\overline \tau$, a contradiction.

Thus, $\overline \tau = \infty$. Now, returning to \eqref{eq:conseq-loj}, we have that (recall that $s_{k}\to\infty$ are so that $M_{s_{k}}\to\Sigma$)
\[
\sup_{\tau \in [s_{k},\infty)} \left\Vert u(\cdot,\tau) - u(\cdot,s_{k}) \right \Vert_{L^{2}(\Sigma\cap B_{4\underline r}(0))} \lesssim  \int_{s_{k}}^{\infty}  \left\Vert \frac{\partial u}{\partial \tau} \right \Vert_{L^{2}(\Sigma\cap B_{4\underline r}(0))}  d\tau \lesssim (F(M_{s_{k}})-F(\Sigma))^{\theta'} + e^{-s_{k}} 
\]
Since $u(\cdot,s_{k})\to 0$ in $L^{2}(\Sigma\cap B_{4\underline r}(0))$, we thus see that $u(\cdot,\tau) \to 0$ in $L^{2}(\Sigma \cap B_{4\underline r}(0))$ as $\tau \to\infty$, and thus in $C^{\ell+1}(\Sigma \cap B_{2\underline r}(0))$. 

From this, it is clear that $M_{\tau}$ converges on compact sets to $\Sigma$ as $\tau\to\infty$. This completes the proof of Theorem \ref{thm:unique}.

\subsection{Rate of convergence} \label{subsec:rate-convergence}
Here, we observe that similar arguments can yield a rate of convergence of $M_{\tau}$ towards $\Sigma$. Arguing as above, we have that 
\begin{align*}
\frac{d}{d\tau} (F(M_{\tau}) - F(\Sigma)) & = -\int_{M_{\tau}} |\phi|^{2} \rho \, d\cH^{n}\\
& \leq - C(F(M_{\tau}) - F(\Sigma))^{2(1-\theta')} + Ce^{-2\tau}
\end{align*}
for all $\tau \in [0,\infty)$.
We claim that 
\[
F(M_{\tau}) - F(\Sigma) \leq D (1+\tau)^{-\frac{1}{1-2\theta'}}.
\]
for $D$ sufficiently large in terms of $M_{0}$ and $\Sigma$. Indeed, letting $\tilde\tau$ denote the first time this fails, since $e^{-2x} \lesssim (1+x)^{-\alpha}$ for all $x>0$, we have that
\[
(F(M_{\tilde\tau}) - F(\Sigma))^{2(1-\theta')} = D^{2(1-\theta')} (1+\tilde \tau)^{-\frac{2(1-\theta')}{1-2\theta'}} \geq c D^{2(1-\theta')} e^{-2\tilde \tau}
\]
Thus, as long as $D$ sufficiently large, we find that 
\begin{align*}
- \frac{ D}{1-2\theta'} (1+\tilde \tau)^{-\frac{2(1-\theta')}{1-2\theta'}} & \leq \frac{d}{d\tau} (F(M_{\tau}) - F(\Sigma))\Big|_{\tau = \tilde \tau} \\
& \leq - C(F(M_{\tilde \tau}) - F(\Sigma))^{2(1-\theta')} \\
& \leq - C  D^{2(1-\theta')}(1+\tilde \tau)^{-\frac{2(1-\theta')}{1-2\theta'}}
\end{align*}
Taking $D$ larger if necessary, this yields a contradiction. Thus, we have that for any $R$ fixed,
\begin{align*}
\Vert u(\cdot,\tau) \Vert_{L^{2}(\Sigma\cap B_{2R})} & \lesssim \int_{\tau}^{\infty} \left( \int_{M_{\tau}} |\phi|^{2} \rho\, d\cH^{n} \right)^{\frac 12} d\tau \\
& \lesssim (F(M_{\tau}) - F(\Sigma))^{\theta'} + e^{-\tau}\\
& \lesssim  (1+\tau)^{-\frac{\theta'}{1-2\theta'}}
\end{align*}
Interpolating yields 
\[
\Vert u(\cdot, \tau) \Vert_{C^{k}(\Sigma\cap B_{R})} \lesssim (1+\tau)^{-\frac{\theta'}{1-2\theta'}+\eta}
\]
for any $k$, $R$, and $\eta>0$, as $\tau\to\infty$. 

{\subsection{Proof of Corollary \ref{coro:conical-struct}} \label{subsec: cor-proof}
 Note that the proof of Theorem \ref{thm:unique} implies that there exists $\eps>0$ such that the surfaces $M_t \cap B_{\eps}(0)$ for $t \in (-\eps^2, 0)$ are smooth graphs over $\sqrt{t}\cdot \Sigma$. Even more, one also sees that $(M_0 \cap B_{\eps}(0))\setminus \{0\}$ is a smooth normal graph over the asymptotic cone $C$ of $\Sigma$ with curvature bounded by $c/r$ (plus all corresponding higher order derivative estimates). Note that the tangent flow $\cM_\Sigma$ has as the time zero slice the cone $C$. Thus by taking rescaling limits of the flow, including time zero, we see that the uniqueness of the tangent flow implies that rescalings of $M_0$ converge smoothly on compact subsets of $\RR^n \setminus \{0\}$ to $C$.}
 
\appendix

\section{Standard definitions}\label{app:defi}
We recall the following definitions and conventions:

\begin{defi}\label{defi:gaussian-area}
For $M^{n}\subset \RR^{n+1}$ with polynomial area growth, the \emph{Gaussian area} of $M$ is \index{$F(\cdot)$}
\[
F(M) = \int_{M} \rho \, d\cH^{n}
\]
where $\rho = (4\pi)^{-\frac n 2} e^{-|x|^{2}/4}$\index{$\rho$}. {Recall, that the entropy $\lambda(M)$ \index{$\lambda(M)$} is defined as the supremum of the Gaussian area over all centers and scales, see \cite{CM:generic}. }

\end{defi}

\begin{defi}
A hypersurface $\Sigma^{n}\subset \RR^{n+1}$ is a \emph{self-shrinker} if $\sqrt{-t}\cdot\Sigma$ is a solution to mean curvature flow for $t \in (-\infty,0)$. This is equivalent to 
\begin{equation}\label{eq:self-shrinker}
H_{\Sigma} = \frac 12 \bangle{x,\nu_{\Sigma}}
\end{equation}
\end{defi}

\begin{defi}\label{defi:phi}
For a general hypersurface $M^{n}\subset \RR^{n+1}$, we define the function
\[
\phi = \phi_{M} := \frac 12 \bangle{x,\nu_{M}} - H_{M}. 
\]
\index{$\phi$}
Note that $\Sigma$ is a self-shrinker if and only if $\phi_{\Sigma} \equiv 0$. 
\end{defi}

\begin{defi}\label{defi:as-con}
A smooth self-shrinker $\Sigma^{n}\subset \RR^{n+1}$ is (smoothly) \emph{asymptotically conical} if
\[
\lim_{t\nearrow 0}\sqrt{-t}\cdot\Sigma = \cC
\]
in $C^{\infty}_{\textrm{loc}}(\RR^{n+1}\setminus]\{0\})$ with multiplicity one, where $\cC$ is a cone over a smooth closed hypersurface $\Gamma^{n-1}\subset \SS^{n}\subset \RR^{n+1}$. 
\end{defi}

\begin{defi}\label{defi:L-operators}
We define the following operators along $\Sigma$:
\begin{align*}
\cL_{\gamma} u & : = \Delta_{\Sigma} u - \frac 12 \vec{x}\cdot \nabla_{\Sigma} u + \gamma u \\
L u & := \cL_{\frac 12}u + |A_{\Sigma}|^{2}u = \Delta_{\Sigma} u - \frac 12 (\vec{x}\cdot \nabla_{\Sigma} u - u) + |A_{\Sigma}|^{2} u .
\end{align*}
\index{$\cL_{\gamma}$}\index{$L$}\index{$\cL_{0}$}\index{$\cL_{\frac12}$}Note that $L$ is the full second variation of Gaussian area along $\Sigma$. Moreover, $\cL_{0}$ and $\cL_{\frac 12}$ will be particularly relevant in the sequel. 
\end{defi}

\section{Interpolation inequalities} \label{app:interpolate}
We recall the following standard interpolation inequalities in multiplicative form. 
\begin{lemm}\label{lemm:interpolation}
Suppose that $u \in C^{k}(B_{2})$, then for $j< k$,
\[
\Vert D^{j} u \Vert_{C^{0}(B_{1})} \leq C \Vert u\Vert_{C^{0}(B_{2})}^{1-\frac{j}{k}} \Vert D^{k} u \Vert_{C^{0}(B_{2})}^{\frac j k}
\]
for $C=C(n,k)$. Similarly, if $u \in C^{k,\alpha}(B_{2})$, then for $j+\beta < k+\alpha$,
\[
[ D^{j} u ]_{\beta;B_{1}} \leq C \Vert u\Vert_{C^{0}(B_{2})}^{1-\frac{j+\beta}{k+\alpha}} [ D^{k} u ]_{\alpha ; B_{2}}^{\frac{j+\beta}{k+\alpha}}
\]
for $C=C(n,k,\alpha,\beta)$.
\end{lemm}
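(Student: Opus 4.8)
The plan is to reduce both inequalities to their additive (``$\eps$-interpolation'') forms and then optimize a free parameter. The elementary input is a one-dimensional estimate: if $g\in C^{2}([-\ell,\ell])$, then for $|t|\le\ell/2$ and $|h|\le\ell/2$ Taylor's theorem gives $g(t+h)=g(t)+hg'(t)+\tfrac12h^{2}g''(\xi)$, hence $|g'(t)|\le \tfrac{2}{|h|}\sup|g|+\tfrac{|h|}{2}\sup|g''|$; choosing $|h|=\min\{\ell/2,\,2(\sup|g|/\sup|g''|)^{1/2}\}$ yields $\sup_{[-\ell/2,\ell/2]}|g'|\le C(\ell^{-1}\sup|g|+\ell\sup|g''|)$. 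Applying this along coordinate segments through a point $x_{0}$ (and noting $|D^{2}_{vv}u|\le|D^{2}u|$) produces, for every $B_{r}(x_{0})\subset B_{2}$,
\[
\Vert Du\Vert_{C^{0}(B_{r/2}(x_{0}))}\leq C\big(r^{-1}\Vert u\Vert_{C^{0}(B_{r}(x_{0}))}+r\Vert D^{2}u\Vert_{C^{0}(B_{r}(x_{0}))}\big).
\]

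Applying this with $D^{j-1}u$ in place of $u$ gives the same statement with $D^{j-1},D^{j},D^{j+1}$ replacing $D^{0},D^{1},D^{2}$. Iterating $k-j$ times along a geometric sequence of radii decreasing from $2$ to $1$ (a standard interior-ball/covering argument absorbs the successive shrinkage of the domain), and invoking Young's inequality $ab\le\tfrac1p(\eta a)^{p}+\tfrac1{p'}(\eta^{-1}b)^{p'}$ at each stage to keep only the extreme orders, one arrives at
\[
\Vert D^{j}u\Vert_{C^{0}(B_{1})}\leq \eps\,\Vert D^{k}u\Vert_{C^{0}(B_{2})}+C\,\eps^{-j/(k-j)}\Vert u\Vert_{C^{0}(B_{2})}
\]
for all $\eps\in(0,1]$. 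Optimizing over $\eps$ --- take $\eps=(\Vert u\Vert_{C^{0}(B_{2})}/\Vert D^{k}u\Vert_{C^{0}(B_{2})})^{(k-j)/k}$ when this lies in $(0,1]$, and $\eps=1$ (so that $\Vert D^{k}u\Vert_{C^{0}(B_{2})}\gtrsim\Vert u\Vert_{C^{0}(B_{2})}$ and the bound is immediate) otherwise --- yields the first displayed inequality.

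The Hölder inequality follows by the same moves once one records the single-function bound $[g]_{\beta;B_{1}}\leq C\Vert g\Vert_{C^{0}(B_{2})}^{1-\beta}[g]_{1;B_{2}}^{\beta}$ (split the difference quotient $|g(x)-g(y)|/|x-y|^{\beta}$ according to whether $|x-y|$ exceeds a threshold) together with $[g]_{1;B_{2}}\sim\Vert Dg\Vert_{C^{0}(B_{2})}$: combining these with the $C^{0}$ estimates above, iterating between consecutive orders, and optimizing the parameter as before gives
\[
[D^{j}u]_{\beta;B_{1}}\leq C\,\Vert u\Vert_{C^{0}(B_{2})}^{\,1-\frac{j+\beta}{k+\alpha}}\,[D^{k}u]_{\alpha;B_{2}}^{\,\frac{j+\beta}{k+\alpha}}.
\]

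The argument is entirely classical (see, e.g., \cite{Simon:green-book}), and I expect no conceptual obstacle: the only genuine work is the bookkeeping --- arranging the geometric sequence of radii so that one lands exactly on $B_{1}$ after the $k-j$ iterations, and tracking the exponents of $\eps$ through the repeated applications of Young's inequality. For that reason one would in practice simply invoke the standard reference.
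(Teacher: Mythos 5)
Your approach — reduce to the additive ($\eps$-form) interpolation inequality and then optimize the parameter — is exactly what the paper does; its ``proof'' is simply a citation to \cite[Lemma 6.32]{giltru} (the additive form) with the instruction to optimize the interpolation parameter $\mu$, plus a pointer to \cite[Lemma A.2]{Hormander:geodesy}. Your one-dimensional Taylor argument and iteration across orders is the standard derivation of that additive estimate, so the two routes coincide in substance.

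One point in your write-up does not close as written: the degenerate case where the optimal $\eps$ exceeds $1$. You write that there one has $\Vert D^{k}u\Vert_{C^{0}(B_{2})}\gtrsim\Vert u\Vert_{C^{0}(B_{2})}$, but the inequality is the other way round — $\eps_{\mathrm{opt}}=(\Vert u\Vert/\Vert D^{k}u\Vert)^{(k-j)/k}>1$ forces $\Vert D^{k}u\Vert<\Vert u\Vert$. In that regime, taking $\eps=1$ only yields $\Vert D^{j}u\Vert\le C\Vert u\Vert$, which is \emph{weaker} than the multiplicative bound $C\Vert u\Vert^{1-j/k}\Vert D^{k}u\Vert^{j/k}$ (since $\Vert D^{k}u\Vert<\Vert u\Vert$ makes the latter smaller). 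Indeed, as stated the multiplicative inequality cannot hold on a fixed bounded ball: take $n=1$, $u(x)=x$, $j=1$, $k=2$; the left side is $1$ while the right side vanishes because $D^{2}u\equiv 0$. So this is not a gap you could have repaired — it reflects an imprecision in the statement itself (the clean multiplicative form requires either the whole space, or an implicit normalization such as $\Vert D^{k}u\Vert\gtrsim\Vert u\Vert$, or an additive $\Vert u\Vert$ term on the right). You should flag this rather than claim ``the bound is immediate,'' and also fix the reversed inequality in that parenthetical.
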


These follow in a similar manner to the linear inequalities given in \cite[Lemma 6.32]{giltru}, except in the proof one should optimize with respect to the parameter $\mu$ rather than just choosing $\mu$ sufficiently small. Alternatively, see \cite[Lemma A.2]{Hormander:geodesy}.

We will also need the following interpolation inequality. 
\begin{lemm}[{cf.\ \cite[Lemma B.1]{CM:uniqueness}}]\label{lemm:interpL1Ck}
If $u$ is a $C^{k}$ function on $B_{2r}\subset \RR^{n}$ then
\begin{align*}
\Vert u \Vert_{L^{\infty}(B_{r})} & \leq C\left( r^{-n} \Vert u\Vert_{L^{1}(B_{2r})} + \Vert u\Vert_{L^{1}(B_{2r})}^{a_{k,n}} \Vert \nabla^{k}u\Vert_{L^{\infty}(B_{2r})}^{1-a_{k,n}}\right)\\
r \Vert \nabla  u \Vert_{L^{\infty}(B_{r})} & \leq C\left( r^{-n} \Vert u\Vert_{L^{1}(B_{2r})} + \Vert u\Vert_{L^{1}(B_{2r})}^{b_{k,n}} \Vert \nabla^{k}u\Vert_{L^{\infty}(B_{2r})}^{1-b_{k,n}}\right)
\end{align*}
for $C=C(k,n)$, $a_{k,n}=\frac{k}{k+n}$, and $b_{k,n} = \frac{k-1}{k+n}$.
\end{lemm}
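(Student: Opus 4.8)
The plan is to prove both inequalities at unit scale ($r=1$), i.e.\ on $B_1\subset B_2\subset\RR^n$, and then recover the general case by applying the result to $u(r\,\cdot)$. The exponents $a_{k,n}=\tfrac{k}{k+n}$ and $b_{k,n}=\tfrac{k-1}{k+n}$ are exactly the ones making this rescaling consistent: the $C^0$ estimate scales cleanly, while the sharp unit-scale gradient bound carries a factor $r^{-n-1}$ in front of $\Vert u\Vert_{L^1(B_{2r})}$ and no $r$ on the interpolated term, which for $r\le 1$ (the only range in which the lemma is used) implies the displayed form after multiplying by $r$. So it suffices to treat the unit scale.

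For the $C^0$ estimate I would fix once and for all a mollifier $\varphi\in C_c^\infty(B_1)$ with $\int\varphi=1$ and $\int\varphi(y)\,y^\alpha\,dy=0$ for all multi-indices $1\le|\alpha|\le k-1$; this is a finite, solvable system of linear conditions since the monomials are linearly independent. For $x_0\in B_1$ and $\rho\in(0,1)$, write $u(x_0)=\int_{B_1}\varphi(y)\,u(x_0-\rho y)\,dy+\int_{B_1}\varphi(y)\big(u(x_0)-u(x_0-\rho y)\big)\,dy$. The first integral equals $\rho^{-n}\int_{B_\rho(x_0)}\varphi\big(\tfrac{x_0-z}{\rho}\big)u(z)\,dz$, hence is bounded by $C\rho^{-n}\Vert u\Vert_{L^1(B_2)}$ because $B_\rho(x_0)\subset B_2$; in the second integral, Taylor-expanding $u(x_0-\rho y)$ about $x_0$ to order $k-1$, all polynomial terms of degree $1,\dots,k-1$ annihilate $\varphi$ by the moment conditions and the degree-$0$ term produces $u(x_0)$, leaving only the remainder, which is $O(\rho^k\Vert\nabla^k u\Vert_{L^\infty(B_2)})$. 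This gives $|u(x_0)|\le C\rho^{-n}\Vert u\Vert_{L^1(B_2)}+C\rho^k\Vert\nabla^k u\Vert_{L^\infty(B_2)}$ for every $\rho\in(0,1)$. Then optimize over $\rho$: if $\Vert u\Vert_{L^1(B_2)}\le\Vert\nabla^k u\Vert_{L^\infty(B_2)}$, choosing $\rho$ comparable to $\big(\Vert u\Vert_{L^1(B_2)}/\Vert\nabla^k u\Vert_{L^\infty(B_2)}\big)^{1/(k+n)}\in(0,1)$ equates the two terms with $\Vert u\Vert_{L^1(B_2)}^{a_{k,n}}\Vert\nabla^k u\Vert_{L^\infty(B_2)}^{1-a_{k,n}}$; otherwise take $\rho$ close to $1$, giving $|u(x_0)|\le C\Vert u\Vert_{L^1(B_2)}$. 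Taking the supremum over $x_0\in B_1$ yields the first inequality at unit scale.

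For the gradient estimate the only change is to replace $\varphi$ by mollifiers $\psi_e\in C_c^\infty(B_1)$, one for each unit direction $e$, with $\int\psi_e(y)\,y^\alpha\,dy=0$ for $|\alpha|=0$ and for $2\le|\alpha|\le k-1$, and $\int\psi_e(y)\,y\,dy=e$. Then for $\rho\in(0,1)$ Taylor expansion gives $\int_{B_1}\psi_e(y)\,u(x_0-\rho y)\,dy=-\rho\,\partial_e u(x_0)+O\big(\rho^k\Vert\nabla^k u\Vert_{L^\infty(B_2)}\big)$, while the left side is $\le C\rho^{-n}\Vert u\Vert_{L^1(B_2)}$ as before; hence $|\partial_e u(x_0)|\le C\rho^{-n-1}\Vert u\Vert_{L^1(B_2)}+C\rho^{k-1}\Vert\nabla^k u\Vert_{L^\infty(B_2)}$, and the same two-regime optimization (balancing at $\rho\sim\big(\Vert u\Vert_{L^1(B_2)}/\Vert\nabla^k u\Vert_{L^\infty(B_2)}\big)^{1/(k+n)}$) produces the exponent $b_{k,n}$. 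Rescaling then finishes the proof. I do not expect a genuine obstacle — this is a standard multiplicative interpolation inequality and the statement is exactly \cite[Lemma B.1]{CM:uniqueness}; the only points requiring care are the construction of the vanishing-moment mollifiers $\varphi,\psi_e$ and the bookkeeping of the powers of $\rho$ (and of $r$ under rescaling) so that they land precisely on $a_{k,n}$ and $b_{k,n}$, with the two-regime split forcing the pure $\Vert u\Vert_{L^1}$ term on the right-hand side.
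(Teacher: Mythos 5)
The paper offers no proof of this lemma; it is stated with a reference to Colding--Minicozzi \cite[Lemma B.1]{CM:uniqueness} and invoked directly, so there is no in-paper argument to compare against. Your proof (reduce to unit scale, convolve against a fixed smooth kernel with vanishing moments up to order $k-1$, Taylor-expand the difference, then optimize the mollification radius $\rho\in(0,1)$ in two regimes so that the ``balancing'' regime produces the exponent $a_{k,n}$ or $b_{k,n}$ and the ``$\rho$ near $1$'' regime produces the pure $r^{-n}\Vert u\Vert_{L^1}$ term) is correct and is the standard derivation of such $L^1$--$C^k$ multiplicative interpolation bounds; the linear-algebraic solvability of the moment conditions for $\varphi$ and $\psi_e$ is also correctly dispatched.

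The one subtlety you correctly surfaced deserves emphasis: the gradient inequality as displayed is \emph{not} scale-invariant. The scale-invariant form coming from your unit-scale estimate is
\[
\Vert\nabla u\Vert_{L^\infty(B_r)}\le C\bigl(r^{-n-1}\Vert u\Vert_{L^1(B_{2r})}+\Vert u\Vert_{L^1(B_{2r})}^{b_{k,n}}\Vert\nabla^k u\Vert_{L^\infty(B_{2r})}^{1-b_{k,n}}\bigr),
\]
and multiplying by $r$ yields the displayed bound only after discarding a favorable factor of $r$ on the interpolated term, which is legitimate precisely when $r\le1$. (Indeed, the displayed second inequality genuinely fails for large $r$: take $n=1$, $k=2$, $u(x)=\sin(rx)$ on $B_{2r}$, for which the left-hand side grows like $r^2$ while the right-hand side grows like $r^{5/3}$.) Since the lemma is invoked in the paper only with $r=r_z=(1+|z|)^{-1}\le1$ (proof of Theorem~\ref{theo:final-Loj}), your proof covers exactly the range that is used, and you were right to flag the restriction rather than silently assume scale invariance.
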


\section{Geometry of normal graphs}\label{app:NG}
We consider hypersurfaces $M,N$ in $\RR^{n+1}$ such that $N$ can be locally written as a normal graph over $M$ with height function $v$, where we assume that the $C^1$-norm of $u$ is sufficiently small (depending on the geometry of $M$). Let $p \in M$ and choose a local parametrisation $F$, parametrising an open neighbourhood $U$ of $p$ in $M$ such that $F(0)=p$. We can assume that $ g_{ij}=\bangle{\partial_i F, \partial_j F}$ satisfies
$$ g_{ij}\big|_{x=0} = \delta_{ij}  \text{ and } \partial_kg_{ij}|_{x=0} = 0\, .$$
For simplicity we can furthermore assume that the second fundamental form $(h_{ij})$ is diagonalised at $p$ with eigenvalues $\lambda_1,\ldots, \lambda_n$. A direct calculation, see \cite[(2.27)]{Wang:uniqueness-conical}, yields that the normal vector $\nu_N(q)$, where $q = p + u(p) \nu_M(q)$, is co-linear to the vector
$$ N = - \sum_{i=1}^n \frac{\partial_i u}{1 - \lambda_i u} \partial_iF\bigg|_{x=0} + \nu_M(p)\ . $$
Denoting the shape operator by $S = (h^i_{\ j})$ we see that thus in coordinate free notation
\begin{equation}\label{eq:ng.1}
\nu_N(q) = v^{-1} \left( - (\text{Id} - uS)^{-1}\nabla^M u + \nu_M\right) (p)\, ,
\end{equation}
where $v:=(1 + |(\text{Id} - uS)^{-1}(\nabla^M u)|^2)^{\frac{1}{2}}$.
This implies
\begin{equation}\label{eq:ng.2}
\langle q, \nu_N(q) \rangle = v^{-1} \left( u + \langle p, \nu_M(p) \rangle - \big\langle p, (\text{Id} - uS)^{-1}\nabla^M u \big\rangle \right)\, .
\end{equation}
For the induced metric $\tilde g$ one obtains in the above coordinates at $p$, again see \cite[(2.32)]{Wang:uniqueness-conical},
$$ \tilde g_{ij} = (1-\lambda_iu)(1-\lambda_ju) \delta_{ij} + \partial_i u \partial_j u$$
which implies
\begin{equation}\label{eq:ng.3}
\tilde g^{ij} = \frac{\delta^{ij}}{(1-\lambda_i u)(1-\lambda_j u)} -  v^{-2}\frac{\partial_i u}{(1-\lambda_i u)^2}\frac{\partial_j u}{(1-\lambda_j u)^2}\, .
\end{equation}
Furthermore, from \cite[(2.30)]{Wang:uniqueness-conical} we have
\begin{equation}\label{eq:ng.4}
\begin{split}
\tilde h_{ij} = \langle \partial^2_{ij} \tilde F,  \nu_N \rangle &= v^{-1} \Big( \frac{\lambda_i}{1-\lambda_i u} \partial_i u \partial_ju + \frac{\lambda_j}{1-\lambda_j u} \partial_i u \partial_ju\\
&\qquad\quad\ + \sum_k \frac{u}{1-\lambda_k u}\, \partial_k u \,\partial_i h_{jk} + h_{ij} - \lambda_i\lambda_j u\, \delta_{ij} + \partial^2_{ij} u\Big)\, ,
\end{split}
\end{equation}
which yields a closed expression for the mean curvature $\tilde H$ of $N$, since $\tilde H(p) = \tilde g^{ij}(p) \tilde h_{ij}(p)$.

\section{Variations of Gaussian area and the Euler--Lagrange equation}\label{app:EL}
Suppose that $M^{n}\subset \RR^{n+1}$ is a normal graph of $v : \Sigma\to\RR$ for a fixed shrinker $\Sigma^{n}$. Recall that the Gaussian area is defined as
\[
F(M) : = \int_{M}\rho\, d\cH^{n}. 
\]
For $\tilde v$ a variation of $v$ (i.e., a variation in the normal direction to $\Sigma$), the first variation of $F$ in the direction of $\tilde v$ satisfies (see \cite{Schulze:compact}) 
\begin{align*}
\delta_{\tilde v}F(M) & = - \int_{\Sigma} \Pi_{T^\perp\Sigma}\left(\vec{H}_{M} + \frac{x^{\perp}}{2} \right)\Big|_{x=y+v(y)\nu_{\Sigma}(y)}\tilde v(y) J(y,v,\nabla_{\Sigma}v)\rho(y+\nu_{\Sigma})\rho(y)^{-1} \rho(y) d\cH^{n},
\end{align*}
where $\Pi_{T^\perp\Sigma}$\index{$\Pi_{T^\perp\Sigma}$} is the projection on to the normal bundle to $\Sigma$ and \index{$J$}
\[
J(y,v,\nabla_{\Sigma}v) = \Jac(D \exp_{y}(v(y)\nu_{\Sigma}(y)))
\]
 is the area element. 

Hence, the Euler--Lagrange operator $\cM$\index{$\cM$} (with respect to the weighted space $L^{2}_{W}$) satisfies 
\[
\cM(v) =  \Pi_{T^\perp\Sigma}\left(\vec{H}_{M} + \frac{x^{\perp}}{2} \right)\Big|_{x=y+v(y)\nu_{\Sigma}(y)}  J(y,v,\nabla_{\Sigma}v)\rho(y+v(y)\nu_{\Sigma})\rho(y)^{-1} 
 \]
It is well known that the linearization of $\cM$ at $v=0$ is the $L$ operator (cf.\ \cite[Lemma 4.3]{CM:uniqueness}). 

\section{Area growth bounds from Gaussian area estimates} \label{app:area-bds}
The following is a well known fact:
\begin{lemm}\label{lemm:poly-area-growth}
For $M^{n}\subset \RR^{n+1}$ with $\lambda(M) \leq \lambda_{0}$, there is $C=C(\lambda_{0},n)$ so that 
\[
\cH^{n}(M\cap B_{R}(x)) \leq C R^{n}
\]
for all $R>0$ and $x \in \RR^{n+1}$. 
\end{lemm}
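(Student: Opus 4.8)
The plan is to extract the bound directly from the entropy hypothesis by testing against a single well-chosen Gaussian. Recall (Definition \ref{defi:gaussian-area}, following \cite{CM:generic}) that $\lambda(M)$ is the supremum over all centers $x_{0}\in\RR^{n+1}$ and scales $t_{0}>0$ of the Gaussian area
\[
F_{x_{0},t_{0}}(M) := \int_{M} (4\pi t_{0})^{-\frac n2}\, e^{-|y-x_{0}|^{2}/(4t_{0})}\, d\cH^{n}(y).
\]
Given $x\in\RR^{n+1}$ and $R>0$, I would simply choose $x_{0}=x$ and $t_{0}=R^{2}$ and then discard the contribution from outside $B_{R}(x)$:
\[
\lambda_{0} \;\geq\; \lambda(M) \;\geq\; \int_{M\cap B_{R}(x)} (4\pi R^{2})^{-\frac n2}\, e^{-|y-x|^{2}/(4R^{2})}\, d\cH^{n}(y).
\]

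The second step is to bound the integrand from below on the region of integration. For $y\in B_{R}(x)$ one has $|y-x|^{2}/(4R^{2})\leq \tfrac14$, hence $e^{-|y-x|^{2}/(4R^{2})}\geq e^{-1/4}$, and therefore
\[
\lambda_{0} \;\geq\; (4\pi)^{-\frac n2}\, e^{-1/4}\, R^{-n}\, \cH^{n}\!\left(M\cap B_{R}(x)\right).
\]
Rearranging yields $\cH^{n}(M\cap B_{R}(x))\leq C R^{n}$ with $C=(4\pi)^{\frac n2}\, e^{1/4}\,\lambda_{0}$, depending only on $n$ and $\lambda_{0}$, as claimed.

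There is no substantive obstacle here; the argument is essentially a one-line comparison. The only point worth noting is that finiteness of $\lambda(M)$ must be understood to include that $M$ carries a locally finite $\cH^{n}$-measure (e.g.\ $M$ is a rectifiable varifold), so that the Gaussian integrals make sense; but this is implicit in the hypothesis, and in fact the displayed inequality is itself the proof that every Euclidean ball meets $M$ in a set of finite $\cH^{n}$-measure. This standard observation goes back to Colding--Minicozzi \cite{CM:generic}.
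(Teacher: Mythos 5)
Your proof is correct, and it is the standard argument that the paper clearly has in mind (the paper labels this a ``well known fact'' and gives no proof). Choosing $x_{0}=x$, $t_{0}=R^{2}$, discarding the integral outside $B_{R}(x)$, and bounding the Gaussian from below by $e^{-1/4}$ on that ball yields exactly the claimed bound with $C=(4\pi)^{n/2}e^{1/4}\lambda_{0}$; there is nothing to add.
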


\printindex

\bibliographystyle{amsalpha}
\bibliography{bib}

\end{document}